\pdfoutput=1
\documentclass[11pt,a4paper]{amsart}
\usepackage[left=25truemm,right=25truemm,top=30truemm,bottom=30truemm]{geometry}
\usepackage{amsmath, amsfonts, amssymb, amsthm, amscd, latexsym, mathtools}
\usepackage{ascmac}
\usepackage{enumerate}
\usepackage{amsthm}
\usepackage{xcolor}
\usepackage{hyperref}
\usepackage{cleveref}
\usepackage{mathrsfs}
\usepackage{enumitem}

\numberwithin{equation}{section}

\newtheorem{Thm}{Theorem}[section]
\newtheorem{Lem}[Thm]{Lemma}
\newtheorem{Prop}[Thm]{Proposition}
\newtheorem{Cor}[Thm]{Corollary}
\theoremstyle{definition}
\newtheorem{Def}[Thm]{Definition}
\newtheorem{Rem}[Thm]{Remark}
\newtheorem{Ex}[Thm]{Example}
\newtheorem{problem}{Problem}


\newcommand{\RR}{\mathbb R}

\newcommand{\ZZ}{\mathbb Z}

\newcommand{\GRHn}{G_{\mathbb{R} \mathrm{H}^n}}
\newcommand{\geRHn}{\mathfrak{g}_{\mathbb{R} \mathrm{H}^n}}
\newcommand{\RRH}{\mathbb{R} \mathrm{H}}
\newcommand{\LR}{\langle, \rangle}
\newcommand{\gee}{\mathfrak{g}}
\newcommand{\haa}{\mathfrak{h}}
\newcommand{\support}{\mathrm{supp}}
\newcommand{\Modin}{\mathop{\widetilde{\mathfrak{M}}}}

\newcommand{\RRtimesaut}{\mathbb{R}_{>0} \times \mathrm{Aut}}
\newcommand{\Lstat}{\mathop{\mathcal{L}\mathrm{Stat}}}
\newcommand{\LstatDF}{\mathcal{L}\mathrm{Stat}^{\mathrm{DF}}}
\newcommand{\LstatCS}{\mathcal{L}\mathrm{Stat}^{\mathrm{CS}}}

\newcommand{\MLStat}{\mathop{\mathcal{ML}\mathrm{Stat}}}
\newcommand{\MLStatDF}{\mathop{\mathcal{ML}\mathrm{Stat}^{\mathrm{DF}}}}
\newcommand{\MLStatCS}{\mathop{\mathcal{ML}\mathrm{Stat}^{\mathrm{CS}}}}

\subjclass[2020]{{Primary 53C30; 
Secondary 53B12, 22E25, 53A15.}}
\keywords{Statistical manifold, 
Statistical Lie group,
Hessian manifold,
Solvable Lie group.}

\pagestyle{headings}
\title[The moduli spaces of left-invariant statistical structures on Lie groups]{The moduli spaces of left-invariant statistical structures on Lie groups}
\author{Hikozo Kobayashi, Yu Ohno, Takayuki Okuda, Hiroshi Tamaru}
    \subjclass[2020]{
    Primary 53C30,  
    Secondary 
    53B12, 
    22E25, 
    53A15}
    \keywords{statistical manifold; statistical Lie group; Hessian manifold; solvable Lie group}
    \thanks{
    The first author was supported by JST SPRING, Grant Number JPMJSP2132.
    The second author was supported by JST SPRING, Grant Number JPMJSP2119.
    The third author was supported by JSPS KAKENHI, Grant Number JP24K06714.
    The fourth author was supported by JSPS KAKENHI Grant Numbers JP23K22395 and JP24K21193.
    This work was partially supported by the MEXT Promotion of Distinctive Joint Research Center Program JPMXP0723833165 and the Osaka Metropolitan University Strategic Research Promotion Project (Development of International Research Hubs).
    This work was also supported by the Research Institute for Mathematical Sciences, an International Joint Usage/Research Center at Kyoto University.
    }
\address[H.~Kobayashi]{%
    Mathematics Program, Graduate School of Advanced Science and Engineering, Hiroshima University, 
    1-3-1 Kagamiyama, Higashi-Hiroshima City, Hiroshima, 739-8526, Japan
        }
\email{hikozo-kobayashi@hiroshima-u.ac.jp}
\address[Y.~Ohno]{%
    Department of Mathematics, Faculty of Science, Hokkaido University,
    Kita 10, Nishi 8, Kita-Ku, Sapporo, Hokkaido, 060-0810, Japan
        }
\email{ono.yu.0414@gmail.com}
\address[T.~Okuda]{%
    Mathematics Program, Graduate School of Advanced Science and Engineering, Hiroshima University, 
    1-3-1 Kagamiyama, Higashi-Hiroshima City, Hiroshima, 739-8526, Japan
        }
\email{okudatak@hiroshima-u.ac.jp}
\address[H.~Tamaru]{%
    Department of Mathematics, Graduate School of Science, Osaka Metropolitan University,
    3-3-138, Sugimoto, Sumiyoshi-ku, Osaka, 558-8585, Japan
        }
\email{tamaru@omu.ac.jp}
\begin{document}

\begin{abstract}
In the context of information geometry, the concept known as left-invariant statistical structure on Lie groups is defined by Furuhata--Inoguchi--Kobayashi (Inf Geom 4(1):177--188, 2021).
In this paper, we introduce the notion of the moduli space of left-invariant statistical structures on a Lie group. We study the moduli spaces for three particular Lie groups, each of which has a moduli space of left-invariant Riemannian metrics that is a singleton. 
As applications, we classify left-invariant conjugate symmetric statistical structures and left-invariant dually flat structures (which are equivalent to left-invariant Hessian structures) on these three Lie groups. 
A characterization of the Amari--Chentsov $\alpha$-connections on the Takano Gaussian space is also given.
\end{abstract}

\maketitle

\tableofcontents

\section{Introduction}\label{sec1}

In differential geometry, 
classifying geometric structures on manifolds is a fundamental problem.
When the manifolds are equipped with additional geometric structures or suitable group actions,
it becomes natural to consider compatible geometric structures.
A particular area of interest is left-invariant geometric structures on Lie groups,
which have emerged as a significant field of study.
It is crucial to examine whether given Lie groups admit particular left-invariant geometric structures and,
furthermore, to classify those structures on the given Lie groups.
In this context, the concept of moduli spaces of left-invariant geometric structures plays a vital role, providing insight into their classification and deformation theory (e.g.,~\cite{ABD_2011, Luis-T, HTT, KTT, KONDO, KOTT, L-DL, Salamon_2001}).

In this paper, we focus on \emph{statistical structures} on manifolds (cf.~\cite{Kurose-1990, L-SM}). 
A statistical structure originates from information geometry, and is defined as a pair consisting of a Riemannian metric and a torsion-free affine connection that satisfies suitable conditions (see Section~\ref{Sec_SM} for details). In the theory of statistical structures, two distinguished subclasses have received particular attention. The first is the class of \emph{dually flat structures}, defined by the property that the associated affine connection is flat (cf.~\cite{AN}). Dually flat structures play an important role in information geometry (see~\cite{AN} for details). The second subclass is the class of \emph{conjugate symmetric statistical structures} (cf.~\cite{L-SM}), which is a broader concept that includes dually flat structures. This structure has significant connections with affine hypersurface theory (see~\cite{Opozda_2019}) and constant curvature statistical manifolds (see~\cite{KO-CC}).
It is an important problem to examine whether given manifolds admit these structures and to classify such structures on those manifolds.

A statistical structure on a Lie group is said to be 
\emph{left-invariant} 
if both the metric and the connection are left-invariant (cf.~\cite{FIK, IO-2024}). 
In this paper, we focus on this type of structure, and our concern is the following problem:

\begin{problem}\label{Prob_A}
Given a Lie group $G$, find all left-invariant dually flat structures and left-invariant conjugate symmetric statistical structures on $G$.
\end{problem}

Here, we review some previous works related to Problem~\ref{Prob_A}.
We note that a dually flat structure is equivalent to a Hessian structure (see~\cite{Shima}).
Shima~\cite{Shima-1980} deeply studied this structure, 
and proved that left-invariant Hessian structures are admissible only on solvable Lie groups.
Note that not every solvable Lie group admits a left-invariant Hessian structure, 
and a complete classification is still far from being achieved. 
For left-invariant conjugate symmetric statistical structures on Lie groups, 
a typical example is provided by the space $\mathcal{N}$ of univariate normal distributions, 
which can be identified with the Lie group $\RR_{>0} \ltimes \RR$. 
Furuhata--Inoguchi--Kobayashi~\cite{FIK} showed that the 
Amari--Chentsov $\alpha$-connections on the space $\mathcal{N}$ 
are the only left-invariant affine connections that are conjugate symmetric
with respect to the Fisher metric on $\mathcal{N}$. 
Furthermore, Kobayashi and the second author \cite{KO} proved that a similar statement holds for the Lie group of multivariate normal distributions. 
Recently, Inoguchi and the second author \cite{IO-2024} classified
left-invariant conjugate symmetric statistical structures on three-dimensional Lie groups. 
Moreover, for the classification of a certain class of left-invariant statistical structures on two- and three-dimensional Lie groups, see also \cite{MNF_2024}.
For higher-dimensional Lie groups, aside from the Lie group of multivariate normal distributions studied in \cite{KO},
the classification problem becomes more complicated and remains widely open. 

In this paper, we introduce a new framework for investigating Problem~\ref{Prob_A}, 
which can be applied to Lie groups of any dimension. 
In the study of left-invariant Riemannian metrics on Lie groups,
the notion of moduli spaces plays an important role. 
This concept was introduced for the Iwasawa manifold in \cite{Scala_2013} and further developed by Kodama, Takahara, and the fourth author in \cite{KTT}.
Consider the space of all left-invariant Riemannian metrics on a Lie group $G$, and the natural action of $\RR_{>0} \times \mathrm{Aut}(G)$ on it. 
The moduli space $\mathfrak{PM}(G)$ of left-invariant Riemannian metrics on $G$ is defined as the orbit space of this action. 
Inspired by this concept, let us denote by $\Lstat(G)$ the space of all left-invariant statistical structures on $G$, and consider the natural group action on $\Lstat(G)$ by $\RR_{>0} \times \mathrm{Aut}(G)$. 
We define the \emph{moduli space of left-invariant statistical structures on $G$} as the orbit space of this action, 
and denote it by $\MLStat(G)$.

We will see that 
two left-invariant statistical structures $(g,\nabla)$ and $(g',\nabla')$ lie in the same $\RR_{>0} \times \mathrm{Aut}(G)$-orbit
 if and only if 
there exists a \emph{scaling statistical isomorphism compatible with the group structure} (see Definition~\ref{def:ssi_compati-grp}) between statistical Lie groups $(G,g,\nabla)$ and $(G,g',\nabla')$.
In particular, the action of $\RR_{>0} \times \mathrm{Aut}(G)$ on $\Lstat(G)$ preserves the properties of statistical structures,
such as the dually flat property and conjugate symmetry.
Consequently, we define two subspaces of $\MLStat(G)$, denoted by $\MLStatCS(G)$ and $\MLStatDF(G)$, 
as the \emph{moduli spaces of left-invariant conjugate symmetric statistical structures}
and \emph{left-invariant dually flat structures}, respectively. 
In this paper, motivated by Problem~\ref{Prob_A}, we address the following problem:

\begin{problem}\label{Prob_B}
Given a Lie group $G$, determine the moduli spaces $\MLStatCS(G)$ and $\MLStatDF(G)$. 
\end{problem}

For example, if $\MLStatDF(G) = \emptyset$, 
then the Lie group $G$ does not admit any left-invariant dually flat structures.
If $\MLStatDF(G)$ is a singleton, then the uniqueness of such a structure holds. 
We note that $\MLStatCS(G) \neq \emptyset$ always holds, 
since the pair of a left-invariant Riemannian metric and its Levi-Civita connection is conjugate symmetric.
Therefore, regarding $\MLStatCS(G)$, 
we are interested in the existence of nontrivial left-invariant conjugate symmetric statistical structures.

It is natural to first consider Problem~\ref{Prob_B} for Lie groups $G$ where the moduli space $\MLStat(G)$ is small. 
Recall that a left-invariant statistical structure on a Lie group $G$ is defined as a pair of a left-invariant Riemannian metric and a left-invariant affine connection.
We are thus led to consider Lie groups $G$ whose moduli space $\mathfrak{PM}(G)$ of left-invariant Riemannian metrics is small, since these provide natural candidates for the study of Problem~\ref{Prob_B}.
In fact, the simplest case has been classified.
Specifically, for a connected and simply-connected Lie group, 
the moduli space $\mathfrak{PM}(G)$ is a singleton 
(that is, a left-invariant Riemannian metric is essentially unique) 
if and only if 
it is isomorphic to one of the following three series (cf.~\cite{L-DL, KTT}): 
\begin{equation}\label{eq_special-Liegrps_Intro}
    \RR^n,\;
    G_{\RR \mathrm{H}^n}\ (n \ge 2),\;
    H^3 \times \RR^{n-3}\ (n \ge 3).
\end{equation}
Note that $\RR^n$ is an abelian Lie group, 
$\GRHn$ is the Lie group associated with the real hyperbolic space $\RR \mathrm{H}^n$ 
(the solvable part of the Iwasawa decomposition of the identity component $SO_0(n,1)$ of $SO(n,1)$, and acts simply-transitively on $\RR \mathrm{H}^n$), 
and $H^3$ is the three-dimensional Heisenberg group. 
It is well-known that their unique metrics are flat on $\RR^n$, negative constant sectional curvature on $\GRHn$,
and Ricci soliton on $H^3 \times \RR^{n-3}$, respectively. 

The main result of this paper addresses Problem~\ref{Prob_B} 
for the three series of Lie groups shown in \eqref{eq_special-Liegrps_Intro}. 
For each of these Lie groups $G$, Table~\ref{table_Main-1} summarizes the sets of representatives for the moduli spaces $\MLStatCS(G)$ and $\MLStatDF(G)$. 

For these Lie groups $G$, 
let us denote by $S^3(\mathfrak{g}^\ast)$ the space of all symmetric $(0,3)$-tensors on the Lie algebra $\mathfrak{g}$ of $G$.
Then the moduli space $\MLStat(G)$ 
can be written as an orbit space of $S^3(\mathfrak{g}^\ast)$ under a certain Lie group (see Section~\ref{subsection:modli_unique_metric}).  
A set of representatives for the moduli spaces $\MLStatCS(G)$, resp.~$\MLStatDF(G)$, 
refers to a subset of $S^3(\mathfrak{g}^\ast)$ 
whose projection onto these moduli spaces is surjective. 
If this projection is bijective, we call it a complete set of representatives.
The notations $V^+$, $v_0$, and $w_0$ appearing in Table~\ref{table_Main-1} will be explained in Theorems~\ref{thm:Rn-intro}, \ref{thm:GRHn-intro}, and~\ref{thm:H3Rn-3-intro}, respectively. 

\begin{table}[h]
    \centering
    \caption{Sets of representatives for $\MLStatCS(G)$ and $\MLStatDF(G)$ in \eqref{eq_special-Liegrps_Intro}}
    \label{table_Main-1}
    \begin{tabular}{c|cc}
        $G$ & $\MLStatCS(G)$ & $\MLStatDF(G)$ \\ \hline
        $\RR^n$ & $S^3(\mathfrak{g}^\ast_{\RR^n})$ & $V^+$\\
        $G_{\RR \mathrm{H}^n}$ & $\RR v_0$ & $\{v_0, -v_0\}$ \\
        $H^3$ & $\{0\}$ & $\emptyset$ \\
        $H^3\times \RR^{n-3}$ & $(\mathbb{R} w_0 \odot S^1(\mathfrak{g}_{\mathbb{R}^{n-3}}^\ast)) \oplus S^3(\mathfrak{g}_{\mathbb{R}^{n-3}}^\ast)$ & $\emptyset$
    \end{tabular}
\end{table}

In order to express the moduli spaces, we need to fix the basis of the Lie algebras. 
For each of our three Lie algebras $\gee_{\RR^n}$, $\geRHn$ and $\haa_3 \oplus \gee_{\RR^{n-3}}$, the standard basis 
$\{ e_1, e_2, \ldots , e_n \}$ 
is a basis whose nonzero bracket relations are given as follows: 
\begin{align*}
\gee_{\RR^n} &: \mbox{(none)} , \\
\geRHn &: [e_1, e_i] = e_i \quad (i=2, \ldots , n) , \\ 
\haa_3 \oplus \gee_{\RR^{n-3}} &: [e_1,e_2] = e_3 . 
\end{align*}
Let $\{ x_1 , \ldots , x_n \} \subset \mathfrak{g}^\ast$ be the dual basis of the standard basis. 
We will now describe the results for each of the three Lie groups in detail.
The first case is the abelian Lie group $\RR^n$. 

\begin{Thm}[see also Theorem~\ref{thm:Rn-moduli}]\label{thm:Rn-intro}
For the abelian Lie group $\RR^n$, the following holds:
\begin{enumerate}[label=(\arabic*)]
\item
All left-invariant statistical structures on $\RR^n$ are conjugate symmetric. 
\item 
The following is a set of representatives for $\MLStatDF(\RR^n)$$:$
\[
    V^+ := \left\{ \sum_{i = 1}^n \lambda_i x_i^3 ~\middle|~ \lambda_1 \geq \dots \geq \lambda_n \geq 0 \right\}.
\]
The moduli space $\MLStatDF(\RR^n)$ is homeomorphic to the orbit space of the action of $\RR_{>0}$ on $V^+$, which implies the existence of an $(n-1)$-dimensional family of left-invariant dually flat structures on $\RR^n$.
\end{enumerate}
\end{Thm}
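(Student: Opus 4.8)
The plan is to carry out the entire argument on the Lie algebra $\gee_{\RR^n}$, fixing the standard flat metric $g_0$ as the distinguished representative of the singleton moduli space $\mathfrak{PM}(\RR^n)$; the subgroup of $\RR_{>0}\times\mathrm{Aut}(\RR^n)=\RR_{>0}\times GL_n(\RR)$ preserving the class of $g_0$ is $\RR_{>0}\times O(n)$, and this is the group whose orbit space on $S^3(\gee_{\RR^n}^\ast)$ realizes the moduli space, as recorded earlier in the excerpt. Since $\gee_{\RR^n}$ is abelian, the Levi-Civita connection $\nabla^{g_0}$ has vanishing connection product on left-invariant fields, so every left-invariant torsion-free $\nabla$ is encoded by a symmetric bilinear product $K:=\nabla-\nabla^{g_0}$; the statistical (Codazzi) condition is precisely that the cubic form $C(X,Y,Z):=g_0(K(X,Y),Z)$ is totally symmetric, which identifies $\Lstat(\RR^n)$ at the fixed metric with $S^3(\gee_{\RR^n}^\ast)$.

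For part (1) I would compare the curvature of $\nabla$ with that of its dual connection $\nabla^{\ast}=\nabla^{g_0}-K$. The standard difference formula gives $R^{\nabla}-R^{\nabla^{\ast}}=2\bigl((\nabla^{g_0}_X K)_Y-(\nabla^{g_0}_Y K)_X\bigr)$. Because $K$ is left-invariant and $\nabla^{g_0}$ annihilates left-invariant fields on the abelian group, $\nabla^{g_0}K=0$, whence $R^{\nabla}=R^{\nabla^{\ast}}$ and every left-invariant statistical structure on $\RR^n$ is conjugate symmetric; consequently $S^3(\gee_{\RR^n}^\ast)$ itself is a set of representatives for $\MLStatCS(\RR^n)$.

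For part (2) the crux is to extract a sum-of-cubes normal form from flatness. With $\nabla^{g_0}=0$ and all brackets zero, $R^{\nabla}(X,Y)Z=K(X,K(Y,Z))-K(Y,K(X,Z))$, so writing $X\cdot Y:=K(X,Y)$ (a commutative product) flatness reads $X\cdot(Y\cdot Z)=Y\cdot(X\cdot Z)$. A short manipulation using commutativity shows this is equivalent to associativity. Total symmetry of $C$ says $g_0(X\cdot Y,Z)=g_0(Y,X\cdot Z)$, i.e. each left multiplication $L_X$ is $g_0$-self-adjoint; together with $L_XL_Y=L_{X\cdot Y}=L_YL_X$ this exhibits $\{L_X\}$ as a commuting family of self-adjoint operators. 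By the spectral theorem they are simultaneously orthogonally diagonalizable, and in an orthonormal eigenbasis $\{f_1,\dots,f_n\}$ one reads off $f_i\cdot f_j=0$ for $i\neq j$ and $f_i\cdot f_i=c_if_i$; hence $C=\sum_i c_i\theta_i^3$ in the dual coframe $\{\theta_i\}$.

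It then remains to run the moduli bookkeeping. Using $O(n)$ to rotate the orthonormal eigencoframe onto the standard one $\{x_i\}$, to permute indices, and to apply coordinate reflections (which send $c_i\mapsto-c_i$), every class acquires a representative $\sum_i\lambda_i x_i^3$ with $\lambda_1\ge\cdots\ge\lambda_n\ge0$, i.e. lies in $V^+$; the only identification left unaccounted is the residual $\RR_{>0}$-scaling, giving $\MLStatDF(\RR^n)\cong V^+/\RR_{>0}$ and an $(n-1)$-dimensional family. I expect the main obstacle to be this structural step: verifying that flatness is genuinely \emph{equivalent} to associativity rather than merely implied by it, and then checking that $V^+$ is a true fundamental domain for $O(n)$ (so that distinct ordered tuples give inequivalent structures), which is what upgrades $V^+$ from a set of representatives to the claimed homeomorphism with $V^+/\RR_{>0}$.
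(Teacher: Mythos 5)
Your part (1) and the normal-form derivation in part (2) are correct and essentially follow the paper's own route: conjugate symmetry comes from the fact that every left-invariant tensor is parallel for the flat connection (your $\nabla^{g_0}K=0$ is the paper's $\nabla^{g^E}C=0$ combined with Proposition~\ref{Prop_K-CS}), and your spectral-theorem argument --- flatness $\Leftrightarrow$ the self-adjoint operators $K_X$ commute, hence are simultaneously orthogonally diagonalizable, hence $C=\sum_i c_i\theta_i^3$ in an orthonormal eigencoframe --- is precisely the content of Proposition~\ref{proposition:Opozda} (Opozda's result), which the paper cites rather than reproves. Incidentally, your first worry is unfounded: associativity is a red herring, since flatness on the abelian group is literally the identity $R^\nabla(X,Y)=[K_X,K_Y]=0$, and both directions of the equivalence you need (flat $\Rightarrow$ diagonal form, diagonal form $\Rightarrow$ flat) are immediate from this. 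Up to this point you have established that $V^+$ is a set of representatives, i.e.\ the surjectivity part of (2).

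The genuine gap is the step you flag at the end and leave unresolved: that $V^+$ is a true fundamental domain --- distinct $\RR_{>0}$-orbits in $V^+$ give distinct classes in $\MLStatDF(\RR^n)$ --- together with the upgrade from a continuous bijection to the claimed homeomorphism. This is not mere bookkeeping. When the $\lambda_i$ have repetitions or zeros, the diagonalizing orthonormal frame is far from unique, so one cannot simply ``read off'' the sorted tuple from the orbit; and a continuous bijection between non-compact quotient spaces need not be a homeomorphism. The paper devotes Appendix~\ref{Appendix_Moduli-Rn-DF} to exactly these two points: Proposition~\ref{proposition:OnW} proves, via a genericity argument (the operators $\widetilde{K}^{C}_u$ for $u$ in the dense set $\Omega_\lambda$ and their eigenspace decompositions), that $O(n)$-equivalent diagonal cubic forms are already equivalent under the finite group $W=\mathfrak{S}_n\ltimes\{\pm1\}^n$, and Theorem~\ref{theorem:RnDFW} then shows the induced continuous bijection $V_+\to O(n)\backslash\Xi$ is closed, hence a homeomorphism. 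If you want to close the gap without reproducing the appendix, one economical fix is the $O(n)$-equivariant trace form $B_C(u,v):=\sum_{j,k}C(u,e_j,e_k)\,C(v,e_j,e_k)$: for $C=\sum_i\lambda_ix_i^3$ one computes $B_C=\mathrm{diag}(\lambda_1^2,\dots,\lambda_n^2)$, so the sorted spectrum of $B_C$ recovers $(\lambda_1,\dots,\lambda_n)$ from the $(\RR_{>0}\times O(n))$-orbit up to a common positive factor; since $C\mapsto B_C\mapsto(\text{sorted spectrum})$ is continuous and constant on orbits, this yields both the injectivity and a continuous inverse, hence the homeomorphism. As written, however, your proposal proves only the ``set of representatives'' half of statement (2).
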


The second case is the Lie group $\GRHn$ of the real hyperbolic space. 
\begin{Thm}[see also Theorem~\ref{thm:GRHn-cs-df-moduli}]\label{thm:GRHn-intro}
For the Lie group $\GRHn$, we denote by 
\[
\textstyle 
v_0 := 4 x_1^3 + 6 \sum_{i=2}^{n} x_1 x_i^2 \in S^3(\geRHn^\ast) .
\] 
\begin{enumerate}[label=(\arabic*)]
\item
$\RR v_0$ is a complete set of representatives for $\MLStatCS(\GRHn)$. 
Specifically, the moduli space, equipped with the natural quotient topology, is homeomorphic to 
the subspace $\RR v_0$ of $S^3(\geRHn^\ast)$. 
This implies that
there exists a one-dimensional family of left-invariant conjugate symmetric statistical structures on $\GRHn$.
\item 
$\{ v_0, -v_0 \}$ is a complete set of representatives for $\MLStatDF(\GRHn)$.
Specifically, the moduli space, equipped with the natural quotient topology, is homeomorphic to a two-point discrete space. 
This implies that there exist exactly two left-invariant dually flat structures on $\GRHn$ up to scaling statistical isomorphisms compatible with the group structure.
\end{enumerate}
\end{Thm}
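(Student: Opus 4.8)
The plan is to exploit the rigidity of the metric on $\GRHn$ and reduce the whole problem to linear algebra on $S^3(\geRHn^\ast)$, following the realisation of the moduli space as an orbit space recalled in Section~\ref{subsection:modli_unique_metric}. First I would pin down the residual group acting on the slice where the metric equals the standard one $g_0$ (making $\{e_1,\dots,e_n\}$ orthonormal). Writing a general element of $\Autgee(\geRHn)$ in the standard basis, the relation $[e_1,e_i]=e_i$ forces the $e_1$-coefficient of $\phi(e_1)$ to equal $1$, so $\Autgee(\geRHn)\cong \RR^{n-1}\rtimes GL(n-1,\RR)$, acting on the derived algebra $\mathrm{span}(e_2,\dots,e_n)$. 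Imposing $\phi^\ast g_0=\mu g_0$ then forces $\mu=1$ and $\phi\in O(n-1)$; in particular $\GRHn$ admits no dilating automorphism. Consequently the factor $\RR_{>0}$ acts trivially on the slice, and the residual group is exactly $O(n-1)$, acting on a cubic form $C\in S^3(\geRHn^\ast)$ by fixing $x_1$ and rotating $x_2,\dots,x_n$.

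Next I would record the Levi-Civita connection $\nabla^0$ of $g_0$ via Koszul's formula; with the given brackets one gets $\nabla^0_{e_1}=0$ on all of $\geRHn$, together with $\nabla^0_{e_i}e_1=-e_i$ and $\nabla^0_{e_i}e_j=\delta_{ij}e_1$ for $i,j\ge 2$. Since a left-invariant statistical structure with metric $g_0$ is the same datum as a totally symmetric $C$, the conjugate-symmetry condition becomes the algebraic requirement that the $(0,4)$-tensor $\nabla^0C$ be totally symmetric. Because $\nabla^0_{e_1}C=0$, total symmetry forces $(\nabla^0_{e_b}C)(e_1,e_c,e_d)=0$ for all $b,c,d$, and I would write these out in coordinates. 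Sorting by the position of the index $1$, they yield successively that $C_{11k}=0$ and $C_{ijk}=0$ (all indices $\ge 2$), while $C_{1jk}=\tfrac12 C_{111}\,\delta_{jk}$; reassembling gives exactly the one-parameter family $C=\tfrac14 C_{111}\,v_0$, so every conjugate symmetric $C$ lies on the line $\RR v_0$. A direct check that $\nabla^0 v_0$ is totally symmetric gives the converse. Since $v_0$ is $O(n-1)$-invariant and $\RR_{>0}$ is trivial on the slice, the residual group fixes $\RR v_0$ pointwise, yielding the homeomorphism $\MLStatCS(\GRHn)\cong \RR v_0$ and proving~(1).

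For~(2) I would compute the curvature of $\nabla=\nabla^0+K$ along this line. Normalising the difference tensor by $g(K_{e_a}e_b,e_c)=-\tfrac12 C_{abc}$ (so that $C=\nabla g$), the choice $C=\lambda v_0$ gives $K_{e_1}e_1=-2\lambda e_1$, $K_{e_1}e_j=K_{e_j}e_1=-\lambda e_j$, and $K_{e_j}e_k=-\lambda\delta_{jk}e_1$ for $j,k\ge 2$, hence the explicit connection $\nabla_{e_1}e_1=-2\lambda e_1$, $\nabla_{e_1}e_j=-\lambda e_j$, $\nabla_{e_j}e_1=-(1+\lambda)e_j$, $\nabla_{e_j}e_k=(1-\lambda)\delta_{jk}e_1$. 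A short computation of $R^\nabla(e_1,e_j)e_1$, $R^\nabla(e_1,e_j)e_k$ and $R^\nabla(e_j,e_k)e_l$ shows that every component is a multiple of $(1-\lambda^2)$; for instance $R^\nabla(e_1,e_j)e_1=(1-\lambda^2)e_j$. Thus $\nabla$ is flat precisely when $\lambda^2=1$, i.e.\ $C=\pm v_0$. As $O(n-1)$ fixes $v_0$ and scaling is trivial on the slice, $v_0$ and $-v_0$ are inequivalent, so $\MLStatDF(\GRHn)=\{v_0,-v_0\}$ is a two-point discrete space, proving~(2).

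The main obstacle is bookkeeping rather than conceptual: correctly extracting, from the many components of the symmetry condition $\nabla^0C\in S^4$, the minimal subset that already confines $C$ to the line $\RR v_0$, and then verifying that this whole line is genuinely conjugate symmetric. The one genuinely nonroutine point is the rigidity observation that $\GRHn$ has no dilating automorphism: this is exactly what prevents the scaling factor from collapsing $\RR v_0$, and it is responsible for the contrast between the one-dimensional conjugate symmetric moduli and the two-point dually flat moduli.
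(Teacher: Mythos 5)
Your proposal is correct and takes essentially the same route as the paper: you reduce to the fiber over the unique metric via the orbit-space description, identify the residual group as $O(n-1)$ acting with no scaling factor (you verify this rigidity directly for $\geRHn$, where the paper cites Proposition~\ref{Prop_isotropy-LIM}), carve out the line $\RR v_0$ from total symmetry of $\nabla^g C$ just as in Proposition~\ref{prop:gehrnCScomp} (your component relations $C_{11k}=0$, $C_{1jk}=\tfrac12 C_{111}\delta_{jk}$, $C_{ijk}=0$ are the correct ones and reassemble to $\RR v_0$), and detect flatness at $\lambda=\pm 1$ by computing $R^\nabla$ directly from the connection coefficients rather than via the commutator formula of Theorem~\ref{theorem:Kcurvature} --- all components are multiples of $1-\lambda^2$, matching the paper's $R^{C^\alpha}_g=(1-\alpha^2)R^0_g$. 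The one step you should state explicitly in part (2) is that dual flatness implies conjugate symmetry (the nesting recalled in Section~\ref{subsec:some-class-of-SS}), since your curvature computation is carried out only along $\RR v_0$, and it is precisely this containment that guarantees every left-invariant dually flat structure already lies on that line.
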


The third case is the Lie group $ H^3 \times \mathbb{R}^{n-3} $,
the direct product of the three-dimensional Heisenberg group and an abelian Lie group. 
We denote by $\odot$ the symmetric tensor product. 

\begin{Thm}[see also Theorem~\ref{thm:H3Rn-3-cs-df-moduli}]\label{thm:H3Rn-3-intro}
For the Lie group $ H^3 \times \mathbb{R}^{n-3} $, we denote by 
$$
w_0 := x_1 x_1 + x_2 x_2 + x_3 x_3 \in S^2(\mathfrak{h}_3^\ast) . 
$$
\begin{enumerate}[label=(\arabic*)]
\item
$(\mathbb{R} w_0 \odot S^1(\mathfrak{g}_{\mathbb{R}^{n-3}}^\ast)) \oplus S^3(\mathfrak{g}_{\mathbb{R}^{n-3}}^\ast) $
is a set of representatives for $ \MLStatCS(H^3 \times \mathbb{R}^{n-3}) $.
In particular, if $ n=3 $, then the set of representatives for $ \MLStatCS(H^3) $ is a singleton, 
which implies that a left-invariant conjugate symmetric statistical structure on $ H^3 $ is essentially unique;
this structure consists of a left-invariant Riemannian metric and its Levi-Civita connection. 
If $n \geq 4$, then the moduli space  
$\MLStatCS(H^3 \times \mathbb{R}^{n-3})$ 
is homeomorphic to the orbit space of $(\mathbb{R} w_0 \odot S^1(\mathfrak{g}_{\mathbb{R}^{n-3}}^\ast)) \oplus S^3(\mathfrak{g}_{\mathbb{R}^{n-3}}^\ast)$ under the action of $O(n-3)$.
\item
There do not exist any left-invariant dually flat structures on $ H^3 \times \mathbb{R}^{n-3} $. 
\end{enumerate}
\end{Thm}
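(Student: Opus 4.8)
The plan is to fix the inner product $g_0$ on $\mathfrak{g} = \mathfrak{h}_3 \oplus \mathfrak{g}_{\mathbb{R}^{n-3}}$ for which the standard basis is orthonormal. Because $\mathfrak{PM}(H^3 \times \mathbb{R}^{n-3})$ is a singleton, every left-invariant statistical structure is, after applying a suitable element of $\mathbb{R}_{>0} \times \mathrm{Aut}$, of the form $(g_0, \nabla^{\mathrm{LC}} + K)$ with $\langle K_X Y, Z\rangle = C(X,Y,Z)$ for a totally symmetric $C \in S^3(\mathfrak{g}^\ast)$, and $\MLStat(H^3 \times \mathbb{R}^{n-3})$ is the quotient of $S^3(\mathfrak{g}^\ast)$ by the residual group $H$ of automorphisms of $(\mathfrak{g}, g_0)$ that are conformal. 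I would first record the Koszul formula for $\nabla^{\mathrm{LC}}$, whose only nonzero components arise from $[e_1,e_2] = e_3$, together with the two working criteria supplied earlier: conjugate symmetry is equivalent to $\nabla^{\mathrm{LC}} C$ being totally symmetric, i.e. $(\nabla^{\mathrm{LC}}_X C)(Y,Z,W) = (\nabla^{\mathrm{LC}}_Y C)(X,Z,W)$, and dual flatness is equivalent to $R^{\nabla} = 0$.

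For part (1), the easy half is that each proposed representative is conjugate symmetric for a structural reason: $w_0$ is the restriction of $g_0$ to $\mathfrak{h}_3$ and is hence $\nabla^{\mathrm{LC}}$-parallel, while each $x_j$ with $j \geq 4$ is parallel since $e_j$ is central and abelian; thus every element of $(\mathbb{R} w_0 \odot S^1(\mathfrak{g}_{\mathbb{R}^{n-3}}^\ast)) \oplus S^3(\mathfrak{g}_{\mathbb{R}^{n-3}}^\ast)$ satisfies $\nabla^{\mathrm{LC}} C = 0$. The substantial step, which I expect to be the main obstacle of the whole theorem, is the converse: writing $(\nabla^{\mathrm{LC}}_X C)(Y,Z,W) = (\nabla^{\mathrm{LC}}_Y C)(X,Z,W)$ as an explicit linear system in the components $C_{abc}$ and solving it. I anticipate that the equations with all indices in $\{1,2,3\}$ force every purely Heisenberg component of $C$ to vanish, which is exactly what makes $\MLStatCS(H^3)$ a singleton when $n=3$, and that the mixed equations pin the remaining components down to the form $w_0 \odot \ell$ modulo the $H$-action. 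Finally I would identify $H$: since $e_3 \in [\mathfrak{g},\mathfrak{g}]$ is characteristic and conformality forbids any relative rescaling between the Heisenberg and the abelian blocks, the connected part of $H$ acts on the representative space only through $O(n-3)$ on $\mathfrak{g}_{\mathbb{R}^{n-3}}$ (the Heisenberg factor $O(2)$ fixing $w_0$), giving the asserted description of $\MLStatCS(H^3 \times \mathbb{R}^{n-3})$ together with its quotient topology.

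For part (2), a dually flat structure is conjugate symmetric and the $H$-action preserves flatness, so by part (1) it suffices to show that no representative $C = w_0 \odot \ell + \kappa$, with $\ell \in S^1(\mathfrak{g}_{\mathbb{R}^{n-3}}^\ast)$ and $\kappa \in S^3(\mathfrak{g}_{\mathbb{R}^{n-3}}^\ast)$, makes $\nabla = \nabla^{\mathrm{LC}} + K$ flat. Such a $C$ has no purely Heisenberg part, so $K_{e_a} e_b = \tfrac13 \delta_{ab}\, \ell^\sharp$ for $a,b \in \{1,2,3\}$ and $K_{e_a} e_j = \tfrac13 \ell(e_j)\, e_a$, where $\ell^\sharp \in \mathfrak{g}_{\mathbb{R}^{n-3}}$ is the metric dual of $\ell$, and the abelian cubic $\kappa$ never enters the Heisenberg block. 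The whole obstruction then reduces to a single component: from $R^{\nabla}(X,Y)Z = R^{\mathrm{LC}}(X,Y)Z + (\nabla^{\mathrm{LC}}_X K)_Y Z - (\nabla^{\mathrm{LC}}_Y K)_X Z + [K_X, K_Y]Z$, together with $R^{\mathrm{LC}}(e_1,e_3)e_1 = -\tfrac14 e_3$, all covariant-derivative terms vanish and the commutator contributes $-\tfrac19 |\ell|^2 e_3$, so that
\[
R^{\nabla}(e_1,e_3)e_1 = -\Bigl(\tfrac14 + \tfrac19 |\ell|^2\Bigr) e_3 .
\]
Both contributions carry the same sign, so this curvature component can never be cancelled; hence $\nabla$ is never flat and $\MLStatDF(H^3 \times \mathbb{R}^{n-3}) = \emptyset$. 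In summary, the real difficulty is concentrated in the linear classification of part (1); once the reduction to $C = w_0 \odot \ell + \kappa$ is available, part (2) is a short sign computation.
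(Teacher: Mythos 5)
Your overall strategy is the same as the paper's: fix the inner product $g_0$ making the canonical basis orthonormal, use the fact that $\mathfrak{PM}(H^3\times\RR^{n-3})$ is a singleton to reduce $\MLStat$ to a quotient of $S^3(\mathfrak{g}^\ast)$ by the isotropy group at $g_0$, characterize conjugate symmetry by total symmetry of $\nabla^{g_0}C$, and kill dual flatness by a sign argument on one curvature component. Two of your ingredients are actually cleaner than the paper's. First, your ``easy direction'' of (1) is structural: $w_0 = g_0 - \sum_{j\geq 4}x_j^2$ and each $x_j$ ($j\geq 4$) are $\nabla^{g_0}$-parallel, so every element of $(\RR w_0 \odot S^1(\mathfrak{g}_{\RR^{n-3}}^\ast))\oplus S^3(\mathfrak{g}_{\RR^{n-3}}^\ast)$ satisfies $\nabla^{g_0}C=0$; the paper instead verifies this by brute force from its Lemma~\ref{lemma:H3Rn-3_nablaC_comp}. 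Second, your part (2) is correct and matches Proposition~\ref{prop:H3Rn-3-curvature}: I checked that $R^{\nabla^{g_0}}(e_1,e_3)e_1=-\tfrac14 e_3$, that $\nabla^{g_0}K=0$ kills the derivative terms, and that $[K_{e_1},K_{e_3}]e_1=-\tfrac19|\ell|^2e_3$ in your normalization $\langle K_XY,Z\rangle = C(X,Y,Z)$ (the paper's $\tfrac{1}{36}\sum_k c_k^2$ corresponds to its convention $K=-\tfrac12 C^\sharp$), so the component $-\bigl(\tfrac14+\tfrac19|\ell|^2\bigr)$ indeed can never vanish.

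However, there is a genuine gap, and it sits exactly where you say the main difficulty is: the converse inclusion in part (1), namely that total symmetry of $\nabla^{g_0}C$ \emph{forces} $C = w_0\odot\ell + \kappa$, is only announced (``I anticipate that the equations \dots force every purely Heisenberg component of $C$ to vanish \dots and that the mixed equations pin the remaining components down''), not carried out. This is the heart of the theorem: it is what makes $\MLStatCS(H^3)$ a singleton for $n=3$, and your part (2) logically depends on it, since you only rule out flatness for structures already reduced to that normal form. In the paper this step is Proposition~\ref{prop:H3Rn-3CScomp}, proved via Lemma~\ref{lemma:H3Rn-3_nablaC_comp} (the value of $\nabla^{g_0}$ on every cubic monomial) followed by an explicit list of component identities $(\nabla^{g_0}C)_{uvst}=(\nabla^{g_0}C)_{uvts}$, etc., eliminating all coefficients $C_{abc}$ with at most one index $\geq 4$ except those of $w_0\odot\ell$; nothing in your proposal substitutes for this computation, and the answer cannot be guessed from symmetry alone (note, e.g., that the mixed coefficients $C_{11i},C_{22i},C_{33i}$ are not forced to vanish but forced to be \emph{equal}, which is what produces $w_0\odot\ell$). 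A secondary, smaller gap of the same nature: the identification of the isotropy group as $S(O(2)\times O(1))\times O(n-3)$ (Proposition~\ref{prop:H3Rn-3L'}, resting on Proposition~\ref{Prop_isotropy-LIM}\,(\ref{Prop_isotropy-LIM_nonabelian}) to exclude nontrivial scalings) is asserted from a sketch; your claim that ``conformality forbids relative rescaling'' is true but needs the automorphism constraint $\varphi e_3=[\varphi e_1,\varphi e_2]$ to be turned into an actual argument. With those two computations supplied, your proof would be complete and essentially coincide with the paper's.
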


We note that the statement regarding $ H^3 $ has been established in \cite{IO-2024}. 
Our argument provides an alternative approach and generalizes this result to the higher-dimensional case. 
In particular, unlike the case of $n=3$, 
the Lie group $H^3 \times \mathbb{R}^{n-3}$ with $n \geq 4$ admits 
nontrivial left-invariant conjugate symmetric statistical structures. 

Finally, in this section, we present one application of our theorems. 
By using an explicit expression for the moduli space $\MLStatCS(\GRHn)$ 
of left-invariant conjugate symmetric statistical structures on $\GRHn$, 
we obtained a characterization of the Amari--Chentsov $\alpha$-connections on the Takano Gaussian space. 
This result is analogous to the characterization of the Amari--Chentsov $\alpha$-connections
on the family of univariate normal distributions (cf.~\cite{FIK}), 
and on the family of multivariate normal distributions (cf.~\cite{KO}).

\begin{Cor}[see also Theorem~\ref{Thm_Amari--Chentsov_Takano}]
The Amari--Chentsov $\alpha$-connection $\nabla$ on the Takano Gaussian space $(\mathcal{N}_T, g^F)$ is 
characterized by the following two properties: 
\begin{enumerate}[label=(\arabic*)]
    \item The statistical structure $(g^F, \nabla)$ is conjugate symmetric. 
    \item $\nabla$ is $\RR_{>0} \ltimes \RR^n$-invariant. 
\end{enumerate}
\end{Cor}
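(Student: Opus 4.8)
The plan is to reduce the statement to the description of $\MLStatCS(G_{\RRH^{n+1}})$ obtained in \cref{thm:GRHn-intro}, after identifying the Takano Gaussian space with this solvable Lie group. First I would recall that $(\mathcal{N}_T, g^F)$, the space of Gaussian distributions $N(\mu, \sigma^2 I)$ with $\mu \in \RR^n$ and $\sigma > 0$ equipped with its Fisher metric, is a space of constant negative curvature, and that the affine action $x \mapsto ax + b$ on the sample space induces a simply transitive, $g^F$-preserving action of $\RR_{>0}\ltimes\RR^n$ on $\mathcal{N}_T$. This identifies $(\mathcal{N}_T, g^F)$ with the Riemannian Lie group $(G_{\RRH^{n+1}}, g^F)$ carrying its unique left-invariant metric, and under this identification the $\RR_{>0}\ltimes\RR^n$-action becomes left translation. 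Consequently condition~(2) is precisely the left-invariance of $\nabla$.

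Next I would translate the two conditions into the language of the moduli space. Writing $\gee$ for the Lie algebra of $G_{\RRH^{n+1}}$, a left-invariant torsion-free connection $\nabla$ is recorded by its difference tensor from the Levi-Civita connection of $g^F$, equivalently by a cubic tensor $C \in S^3(\gee^\ast)$; the pair $(g^F, \nabla)$ is a conjugate symmetric statistical structure exactly when $C$ lies in the conjugate symmetric locus for the fixed metric $g^F$. By \cref{thm:GRHn-intro}(1) this locus is the line $\RR v_0$, with $v_0 = 4x_1^3 + 6\sum_{i=2}^{n+1} x_1 x_i^2$. Thus the connections satisfying both~(1) and~(2) form a one-parameter family whose cubic tensors fill out $\RR v_0$.

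It then remains to match this family with the Amari--Chentsov $\alpha$-connections. I would compute the Amari--Chentsov skewness tensor $T_{ijk} = \mathbb{E}[\partial_i \ell\, \partial_j \ell\, \partial_k \ell]$ of the Gaussian family in the left-invariant coframe $\{x_1, \ldots, x_{n+1}\}$, where $x_1$ corresponds to the scale direction and $x_2, \ldots, x_{n+1}$ to the mean directions. Since the $\alpha$-connection has cubic tensor $\tfrac{\alpha}{2}T$ (in the standard normalization) and $T$ is a left-invariant tensor, each $\alpha$-connection is automatically left-invariant, i.e.\ $\RR_{>0}\ltimes\RR^n$-invariant. Once I verify that $T$ is a nonzero scalar multiple of $v_0$, the map $\alpha \mapsto \tfrac{\alpha}{2}T$ is a bijection from $\RR$ onto $\RR v_0$; in particular each $\alpha$-connection is conjugate symmetric directly from $\tfrac{\alpha}{2}T \in \RR v_0$, giving the forward implication. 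Conversely, any $\nabla$ satisfying~(1) and~(2) has cubic tensor in $\RR v_0$ by the previous paragraph, hence coincides with the $\alpha$-connection for the corresponding value of $\alpha$.

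The main obstacle is the explicit computation in the third step: I must pin down the correspondence between the statistical coordinates $(\mu, \sigma)$ and the standard basis $\{e_1, \ldots, e_{n+1}\}$ of $\gee$, compute the Fisher metric and the skewness tensor in the left-invariant frame, and verify the exact proportionality $T = c\,v_0$ with $c \neq 0$. A secondary point requiring care is that \cref{thm:GRHn-intro}(1) a priori identifies $\RR v_0$ as a set of representatives modulo $\RR_{>0}\times\mathrm{Aut}$; to obtain the sharp characterization---that $\nabla$ itself, paired with the fixed metric $g^F$, is an $\alpha$-connection---I would invoke the finer fact, established inside the proof of that theorem, that the conjugate symmetric locus for the fixed metric $g^F$ is already exactly the line $\RR v_0$.
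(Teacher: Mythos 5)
Your proposal is correct, and its overall skeleton is the same as the paper's proof of \cref{Thm_Amari--Chentsov_Takano}: identify $(\mathcal{N}_T^n,g^F)$ with the Lie group $G_{\RRH^{n+1}}\cong\RR_{>0}\ltimes\RR^n$ equipped with a left-invariant metric, so that condition (2) becomes left-invariance, and then invoke the fixed-metric classification $S^3_{\mathrm{CS}}(\mathfrak{g}_{\RRH^{n+1}}^\ast,g)=\{C^\alpha\mid\alpha\in\RR\}=\RR v_0$ of \cref{thm:GRHn-cs-df-moduli}; you correctly note that it is this statement, not the moduli-space formulation of \cref{thm:GRHn-intro}, that must be used, and the paper records it in exactly that form. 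The genuine difference is in how the Amari--Chentsov family is placed inside, and matched onto, this line. The paper does it with two citations: invariance of $g^F$ and $\nabla^{A(\alpha)}$ under $\mathrm{Aff}^{d+}(n,\RR)$ comes from the generalized Chentsov theorem (\cref{Ex_Takano-simply-trans}), and conjugate symmetry comes from Takano's result that $(\mathcal{N}_T^n,g^F,\nabla^{A(\alpha)})$ has constant curvature (\cref{Prop:Takano_constant-curvature}). You instead propose to compute the skewness tensor $T$ directly in a left-invariant coframe and verify $T=c\,v_0$ with $c\neq0$; that single computation simultaneously yields the invariance of the $\alpha$-connections, their conjugate symmetry (deduced from the classification itself rather than from constant curvature), and the surjectivity of $\alpha\mapsto C^{A(\alpha)}$ onto $\RR v_0$, which is what the converse direction needs. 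Your route is more self-contained, avoiding \cite{Takano-2006} and Chentsov-type invariance, at the cost of the explicit Gaussian computation and the frame bookkeeping you already flag: a $g^F$-orthonormal frame satisfies $[\tilde e_1,\tilde e_i]=\tfrac{1}{\sqrt{2n}}\,\tilde e_i$ rather than the canonical bracket relation, so one must check that the conjugate symmetric locus still takes the form $\RR\left(4y_1^3+6\sum_{i\geq 2}y_1y_i^2\right)$ in that frame; it does, because a line through the origin is unchanged under the metric rescaling that restores the canonical bracket. Two harmless discrepancies: in the paper's normalization the cubic form of $\nabla^{A(\alpha)}$ is $\alpha T$, not $\tfrac{\alpha}{2}T$ (only linearity in $\alpha$ and $T\neq 0$ matter); and the paper's theorem additionally gives the equivalence with constant curvature, which your argument does not recover but the corollary does not require.
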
 

This paper is organized as follows:
In Section~\ref{Sec_pre}, we briefly recall the basic facts about statistical manifolds and introduce several important classes and examples of statistical structures.
We begin in Section~\ref{Sec_Lie} with a study of left-invariant statistical structures on Lie groups.
Section~\ref{subsec:remark_computations} contains several computational formulas that will be used later, in Section~\ref{Sec_Rn} and the subsequent sections.
The notion of the moduli space of left-invariant statistical structures is then introduced in Section~\ref{Sec_LISS}.
Finally, Sections~\ref{Sec_Rn}, \ref{Sec_GRHn}, and \ref{Sec_H3Rn-3} present classification results together with the corresponding moduli spaces for the Lie groups listed in \eqref{eq_special-Liegrps_Intro}.

\section{Preliminaries}\label{Sec_pre}
Throughout this section, we fix a smooth manifold $M$.

\subsection{Notations}
In this subsection, we fix the terminology used throughout this paper.

For each affine connection $\nabla$ on $M$, we use the symbols $R^{\nabla}$ and $T^{\nabla}$ for the curvature tensor field of type $(1,3)$ and the torsion tensor field of type $(1,2)$ of $\nabla$, respectively,
that is, 
\begin{align*}
    R^\nabla(X,Y)Z &:= (\nabla_X \nabla_Y - \nabla_Y \nabla_X - \nabla_{[X,Y]})Z, \\
    T^\nabla(X,Y) &:= \nabla_X Y - \nabla_Y X - [X,Y].
\end{align*}

Let us fix a Riemannian metric $g$ and an affine connection $\nabla$ on $M$. 
We write $\nabla^g$ for the Levi-Civita connection of $g$ on $M$.
The dual connection of $\nabla$ with respect to $g$ will be denoted by $\overline{\nabla}$, 
that is, 
\[
    Xg(Y, Z) = g(\nabla_X Y, Z) + g(Y, \overline{\nabla}_X Z).
\]
The following relation between $\nabla$ and $\overline{\nabla}$ is well-known.

\begin{Prop}[\cite{Matsuzoe_2010}]\label{Prop_dual-connections}
If two of the following four conditions are satisfied,
then the remaining two will follow:
\begin{enumerate}
    \item[(1)] $T^{\nabla} \equiv 0$,
    \item[(2)] $T^{\overline{\nabla}} \equiv 0$,
    \item[(3)] $(\nabla + \overline{\nabla})/2 = \nabla^g$,
    \item[(4)] The $(0,3)$-tensor field $\nabla g$ is totally symmetric.
\end{enumerate}
\end{Prop}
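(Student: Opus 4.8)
The plan is to reduce all four conditions to symmetry properties of a single $(0,3)$-tensor and then invoke an elementary fact about the symmetric group $S_3$. First I would fix the Levi-Civita connection $\nabla^g$ and write $\nabla = \nabla^g + A$, where $A$ is the $(1,2)$-difference tensor, and package $A$ into the $(0,3)$-tensor $a(X,Y,Z) := g(A(X,Y),Z)$. Using $\nabla^g g = 0$ one computes the cubic form $(\nabla_X g)(Y,Z) = -a(X,Y,Z) - a(X,Z,Y)$; plugging $\nabla = \nabla^g + A$ into the defining relation of $\overline{\nabla}$ then gives $\overline{\nabla} = \nabla^g + \overline{A}$ with $\overline{a}(X,Y,Z) := g(\overline{A}(X,Y),Z) = -a(X,Z,Y)$.

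Next I would translate the four conditions into statements about $a$. Using $T^{\nabla^g} \equiv 0$ one finds $g(T^{\nabla}(X,Y),Z) = a(X,Y,Z) - a(Y,X,Z)$ and $g(T^{\overline{\nabla}}(X,Y),Z) = \overline{a}(X,Y,Z) - \overline{a}(Y,X,Z) = -a(X,Z,Y) + a(Y,Z,X)$, while $(\nabla+\overline{\nabla})/2 - \nabla^g$ corresponds to $a(X,Y,Z) - a(X,Z,Y)$. Combined with the cubic form above, this yields the dictionary: condition (1) holds iff $a$ is symmetric in its first two arguments; (2) holds iff $a$ is symmetric in its first and third arguments; (3) holds iff $a$ is symmetric in its last two arguments; and (4) holds iff the symmetrization of $a$ in its last two slots is totally symmetric. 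In particular, (1), (2), (3) are precisely the invariances of $a$ under the three transpositions in $S_3$ acting on the tensor slots.

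The crux is then purely group-theoretic. Any two of the transpositions generate all of $S_3$, so any two of conditions (1), (2), (3) force $a$ to be totally symmetric, and a totally symmetric $a$ manifestly satisfies all of (1)--(4). For the three pairs that include condition (4), I would argue that (4) together with any single transposition-invariance again forces total symmetry: combined with (3) this is immediate, since (3) makes the last-two symmetrization equal to $a$ itself, while combined with (1) or (2) a short manipulation of the identity in (4) produces invariance of $a$ under a $3$-cycle, and a transposition together with a $3$-cycle once more generate $S_3$. Thus in every case the two given conditions imply that $a$ is totally symmetric, which is equivalent to all four holding simultaneously.

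I expect the only genuine work to be bookkeeping rather than conceptual: deriving the dictionary with the correct signs---especially the relation $\overline{a}(X,Y,Z) = -a(X,Z,Y)$ and the precise reformulation of (4)---and checking the pairs involving (4). The conceptual step, that two transpositions, or a transposition and a $3$-cycle, generate $S_3$, is elementary, so the main obstacle is simply avoiding index and sign errors in the translation.
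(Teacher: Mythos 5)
Your proposal is correct, and in fact the paper itself offers no proof to compare against: Proposition~\ref{Prop_dual-connections} is simply quoted from \cite{Matsuzoe_2010}, so your argument fills in what the paper leaves as a citation. The dictionary you set up is right, with the correct signs: writing $\nabla = \nabla^g + A$ and $a(X,Y,Z) = g(A(X,Y),Z)$, one indeed gets $\overline{a}(X,Y,Z) = -a(X,Z,Y)$, $(\nabla_X g)(Y,Z) = -a(X,Y,Z)-a(X,Z,Y)$, and conditions (1), (2), (3) become invariance of $a$ under the transpositions $(12)$, $(13)$, $(23)$ of the slots, while (4) says the last-two-slot symmetrization $b(X,Y,Z) = a(X,Y,Z)+a(X,Z,Y)$ is totally symmetric. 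The $S_3$ argument then disposes of all six pairs. One small correction to your bookkeeping: in the cases (1)\,\&\,(4) and (2)\,\&\,(4), the ``short manipulation'' does not produce a $3$-cycle invariance but another transposition. For instance, subtracting (1), i.e.\ $a(X,Y,Z)=a(Y,X,Z)$, from the identity in (4) leaves $a(X,Z,Y)=a(Y,Z,X)$, which after renaming is exactly invariance under $(13)$, i.e.\ condition (2); similarly (2)\,\&\,(4) yields (3). This only strengthens your argument, since two distinct transpositions generate $S_3$ just as well, so every pair forces $a$ to be totally symmetric and hence all four conditions to hold.
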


The following formula is also well-known (see \cite{Nomizu-Sasaki}):
\begin{equation}\label{eq_curv-dual}
    g(R^{\nabla}(X,Y)Z ,W) + g(R^{\overline{\nabla}}(X,Y)W,Z) = 0.
\end{equation}
In particular, we have
\begin{equation}\label{eq_curv-dual-2}
    R^{\nabla} \equiv 0 \iff 
    R^{\overline{\nabla}} \equiv 0.
\end{equation}

\subsection{Statistical manifolds}\label{Sec_SM}
In this subsection, we recall some basic facts about statistical manifolds, see \cite{AN, L-SM} for details. 

In this paper, we say that 
a triple $(M,g,\nabla)$ is called a \emph{statistical manifold} 
if $g$ is a Riemannian metric and $\nabla$ is an affine connection on $M$ satisfying all conditions in Proposition~\ref{Prop_dual-connections}.
For a statistical manifold $(M,g,\nabla)$, the pair $(g,\nabla)$ is called a \emph{statistical structure} on $M$.
In particular, $\nabla$ is called a \emph{statistical connection} on the Riemannian manifold $(M,g)$.

For each Riemannian metric $g$ on $M$, 
the pair $(g, \nabla^g)$ is a statistical structure on $M$ by the definition of the Levi-Civita connection, and $(g, \nabla^g)$ is called the \emph{trivial statistical structure} (cf.~\cite{Furuhata_2009}).
Thus, statistical manifolds can be considered as a generalization of Riemannian manifolds. 

For each statistical structure $(g,\nabla)$, the pair $(g, \overline{\nabla})$ is also a statistical structure on $M$.
The pair $(g, \overline{\nabla})$ is called the \emph{dual statistical structure} of the statistical structure $(g, \nabla)$, and the triple $(M, g, \overline{\nabla})$ is called the \emph{dual statistical manifold} of $(M, g, \nabla)$.

We define an equivalence relation between statistical manifolds as follows.

\begin{Def}\label{Def_SIUS}
Let $(M,g, \nabla)$ and $(M', g', \nabla')$ be statistical manifolds. 
If there exists a diffeomorphism
$f \vcentcolon M \to M'$ 
and a positive real number $r > 0$ satisfying the following two conditions (1) and (2), 
then $(M,g, \nabla)$ and $(M', g', \nabla')$ are said to be \emph{statistically isomorphic up to scaling by $(f,r)$}:
\begin{enumerate}[label=(\arabic*)]
\item $f^{\ast} \nabla' = \nabla$,
\item $f^{\ast} g' = r \cdot g$.
\end{enumerate}
The pair $(f, r)$ is called a \emph{scaling statistical isomorphism} between statistical manifolds $(M,g,\nabla)$ and $(M',g',\nabla')$.
In particular, in the case $r = 1$, the map $f$ is called a \emph{statistical isomorphism} from $(M, g, \nabla)$ to $(M', g', \nabla')$, and we say that $(M, g, \nabla)$ and $(M', g', \nabla')$ are \emph{statistically isomorphic} to each other.
\end{Def}

The following proposition will be applied in Theorem~\ref{Thm_DF-CS-preser}.

\begin{Prop}\label{Prop_stat-isom-uptos}
Let $(M,g, \nabla)$ and $(M', g', \nabla')$ be statistical manifolds that are statistically isomorphic up to scaling by $(f, r)$.
Then the following equality holds:
\[
f^\ast \overline{\nabla'} = \overline{\nabla}.
\]
\end{Prop}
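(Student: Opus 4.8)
The plan is to establish $f^\ast \overline{\nabla'} = \overline{\nabla}$ by verifying that $f^\ast \overline{\nabla'}$ satisfies the defining duality relation for $\overline{\nabla}$ with respect to the metric $g$, and then invoking uniqueness of the dual connection. Concretely, the dual connection $\overline{\nabla}$ is the \emph{unique} affine connection characterized by
\begin{equation*}
Xg(Y,Z) = g(\nabla_X Y, Z) + g(Y, \overline{\nabla}_X Z)
\end{equation*}
for all vector fields $X,Y,Z$ on $M$. So it suffices to show that the connection $f^\ast \overline{\nabla'}$ (the pullback of the dual connection on $M'$) fits into this same identity in place of $\overline{\nabla}$.

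First I would write down what the hypotheses give us. By Definition~\ref{Def_SIUS} we have $f^\ast \nabla' = \nabla$ and $f^\ast g' = r \cdot g$ with $r > 0$. On the target side, $\overline{\nabla'}$ is defined by $X' g'(Y',Z') = g'(\nabla'_{X'} Y', Z') + g'(Y', \overline{\nabla'}_{X'} Z')$ for vector fields on $M'$. The main step is to pull this identity back along $f$. The key point is that $f$ is a diffeomorphism, so it induces a bijection $f_\ast$ on vector fields; writing $X' = f_\ast X$, $Y' = f_\ast Y$, $Z' = f_\ast Z$ and using naturality of the pullback of tensors and connections (in particular $f^\ast(g'(\nabla'_{X'}Y', Z')) = (f^\ast g')((f^\ast\nabla')_X Y, Z)$ and similarly for the second term, together with $f^\ast(X' g'(Y',Z')) = X (f^\ast g'(Y,Z))$), the defining identity on $M'$ transports to
\begin{equation*}
X\bigl((f^\ast g')(Y,Z)\bigr) = (f^\ast g')\bigl((f^\ast\nabla')_X Y, Z\bigr) + (f^\ast g')\bigl(Y, (f^\ast\overline{\nabla'})_X Z\bigr).
\end{equation*}

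Now I would substitute the relations from the hypotheses: $f^\ast g' = r g$ and $f^\ast \nabla' = \nabla$. The left-hand side becomes $X(r \cdot g(Y,Z)) = r \cdot X g(Y,Z)$, and the first term on the right becomes $r \cdot g(\nabla_X Y, Z)$; the second becomes $r \cdot g(Y, (f^\ast\overline{\nabla'})_X Z)$. Since $r > 0$ is a nonzero scalar, dividing through by $r$ yields
\begin{equation*}
X g(Y,Z) = g(\nabla_X Y, Z) + g\bigl(Y, (f^\ast\overline{\nabla'})_X Z\bigr).
\end{equation*}
This is precisely the defining duality relation for the dual connection of $\nabla$ with respect to $g$, with $f^\ast\overline{\nabla'}$ playing the role of the dual connection. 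By uniqueness of $\overline{\nabla}$, we conclude $f^\ast\overline{\nabla'} = \overline{\nabla}$.

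The only genuine subtlety — and the step I would be most careful about — is the naturality of pulling back the duality identity, namely confirming that $f^\ast$ commutes appropriately with the operations $\nabla'_{X'} Y'$ and with evaluating the metric, and that the scalar $r$ factors out cleanly on both sides so that it cancels. This is essentially the functoriality of pullback of connections and tensor fields under a diffeomorphism and is routine once set up correctly; the scaling factor $r$ appears uniformly on every term of the identity precisely because $\overline{\nabla'}$ is characterized by a single metric-linear equation, so it does not interfere with matching the definition of $\overline{\nabla}$.
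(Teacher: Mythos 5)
Your proof is correct, and it is the argument the paper implicitly relies on: the paper states Proposition~\ref{Prop_stat-isom-uptos} without proof, treating it as routine. Your route --- pulling back the duality identity along the diffeomorphism $f$, using that the constant factor $r>0$ appears in every term and cancels, and then invoking uniqueness of the dual connection (which follows from nondegeneracy of $g$) --- is exactly the standard verification, and every step, including the naturality of $f^\ast$ with respect to the connection and metric operations, holds as you claim.
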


Next, we introduce the \emph{difference tensor} and the \emph{cubic form}, which provide equivalent descriptions of statistical connections on Riemannian manifolds.
Let $(g, \nabla)$ be a statistical structure on $M$. 
We define the $(1,2)$-tensor field $K$ by $K(X, Y) = \nabla_X Y - \nabla^g_X Y$.
This tensor field $K$ is called the \emph{difference tensor} of $(g,\nabla)$.
Since the affine connections $\nabla$ and $\nabla^g$ are both torsion-free, the difference tensor $K$ is symmetric, that is, $K(X,Y) = K(Y,X)$.
We define the $(0,3)$-tensor field $C$ by $C(X, Y, Z) = (\nabla_X g) (Y, Z)$. This tensor field $C$ is called the \emph{cubic form} of $(g,\nabla)$.
Note that $C(X,Y,Z) = - 2 g(K(X,Y), Z)$ holds.

We introduce the following notions.
Let $\mathcal{A}_{\mathrm{Stat}}(M,g)$ be the set of all statistical connections on the Riemannian manifold $(M, g)$, and $\mathcal{K}(M,g)$ the set of all symmetric $(1,2)$-tensor fields on $M$ satisfying the following equalities,
\[
    g(K(X,Y), Z) = g(K(Y,Z), X) =g(K(Z,X), Y).
\]
In addition, we denote by $S^3(T^\ast M)$ the set of all symmetric $(0,3)$-tensor fields on $M$.
The following fact can be proved directly.

\begin{Prop}\label{Prop_Stat-str}
The following maps are both bijective:
\begin{align*}
    \mathcal{A}_{\mathrm{Stat}}(M,g) \ni \nabla  &\longmapsto  K^{(g, \nabla)} \in \mathcal{K}(M,g),\\
    \mathcal{K}(M,g) \ni K &\longmapsto (\nabla^g + K) g \in S^3(T^\ast M).
\end{align*}
Here, $K^{(g, \nabla)}$ is the difference tensor of $(g, \nabla)$.
The composition of the above maps can be expressed as follows:
\[
    \mathcal{A}_{\mathrm{Stat}}(M,g) \ni \nabla  \longmapsto \nabla g \in S^3(T^\ast M). 
\]
\end{Prop}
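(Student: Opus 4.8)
The plan is to establish both bijections by realizing them through the ``musical'' correspondence between $(1,2)$-tensors and $(0,3)$-tensors induced by the nondegeneracy of $g$, and to track the various symmetry conditions along the way. Write $\Phi\colon \nabla \mapsto K^{(g,\nabla)} = \nabla - \nabla^g$ for the first map and $\Psi\colon K \mapsto (\nabla^g + K)g$ for the second, and abbreviate $\tilde{C}(X,Y,Z) := g(K(X,Y),Z)$.

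First I would verify that $\Phi$ is well-defined. Since $\nabla$ and $\nabla^g$ are both affine connections, their difference $K = \nabla - \nabla^g$ is a $(1,2)$-tensor field, and it is symmetric because both connections are torsion-free, so $K(X,Y) - K(Y,X) = T^{\nabla}(X,Y) - T^{\nabla^g}(X,Y) = 0$. To see $K \in \mathcal{K}(M,g)$, I would compute, using that $\nabla^g$ is metric,
\[
(\nabla_X g)(Y,Z) = -g(K(X,Y),Z) - g(K(X,Z),Y) = -\tilde{C}(X,Y,Z) - \tilde{C}(X,Z,Y),
\]
so that the cubic form $C = \nabla g$ is expressed entirely through $\tilde{C}$. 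The statistical hypothesis gives, via condition~(4) of Proposition~\ref{Prop_dual-connections}, that $C$ is totally symmetric; combined with the symmetry of $\tilde{C}$ in its first two slots (already guaranteed by symmetry of $K$), comparing $C(X,Y,Z)$ with $C(Y,X,Z)$ forces the identity $\tilde{C}(X,Z,Y) = \tilde{C}(Y,Z,X)$, i.e.\ invariance of $\tilde{C}$ under the transposition of its first and third arguments. Since the transpositions $(1\,2)$ and $(1\,3)$ generate $S_3$, the tensor $\tilde{C}$ is totally symmetric, which is exactly the condition $K \in \mathcal{K}(M,g)$; along the way this also yields the stated identity $C = -2\tilde{C}$.

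For the bijectivity of $\Psi$ I would argue that $K \mapsto \tilde{C}$ is a linear isomorphism from $\mathcal{K}(M,g)$ onto $S^3(T^\ast M)$: injectivity is immediate from the nondegeneracy of $g$, and surjectivity follows by defining, for a given totally symmetric $\tilde{C}$, the vector field $K(X,Y)$ through $g(K(X,Y),Z) = \tilde{C}(X,Y,Z)$, whose total symmetry transfers directly to the defining conditions of $\mathcal{K}(M,g)$. Since $\Psi(K) = (\nabla^g + K)g = -2\tilde{C}$ by the computation above, $\Psi$ is this isomorphism composed with multiplication by $-2$, hence bijective.

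Finally, for $\Phi$ I would observe that injectivity is trivial (as $\nabla = \nabla^g + \Phi(\nabla)$), and for surjectivity, given $K \in \mathcal{K}(M,g)$ I set $\nabla := \nabla^g + K$; then $\nabla$ is torsion-free by the symmetry of $K$, and $\nabla g = -2\tilde{C}$ is totally symmetric, so by Proposition~\ref{Prop_dual-connections} (conditions (1) and (4)) the pair $(g,\nabla)$ is a statistical structure, i.e.\ $\nabla \in \mathcal{A}_{\mathrm{Stat}}(M,g)$, with $\Phi(\nabla) = K$. The composition identity $\Psi \circ \Phi(\nabla) = (\nabla^g + K^{(g,\nabla)})g = \nabla g$ is then immediate. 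The only genuinely delicate point is the symmetry bookkeeping in the second paragraph, namely showing that total symmetry of $\nabla g$ is equivalent to $K \in \mathcal{K}(M,g)$; everything else is a formal consequence of the nondegeneracy of $g$ and the tensoriality of differences of connections.
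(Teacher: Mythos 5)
Your proof is correct and complete, and the one genuinely delicate point you isolate—the symmetry bookkeeping—is handled properly: symmetry of $K$ gives invariance of $\tilde{C}$ under the transposition $(1\,2)$, total symmetry of $\nabla g$ then forces invariance under $(1\,3)$, and these generate $S_3$, which is exactly the defining condition of $\mathcal{K}(M,g)$. The paper itself offers no proof of this proposition (it is stated as ``folklore known''), so there is no argument to compare against; what you have written is the standard justification the authors implicitly invoke, including the correct appeal to conditions (1) and (4) of Proposition~\ref{Prop_dual-connections} to certify that $\nabla^g+K$ is a statistical connection, and the identity $(\nabla^g+K)g=-2\tilde{C}$ that makes both maps bijective at once.
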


By Proposition~\ref{Prop_Stat-str}, a pair consisting of the Riemannian metric $g$ and a cubic form $C \in S^3(T^\ast M)$, or a pair consisting of $g$ and a difference tensor $K \in \mathcal{K}(M,g)$, can also be called a statistical structure on $M$.
Thus, we will also call $(g, C)$ and $(g, K)$ statistical structures on $M$ as well.
We see that the statistical structure $(g, 0)$ on $M$ corresponds to the statistical structure $(g, \nabla^g)$. 
Here $0$ is the origin of $\mathcal{K}(M, g)$ or $S^3(T^\ast M)$ as the linear spaces.

We note that if two statistical manifolds $(M, g, \nabla)$ and $(M', g', \nabla')$
are statistically isomorphic up to scaling by $(f, r)$,
then the following equations hold:
\[
    f^\ast K' = K, \quad f^\ast C' = r \cdot C.
\]
Here, $K$, $C$, $K'$, and $C'$ are the difference tensors and cubic forms on $M$ and $M'$, respectively. 

Throughout this paper, 
for each $K \in \mathcal{K}(M,g)$ and each vector field $X$ on $M$, we define the $(1,1)$-tensor field $K_X$ on $M$ by putting $K_X(Y) := K(X,Y)$ for each vector field $Y$.
For a pair $X,Y$ of vector fields on $M$, 
the Lie bracket $[K_X,K_Y]$ of $K_X$ and $K_Y$ is defined as $[K_X,K_Y](Z) = K_X(K_Y(Z)) - K_Y(K_X(Z))$.
The following proposition, proved by Opozda \cite[Corollary~3.8]{O-2016}, will be applied in Section~\ref{subsec_LISS-Rn}:

\begin{Prop}\label{proposition:Opozda}
Fix a statistical structure $(g,\nabla)$ and a point $p \in M$.
We denote by $K$ the difference tensor of $(g,\nabla)$.
Then the following two conditions on $(g,\nabla,p)$ are equivalent:
\begin{enumerate}[label=(\roman*)]
    \item $[K_X,K_Y]_p = 0$ for any pair $(X,Y)$ of vector fields on $M$.
    \item There exists an orthonormal basis $e_1,\dots,e_n$ of $(T_pM,g_p)$ and $c_1, \dots, c_n \in \RR$ such that 
    \[
    K_p(e_i,e_i) = c_i e_i, \quad K_p(e_i,e_j) = 0
    \]
    for distinct $i,j = 1,\dots,n$.
\end{enumerate}
\end{Prop}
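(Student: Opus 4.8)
The plan is to work entirely at the fixed point $p$, reducing the statement to a fact in linear algebra on the Euclidean space $(T_pM, g_p)$. Since $K$ is a tensor field, its value $K_p(v,w)$ depends only on $v,w \in T_pM$, and the defining symmetries of $\mathcal{K}(M,g)$ translate at $p$ into two complementary properties of the operators $(K_v)_p \colon w \mapsto K_p(v,w)$. First, the identity $g(K(X,Y),Z) = g(K(X,Z),Y)$ — which follows from $g(K(X,Y),Z) = g(K(Z,X),Y)$ together with the symmetry $K(Z,X) = K(X,Z)$ — shows that each $(K_v)_p$ is self-adjoint with respect to $g_p$. Second, the symmetry $K(X,Y) = K(Y,X)$ gives $(K_v)_p(w) = (K_w)_p(v)$. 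I would use self-adjointness to diagonalize and the plain symmetry of $K$ to eliminate off-diagonal terms.

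For the implication $(2) \Rightarrow (1)$ I would simply compute. Given the orthonormal basis $e_1, \dots, e_n$ and scalars $c_i$ as in $(2)$, for an arbitrary $v = \sum_i a_i e_i$ one finds $(K_v)_p(e_k) = \sum_i a_i K_p(e_i, e_k) = a_k c_k e_k$, so every operator $(K_v)_p$ is diagonal in the single fixed basis $\{e_j\}$. Operators that are diagonal in a common basis commute, whence $[(K_X)_p, (K_Y)_p] = 0$ for all $X,Y$, which is $(1)$.

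The substantive direction is $(1) \Rightarrow (2)$. Condition $(1)$ says that $\{(K_v)_p : v \in T_pM\}$ is a family of pairwise commuting self-adjoint operators on the finite-dimensional inner product space $(T_pM, g_p)$. By the spectral theorem for a commuting family of symmetric operators, there exists an orthonormal basis $e_1, \dots, e_n$ that simultaneously diagonalizes the whole family; in particular each $(K_{e_i})_p$ is diagonal, so $K_p(e_i, e_j) = \lambda_{ij} e_j$ for suitable scalars $\lambda_{ij}$. Now I would invoke the symmetry $K_p(e_i, e_j) = K_p(e_j, e_i)$, which yields $\lambda_{ij} e_j = \lambda_{ji} e_i$; for $i \neq j$ the vectors $e_i, e_j$ are linearly independent, forcing $\lambda_{ij} = 0$. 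Setting $c_i := \lambda_{ii}$ then gives $K_p(e_i,e_i) = c_i e_i$ and $K_p(e_i,e_j) = 0$ for distinct $i,j$, which is precisely $(2)$.

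The only real obstacle is the correct application of the spectral theorem for a commuting family of self-adjoint operators, together with the recognition that the two symmetries of $K$ play genuinely distinct roles: self-adjointness (from total symmetry of the cubic form) delivers simultaneous orthogonal diagonalizability, while the tensor symmetry $K(X,Y) = K(Y,X)$ is what forces the off-diagonal entries to vanish once a common eigenbasis is in hand. Neither property alone suffices, and keeping them separate is the key to a clean argument.
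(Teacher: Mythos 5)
Your proof is correct. A remark on the comparison: the paper does not prove this proposition at all --- it is imported as a known result, with a citation to Opozda (Corollary~3.8 of \emph{A sectional curvature for statistical structures}), and is then only \emph{applied} in the $\RR^n$ section. So your blind attempt supplies a self-contained argument where the paper relies on a reference, and the argument you give is the natural one (and, in substance, the one behind Opozda's statement): tensoriality reduces everything to the inner product space $(T_pM,g_p)$; total symmetry of $g(K(\cdot,\cdot),\cdot)$ makes each operator $(K_v)_p$ self-adjoint, so condition (1) exhibits $\{(K_v)_p \mid v\in T_pM\}$ as a commuting family of symmetric operators, which the finite-dimensional spectral theorem simultaneously orthogonally diagonalizes; and the separate symmetry $K(v,w)=K(w,v)$ then forces the off-diagonal values to vanish, since $\lambda_{ij}e_j=\lambda_{ji}e_i$ with $e_i,e_j$ independent gives $\lambda_{ij}=0$. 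Your converse direction, common diagonality implies commutation, is also correct. Two small points you use implicitly and could state: every $v\in T_pM$ extends to a vector field, so condition (1), phrased for vector fields, really does give commutation of $(K_v)_p$ for \emph{all} pairs of tangent vectors; and the commuting family is the image of the linear map $v\mapsto (K_v)_p$, hence is spanned by the finitely many operators $(K_{e_i})_p$, so the standard finite-family version of simultaneous diagonalization suffices. Neither is a gap; the proof stands as written.
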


Let $(M_i, g_i, \nabla_i)$ be statistical manifolds for $i = 1, 2$.  
The direct product manifold $M_1 \times M_2$ carries a natural statistical structure given by the direct product metric $g_1 \oplus g_2$ and the direct product connection $\nabla_1 \oplus \nabla_2$.  
The cubic form $C$ of this statistical structure is given by  
\[
C(X,Y,Z) = C_1(X_1, Y_1, Z_1) + C_2(X_2, Y_2, Z_2),
\]
for all vector fields $X, Y, Z$ on $M_1 \times M_2$,  
where $C_i$ denotes the cubic form of $(M_i, g_i, \nabla_i)$, and $X_i$ denotes the $M_i$-component of the vector field $X$ $(i = 1, 2)$.

\subsection{Some classes of statistical structures}\label{subsec:some-class-of-SS}
In this subsection, we introduce four classes of statistical structures: conjugate symmetric, constant curvature, dually flat, and constant Hessian curvature (abbreviated as CHC).
Note that these classes are nested in the order listed above; that is, every CHC structure is dually flat, every dually flat structure has constant curvature, and every constant curvature statistical structure is conjugate symmetric.

In \cite{L-SM}, Lauritzen defined a statistical manifold $(M,g, \nabla)$ to be \emph{conjugate symmetric} if $R^{\nabla} \equiv R^{\overline{\nabla}}$.
We also call a statistical structure $(g, \nabla)$ \emph{conjugate symmetric} if the statistical manifold $(M,g, \nabla)$ is conjugate symmetric.
Note that for each Riemannian metric $g$ on a manifold $M$, 
the trivial statistical structure $(g,\nabla^g)$ is clearly conjugate symmetric. 
Moreover, the property of being conjugate symmetric is closed under taking direct products in the sense of Section~\ref{Sec_SM}.

The conjugate symmetry of a statistical structure can be characterized as follows:

\begin{Prop}[Proposition 2.4 in~\cite{KO-CC}]\label{Prop_K-CS}
Let $(g, \nabla)$ be a statistical structure on $M$,
and let $K$ and $C$ be the difference tensor and the cubic form of $(g, \nabla)$, respectively.
Then, the following four conditions are equivalent:
\begin{enumerate}[label=(\roman*)]
\item The statistical structure $(g, \nabla)$ is conjugate symmetric.
\item The $(0,4)$-tensor field $\nabla C$ is totally symmetric.
\item The $(0,4)$-tensor field $\nabla^g C$ is totally symmetric.
\item The $(1,3)$-tensor field $\nabla^g K$ is totally symmetric, that is, 
\[
(\nabla^g_{X_1} K)(X_2,X_3) = (\nabla^g_{X_{\sigma(1)}} K)(X_{\sigma(2)}, X_{\sigma(3)})
\] for vector fields $X_1,X_2,X_3$ and $\sigma \in \mathfrak{S}_3$.
\end{enumerate}
\end{Prop}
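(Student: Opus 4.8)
The plan is to establish the four conditions through the chain (i) $\Leftrightarrow$ (iv) $\Leftrightarrow$ (iii) $\Leftrightarrow$ (ii), reducing every statement to an algebraic symmetry of the difference tensor $K$ and its Levi--Civita covariant derivative $\nabla^g K$. The single computational input I would prepare first is the curvature formula for a connection of the form $\nabla^g + A$, where $A$ is a $(1,2)$-tensor field: using that $\nabla^g$ is torsion-free one obtains $R^{\nabla^g+A}(X,Y)Z = R^{\nabla^g}(X,Y)Z + (\nabla^g_X A)(Y,Z) - (\nabla^g_Y A)(X,Z) + A(X,A(Y,Z)) - A(Y,A(X,Z))$. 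Since $\nabla = \nabla^g + K$ and, by condition (3) of \cref{Prop_dual-connections}, $\overline{\nabla} = \nabla^g - K$, I specialize this to $A = K$ and $A = -K$. The crucial structural point is that the quadratic terms $A(X,A(Y,Z)) - A(Y,A(X,Z))$ are \emph{even} in $A$ and hence cancel in the difference, while the linear terms double, yielding
\[
R^{\nabla}(X,Y)Z - R^{\overline{\nabla}}(X,Y)Z = 2\big((\nabla^g_X K)(Y,Z) - (\nabla^g_Y K)(X,Z)\big).
\]
Thus conjugate symmetry $R^{\nabla} \equiv R^{\overline{\nabla}}$ is equivalent to $(\nabla^g_X K)(Y,Z) = (\nabla^g_Y K)(X,Z)$. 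As $K$ is symmetric, $\nabla^g K$ is automatically symmetric in its last two arguments, and this transposition together with $X \leftrightarrow Y$ generates $\mathfrak{S}_3$; hence conjugate symmetry is equivalent to total symmetry of $\nabla^g K$, which is (iv).

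For (iv) $\Leftrightarrow$ (iii) I would simply lower the contravariant index using that $\nabla^g$ is metric. From $C = -2\,g(K(\cdot,\cdot),\cdot)$ and $\nabla^g g = 0$ we get
\[
(\nabla^g_W C)(X,Y,Z) = -2\,g\big((\nabla^g_W K)(X,Y),Z\big).
\]
Because $(g,\nabla)$ is a statistical structure, $C$ is totally symmetric, so this $(0,4)$-tensor is automatically symmetric in its last three slots; its total symmetry therefore reduces to the single transposition $W \leftrightarrow X$, which is exactly the content of (iv). This step is a routine index manipulation.

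The equivalence (iii) $\Leftrightarrow$ (ii) rests on showing that $\nabla C - \nabla^g C$ is totally symmetric \emph{a priori}, independently of conjugate symmetry. Writing $\nabla_W = \nabla^g_W + K_W$ and expanding the difference gives the purely algebraic tensor $(\nabla^g_W C - \nabla_W C)(X,Y,Z) = C(K(W,X),Y,Z) + C(X,K(W,Y),Z) + C(X,Y,K(W,Z))$. Here I would use the total symmetry of $g(K(\cdot,\cdot),\cdot)$ (the defining property of $\mathcal{K}(M,g)$) to rewrite each summand as $-2\,g(K(\cdot,\cdot),K(\cdot,\cdot))$ for one of the three pairings of $\{W,X,Y,Z\}$ into two pairs, so that the total equals $-2\big(g(K(W,X),K(Y,Z)) + g(K(W,Y),K(X,Z)) + g(K(W,Z),K(X,Y))\big)$, which is manifestly invariant under all permutations of $W,X,Y,Z$. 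Hence $\nabla C$ is totally symmetric if and only if $\nabla^g C$ is.

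The main obstacle is the first step: deriving the curvature-difference identity with the correct signs and, above all, recognizing the cancellation of the quadratic $K$-terms, which is precisely what makes conjugate symmetry --- an a priori second-order curvature condition --- collapse to the first-order symmetry of $\nabla^g K$. The remaining equivalences are essentially bookkeeping with symmetric tensors; the only subtlety there is the careful use of the total symmetry of $C$ to recast the three correction terms in the manifestly symmetric pairing form.
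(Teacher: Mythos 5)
Your proof is correct, but there is no in-paper argument to compare it against: the paper does not prove Proposition~\ref{Prop_K-CS} at all; it imports the statement verbatim from \cite[Proposition~2.4]{KO-CC}. Judged on its own, your argument is complete. The curvature expansion for $\nabla^g+A$ is right (the first-order terms $A(X,\nabla^g_YZ)$, $A(\nabla^g_XY,Z)$ cancel against their counterparts and against $A([X,Y],Z)$ by torsion-freeness of $\nabla^g$), and since $\overline{\nabla}=\nabla^g-K$ by condition (3) of Proposition~\ref{Prop_dual-connections}, your parity observation (quadratic terms even in $A$, linear terms odd) correctly yields
\begin{equation*}
R^{\nabla}(X,Y)Z-R^{\overline{\nabla}}(X,Y)Z
=2\bigl((\nabla^g_XK)(Y,Z)-(\nabla^g_YK)(X,Z)\bigr),
\end{equation*}
so conjugate symmetry is exactly symmetry of $\nabla^gK$ in its first two slots; combined with the automatic symmetry in the last two slots (as $K$ is symmetric), the transpositions $(1\,2)$ and $(2\,3)$ generate $\mathfrak{S}_3$, giving (i)$\Leftrightarrow$(iv). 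The index-lowering identity $(\nabla^g_WC)(X,Y,Z)=-2\,g\bigl((\nabla^g_WK)(X,Y),Z\bigr)$ uses $\nabla^gg=0$ correctly and reduces (iii) to (iv) by the same generation argument in $\mathfrak{S}_4$. Finally, your computation that $\nabla^gC-\nabla C$ equals $-2$ times the sum of $g(K(\cdot,\cdot),K(\cdot,\cdot))$ over the three pair-partitions of $\{W,X,Y,Z\}$ — using the total symmetry of $g(K(\cdot,\cdot),\cdot)$, which is part of the definition of a statistical structure — shows this difference is totally symmetric a priori, settling (ii)$\Leftrightarrow$(iii) independently of any curvature hypothesis. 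This is a clean, self-contained route; the only thing to flag is that anyone checking your work must go to \cite{KO-CC} (or reproduce your computation, as above) since the present paper offers nothing to verify it against.
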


Note that, by Proposition~\ref{Prop_K-CS}, the following set is a linear subspace of $S^3(T^\ast M)$:
\begin{equation}\label{eq_cubic-linsub}
    \left\{ C \in S^3(T^\ast M) ~ \middle| ~ (g, C)  \text{ is conjugate symmetric} \right\}.
\end{equation}

Throughout this paper, for a conjugate symmetric statistical manifold $(M,g,\nabla)$, 
the $(0,4)$-tensor field $R^{\nabla}_g$ is defined by putting 
\[
R^{\nabla}_g(X,Y,Z,W) := g(R^{\nabla}(X,Y)Z,W) 
\]
for vector fields $X,Y,Z,W$ on $M$.
Then the following holds:

\begin{Prop}[cf.~\cite{Opozda_2015}]\label{Prop_curvature-CS}
For a conjugate symmetric statistical manifold $(M,g,\nabla)$, the equality below holds:
\[
    R^\nabla(X,Y) = R^{\nabla^g} (X, Y) + [K_X, K_Y]
\]
for all vector fields $X$, $Y$ on $M$.
In particular, 
the $(0,4)$-tensor field $R^\nabla_g$ satisfies 
\[
R^\nabla_g(X,Y,Z,W) = -R^\nabla_g(Y,X,Z,W) = -R^\nabla_g(X,Y,W,Z) = R^\nabla_g(Z,W,X,Y) 
\]
and 
\[
R^\nabla_g(X,Y,Z,W) + R^\nabla_g(Y,Z,X,W) + R^\nabla_g(Z,X,Y,W) = 0
\]
for vector fields $X,Y,Z,W$ on $M$.
\end{Prop}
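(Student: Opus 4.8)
The plan is to establish the displayed curvature identity first, and then to read off the four symmetry relations of $R^\nabla_g$ from it together with the general properties of the connections involved. Writing $\nabla = \nabla^g + K$, where $K$ is the difference tensor, I would substitute into the definition $R^\nabla(X,Y)Z = \nabla_X\nabla_Y Z - \nabla_Y \nabla_X Z - \nabla_{[X,Y]}Z$ and expand everything in terms of $\nabla^g$ and $K$. Applying the product rule $\nabla^g_X(K(Y,Z)) = (\nabla^g_X K)(Y,Z) + K(\nabla^g_X Y,Z) + K(Y,\nabla^g_X Z)$, the expansion organizes into four groups: the purely metric terms assemble into $R^{\nabla^g}(X,Y)Z$; the quadratic terms $K(X,K(Y,Z)) - K(Y,K(X,Z))$ assemble into $[K_X,K_Y]Z$; the terms $K(\nabla^g_X Y,Z) - K(\nabla^g_Y X,Z) - K([X,Y],Z)$ vanish because $\nabla^g$ is torsion-free; and the mixed first-order terms $K(Y,\nabla^g_X Z)$ and $K(X,\nabla^g_Y Z)$ cancel between the two iterated derivatives. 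What survives is
\begin{equation*}
R^\nabla(X,Y)Z = R^{\nabla^g}(X,Y)Z + (\nabla^g_X K)(Y,Z) - (\nabla^g_Y K)(X,Z) + [K_X,K_Y]Z.
\end{equation*}

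At this point the hypothesis of conjugate symmetry enters, and this is the only place where it is genuinely needed. By \cref{Prop_K-CS}, conjugate symmetry of $(g,\nabla)$ is equivalent to the total symmetry of the $(1,3)$-tensor $\nabla^g K$; in particular $(\nabla^g_X K)(Y,Z) = (\nabla^g_Y K)(X,Z)$, so the two first-order terms cancel and the identity $R^\nabla(X,Y) = R^{\nabla^g}(X,Y) + [K_X,K_Y]$ follows.

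For the symmetries of $R^\nabla_g$ I would argue as follows. Antisymmetry in the first pair, $R^\nabla_g(X,Y,Z,W) = -R^\nabla_g(Y,X,Z,W)$, is immediate from the definition of the curvature operator and holds for any connection. For antisymmetry in the second pair I would combine \eqref{eq_curv-dual}, which gives $g(R^\nabla(X,Y)Z,W) = -g(R^{\overline{\nabla}}(X,Y)W,Z)$, with the defining condition $R^\nabla \equiv R^{\overline{\nabla}}$ of conjugate symmetry; this yields $R^\nabla_g(X,Y,Z,W) = -R^\nabla_g(X,Y,W,Z)$. Alternatively one may read it off the formula, since $R^{\nabla^g}_g$ has the Riemannian symmetry and $[K_X,K_Y]$ is $g$-skew-adjoint because each $K_X$ is $g$-self-adjoint (as $K \in \mathcal{K}(M,g)$). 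The first Bianchi identity $R^\nabla_g(X,Y,Z,W) + R^\nabla_g(Y,Z,X,W) + R^\nabla_g(Z,X,Y,W) = 0$ reduces to the cyclic relation $R^\nabla(X,Y)Z + R^\nabla(Y,Z)X + R^\nabla(Z,X)Y = 0$, which holds because $\nabla$ is torsion-free. Finally, the pair symmetry $R^\nabla_g(X,Y,Z,W) = R^\nabla_g(Z,W,X,Y)$ follows by the classical purely algebraic argument from the two antisymmetries and the first Bianchi identity.

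The expansion in the first step is routine but error-prone, so the main care is in tracking the first-order terms and verifying their cancellation against torsion-freeness of $\nabla^g$. The conceptual crux, however, is recognizing that conjugate symmetry is precisely the condition that removes the linear-in-$K$ derivative terms, after which both the curvature formula and all the tensor symmetries of $R^\nabla_g$ fall out without further work.
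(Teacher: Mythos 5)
Your proof is correct and complete. Note that the paper itself gives no proof of this proposition: it is stated with ``cf.~\cite{Opozda_2015}'' and the burden is deferred to that reference, so your argument fills in what the paper leaves implicit, and it does so by the standard route. The expansion of $R^\nabla$ under $\nabla = \nabla^g + K$ is carried out correctly: the mixed terms $K(Y,\nabla^g_X Z)$ and $K(X,\nabla^g_Y Z)$ cancel between the two iterated derivatives, the terms $K(\nabla^g_X Y - \nabla^g_Y X - [X,Y], Z)$ vanish by torsion-freeness of $\nabla^g$, and what remains is
\begin{equation*}
R^\nabla(X,Y)Z = R^{\nabla^g}(X,Y)Z + (\nabla^g_X K)(Y,Z) - (\nabla^g_Y K)(X,Z) + [K_X,K_Y]Z .
\end{equation*}
Your identification of conjugate symmetry, via condition (iv) of Proposition~\ref{Prop_K-CS}, as exactly the hypothesis that kills the two first-order terms is the conceptual heart of the matter and is stated precisely. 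The symmetries are also handled correctly: antisymmetry in the first pair is definitional; antisymmetry in the second pair follows either from Equation~\eqref{eq_curv-dual} together with $R^\nabla \equiv R^{\overline{\nabla}}$, or (equivalently) from the skew-adjointness of $[K_X,K_Y]$, which you justify correctly by the $g$-self-adjointness of each $K_X$ coming from $K \in \mathcal{K}(M,g)$; the first Bianchi identity is valid because $\nabla$ is torsion-free by the definition of a statistical structure; and the pair symmetry then follows from the classical algebraic lemma for $(0,4)$-tensors with both antisymmetries and the cyclic identity. The only cosmetic remark is that your phrase ``this is the only place where conjugate symmetry is genuinely needed'' should be read as referring to the curvature formula alone, since the antisymmetry in the second pair also uses conjugate symmetry (either directly through $R^\nabla \equiv R^{\overline{\nabla}}$ or indirectly through the formula you derived); this does not affect the validity of the proof.
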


For conjugate symmetric statistical manifolds, ``sectional curvatures of the statistical structure'' can be defined in the natural sense as below: 
Let $(M,g,\nabla)$ be a conjugate symmetric statistical manifold and fix a point $p$. 
For each linearly independent pair $\{ v,w \}$ of vectors in $T_pM$, 
we put 
\[
\mathrm{Sect}^\nabla_g(v,w) := \frac{( R^\nabla_g)_p(v,w,w,v)}{g_p(v,v) g_p(w,w)- g_p(v,w)^2}.
\]
Then $\mathrm{Sect}^\nabla_g(v,w)$ is known to depend only on $\mathrm{Span}\{ v,w \}$.
Thus $\mathrm{Sect}^\nabla_g(v,w)$ is called the sectional curvature of the two-dimensional tangential plane $\mathrm{Span}\{ v,w \} \subset T_pM$ on $(M,g,\nabla)$ at $p$ (see \cite{L-SM} for details).

In this paper, 
a statistical manifold $(M,g,\nabla)$ is said to have \emph{constant curvature} if 
there exists a constant $k \in \mathbb{R}$ such that
\[
R^\nabla(X,Y)Z = k(g(Y,Z)X-g(X,Z)Y) 
\]
(see \cite{Kurose-1990}) or equivalently, 
$(M,g,\nabla)$ is conjugate symmetric and 
$\mathrm{Sect}^\nabla_g(v,w) = k$ for any $p \in M$ and any linearly independent pair $(v,w)$ of vectors in $T_pM$.

\begin{Rem}
Opozda~\cite{Opozda_2015} gave a definition of sectional curvatures for general statistical manifolds.
\end{Rem}

A statistical manifold $(M, g, \nabla)$ is called a \emph{dually flat statistical manifold}, or simply \emph{dually flat}, when $R^\nabla = 0$ (or equivalently, $R^{\overline{\nabla}} = 0$ by Equation~\eqref{eq_curv-dual-2}) holds.
For a dually flat statistical manifold $(M, g, \nabla)$, we call $(g, \nabla)$ a \emph{dually flat structure} on $M$.
Note that the dually flat property is preserved under taking direct products, in the sense of Section~\ref{Sec_SM} (see~\cite{Todjihounde_2006}).
As is well-known, dually flat statistical manifolds and dually flat structures are equivalent to the notion of Hessian manifolds and Hessian structures, respectively, see \cite{Shima} for details.
Dually flat structures play an important role in information geometry (see, for example, \cite{AN}).

Let $(M, g, \nabla)$ be a Hessian manifold (i.e.~a dually flat statistical manifold).
A Hessian structure $(g, \nabla)$ is said to be \emph{of CHC} \emph{(of constant Hessian sectional curvature)} if there exists a constant $c \in \RR$ such that the equation
\[
    (\nabla_X K) (Y, Z) = -\frac{c}{2} \{g(X, Y )Z + g(X, Z)Y \}
\]
holds for any vector fields $X, Y, Z$ on $M$, and then $(g, \nabla)$ is also called of CHC $c$ (cf.~\cite{FK_2013, Shima_1995}).
In \cite{FK_2013}, Furuhata and Kurose gave a classification of connected and simply-connected Hessian manifolds of non-positive CHC.

It should be emphasized that  
by the arguments in Section~\ref{Sec_SM}, the following holds.

\begin{Thm}\label{Thm_DF-CS-preser}
The classes of conjugate symmetric, constant curvature, dually flat, and CHC statistical manifolds are preserved under scaling statistical isomorphisms.  
\end{Thm}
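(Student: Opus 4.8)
The plan is to reduce everything to the naturality of curvature and of covariant differentiation under a connection-preserving diffeomorphism, combined with the fact that the two ``model'' tensors appearing in the definitions of constant curvature and of CHC depend linearly on the metric.

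First I would fix a scaling statistical isomorphism $(f,r)$ between $(M,g,\nabla)$ and $(M',g',\nabla')$, so that $f^*\nabla' = \nabla$ and $f^*g' = r\,g$; by Proposition~\ref{Prop_stat-isom-uptos} we also have $f^*\overline{\nabla'} = \overline{\nabla}$, and by the discussion following Proposition~\ref{Prop_Stat-str} the difference tensors satisfy $f^*K' = K$. The one standard fact I would invoke is that a diffeomorphism intertwining two affine connections intertwines their curvature tensors and commutes with covariant differentiation: $f^*R^{\nabla'} = R^\nabla$, $f^*R^{\overline{\nabla'}} = R^{\overline{\nabla}}$, and $f^*(\nabla' S') = \nabla(f^*S')$ for every tensor field $S'$ on $M'$. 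Since $f^*$ is injective on tensor fields, each of the four defining conditions can then be transported along $f$.

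I would then run through the four classes. For conjugate symmetry, pulling back $R^{\nabla'} = R^{\overline{\nabla'}}$ yields $R^\nabla = R^{\overline{\nabla}}$; for dual flatness, $R^{\nabla'} = 0$ pulls back to $R^\nabla = 0$. These two cases are insensitive to the scaling factor, since they involve the connections only. For constant curvature, I would introduce the model tensor $G(X,Y)Z := g(Y,Z)X - g(X,Z)Y$, which is linear in $g$, so that $f^*G' = r\,G$; pulling back $R^{\nabla'} = k'\,G'$ then gives $R^\nabla = (rk')\,G$, i.e.\ constant curvature with $k = rk'$. For CHC, I would introduce the model tensor $H(X,Y,Z) := g(X,Y)Z + g(X,Z)Y$, again linear in $g$, so that $f^*H' = r\,H$; pulling back $\nabla'K' = -\tfrac{c'}{2}H'$ gives $\nabla K = -\tfrac{rc'}{2}H$, i.e.\ CHC with $c = rc'$. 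Finally, since $(f^{-1}, 1/r)$ is again a scaling statistical isomorphism, the reverse implications hold, which establishes preservation in all four cases.

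The computations here are routine; the only points requiring care are (i) the bookkeeping of the scaling factor $r$, where it is worth emphasizing that the purely connection-theoretic properties (conjugate symmetry and dual flatness) are unchanged while the two curvature constants rescale as $k \mapsto rk'$ and $c \mapsto rc'$, and (ii) the justification that pullback commutes with covariant differentiation, $f^*(\nabla' S') = \nabla(f^*S')$, once $f^*\nabla' = \nabla$. The latter is the genuine engine of the argument, but it is a standard \emph{naturality} property of affine connections, so I do not anticipate any real obstacle.
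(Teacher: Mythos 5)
Your proof is correct and follows essentially the same route as the paper, which simply remarks that the theorem follows from the pullback identities of Section~\ref{Sec_SM} ($f^{\ast}\overline{\nabla'}=\overline{\nabla}$, $f^{\ast}K'=K$, $f^{\ast}C'=r\cdot C$) without writing out the details. Your proposal supplies exactly those details — naturality of curvature and of covariant differentiation under a connection-preserving diffeomorphism, plus the correct bookkeeping that the constants rescale as $k=rk'$ and $c=rc'$ while conjugate symmetry and dual flatness are untouched.
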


\subsection{Some examples of statistical manifolds}\label{Subsec_Ex-SM}
In this subsection, we introduce some examples of statistical manifolds.

\begin{Ex}[$n$-variable normal distribution family]\label{Ex_multi-Normal}
Let $n \in \ZZ$ with $n \geq 1$, and $\mathrm{Sym}^+(n, \RR)$ be the space of all positive definite symmetric matrices of order $n$.
The \emph{$n$-variate normal distribution family} is a statistical model $(\mathcal{N}^n, \RR^n, \mathbf{p}_N)$ with $ \mathcal{N}^n = \mathrm{Sym}^{+}(n, \RR) \times \RR^n$ and
\[
    \mathbf{p}_N: \mathcal{N}^n \to \mathcal{P}(\RR^n), ~ (\Sigma, \mu) \mapsto N(x \mid \Sigma, \mu)dx := \frac{1}{\sqrt{(2\pi )^n \det(\Sigma)}} \exp \bigg{(} -\frac{1}{2} (x - \mu)^{\mathsf{T}} \Sigma^{-1} (x - \mu) \bigg{)}dx
\]
(see \cite{Ay_2017} for the definition of a statistical model).
Here, $dx$ denotes the Lebesgue measure on $\RR^n$, and $\mathcal{P}(\RR^n)$ denotes the space of all probability measures on $\RR^n$.
The \emph{Fisher metric on $(\mathcal{N}^n, \RR^n, \mathbf{p}_N)$} is the Riemannian metric on $\mathcal{N}^n$ defined by
\begin{equation}\label{eq_Fisher-metric}
    g^F_{\hat{\theta}}(X, Y) = \int_{x \in \RR^n } (X \log N(x \mid \theta)) (Y \log N(x \mid \theta)) N(x \mid \hat{\theta}) dx \quad (\hat{\theta} \in \mathcal{N}^n).
\end{equation}
For $\alpha \in \RR$, the \emph{Amari--Chentsov $\alpha$-tensor field on $(\mathcal{N}^n, \RR^n, \mathbf{p}_N)$} is the symmetric $(0,3)$-tensor field on $\mathcal{N}^n$ defined by
\begin{equation}\label{eq_Amari--Chentsov}
    C^{A(\alpha)}_{\hat{\theta}}(X, Y, Z) = \alpha \int_{x \in \RR^n } (X \log N(x \mid \theta)) (Y \log N(x \mid \theta)) (Z \log N(x \mid \theta)) N(x \mid \hat{\theta}) dx \quad (\hat{\theta} \in \mathcal{N}^n).
\end{equation}
Moreover, a statistical connection $\nabla^{A(\alpha)}$ on $(\mathcal{N}^n, g^F)$ corresponding to $(g^F, C^{A(\alpha)})$ in the sense of Proposition~\ref{Prop_Stat-str} is called the \emph{Amari--Chentsov $\alpha$-connection on $(\mathcal{N}^n, \RR^n, \mathbf{p}_N)$}.
\end{Ex}

The following fact for the statistical manifold \((\mathcal{N}^n, g^F, C^{A(\alpha)})\) is well-known.

\begin{Prop}[\cite{Amari_1985,L-SM}]\label{Fact_N-CS-DF}
For any \(\alpha \in \mathbb{R}\), the statistical manifold \((\mathcal{N}^n, g^F, \nabla^{A(\alpha)})\) is conjugate symmetric.  
Moreover, it is dually flat if and only if \(\alpha = \pm 1\).
\end{Prop}

\begin{Ex}[Takano Gaussian space]\label{Ex_Takano}
Let $n \in \ZZ$ with $n \geq 1$, $\mathcal{N}_T^n := \RR_{>0} \times \RR^n$ and 
\[
    \mathbf{p}_T = \mathbf{p}_N \circ \iota : \mathcal{N}^n_T \to \mathcal{P}(\RR^n), ~ (\Sigma, \mu) \mapsto N(x \mid \mathrm{diag}(\Sigma, \dots, \Sigma), \mu)dx.
\]
Here, 
\[
    \iota :  \mathcal{N}^n_T \hookrightarrow \mathcal{N}^n, ~ (\Sigma, \mu) \mapsto (\mathrm{diag}(\Sigma, \dots, \Sigma), \mu).
\]
Then the triple $(\mathcal{N}_T^n, \RR^n, \mathbf{p}_T)$ is a statistical model in the sense of \cite{Ay_2017}. 
On $(\mathcal{N}_T^n, \RR^n, \mathbf{p}_T)$, the Fisher metric \(g^F\), the Amari--Chentsov $\alpha$-tensor field $C^{A(\alpha)}$, and the Amari--Chentsov \(\alpha\)-connection \(\nabla^{A(\alpha)}\) are defined in a similar way as in Example~\ref{Ex_multi-Normal}, respectively.
In \cite{Fujitani}, the Riemannian manifold $(\mathcal{N}_T^n, g^F)$ is called the \emph{Takano Gaussian space}, and we use this terminology in this paper as well.
\end{Ex}

\begin{Rem}
For $n=1$, the statistical model $(\mathcal{N}_T^1,\mathbb{R},\mathbf{p}_T)$ coincides with the univariate normal distribution family $(\mathcal{N}^1,\mathbb{R},\mathbf{p}_{N})$.
\end{Rem}

For the statistical manifolds $(\mathcal{N}^n_T, g^F, \nabla^{A(\alpha)})$, the following holds.

\begin{Prop}\label{Prop:Takano_constant-curvature}
For any \(\alpha \in \mathbb{R}\), the statistical manifold \((\mathcal{N}^n_T, g^F, C^{A(\alpha)})\) has constant curvature (cf. \cite{Takano-2006}). 
Moreover, it is dually flat if and only if \(\alpha = \pm 1\) (cf. \cite{Amari_1985,L-SM}).
\end{Prop}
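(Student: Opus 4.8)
The plan is to recognize the Takano Gaussian space, together with its $\alpha$-connection, as a left-invariant statistical structure on the Lie group $G_{\RRH^{n+1}}$, thereby reducing the assertion to the structure theory developed for that group in Theorem~\ref{thm:GRHn-intro}. First I would evaluate the Gaussian moment integrals in \eqref{eq_Fisher-metric} and \eqref{eq_Amari--Chentsov}. Writing $(\Sigma,\mu)\in\RR_{>0}\times\RR^n$ for the coordinates on $\mathcal{N}_T^n$, a parity argument (odd moments of $x-\mu$ vanish) shows that the only surviving components of $g^F$ are $g^F_{\Sigma\Sigma}$ and $g^F_{\mu_i\mu_i}$, yielding
\begin{equation*}
g^F = \frac{n}{2\Sigma^2}\,d\Sigma^2 + \frac{1}{\Sigma}\sum_{i=1}^n d\mu_i^2 .
\end{equation*}
After the substitution $\Sigma = y^2/(2n)$ this becomes $2n$ times the upper half-space metric on $\RR_{>0}\times\RR^n$, so $(\mathcal{N}_T^n, g^F)$ has constant negative sectional curvature and is isometric to a rescaling of $\RR\mathrm{H}^{n+1}$. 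Moreover the affine action $x\mapsto ax+b$ on the sample space $\RR^n$ carries the family $\{N(\Sigma I_n,\mu)\}$ to itself and induces the simply transitive action of $\RR_{>0}\ltimes\RR^n \cong G_{\RRH^{n+1}}$ on the parameter space $\mathcal{N}_T^n$. Since the Fisher metric and the Amari--Chentsov tensor are invariant under such transformations of the sample space, both $g^F$ and $C^{A(\alpha)}$ are invariant under this action, and hence $(\mathcal{N}_T^n, g^F, \nabla^{A(\alpha)})$ is a left-invariant statistical structure on $G_{\RRH^{n+1}}$.

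It then remains to locate this structure inside the moduli space. The same parity argument shows that the only nonzero components of $C^{A(\alpha)}$ are $C_{\Sigma\Sigma\Sigma}$ and $C_{\Sigma\mu_i\mu_i}$, which matches exactly the shape of the representative
\[
v_0 = 4x_1^3 + 6\sum_{i=2}^{n+1} x_1 x_i^2 \in S^3(\mathfrak{g}_{\RRH^{n+1}}^\ast)
\]
from Theorem~\ref{thm:GRHn-intro}, with $x_1$ dual to the scaling generator and $x_2,\dots,x_{n+1}$ dual to the translation generators. Computing the constants confirms $C^{A(\alpha)} = c\,\alpha\,v_0$ for a fixed $c\neq 0$. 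By Theorem~\ref{thm:GRHn-intro}(1) every element of $\RR v_0$ is conjugate symmetric; combining this with Proposition~\ref{Prop_curvature-CS} and the fact that the underlying metric already has constant sectional curvature, one verifies that $R^{\nabla^g}(X,Y)+[K_X,K_Y]$ is again of the form $k(g(Y,Z)X-g(X,Z)Y)$, so $(\mathcal{N}_T^n, g^F, C^{A(\alpha)})$ has constant curvature for every $\alpha$.

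For the second assertion, since $C^{A(\alpha)} = c\,\alpha\,v_0$ the difference tensor $K$ scales linearly with $\alpha$, and the curvature formula of Proposition~\ref{Prop_curvature-CS} expresses $R^\nabla$ as the constant negatively curved tensor $R^{\nabla^g}$ plus the quadratic-in-$\alpha$ term $[K_X,K_Y]$. Setting $R^\nabla\equiv 0$ forces $\alpha^2$ to take the single value for which the two contributions cancel; with the normalization $C^{A(1)} = c\,v_0$ this value is $\alpha = \pm 1$. This is consistent both with Theorem~\ref{thm:GRHn-intro}(2), which identifies $\{v_0,-v_0\}$ as the complete set of dually flat representatives, and with Fact~\ref{Fact_N-CS-DF} for the ambient normal family.

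The main obstacle is the bridge between the information-geometric and the Lie-theoretic descriptions: carrying out the Gaussian moment integrals and, more delicately, fixing the correct left-invariant frame so that the computed components of $C^{A(\alpha)}$ are matched against the specific coefficients of $v_0$ with the right normalization, so that dual flatness lands exactly at $\alpha=\pm1$. Once this identification is secured, conjugate symmetry and constant curvature follow from Theorem~\ref{thm:GRHn-intro} together with the finite-dimensional curvature computation supplied by Proposition~\ref{Prop_curvature-CS}.
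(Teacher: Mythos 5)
Your proposal is correct in substance, but note that it takes a genuinely different route from the paper: the paper gives \emph{no proof} of Proposition~\ref{Prop:Takano_constant-curvature} at all, importing both claims from the literature (Takano for constant curvature; Amari and Lauritzen for dual flatness exactly at $\alpha=\pm1$) and then using the proposition as an external input to Theorem~\ref{Thm_Amari--Chentsov_Takano}. You instead derive it internally from the paper's own classification on $G_{\RRH^{n+1}}$. This is legitimate and non-circular: the proofs of Theorem~\ref{thm:GRHn-cs-df-moduli} (Propositions~\ref{prop:GRHnL}, \ref{prop:gehrnCScomp}, \ref{prop:gehrn-K-curvature}) are purely Lie-algebraic and nowhere invoke the Takano proposition; the only thing you must not use is Theorem~\ref{Thm_Amari--Chentsov_Takano} itself, which does depend on it. What each approach buys: the paper's citation keeps the preliminaries short and independent of the later machinery, while your derivation makes the statement self-contained within the paper and produces the explicit curvature constant as a by-product.

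One step of your write-up is too weak as stated, though the defect is repairable by the computation you yourself flag as the ``main obstacle.'' Invariance plus the parity pattern does \emph{not} identify $C^{A(\alpha)}$ with a multiple of $v_0$: the isotropy-invariant cubic forms on $\mathfrak{g}_{\RRH^{n+1}}$ constitute the two-dimensional space $\{\,a\,x_1^3+b\,x_1\sum_{i\geq 2}x_i^2\,\}$, of which $\RR v_0$ is only a line, so ``matching the shape'' proves nothing about the ratio $a:b$. The moment computation is therefore indispensable, and it does close the gap with exactly the normalization you need. Using $E[(\chi^2_n-n)^3]=8n$ and $E[(\chi^2_n-n)z_i^2]=2$, and the frame $e_1=\sigma\partial_\sigma$, $e_{i+1}=\sqrt{2n}\,\sigma\partial_{\mu_i}$ (which is orthonormal for $\tfrac{1}{2n}g^F$ and satisfies the canonical relations $[e_1,e_{i+1}]=e_{i+1}$), one finds
\begin{equation*}
\tfrac{1}{2n}C^{A(\alpha)}(e_1,e_1,e_1)=4\alpha, \qquad \tfrac{1}{2n}C^{A(\alpha)}(e_1,e_{i+1},e_{i+1})=2\alpha,
\end{equation*}
i.e.\ $\tfrac{1}{2n}C^{A(\alpha)}=C^{\alpha}$ precisely in the notation of Theorem~\ref{thm:GRHn-cs-df-moduli}. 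Since conjugate symmetry, constant curvature, and dual flatness are unchanged when the metric is rescaled with the connection held fixed (the Levi-Civita and dual connections are unaffected), the formula $R^{C^\alpha}_g=(1-\alpha^2)R^0_g$ then gives constant curvature $(\alpha^2-1)/(2n)$ for $(\mathcal{N}^n_T,g^F,\nabla^{A(\alpha)})$ and dual flatness if and only if $\alpha=\pm1$, as desired.
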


For other examples of statistical manifolds obtained from statistical models, see \cite{AN, L-SM} for example.

\begin{Rem}
The statistical structure on $\mathcal{N}^n$ defined above is related to statistical inference on the multivariate normal distribution family $(\mathcal{N}^n, \mathbb{R}^n, \mathbf{p}_N)$ (see \cite{AN} for details).
\end{Rem}

\section{Preliminaries for left-invariant geometric structures on Lie groups}\label{Sec_Lie}
The purpose of this section is to fix our terminologies for left-invariant Riemannian metrics and left-invariant statistical structures on Lie groups.

\subsection{Homogeneous statistical manifolds}\label{subsec:homog-SM}
Let $(g, \nabla)$ be a statistical structure on a manifold $M$ equipped with a smooth action of a Lie group $G$.
We say that $(g,\nabla)$ is \emph{$G$-invariant} if $g$ and $\nabla$ are both $G$-invariant.
In particular, when $M$ is a $G$-homogeneous space, the triple $(M, g, \nabla)$ is called a \emph{$G$-homogeneous statistical manifold}.

We shall introduce some examples of homogeneous statistical manifolds obtained from statistical models. We use the notions introduced in Section~\ref{Subsec_Ex-SM}.

\begin{Ex}
The statistical manifold \((\mathcal{N}^n, g^F, \nabla^{A(\alpha)})\) (see Example~\ref{Ex_multi-Normal}) is an \(\mathrm{Aff}(n, \mathbb{R})\)-homogeneous statistical manifold for each $\alpha \in \RR$.
Note that the isotropy subgroup of the $\mathrm{Aff}(n, \RR)$-action on $\mathcal{N}^n$ at the point $(I_n, 0)$ is $O(n) \ltimes \{0\}$.
Here, $I_n$ denotes the identity matrix of size $n$.
\end{Ex}

\begin{Ex}\label{Ex_Takano-simply-trans}
Let $\mathrm{Aff}^{d+}(n, \mathbb{R}) \coloneqq \RR_{>0} \ltimes \RR^n$.
The statistical manifold \((\mathcal{N}_T^n, g^F, \nabla^{A(\alpha)})\) (see Example~\ref{Ex_Takano}) is an \(\mathrm{Aff}^{d+}(n, \mathbb{R})\)-homogeneous statistical manifold for each $\alpha \in \RR$.
Note that the $\mathrm{Aff}^{d+}(n,\RR)$-action on $\mathcal{N}_T^n$ is simply-transitive.
\end{Ex}

The nontrivial point in the above two examples is the invariance of the Fisher metric and the Amari--Chentsov $\alpha$-connection.
The invariance of the Fisher metric and the Amari--Chentsov $\alpha$-tensor field on a statistical model is described in detail in \cite{Ay_2017} (see the \textit{generalization of Chentsov's theorem}, Corollary 5.3 in \cite{Ay_2017}).

\subsection{Left-invariant Riemannian metrics and left-invariant statistical structures on Lie groups} 
\label{Subsec_LISS}
Let $G$ be a Lie group.
We consider $G$ is a $G$-homogeneous space equipped with the left translations. 
In this subsection, we introduce some basic facts about the left-invariant statistical structures on $G$.

A left-invariant Riemannian metric on $G$ is a Riemannian metric that is invariant under left translations.
Let $\mathfrak{g}$ denote the Lie algebra of $G$, that is, the Lie algebra consisting of all left-invariant vector fields on $G$.
As is well-known that the space $\Modin(G)$ of all left-invariant Riemannian metrics on $G$ can be regarded as the space $\Modin(\mathfrak{g})$ of all inner products on the vector space $\mathfrak{g}$ in a canonical sense (see \cite{Kobayashi-Nomizu_II} for details). 
Throughout this paper, the space $\Modin(G) \cong \Modin(\mathfrak{g})$ is considered as a smooth manifold in the usual sense. In particular, for each $\LR \in \Modin(\mathfrak{g})$, we have a diffeomorphism $\Modin(\mathfrak{g}) \cong GL(\mathfrak{g})/{O(\mathfrak{g},\LR)}$, where $GL(\mathfrak{g})$ denotes the general linear group of the vector space $\mathfrak{g}$ and $O(\mathfrak{g},\LR)$ the orthogonal group of the inner product space $(\mathfrak{g},\LR)$.
Note that the manifold $\Modin(\mathfrak{g})$ can be considered as a symmetric space.

A left-invariant affine connection on a Lie group $G$ is an affine connection that is invariant under left translations. 
A statistical structure $(g,\nabla)$ on $G$ is said to be \emph{left-invariant} if $g$ and $\nabla$ are both left-invariant.
For such $(g,\nabla)$, the triple $(G, g, \nabla)$ is called a \emph{statistical Lie group}, see~\cite{FIK}.

Throughout this paper, the set of all left-invariant statistical structures on $G$ will be denoted by 
\[
\Lstat(G) := \left\{  \text{ left-invariant statistical structures } (g,\nabla) \text{ on } G \right\}.
\]

Throughout this paper, 
we write $S^k(\mathfrak{g}^\ast) = \overbrace{\mathfrak{g}^* \odot \dots \odot \mathfrak{g}^*}^{k}$ for the $k$-th order symmetric tensors of the dual space $\mathfrak{g}^\ast$ of $\mathfrak{g}$.
For $x_1, \dots, x_k \in \mathfrak{g}^\ast$, 
we realize the element $x_1 x_2 \cdots x_k \in S^k(\mathfrak{g}^{\ast})$ in the following sense:
\begin{multline*}
    (x_1 x_2 \cdots x_k) (X_1, X_2,\cdots,X_k) :=\\ \frac{1}{k!} \sum_{\sigma \in \mathfrak{S}_k} (x_{\sigma(1)} (X_1)) \cdot (x_{\sigma(2)} (X_2)) \cdots  (x_{\sigma(k)} (X_k)) \quad (X_1,\dots,X_k \in \mathfrak{g}).
\end{multline*}
In particular, if we put $n$ the dimension of $G$ 
and take a basis $\{x_1, \dots ,x_n\}$ of $\mathfrak{g}^\ast$, the set $\{ x_{i_1} x_{i_2} \cdots x_{i_k} \mid 1 \leq i_1 \leq i_2 \leq \cdots \leq i_k \leq n \}$ forms a basis of $S^k(\mathfrak{g}^\ast)$, and thus the real vector space $S^k(\mathfrak{g}^\ast)$ is $\binom{n + k-1}{k}$-dimensional. 
In the following sections, we shall often identify $S^k(\mathfrak{g}^\ast)$ with the vector space $\mathrm{Homog}_k(x_1, \dots, x_n)$ of homogeneous polynomials of degree $k$.

We note that for $k = 3$, 
the space $S^3(\mathfrak{g}^\ast)$ is nothing but the space of all left-invariant cubic forms on $G$.

For each $g \in \Modin(G)$, 
we also define 
\begin{itemize}
    \item $\mathcal{LA}_{\mathrm{Stat}}(G, g) := \left\{ \nabla \in \mathcal{A}_{\mathrm{stat}}(G,g) ~\middle|~ \text{$\nabla$ is left-invariant} \right\} \subset \mathcal{A}_{\mathrm{stat}}(G,g)$ and 
    \item $\mathcal{LK}(G,g) := \left\{ K \in \mathcal{K}(G,g) ~\middle|~ \text{$K$ is left-invariant} \right\} \subset \mathcal{K}(G,g)$.
\end{itemize}

For each $(g,\nabla) \in \Lstat(G)$, 
we denote by $K^{(g,\nabla)}$ the difference tensor of $(g,\nabla)$.
Furthermore, for $g \in \Modin(G)$ and $K \in \mathcal{LK}(G,g)$, let us write $C^{(g,K)}$ for the cubic form of $(g,K)$ in the sense of $C^{(g,K)}(X, Y, Z) := -2g(K(X,Y),Z)$ as in Section~\ref{Sec_SM}.

We also note that for a fixed $g \in \Modin(G)$, 
the space $\mathcal{LK}(G,g)$ is identified with 
the space $\mathcal{K}(\mathfrak{g},g)$ consists of the symmetric bilinear forms $K : \mathfrak{g} \odot \mathfrak{g} \rightarrow \mathfrak{g}$ satisfying the following equalities,
\[
    \langle K(X,Y), Z \rangle = \langle K(Y,Z), X \rangle = \langle K(Z,X), Y \rangle.
\]
where $\LR$ denotes the inner product on the vector space $\mathfrak{g}$ corresponding to the left-invariant metric $g$.

For the sake of computations in sections later, we summarize some correspondences in the form of the following proposition, which is a reformulation of Proposition~\ref{Prop_Stat-str}.

\begin{Prop}\label{Prop_1to1-Lstat}
Let $g$ be a left-invariant Riemannian metric on $G$, and $\LR$ denotes the inner product on $\mathfrak{g}$ corresponding to $g$.
The following maps are bijective:
\begin{align*}
    \mathcal{LA}_{\mathrm{Stat}}(G,g) \ni \nabla  &\longmapsto  K^{(g ,\nabla)} \in \mathcal{LK}(G,g) \cong \mathcal{K}(\mathfrak{g},g),\\
    \mathcal{LK}(G,g) \cong \mathcal{K}(\mathfrak{g},g) \ni K &\longmapsto  C^K \in S^3(\mathfrak{g}^\ast).
\end{align*}
Here, $K^{(g, \nabla)}$ is the difference tensor of $(g, \nabla)$, and $C^{K} (X, Y, Z) := -2\langle K(X, Y ), Z\rangle$.
The composition of the above maps can be expressed as follows:
\[
    \mathcal{LA}_{\mathrm{Stat}}(G,g) \ni \nabla  \longmapsto \nabla g \in S^3(\mathfrak{g}^\ast).
\]
In particular, the composition defines a bijection 
\[
\Lstat(G) = \bigsqcup_{g \in \Modin(G)} \mathcal{LA}_{\mathrm{Stat}}(G,g) \to \Modin(G) \times S^3(\mathfrak{g}^\ast), \quad (g,\nabla) \mapsto (g,C^{(g, K^{(g,\nabla)} )} = \nabla g), 
\]
and hence $\Lstat(G)$ is a trivial vector bundle on the symmetric space $\Modin(G)$ with the fiber $S^3(\mathfrak{g}^\ast)$.  
\end{Prop}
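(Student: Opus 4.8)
The plan is to reduce everything to Proposition~\ref{Prop_Stat-str} applied fibrewise. Fix $g \in \Modin(G)$ with corresponding inner product $\LR$ on $\mathfrak{g}$. First I would establish the two individual bijections. For the first map $\nabla \mapsto K^{(g,\nabla)}$, recall that Proposition~\ref{Prop_Stat-str} already gives a bijection $\mathcal{A}_{\mathrm{Stat}}(G,g) \cong \mathcal{K}(G,g)$ via the difference tensor $K^{(g,\nabla)}(X,Y) = \nabla_X Y - \nabla^g_X Y$. Since $g$ is left-invariant, its Levi-Civita connection $\nabla^g$ is left-invariant, so $\nabla$ is left-invariant if and only if $K^{(g,\nabla)} = \nabla - \nabla^g$ is left-invariant. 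This shows the bijection restricts to $\mathcal{LA}_{\mathrm{Stat}}(G,g) \cong \mathcal{LK}(G,g)$. Combined with the identification $\mathcal{LK}(G,g) \cong \mathcal{K}(\mathfrak{g},g)$ stated in the text (a left-invariant tensor field is determined by its value on $\mathfrak{g}$, with the three-fold symmetry condition descending to $\mathfrak{g}$ verbatim), the first displayed map is bijective.

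Next I would handle the second map $K \mapsto C^K$. The content of Proposition~\ref{Prop_Stat-str} is that $K \mapsto (\nabla^g + K)g$ is a bijection $\mathcal{K}(G,g) \cong S^3(T^\ast G)$; here I must verify that its restriction to left-invariant objects lands in (and surjects onto) the left-invariant symmetric $(0,3)$-tensors, which are precisely $S^3(\mathfrak{g}^\ast)$. Using the identity $C^{(g,\nabla)}(X,Y,Z) = -2g(K(X,Y),Z)$ recorded in Section~\ref{Sec_SM}, one sees that for $K \in \mathcal{LK}(G,g)$ the cubic form $C^K(X,Y,Z) = -2\langle K(X,Y),Z\rangle$ is manifestly determined by the values of $K$ and $\LR$ on $\mathfrak{g}$, hence left-invariant, i.e.\ an element of $S^3(\mathfrak{g}^\ast)$. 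Conversely, since $\langle,\rangle$ is nondegenerate, the assignment $K \mapsto C^K$ is injective on $\mathcal{K}(\mathfrak{g},g)$, and surjectivity onto $S^3(\mathfrak{g}^\ast)$ follows because the three-fold symmetry condition defining $\mathcal{K}(\mathfrak{g},g)$ corresponds exactly to the total symmetry of $C^K$: given any $C \in S^3(\mathfrak{g}^\ast)$ one recovers $K$ by $\langle K(X,Y),Z\rangle = -\tfrac{1}{2}C(X,Y,Z)$ and checks the symmetry constraint is automatic. The composition is then $\nabla \mapsto C^{(g,K^{(g,\nabla)})}$, which equals $\nabla g$ by the computation $(\nabla g)(X,Y,Z) = -2g(K^{(g,\nabla)}(X,Y),Z)$ valid since $\nabla^g g = 0$.

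For the final ``in particular'' statement, I would assemble the fibrewise bijections into a global one. By definition $\Lstat(G) = \bigsqcup_{g \in \Modin(G)} \mathcal{LA}_{\mathrm{Stat}}(G,g)$, and the two maps above compose to a bijection $\mathcal{LA}_{\mathrm{Stat}}(G,g) \to S^3(\mathfrak{g}^\ast)$ for each fixed $g$. Taking the disjoint union over $g \in \Modin(G)$ and pairing with the base point $g$ yields the map $(g,\nabla) \mapsto (g, \nabla g)$ onto $\Modin(G) \times S^3(\mathfrak{g}^\ast)$. Bijectivity is immediate from the fibrewise bijectivity since the first coordinate is preserved. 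To justify the phrase ``trivial vector bundle,'' I would note that the target $S^3(\mathfrak{g}^\ast)$ is a fixed vector space independent of $g$ (it depends only on $\mathfrak{g}$, not on the chosen inner product), so the product decomposition $\Modin(G) \times S^3(\mathfrak{g}^\ast)$ is literally a trivial bundle over the symmetric space $\Modin(G)$, with each fibre $\{g\} \times S^3(\mathfrak{g}^\ast)$ carrying its natural linear structure.

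The main obstacle I anticipate is the bookkeeping in the second map: one must be careful that the symmetry constraints defining $\mathcal{K}(\mathfrak{g},g)$ and the total symmetry of $S^3(\mathfrak{g}^\ast)$ match up correctly under the nondegenerate pairing, and that the normalization factor $-2$ is handled consistently so that injectivity and surjectivity both hold. Everything else is a routine transfer of the manifold-level statement of Proposition~\ref{Prop_Stat-str} to the Lie-algebra level, using only that left-invariant tensor fields are in canonical bijection with their restrictions to $\mathfrak{g}$ and that $\nabla^g$ is itself left-invariant.
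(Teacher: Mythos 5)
Your proposal is correct and follows exactly the route the paper intends: the paper dismisses this proposition with ``The proof is straightforward,'' meaning precisely the fibrewise transfer of Proposition~\ref{Prop_Stat-str} to left-invariant objects that you carry out (left-invariance of $\nabla^g$, the identification $\mathcal{LK}(G,g)\cong\mathcal{K}(\mathfrak{g},g)$, the pairing argument for $K\mapsto C^K$, and the identity $\nabla g = -2g(K(\cdot,\cdot),\cdot)$ from Section~\ref{Sec_SM}). Your write-up simply supplies the details the authors chose to omit.
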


The proof is straightforward. 

Throughout this paper,  we also use the following terminologies:
\begin{align*}
    \LstatCS(G) &\coloneqq \left\{ (g, \nabla) \in \Lstat(G) ~\middle|~ (g, \nabla) \text{ is conjugate symmetric} \right\},\\
    \LstatDF(G) &\coloneqq \left\{ (g, \nabla) \in \Lstat(G) ~\middle|~ (g, \nabla) \text{ is dually flat } \right\}.
\end{align*}
Let us fix $g \in \Modin(G)$ and put:
\begin{itemize}
    \item $S^3_{\mathrm{CS}}(\mathfrak{g}^\ast, g) := \left\{ C \in S^3(\mathfrak{g}^\ast) ~\middle|~ (g, C) \text{ is conjugate symmetric} \right\}$,
    \item $S^3_{\mathrm{DF}}(\mathfrak{g}^\ast, g) := \left\{ C \in S^3(\mathfrak{g}^\ast) ~\middle|~ (g, C) \text{ is dually flat} \right\}$.
\end{itemize}
Note that $S^3_{\mathrm{CS}}(\mathfrak{g}^\ast, g)$ is always a linear subspace of $S^3(\mathfrak{g}^\ast)$ but $S^3_{\mathrm{DF}}(\mathfrak{g}^\ast, g)$ is not.
We also denote these spaces by $S^3_{\mathrm{CS}}(\mathfrak{g}^\ast, \LR)$ and $S^3_{\mathrm{DF}}(\mathfrak{g}^\ast, \LR)$, using the inner product $\LR$ on $\mathfrak{g}$ corresponding to $g$.

\subsection{Some remarks on computations}\label{subsec:remark_computations}

Let $G$ be an $n$-dimensional Lie group and $g$ a left-invariant Riemannian metric on $G$.
In this subsection, we introduce several computational formulas for left-invariant statistical connections on $(G, g)$ that will be used in later sections.

We denote by $\mathfrak{g}$ the Lie algebra of $G$, that is, the Lie algebra of all left-invariant vector fields on $G$, and $\LR$ the inner product on $\mathfrak{g}$ corresponding to $g$.
Let us fix an orthonormal basis $\{ e_1,\dots,e_n \}$ of $\mathfrak{g}$ with respect to $\LR$. 
The structure constants for $\{ e_1,\dots,e_n \}$ are denoted by $a_{ij}^k := \langle [e_i,e_j],e_k \rangle$.

For the Levi-Civita connection $\nabla^g$ on $(G,g)$, 
the (generalized) Christoffel symbols with respect to the global frame $\{ e_1,\dots,e_n \}$ are denoted by $\Gamma_{ij}^{k} := \langle \nabla^g_{e_i} e_j, e_k \rangle$.
Note that $\nabla^g_{e_i} e_j$ is left-invariant on $G$ and thus $\Gamma_{ij}^{k}$ is constant on $G$.

The following is a formula for computing $\Gamma^k_{ij}$ from the structure constants.

\begin{Prop}\label{proposition:Gamma_from_a}
$\Gamma_{ij}^{k} = \frac{1}{2}(a_{ij}^k + a_{ki}^j + a_{kj}^{i})$.
\end{Prop}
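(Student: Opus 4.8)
The plan is to apply the Koszul formula for the Levi-Civita connection and to exploit the left-invariance of both $g$ and the chosen frame. Recall that for arbitrary vector fields $X, Y, Z$ on a Riemannian manifold one has
\[
2\langle \nabla^g_X Y, Z\rangle = X\langle Y,Z\rangle + Y\langle X,Z\rangle - Z\langle X,Y\rangle + \langle [X,Y],Z\rangle - \langle [X,Z],Y\rangle - \langle [Y,Z],X\rangle .
\]
First I would specialize this identity to $X = e_i$, $Y = e_j$, $Z = e_k$ from the fixed orthonormal basis of $\mathfrak{g}$.

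The key observation is that, since $\{e_1,\dots,e_n\}$ is a left-invariant frame and $g$ is a left-invariant metric, each function $\langle e_a, e_b\rangle$ is constant on $G$ (indeed identically $\delta_{ab}$). Consequently the three derivative terms $e_i\langle e_j,e_k\rangle$, $e_j\langle e_i,e_k\rangle$, and $e_k\langle e_i,e_j\rangle$ all vanish, and the Koszul formula collapses to a purely algebraic expression in the structure constants:
\[
2\Gamma_{ij}^{k} = \langle [e_i,e_j],e_k\rangle - \langle [e_i,e_k],e_j\rangle - \langle [e_j,e_k],e_i\rangle = a_{ij}^{k} - a_{ik}^{j} - a_{jk}^{i} .
\]

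Finally, I would rewrite the right-hand side using the antisymmetry $a_{ab}^{c} = -a_{ba}^{c}$, which holds because $[e_a,e_b] = -[e_b,e_a]$. This gives $-a_{ik}^{j} = a_{ki}^{j}$ and $-a_{jk}^{i} = a_{kj}^{i}$, so that $2\Gamma_{ij}^{k} = a_{ij}^{k} + a_{ki}^{j} + a_{kj}^{i}$, and dividing by two yields the claimed identity. The argument is entirely routine and presents no genuine obstacle; the only point demanding care is the bookkeeping of index positions and sign conventions, so that the bracket terms of the Koszul formula are matched to the correct structure constants and the antisymmetry is applied to the correct pair of lower indices.
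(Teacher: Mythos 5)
Your proof is correct and amounts to essentially the same argument as the paper's: the paper cites the Kobayashi--Nomizu formula $\nabla^g_X Y = \tfrac{1}{2}[X,Y] + U(X,Y)$ for left-invariant fields, which is precisely the Koszul formula after the derivative terms $e_a\langle e_b,e_c\rangle$ have been discarded, so your derivation just makes that one reduction step explicit instead of quoting it. Your index bookkeeping and use of the antisymmetry $a_{ab}^c=-a_{ba}^c$ land on the same identity, so there is nothing to repair.
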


\begin{proof}
It is well-known that for each $X,Y \in \mathfrak{g}$, 
\begin{equation*}
\nabla^g_X Y = \frac{1}{2} [X, Y] + U(X,Y)
\end{equation*}
holds, where $U$ is a $\mathfrak{g}$-valued symmetric bilinear form on $\mathfrak{g}$ defined by
\[
2\langle U(X,Y), Z \rangle = \langle [Z, X], Y \rangle + \langle X, [Z,Y]\rangle \quad (X,Y,Z \in \mathfrak{g})
\]
(cf.~\cite{Kobayashi-Nomizu_II}, Chapter X).
By the definition, 
\[
\langle U(e_i,e_j),e_k \rangle 
    = \frac{1}{2}(a_{ki}^j + a_{kj}^i),
\]
and thus 
\begin{align*}
\Gamma_{ij}^k 
    &= \langle \nabla^g_{e_i} e_j, e_k \rangle \\
    &= \langle \frac{1}{2}[e_i,e_j] + U(e_i,e_j), e_k \rangle \\ 
    &= \frac{1}{2} \langle [e_i,e_j], e_k \rangle + \langle U(e_i,e_j), e_k \rangle\\ 
    &= \frac{1}{2}(a_{ij}^k + a_{ki}^j + a_{kj}^{i}).
\end{align*}
\end{proof}

We write $\{ x_1,\dots,x_n \}$ for the dual basis of the orthonormal basis $\{ e_1,\dots,e_n \}$ of $\mathfrak{g}$.
Recall that $\{ x_{i_1} x_{i_2} x_{i_3} \}_{i_1 \leq i_2 \leq i_3}$ forms a basis of $S^3(\mathfrak{g}^\ast)$ (see~Section~\ref{Subsec_LISS}). 
The following is a formula for computing the tensor $\nabla^g(x_{i_1} x_{i_2} x_{i_3}) \in S^3(\mathfrak{g}^\ast) \otimes \mathfrak{g}^\ast$.

\begin{Prop}\label{proposition:nablax_computation}
Let $1 \leq i_1,i_2,i_3 \leq n$.
Then the tensor $\nabla^g(x_{i_1}x_{i_2}x_{i_3}) \in S^3(\mathfrak{g}^\ast) \otimes \mathfrak{g}^\ast$ can be written as 
\[
\nabla^g (x_{i_1} x_{i_2} x_{i_3}) = - \sum_{t=1}^n \sum_{u = 1}^n (\Gamma_{tu}^{i_{1}} (x_u x_{i_{2}} x_{i_{3}}) + 
\Gamma_{tu}^{i_{2}} (x_u x_{i_{1}} x_{i_{3}}) + \Gamma_{tu}^{i_{3}} (x_u x_{i_{1}} x_{i_{2}}))\otimes x_t.
\]
\end{Prop}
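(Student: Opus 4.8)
The plan is to reduce the statement to two elementary facts: the Leibniz rule for the extension of $\nabla^g$ to the tensor algebra, and an explicit formula for the covariant derivative of a single left-invariant dual $1$-form. The key simplification comes from left-invariance: since $\{e_1,\dots,e_n\}$ consists of left-invariant vector fields and $\{x_1,\dots,x_n\}$ is its dual basis, each function $x_j(e_u) \in \{0,1\}$ is constant on $G$. Consequently, whenever $\nabla^g$ acts on a tensor built from the $x_j$ and is evaluated on the $e_u$, all the directional-derivative-of-function terms in the defining formula for $\nabla^g$ on tensors vanish, and only the connection terms survive.

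First I would compute $\nabla^g_{e_t} x_j$ for fixed $t$ and $j$. Using $\nabla^g_{e_t} e_u = \sum_v \Gamma^v_{tu} e_v$ and the standard identity $(\nabla^g_{e_t} x_j)(e_u) = e_t\big(x_j(e_u)\big) - x_j(\nabla^g_{e_t} e_u)$, the first term is zero by the constancy observation above, and the second equals $-\Gamma^j_{tu}$. This yields
\[
\nabla^g_{e_t} x_j = - \sum_u \Gamma^j_{tu}\, x_u .
\]
This is the one genuinely load-bearing computation; everything else is formal.

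Next I would apply the Leibniz rule for $\nabla^g_{e_t}$ to the symmetric product $x_{i_1} x_{i_2} x_{i_3}$, which gives
\[
\nabla^g_{e_t}(x_{i_1} x_{i_2} x_{i_3}) = (\nabla^g_{e_t} x_{i_1})\, x_{i_2} x_{i_3} + x_{i_1} (\nabla^g_{e_t} x_{i_2})\, x_{i_3} + x_{i_1} x_{i_2}\, (\nabla^g_{e_t} x_{i_3}).
\]
Substituting the formula for $\nabla^g_{e_t} x_{i_\ell}$ from the previous step, and using the commutativity of the symmetric product to rewrite $x_{i_1}(x_u) x_{i_3}$ as $x_u x_{i_1} x_{i_3}$ and similarly in the third slot, produces exactly the three families of terms appearing in the claimed expression. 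Finally, encoding $\nabla^g T$ as $\sum_t (\nabla^g_{e_t} T) \otimes x_t$ (the trailing $\mathfrak{g}^\ast$ factor recording the differentiation direction) and summing over $t$ recovers the stated identity.

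I expect no serious obstacle here: the argument is a direct unwinding of definitions, and the only place requiring care is fixing the sign and index placement in the formula for $\nabla^g_{e_t} x_j$, together with the bookkeeping needed to verify that the symmetrization conventions for $S^3(\mathfrak{g}^\ast)$ are respected when the three Leibniz terms are collected. Since $\nabla^g_{e_i} e_j$ is left-invariant, the $\Gamma^k_{ij}$ are genuine constants, so there are no further analytic subtleties, and the proof is purely algebraic.
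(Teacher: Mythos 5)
Your proposal is correct: the formula $\nabla^g_{e_t} x_j = -\sum_u \Gamma^j_{tu}\, x_u$ (valid because $x_j(e_u)$ is constant by left-invariance), combined with the Leibniz rule on the symmetric product and the convention $\nabla^g T = \sum_t (\nabla^g_{e_t} T) \otimes x_t$, yields exactly the stated identity with the correct signs and index placement. The paper omits the proof as straightforward, and your argument is precisely the intended direct unwinding of definitions.
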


The proof is straightforward.

For each $C \in S^3_{\mathrm{CS}}(\mathfrak{g}^\ast,g)$, 
we simply write $R^C_g$ for the left-invariant $(0,4)$ tensor field $R_{g}^{\nabla^{(g,C)}}$ on $G$,
that is, 
\[
R^C_g(X,Y,Z,W) = g(R^{\nabla^{(g,C)}}(X,Y)Z,W),
\]
where $\nabla^{(g,C)}$ denotes the left-invariant statistical connection corresponding to $(g,C)$ in the sense of Proposition~\ref{Prop_1to1-Lstat}.
By Proposition~\ref{Prop_curvature-CS},
we shall regard the left-invariant $(0,4)$-tensor field
$R^C_g$ on $G$ as an element of $(\wedge^2 \mathfrak{g}^\ast) \odot (\wedge^2 \mathfrak{g}^\ast)$, 
where $\wedge^2 \mathfrak{g}^\ast$ denotes the space of alternating $2$-tensors on $\mathfrak{g}^\ast$, 
and $(\wedge^2 \mathfrak{g}^\ast) \odot (\wedge^2 \mathfrak{g}^\ast)$ the space of symmetric $2$-tensors on $(\wedge^2 \mathfrak{g}^\ast)$.
Let us put $\omega_{ij} := x_i \otimes x_j - x_j \otimes x_i$ for each $i,j$.
Then $\{ \omega_{ij} \}_{i < j}$ forms a basis of 
the space $\wedge^2 \mathfrak{g}^\ast$ of alternating $2$-tensors on $\mathfrak{g}^\ast$.
We shall write $(i,j) \leq (k,l)$ if $i < k$ or ``$i=k$ and $j \leq l$''.
Then the family $\{ \omega_{ij} \odot \omega_{kl} \}_{i < j, k < l, (i,j) \leq (k,l)}$ forms a basis of the linear space $(\wedge^2 \mathfrak{g}^\ast) \odot (\wedge^2 \mathfrak{g}^\ast)$.

We note that $\nabla^{(g,0)} = \nabla^g$ and thus $R_g^0$ is the $(0,4)$-curvature tensor of the Riemannian manifold $(G,g)$.
Such a curvature tensor can be written as 
\[
R^0_g = \sum_{i < j, k < l, (i,j) \leq (k,l)} r_{ijkl} (\omega_{ij} \odot \omega_{kl})
\]
with 
\[
r_{ijkl} = 2 R^0_g(e_i,e_j,e_k,e_l) = 2 \sum_{u} (\Gamma_{jk}^u \Gamma_{iu}^{l}  - \Gamma_{ik}^u \Gamma_{ju}^{l} -  a_{ij}^u \Gamma_{uk}^{l})
\]
for $i < j$, $k < l$, $(i,j) < (k,l)$
and 
\[
r_{ijij} = R^0_g(e_i,e_j,e_i,e_j) = \sum_{u} (\Gamma_{ji}^u \Gamma_{iu}^{j}  - \Gamma_{ii}^u \Gamma_{ju}^{j} -  a_{ij}^u \Gamma_{ui}^{j})
\]
for $i < j$.

Throughout this paper, for each $C \in S^3_{\mathrm{CS}}(\mathfrak{g}^\ast, g)$, we denote the $(0,4)$-tensor $R_g^C - R_g^0 \in (\wedge^2 \mathfrak{g}^\ast) \odot (\wedge^2 \mathfrak{g}^\ast)$ by $[K^{(g,C)}, K^{(g,C)}]$, or simply $[K,K]$ when the context is clear.
Then by Proposition~\ref{Prop_curvature-CS}, 
\[
[K,K](X,Y,Z,W) = \langle [K_X,K_Y]Z,W \rangle
\]
holds for $X,Y,Z,W \in \mathfrak{g}$.

By direct calculation, we obtain the following:

\begin{Thm}\label{theorem:Kcurvature}
Let $C \in S^3(\mathfrak{g}^\ast)$. 
For each $u = 1,\dots,n$, we define the matrix $K_u \in M(n,\RR)$ by putting 
\[
(K_u)_{ij} := \langle K^{(g,C)}_{e_u}(e_i), e_j \rangle = -\frac{1}{2}C(e_u,e_i,e_j). 
\]
Then 
\[
    [K^{(g,C)},K^{(g,C)}] = 2 \sum_{i<j,k<l,(i,j) < (k,l)} [K_l,K_k]_{ij} (\omega_{ij} \odot \omega_{kl}) + \sum_{i<j} [K_j,K_i]_{ij} (\omega_{ij} \odot \omega_{ij}),
\]
where $[K_u,K_v] := K_u K_v - K_v K_u$ denotes the Lie bracket of the matrices $K_u$ and $K_v$.
\end{Thm}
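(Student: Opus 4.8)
The plan is to expand the tensor $[K^{(g,C)},K^{(g,C)}] = R_g^C - R_g^0$ directly in the basis $\{\omega_{ij}\odot\omega_{kl}\}_{i<j,\,k<l,\,(i,j)\le(k,l)}$ of $(\wedge^2\mathfrak{g}^\ast)\odot(\wedge^2\mathfrak{g}^\ast)$ and to read off the coefficients, in exactly the same way as was just done for the Riemannian curvature $R_g^0$. By Proposition~\ref{Prop_curvature-CS}, the tensor $[K,K]$ enjoys the usual curvature symmetries, so it genuinely lies in $(\wedge^2\mathfrak{g}^\ast)\odot(\wedge^2\mathfrak{g}^\ast)$ and such an expansion exists and is unique. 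The computation therefore splits into two independent ingredients: a coefficient-extraction rule for this basis, and an evaluation of $[K,K]$ on the quadruples $(e_i,e_j,e_k,e_l)$.

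First I would pin down the extraction rule. Evaluating a basis element on the quadruple $(e_i,e_j,e_k,e_l)$ with $(i,j)<(k,l)$ fixed, one finds $(\omega_{ab}\odot\omega_{cd})(e_i,e_j,e_k,e_l)=\tfrac12$ precisely when $(a,b)=(i,j)$ and $(c,d)=(k,l)$, and it vanishes for every other admissible basis element because the ordering constraints $a<b$, $c<d$, $(a,b)\le(c,d)$ rule out the ``swapped'' contribution. On the diagonal one gets $(\omega_{ij}\odot\omega_{ij})(e_i,e_j,e_i,e_j)=1$. Hence, writing $[K,K]=\sum c_{ijkl}(\omega_{ij}\odot\omega_{kl})$, we have $c_{ijkl}=2\,[K,K](e_i,e_j,e_k,e_l)$ for $(i,j)<(k,l)$ and $c_{ijij}=[K,K](e_i,e_j,e_i,e_j)$. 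This is identical to the normalization already recorded for $r_{ijkl}$ and $r_{ijij}$, so it can simply be reused.

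Next I would evaluate $[K,K](e_i,e_j,e_k,e_l)=\langle[K_{e_i},K_{e_j}]e_k,e_l\rangle$. The total symmetry of $K\in\mathcal{K}(\mathfrak{g},g)$ makes each matrix $K_u$ symmetric and gives $K_{e_u}(e_i)=\sum_j (K_u)_{ij}e_j$; composing the two operators and pairing with $e_l$ converts the operator bracket into the matrix commutator,
\[
\langle[K_{e_i},K_{e_j}]e_k,e_l\rangle=(K_jK_i-K_iK_j)_{kl}=[K_j,K_i]_{kl}.
\]
Substituting into the extraction rule gives $c_{ijij}=[K_j,K_i]_{ij}$ on the diagonal and $c_{ijkl}=2\,[K_j,K_i]_{kl}$ off the diagonal; the pair-swap symmetry $[K,K](e_i,e_j,e_k,e_l)=[K,K](e_k,e_l,e_i,e_j)$ then rewrites the off-diagonal value as $[K_l,K_k]_{ij}$, which assembles into the stated closed form with its two separate sums. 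The argument is essentially bookkeeping and presents no deep obstacle; the one place that needs genuine care is Step three—keeping the matrix-index order and signs straight, so that one obtains $[K_j,K_i]_{kl}$ rather than $[K_i,K_j]_{kl}$—together with consistently tracking the factor-of-$2$ discrepancy between the diagonal and off-diagonal coefficients, which is exactly what produces the asymmetric appearance of the final formula.
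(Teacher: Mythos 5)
Your proposal is correct and is exactly the ``direct calculation'' the paper invokes for this theorem: you reuse the coefficient-extraction rule the paper has just set up for $r_{ijkl}$ and $r_{ijij}$ (legitimate, since Proposition~\ref{Prop_curvature-CS} guarantees $[K,K]=R^C_g-R^0_g$ lies in $(\wedge^2\mathfrak{g}^\ast)\odot(\wedge^2\mathfrak{g}^\ast)$ and evaluation on the quadruples $(e_i,e_j,e_k,e_l)$ isolates each basis coefficient), and your bookkeeping $\langle[K_{e_i},K_{e_j}]e_k,e_l\rangle=[K_j,K_i]_{kl}$ combined with the pair-swap symmetry gives precisely the stated diagonal and off-diagonal coefficients. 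No gaps.
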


\section{Moduli spaces}\label{Sec_LISS}
Throughout this section, let $G$ be an $n$-dimensional Lie group with finitely many connected components, and $\mathfrak{g}$ the Lie algebra of $G$. 
The automorphism group of $G$ and that of $\mathfrak{g}$ are denoted by $\mathrm{Aut}(G)$ and $\mathrm{Aut}(\mathfrak{g})$, respectively.
Note that $\mathrm{Aut}(G)$ and $\mathrm{Aut}(\mathfrak{g})$ are both Lie groups.
The moduli space of all left-invariant statistical structures on $G$ is defined in this section.

\subsection{The moduli space of left-invariant Riemannian metrics}\label{subsec_moduli-LIM}

In this subsection, we recall the definition of moduli spaces of left-invariant Riemannian metrics on Lie groups (cf.~\cite{KTT}).

Throughout this paper, 
we say that two Riemannian manifolds \((M, g)\) and \((M', g')\) are \emph{isometric up to scaling} if there exist a scalar \(r > 0\) and a diffeomorphism \(f: M \to M'\) such that \(r \cdot f^* g' = g\).

As is well-known, the Lie group $\mathbb{R}_{>0} \times \mathrm{Aut}(G)$ acts smoothly on the space of left-invariant Riemannian metrics $\Modin(G)$ (see Section~\ref{Subsec_LISS} for more details on $\Modin(G)$) by
\begin{equation}\label{eq_action-LIM}
    (r, \varphi). g \coloneqq r \cdot (\varphi^{-1})^\ast g = 
    r \cdot g(\varphi^{-1}_\ast ~ \cdot ~ , \varphi^{-1}_\ast ~ \cdot ~ ) \quad ((r , \varphi) \in \mathbb{R}_{>0} \times \mathrm{Aut}(G), ~ g \in \Modin(G)).
\end{equation}
If left-invariant Riemannian metrics $g_1$ and $g_2$ are related by the group action in \eqref{eq_action-LIM}, then the Riemannian manifolds $(G,g_1)$ and $(G, g_2)$ are isometric up to scaling, see Appendix~\ref{Appendix_Moduli-Act} for details.
We call the quotient space $(\mathbb{R}_{>0} \times \mathrm{Aut}(G)) \backslash \Modin(G)$ the \emph{moduli space of left-invariant Riemannian metrics on $G$}, and denote it by $\mathfrak{PM}(G)$.
In what follows, we regard \(\mathfrak{PM}(G)\) as a topological space equipped with the natural quotient topology.

The Lie group $\RR_{>0} \times \mathrm{Aut}(\mathfrak{g})$ also acts smoothly on $\Modin(\mathfrak{g})$ as below:
\begin{equation}\label{eq_action-LIM2}
    (r, \varphi). \LR \coloneqq r \cdot \langle \varphi^{-1} ~ \cdot ~ , \varphi^{-1} ~ \cdot ~ \rangle \quad ((r , \varphi) \in \RR_{>0} \times \mathrm{Aut}(\mathfrak{g}), ~ \LR \in \Modin(\mathfrak{g})).
\end{equation}
Note that if $G$ is connected and simply-connected, 
then $\mathrm{Aut}(G)$ and $\mathrm{Aut}(\mathfrak{g})$ are isomorphic to each other as Lie groups, and furthermore, the homeomorphic correspondence $\Modin(G) \to \Modin(\mathfrak{g})$ is equivariant regarding the actions of $\mathrm{Aut}(G)$ and $\mathrm{Aut}(\mathfrak{g})$.
Thus the moduli space $\mathfrak{PM}(G)$ is homeomorphic to the quotient space $(\RR_{>0} \times \mathrm{Aut}(\mathfrak{g})) \backslash \Modin(\mathfrak{g})$ in such a situation.

\begin{Rem}
In \cite{KTT}, for a Lie group $G$, the orbit space of $\Modin(G)$ under a certain action of the group $\mathbb{R}^\times \times \mathrm{Aut}(G)$ is referred to as the moduli space of left-invariant Riemannian metrics on $G$. 
It is straightforward to see that the action of $\mathbb{R}^\times \times \mathrm{Aut}(G)$ on $\Modin(\mathfrak{g})$ factors through the group $\mathbb{R}_{>0} \times \mathrm{Aut}(G)$ via the natural surjection  
\[
\mathbb{R}^\times \times \mathrm{Aut}(G) \to \mathbb{R}_{>0} \times \mathrm{Aut}(G), \quad (r, \varphi) \mapsto (r^{-2}, \varphi).
\]  
Therefore, the moduli spaces obtained by these two quotient constructions coincide.
\end{Rem}

The connected and simply-connected Lie group with $\mathfrak{PM}(G)$ being a singleton have been classified by \cite{L-DL}.

\begin{Prop}[\cite{L-DL}; see also \cite{KTT}]\label{prop:metric-moduli-singleton}
Let $G$ be a connected and simply-connected Lie group such that $\mathfrak{PM}(G)$ is a singleton.
Then, $G$ is isomorphic to one of the following Lie groups:
\begin{equation}\label{eq_special-Liegrp-2}
    \RR^n, ~ G_{\RR \mathrm{H}^n}\ (n \geq 2) ~ \text{ or } ~ H^3 \times \RR^{n-3}\ (n \geq 3).
\end{equation}
\end{Prop}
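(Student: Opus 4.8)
The statement asserts that a connected, simply-connected Lie group $G$ has singleton moduli space $\mathfrak{PM}(G)$ precisely when $G$ is one of $\RR^n$, $\GRHn$, or $H^3 \times \RR^{n-3}$. Since the paper cites this as a known result from \cite{L-DL} (with \cite{KTT}), the honest approach is to recall the classification rather than reprove it from scratch; nonetheless, let me sketch how I would organize a self-contained argument.

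\emph{The forward direction (these three groups have singleton moduli).} For each of the three groups, the plan is to exhibit that $\RR_{>0} \times \mathrm{Aut}(\gee)$ acts transitively on $\Modin(\gee)$. Equivalently, fixing a reference inner product $\langle,\rangle_0$, one shows every other inner product is obtained by scaling and an automorphism. Concretely, using the identification $\Modin(\gee) \cong GL(\gee)/O(\gee,\langle,\rangle_0)$, transitivity of the action amounts to $GL(\gee) = \RR_{>0}\cdot\mathrm{Aut}(\gee)\cdot O(\gee,\langle,\rangle_0)$. For $\RR^n$ this is immediate since $\mathrm{Aut}(\gee_{\RR^n}) = \Glnr$ acts transitively on all inner products already. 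For $\GRHn$ and $H^3 \times \RR^{n-3}$, I would compute $\mathrm{Aut}(\gee)$ explicitly from the bracket relations given in the introduction and verify that, together with scaling, it suffices to diagonalize an arbitrary inner product with respect to the standard basis and normalize the eigenvalues; the structure of these solvable algebras makes $\mathrm{Aut}(\gee)$ large enough to absorb the off-diagonal and scaling freedom.

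\emph{The converse direction (no other groups qualify).} This is the harder half and the main obstacle. The strategy is dimensional: the orbit of a generic inner product under $\RR_{>0} \times \mathrm{Aut}(\gee)$ has dimension at most $1 + \dim\mathrm{Aut}(\gee)$, while $\dim\Modin(\gee) = \binom{n+1}{2}$. A necessary condition for a singleton moduli space is therefore $1 + \dim\mathrm{Aut}(\gee) \geq \binom{n+1}{2} - \dim O(n) = \binom{n+1}{2} - \binom{n}{2} = n$, but transitivity is far stronger than this count, and one must rule out all remaining solvable algebras (Shima's theorem reduces attention to solvable $G$, but here one works directly). The real work in \cite{L-DL} is a case analysis over low-dimensional Lie algebras together with structural constraints forcing the automorphism group to be abnormally large; I would not reproduce this but would instead invoke Lauret's classification directly.

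Given that the result is attributed to prior work and used only as input, I would present the proof as a citation-based recollection: state that the forward direction is a direct verification via the explicit automorphism groups computed from the standard bases, and that the converse is exactly the classification theorem of \cite{L-DL}. The genuinely new content of the present paper lies not in reproving this proposition but in analyzing $\MLStat$ for these three groups, so a terse proof referring to \cite{L-DL, KTT} is appropriate here.
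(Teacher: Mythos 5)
Your proposal matches the paper exactly: the paper gives no proof of Proposition~\ref{prop:metric-moduli-singleton} at all, treating it as a known classification and citing \cite{L-DL} (see also \cite{KTT}), which is precisely the citation-based treatment you conclude is appropriate. Your supplementary sketch (direct verification of transitivity for the three groups, Lauret's classification for the converse) is a reasonable account of what the cited works establish, but none of it is needed or reproduced in the paper itself.
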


The Lie group $\GRHn$ is the solvable part of the Iwasawa decomposition of \(SO_0(n,1)\).
Here, $SO_0(n,1)$ is the identity component of $SO(n,1)$.
Particularly, $\GRHn$ acts simply-transitively on \(n\)-dimensional real hyperbolic space \(\mathbb{R} \mathrm{H}^n \cong \RR_{>0} \times \RR^{n-1}\). 
The Lie group \(G_{\mathbb{R} \mathrm{H}^n}\) is known as the \emph{Lie group of \(n\)-dimensional real hyperbolic space \(\mathbb{R} \mathrm{H}^n\)}. 
The Lie group $H^3$ is the three-dimensional Heisenberg group.

\subsection{The moduli space of left-invariant statistical structures}\label{subsec_moduli-LISS}
In this subsection, we define the moduli space of left-invariant statistical structures on a Lie group $G$.

The Lie group $\RR_{>0 } \times \mathrm{Aut}(G)$ acts smoothly on $\Lstat(G)$ (see Section~\ref{Subsec_LISS} for the notation of $\Lstat(G)$) by
\begin{equation}\label{eq_main-action}
    (r, \varphi). (g, \nabla) \coloneqq (r \cdot (\varphi^{-1})^\ast g, (\varphi^{-1})^\ast \nabla) \quad ((r, \varphi) \in \RRtimesaut(G), ~ (g,\nabla) \in \Lstat(G)).
\end{equation}
We mentioned the smooth structure on $\Lstat(G)$ in Section~\ref{Subsec_LISS} as the trivial vector bundle $\Lstat(G) \cong \Modin(G) \times S^3(\mathfrak{g}^\ast)$.
In this paper, the equivalence relation on $\Lstat(G)$ defined by the group action \eqref{eq_main-action} is denoted by $\approx$, that is $(g, \nabla) \approx (g', \nabla')$ means that there exist a scalar $r > 0 $ and $\varphi \in \mathrm{Aut}(G)$ which satisfies $(r, \varphi). (g, \nabla) = (g', \nabla')$.

We shall introduce the following notion:
\begin{Def}\label{def:ssi_compati-grp}
Let $(g,\nabla)$ and $(g',\nabla')$ be both left-invariant statistical structures on $G$.
We say that a scaling statistical isomorphism $(f,r)$ from $(G,g,\nabla)$ to $(G,g',\nabla')$ (see Definition~\ref{Def_SIUS} for the notation) is \emph{compatible with the group structure} if there exists $\varphi \in \mathrm{Aut}(G)$ such that $f(hx) = \varphi(h) f(x)$ for all $h,x \in G$.
\end{Def}

Then the next proposition holds:
\begin{Prop}\label{proposition:compatible}
Let $(g,\nabla)$ and $(g',\nabla')$ be both left-invariant statistical structures on $G$.
Then \((g, \nabla) \approx (g', \nabla')\) on $\Lstat(G)$ 
holds if and only if 
there exists a scaling statistical isomorphism compatible with the group structure between $(G,g,\nabla)$ and $(G,g',\nabla')$.
\end{Prop}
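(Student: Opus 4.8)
The plan is to prove both implications by exhibiting, in each direction, the correct pair (diffeomorphism together with automorphism) and then reducing everything to the defining formula \eqref{eq_main-action} of the action combined with the left-invariance of the structures. Throughout I will use the contravariant functoriality of pullback of tensors and of connections under diffeomorphisms, i.e.\ $(h_1 \circ h_2)^* = h_2^* \circ h_1^*$.

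For the ``only if'' direction, suppose $(g,\nabla) \approx (g',\nabla')$, so that $(r,\varphi).(g,\nabla) = (g',\nabla')$ for some $r>0$ and $\varphi \in \mathrm{Aut}(G)$; explicitly $g' = r(\varphi^{-1})^* g$ and $\nabla' = (\varphi^{-1})^*\nabla$. I would simply take $f := \varphi$, viewed as a diffeomorphism of $G$. Since $\varphi$ is an automorphism, $f(gx) = \varphi(g)\varphi(x) = \varphi(g)f(x)$, so $(f,r)$ is compatible with the group structure via the very same $\varphi$. The two conditions of Definition~\ref{Def_SIUS} then follow by functoriality: $f^*g' = \varphi^*\!\big(r(\varphi^{-1})^* g\big) = r\,g$ and $f^*\nabla' = \varphi^*\!\big((\varphi^{-1})^*\nabla\big) = \nabla$, so $(f,r)$ is a scaling statistical isomorphism compatible with the group structure.

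The substantive direction is the converse. Assume $(f,r)$ is a scaling statistical isomorphism from $(G,g,\nabla)$ to $(G,g',\nabla')$ compatible with the group structure via some $\varphi \in \mathrm{Aut}(G)$, so $f(gx) = \varphi(g)f(x)$ for all $g,x \in G$. Setting $x = e$ and writing $a := f(e)$ gives $f(g) = \varphi(g)a$ for all $g$, that is $f = R_a \circ \varphi$. The key algebraic step, and the crux of the whole argument, is to rewrite this composition so that the translation factor becomes a \emph{left} translation: since $\varphi(g)a = a\big(a^{-1}\varphi(g)a\big)$, putting $\psi := \mathrm{Inn}_{a^{-1}} \circ \varphi$ with $\mathrm{Inn}_{a^{-1}}(y) = a^{-1}ya$, we obtain $f = L_a \circ \psi$, where $\psi \in \mathrm{Aut}(G)$ is an inner automorphism composed with $\varphi$. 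I expect this ``trade a right translation for a left translation plus an inner automorphism'' step to be the main point, since it is exactly what makes the left-invariance usable; had one tried to work directly with $f = R_a \circ \varphi$, the factor $R_a$ would not preserve the left-invariant structures.

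With the decomposition $f = L_a\circ\psi$ in hand, the remainder is routine. Because $g'$ and $\nabla'$ are left-invariant we have $L_a^* g' = g'$ and $L_a^*\nabla' = \nabla'$, whence $f^*g' = \psi^*(L_a^* g') = \psi^* g'$ and likewise $f^*\nabla' = \psi^*\nabla'$. The defining relations $f^*g' = r g$ and $f^*\nabla' = \nabla$ thus read $\psi^* g' = r g$ and $\psi^*\nabla' = \nabla$; applying $(\psi^{-1})^*$ gives $g' = r(\psi^{-1})^* g$ and $\nabla' = (\psi^{-1})^*\nabla$. Comparing with \eqref{eq_main-action} shows $(r,\psi).(g,\nabla) = (g',\nabla')$, hence $(g,\nabla)\approx(g',\nabla')$. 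The only remaining points are the variance conventions for pullbacks of connections and for inner automorphisms, which are purely bookkeeping.
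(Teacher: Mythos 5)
Your proof is correct and is essentially the paper's own argument: the paper establishes this proposition as the special case $H=\{e\}$ of Theorem~\ref{theorem:Statorbit} in Appendix~\ref{Appendix_Moduli-Act}, whose key step — taking $\eta_0 = f(e_G)$, twisting the equivariance automorphism by an inner automorphism, and writing $\theta_\varphi^{-1} = \rho^M_{\eta_0^{-1}}\circ f$ — is exactly your decomposition $f = L_a\circ\psi$ with $\psi = \mathrm{Inn}_{a^{-1}}\circ\varphi$, followed by the same appeal to left-invariance. The only difference is that the paper works in the general homogeneous-space setting $M=G/H$ (where one must also check $\varphi(H)=H$), while you argue directly on the group.
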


The proof of Proposition~\ref{proposition:compatible} is postponed to Appendix~\ref{Appendix_Moduli-Act} (as Theorem~\ref{theorem:Statorbit}).

As a corollary to Proposition~\ref{proposition:compatible}, one sees that the group action \eqref{eq_main-action} preserves properties such as conjugate symmetry, constant curvature, the dually flat property, and CHC for left-invariant statistical structures on $G$ (see Section~\ref{subsec:some-class-of-SS} for the definitions of these classes).
\begin{Def}\label{Def_moduli-stat}
The quotient space $(\RR_{>0} \times \mathrm{Aut}(G)) \backslash \Lstat(G)$ by the group action \eqref{eq_main-action} is called the \emph{moduli space of left-invariant statistical structures on $G$}, and denoted by $\MLStat(G)$.
\end{Def}

It should be remarked that the moduli spaces, equipped with the quotient topology, might not be Hausdorff in general.
In fact, the moduli space $\MLStat(G)$ is not Hausdorff when $G = \RR^n$ (see Remark~\ref{Rem_top_moduli-Rn}). 

We also define the \emph{moduli space $\MLStatCS(G)$ of left-invariant
conjugate symmetric statistical structures on $G$} and the
\emph{moduli space $\MLStatDF(G)$ of left-invariant dually flat
structures on $G$} as the following quotient spaces:
\begin{align*}
\MLStatCS(G) &:= (\RR_{>0} \times \mathrm{Aut}(G)) \backslash \LstatCS(G) \text{ and } \\
\MLStatDF(G) &:= (\RR_{>0} \times \mathrm{Aut}(G)) \backslash \LstatDF(G).
\end{align*}

Note that the relative topology of $\MLStatCS(G)$ induced from $\MLStat(G)$ coincides with the quotient topology defined by the surjective map $\LstatCS(G) \to \MLStatCS(G)$.
The topology of $\MLStatDF(G)$ is similar.

\subsection{The cases where $\mathfrak{PM}(G) \cong \{\ast\}$}\label{subsection:modli_unique_metric}
In this subsection, let us assume that 
the moduli space $\mathfrak{PM}(G)$ of left-invariant Riemannian metrics on $G$ is a singleton.
Then  
$\Modin(G)$ is a homogeneous space of $\RR_{>0} \times \mathrm{Aut}(G)$, 
and $\Lstat(G) \cong \Modin(G) \times S^3(\mathfrak{g}^{\ast})$ is an $(\RR_{>0} \times \mathrm{Aut}(G))$-equivariant trivial vector bundle on the homogeneous space $\Modin(G)$.
In such a situation, $\LstatCS(G)$ [resp.~$\LstatDF(G)$] is an $(\RR_{>0} \times \mathrm{Aut}(G))$-equivariant sub-vector-bundle [resp.~sub-fiber-bundle] of the trivial bundle $\Lstat(G)$ on the homogeneous space $\Modin(G)$ with its fiber $S^3_{\mathrm{CS}}(\mathfrak{g}^\ast,g)$ [resp.~$S^3_{\mathrm{DF}}(\mathfrak{g}^\ast,g)$] at each point $g \in \Modin(G)$ (see~Section~\ref{Subsec_LISS} for the definitions of $S^3_{\mathrm{CS}}(\mathfrak{g}^\ast, g)$ and $S^3_{\mathrm{DF}}(\mathfrak{g}^\ast, g)$).

By the general theory of the topology of equivariant fiber bundles on homogeneous spaces (cf.~Theorem~\ref{theorem:fiber_total_homeo} in Appendix~\ref{Appendix_Top-Moduli}),  we obtain the following:

\begin{Prop}\label{Prop_moduli}
Let $G$ be a Lie group with $\mathfrak{PM}(G) \cong \{\ast\}$ (see Section~\ref{subsec_moduli-LIM} for details).
Fix $g \in \Modin(G)$.
Then the following statements hold:
\begin{enumerate}[label=(\arabic*)]
    \item 
    The map 
    \[
    F_g : S^3(\mathfrak{g}^\ast) \to \MLStat(G), ~ C \mapsto (\RR_{>0} \times \mathrm{Aut}(G)).(g, \nabla^{(g, C)})
    \]
    is continuous, open and surjective. Here, $\nabla^{(g, C)}$ is the left-invariant statistical connection corresponding to $(g,C)$ in the sense of Proposition~\ref{Prop_1to1-Lstat}.
    \item The moduli space $\MLStat(G)$ is homeomorphic to the quotient space
\begin{equation}\label{eq_Moduli-Stat-trans}
    (\RR_{>0} \times \mathrm{Aut}(G))^{g} \backslash S^3(\mathfrak{g}^\ast).
\end{equation}
\end{enumerate}
Here, $(\mathbb{R}_{>0} \times \mathrm{Aut}(G))^{g}$ denotes the isotropy subgroup of $\RR_{>0} \times \mathrm{Aut}(G)$ with respect to the action \eqref{eq_action-LIM} at the point $g \in \Modin(G)$, and the Lie group $(\mathbb{R}_{>0} \times \mathrm{Aut}(G))^{g}$ acts on $S^3(\mathfrak{g}^\ast)$ as below:
\begin{equation}\label{eq_action-isotropy-S3}
    (r, \varphi). C \coloneqq r \cdot C(  \varphi^{-1} ~ \cdot ~ , \varphi^{-1} ~ \cdot ~ , \varphi^{-1} ~ \cdot ~ ) \quad ((r, \varphi) \in (\mathbb{R}_{>0} \times \mathrm{Aut}(G))^{g}, ~ C \in S^3(\mathfrak{g}^\ast)),
\end{equation}
where $\varphi^{-1} \cdot$ denotes the induced action of $\varphi^{-1} \in \mathrm{Aut}(G)$ on $\mathfrak{g}$.
Similarly, the moduli spaces $\MLStatCS(G)$ and $\MLStatDF(G)$ are homeomorphic to the quotient spaces
\[
    (\RR_{>0} \times \mathrm{Aut}(G))^{g} \backslash S^3_{\mathrm{CS}}(\mathfrak{g}^\ast, g)
\]
and 
\[
    (\RR_{>0} \times \mathrm{Aut}(G))^{g} \backslash S^3_{\mathrm{DF}}(\mathfrak{g}^\ast, g),
\]
respectively.
\end{Prop}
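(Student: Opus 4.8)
The plan is to deduce the whole statement from the general result on equivariant fiber bundles over homogeneous spaces recorded as Theorem~\ref{theorem:fiber_total_homeo}. Set $\Gamma := \RR_{>0} \times \mathrm{Aut}(G)$. The hypothesis $\mathfrak{PM}(G) \cong \{\ast\}$ means precisely that $\Gamma$ acts transitively on $\Modin(G)$ through \eqref{eq_action-LIM}; fixing the base point $g$, I would identify $\Modin(G) \cong \Gamma/H$ with $H := (\RR_{>0} \times \mathrm{Aut}(G))^{g}$ the isotropy subgroup. By Proposition~\ref{Prop_1to1-Lstat} the projection $\pi: \Lstat(G) \cong \Modin(G) \times S^3(\mathfrak{g}^\ast) \to \Modin(G)$ is a trivial vector bundle, and since the action \eqref{eq_main-action} covers \eqref{eq_action-LIM} on the base, $\Lstat(G)$ is a $\Gamma$-equivariant bundle over the homogeneous space $\Modin(G)$ with fiber $\{g\} \times S^3(\mathfrak{g}^\ast) \cong S^3(\mathfrak{g}^\ast)$ over $g$.

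The one genuine computation is to identify the induced $H$-action on this fiber. For $(r,\varphi) \in H$ one has $(r,\varphi).g = g$, so \eqref{eq_main-action} preserves the fiber over $g$; using the identification $(g,\nabla) \leftrightarrow (g, C = \nabla g)$ of Proposition~\ref{Prop_1to1-Lstat} together with the fact that the pullback by the automorphism $\varphi^{-1}$ commutes with the operation $\nabla \mapsto \nabla g$, a short computation gives that the new cubic form is $r \cdot C(\varphi^{-1}\cdot, \varphi^{-1}\cdot, \varphi^{-1}\cdot)$ (consistent with the transformation law $f^\ast C' = r \cdot C$ recorded in Section~\ref{Sec_SM}). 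This is exactly the action \eqref{eq_action-isotropy-S3}.

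With these two observations, assertions (1) and (2) are direct applications of Theorem~\ref{theorem:fiber_total_homeo}: part (1) of that theorem shows that the composite $S^3(\mathfrak{g}^\ast) \cong \{g\} \times S^3(\mathfrak{g}^\ast) \hookrightarrow \Lstat(G) \twoheadrightarrow \MLStat(G)$, which is precisely $F_g$, is continuous, open and surjective, and part (2) yields the homeomorphism $\MLStat(G) = \Gamma \backslash \Lstat(G) \cong H \backslash S^3(\mathfrak{g}^\ast)$. For the remaining two homeomorphisms I would invoke the corollary to Proposition~\ref{proposition:compatible}, by which $\Gamma$ preserves conjugate symmetry and dual flatness; hence $\LstatCS(G)$ and $\LstatDF(G)$ are the $\Gamma$-equivariant sub-bundles over $\Modin(G)$ whose fibers over $g$ are the $H$-invariant sets $S^3_{\mathrm{CS}}(\mathfrak{g}^\ast, g)$ and $S^3_{\mathrm{DF}}(\mathfrak{g}^\ast, g)$. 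Restricting $\pi$ to each and applying Theorem~\ref{theorem:fiber_total_homeo} once more gives $\MLStatCS(G) \cong H \backslash S^3_{\mathrm{CS}}(\mathfrak{g}^\ast, g)$ and $\MLStatDF(G) \cong H \backslash S^3_{\mathrm{DF}}(\mathfrak{g}^\ast, g)$.

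I expect the main obstacle to be conceptual rather than computational: it lies in confirming that the hypotheses of Theorem~\ref{theorem:fiber_total_homeo} are met---namely that the equivariant projection is open with the stated fiber over the single $\Gamma$-orbit $\Modin(G)$, and (in the dually flat case) that $S^3_{\mathrm{DF}}(\mathfrak{g}^\ast, g)$, which is only a subset and not a linear subspace, still assembles into a genuine equivariant sub-fiber-bundle. Once the fiber action \eqref{eq_action-isotropy-S3} is verified, the rest is formal bookkeeping with the appendix theorem.
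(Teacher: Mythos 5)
Your proposal is correct and takes essentially the same route as the paper: the paper likewise realizes $\Lstat(G) \cong \Modin(G) \times S^3(\mathfrak{g}^\ast)$ as an $(\RR_{>0}\times\mathrm{Aut}(G))$-equivariant trivial bundle over the homogeneous space $\Modin(G)$, with $\LstatCS(G)$ and $\LstatDF(G)$ as equivariant sub-bundles (using that the action preserves conjugate symmetry and dual flatness), and then deduces all of the stated homeomorphisms from Theorem~\ref{theorem:fiber_total_homeo}. The two hypotheses you flag at the end (openness of the orbit maps and the fiber-bundle structure of the dually flat locus) are passed over with the same brevity in the paper itself, so your treatment matches its level of rigor.
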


We also note that 
for the isotropy subgroup $(\RR_{>0} \times \mathrm{Aut}(G))^{g}$, the following is known:

\begin{Prop}[cf.~Lemma~4.2 in Taketomi (2022), arXiv:2210.01483]\label{Prop_isotropy-LIM}
Assume that $G$ is connected and simply-connected (and thus $\mathrm{Aut}(G)$ can be identified with $\mathrm{Aut}(\mathfrak{g})$). 
Fix $g \in \Modin(G)$.
We denote by $\LR$ the corresponding inner product on $\mathfrak{g}$.
Then the claims below hold:
\begin{enumerate}[label=(\arabic*)]
    \item \label{Prop_isotropy-LIM_abelian} If the Lie algebra $\mathfrak{g}$ is abelian, the isotropy subgroup $(\RR_{>0} \times \mathrm{Aut}(G))^{g}$ is isomorphic to the Lie group $\RR_{>0}\times O(\mathfrak{g}, \LR)$.
    \item \label{Prop_isotropy-LIM_nonabelian} If the Lie algebra $\mathfrak{g}$ is non-abelian, the isotropy subgroup $(\RR_{>0} \times \mathrm{Aut}(G))^{g}$ coincides with $\{1\} \times (\mathrm{Aut}(\mathfrak{g}) \cap O(\mathfrak{g}, \LR)) \subset \RR_{>0 }\times \mathrm{Aut}(\mathfrak{g})$.
\end{enumerate}
\end{Prop}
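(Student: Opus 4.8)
The plan is to translate the fixed-point condition for the action~\eqref{eq_action-LIM2} into a concrete statement about the linear map $\varphi$, and then split the analysis according to whether $\mathfrak{g}$ is abelian. A pair $(r,\varphi) \in \RR_{>0} \times \mathrm{Aut}(\mathfrak{g})$ fixes the inner product $\langle \cdot , \cdot \rangle$ precisely when $r \cdot \langle \varphi^{-1}X, \varphi^{-1}Y\rangle = \langle X,Y\rangle$ for all $X,Y \in \mathfrak{g}$. Substituting $X = \varphi X'$ and $Y = \varphi Y'$, this is equivalent to
\[
\langle \varphi X, \varphi Y\rangle = r\,\langle X,Y\rangle \qquad (X,Y \in \mathfrak{g}),
\]
that is, $\varphi$ is a \emph{similarity} of $(\mathfrak{g},\langle \cdot , \cdot \rangle)$ with ratio $r$. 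Equivalently, setting $u := r^{-1/2}\varphi$, the map $u$ is orthogonal, so every element of the isotropy subgroup has the form $(r, r^{1/2} u)$ with $u \in O(\mathfrak{g},\langle \cdot , \cdot \rangle)$.

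For part (1), I first note that when $\mathfrak{g}$ is abelian the bracket is identically zero, so $\mathrm{Aut}(\mathfrak{g}) = GL(\mathfrak{g})$ and there is no constraint on $\varphi$ beyond being a similarity. Hence the isotropy subgroup is exactly $\{(r, r^{1/2}u) : r > 0,\ u \in O(\mathfrak{g},\langle \cdot , \cdot \rangle)\}$. It then remains only to verify that the map
\[
\RR_{>0} \times O(\mathfrak{g},\langle \cdot , \cdot \rangle) \longrightarrow (\RR_{>0}\times\mathrm{Aut}(\mathfrak{g}))^{g}, \qquad (r,u)\longmapsto (r, r^{1/2}u),
\]
is a group isomorphism; this is a routine check using that the ambient group law is the direct product, so that $(r_1,r_1^{1/2}u_1)(r_2,r_2^{1/2}u_2) = (r_1 r_2, (r_1 r_2)^{1/2} u_1 u_2)$ agrees with the image of $(r_1 r_2, u_1 u_2)$.

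For part (2), the decisive point---and the only genuine obstacle---is to show that $r=1$; once this is established, $\varphi = u$ is both orthogonal and a Lie algebra automorphism, which gives the asserted description $\{1\} \times (\mathrm{Aut}(\mathfrak{g}) \cap O(\mathfrak{g},\langle \cdot , \cdot \rangle))$, while the reverse inclusion is immediate. To force $r=1$, I would exploit that $\varphi$ is simultaneously an automorphism and a similarity. Writing $\mu(X,Y) := [X,Y]$ for the bracket, the automorphism property $\varphi[X,Y] = [\varphi X,\varphi Y]$ together with $\varphi = r^{1/2}u$ yields, after cancelling one factor of $r^{1/2}$,
\[
u^{-1}\mu(uX, uY) = r^{-1/2}\,\mu(X,Y) \qquad (X,Y\in\mathfrak{g}),
\]
i.e. the orthogonal pullback $u^{\ast}\mu$ equals $r^{-1/2}\mu$. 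Since $\langle \cdot , \cdot \rangle$ induces an inner product on the space of $(1,2)$-tensors under which the operator induced by the orthogonal map $u$ is again orthogonal, one has $\|u^{\ast}\mu\| = \|\mu\|$; comparing with $u^{\ast}\mu = r^{-1/2}\mu$ gives $r^{-1/2}\|\mu\| = \|\mu\|$. As $\mathfrak{g}$ is non-abelian, $\mu \neq 0$ and hence $\|\mu\| \neq 0$, so $r = 1$. This single norm comparison is exactly where the non-abelian hypothesis is essential, and I expect it to be the crux of the argument.
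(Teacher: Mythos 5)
Your proposal is correct. Note that the paper does not prove this proposition at all---it is quoted from \cite{Taketomi_2022}, Lemma~4.2---so there is no in-paper argument to compare against; your proof is complete and follows the standard route: rewriting the fixed-point condition for the action \eqref{eq_action-LIM2} as the similarity condition $\langle \varphi X, \varphi Y\rangle = r\langle X,Y\rangle$, parametrizing the isotropy as $\{(r, r^{1/2}u) \mid r>0,\ u \in O(\mathfrak{g},\LR)\}$ in the abelian case (where $\mathrm{Aut}(\mathfrak{g}) = GL(\mathfrak{g})$), and in the non-abelian case using that the pullback by the orthogonal map $u$ preserves the natural norm on $(1,2)$-tensors, so that $u^{\ast}\mu = r^{-1/2}\mu$ together with $\mu \neq 0$ forces $r=1$.
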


For each Lie group appearing in \eqref{eq_special-Liegrp-2}, the detail of the isotropy subgroup $(\RR_{>0}\times \mathrm{Aut}(G))^{g}$ is described in Sections \ref{subsec_LISS-Rn}, \ref{subsec:LISS-GRHn} and \ref{subsec:LISS-Rn-Heisenberg}, respectively.

\section{The case of $\RR^n$}\label{Sec_Rn}
In this section, we consider the abelian Lie group $\RR^n$ with $n \geq 1$.
Then $\mathfrak{PM}(\RR^n) \cong \{\ast\}$ (see~Proposition~\ref{prop:metric-moduli-singleton}).

Throughout this section, we denote the Lie algebra of $\RR^n$ by $\mathfrak{g}_{\RR^n}$.
Let us fix the standard inner product $\LR$ on $\gee_{\RR^n} \cong \RR^n$, and we denote by $g^E$ the left-invariant Riemannian metric on $\RR^n$ corresponding to $\LR$,
that is, $g^E$ denotes the usual Euclidean metric on $\RR^n$.

\subsection{Main theorem for $\RR^n$}\label{subsec:LISS-Rn}
The goal of this section is to give a proof of the following theorem:

\begin{Thm}\label{thm:Rn-moduli}
The following hold.
\begin{enumerate}[label=(\arabic*)]
    \item \label{item:Rn-moduli_CS} All left-invariant statistical structures on $\RR^n$ are conjugate symmetric, that is $\Lstat(\RR^n) = \LstatCS(\RR^n)$ holds.

    \item \label{item:Rn-moduli_DF} 
    Let $C \in S^3(\mathfrak{g}_{\RR^n}^\ast)$.
    Then the following two conditions on $(\LR, C)$ are equivalent:
        \begin{enumerate}[label=(\roman*)]
            \item \label{item:Rn-moduli_DF_1} The left-invariant statistical structure $(\LR,C)$ on $\RR^n$ is dually flat.
            \item \label{item:Rn-moduli_DF:onb} There exists an orthonormal basis $\{e_1, \dots , e_n\}$ of $(\mathfrak{g}_{\RR^n}, \LR)$ such that $C$ can be expressed as
                \[
                C = \sum_{i=1}^n \lambda_i x_i^3
                \]
            for some $\lambda_i \in \RR$.
            Here, $\{x_1, \dots ,x_n\}$ denotes the dual basis of $\{e_1, \dots ,e_n\}$.
        \end{enumerate}

    \item \label{item:Rn-moduli_moduli_CS}     
    \[
        \MLStat(\RR^n) = \MLStatCS(\RR^n) \cong (\RR_{>0} \times O(n)) \backslash S^3(\mathfrak{g}^\ast_{\RR^n}).
    \]
    \item \label{item:Rn-moduli_moduli_DF}
    Let $\mathcal{B} = \{ e_1,\dots,e_n \}$ be an orthonormal basis of $(\mathfrak{g}_{\RR^n},\LR)$, 
    and we denote by $\{ x_1,\dots,x_n \}$ the dual basis of $\mathcal{B}$.
    Put
    \[
        V_{\mathcal{B}} := \left\{ \sum_{i=1}^n \lambda_i x_i^3 \in S^3(\mathfrak{g}_{\RR^n}^\ast) \,\middle|\, \lambda_i \in \RR \right\} \subset S^3(\gee_{\RR^n}^\ast).
    \]
    Then
    \begin{align*}
        \MLStatDF(\RR^n)
        &\cong (\RR_{>0} \times O(n)) \backslash O(n). V_{\mathcal{B}} \\
        &\cong \RR_{>0} \backslash \{ \lambda \in \RR^n \mid \lambda_1 \geq \dots \geq \lambda_n \geq 0 \}.
    \end{align*}
\end{enumerate}
\end{Thm}

As a corollary to Theorem~\ref{thm:Rn-moduli}~\ref{item:Rn-moduli_DF}, we have the following proposition:
\begin{Prop}
Let $g$ be a left-invariant Riemannian metric on $\RR^n$ with $n \geq 2$. 
Then a left-invariant dually flat structure $(g, \nabla)$ on $\RR^n$ is of CHC (see Section~\ref{subsec:some-class-of-SS} for the notation) if and only if $\nabla$ is the Levi-Civita connection of $g$.
\end{Prop}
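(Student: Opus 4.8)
The plan is to prove the equivalence directly using the characterization of dually flat structures on $\RR^n$ established in Theorem~\ref{thm:Rn-moduli}\eqref{item:Rn-moduli_DF}. Since the statement is about a fixed left-invariant metric $g$ (equivalently a fixed inner product $\LR$ on $\gee_{\RR^n}$), I would begin by invoking part \eqref{item:Rn-moduli_DF:onb}: any dually flat $(g,\nabla)$ has cubic form $C = \sum_{i=1}^n \lambda_i x_i^3$ for some orthonormal basis $\{e_1,\dots,e_n\}$ with dual basis $\{x_1,\dots,x_n\}$ and constants $\lambda_i \in \RR$. The difference tensor $K = K^{(g,C)}$ is then determined by $C(X,Y,Z) = -2 g(K(X,Y),Z)$, so in this basis $K(e_i,e_i) = -\tfrac{1}{2}\lambda_i e_i$ and $K(e_i,e_j)=0$ for $i\neq j$; this diagonal form is the key computational simplification.

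Next I would unwind the CHC condition. On the abelian Lie group $\RR^n$, the Levi-Civita connection $\nabla^g$ is flat and $\nabla^g_{e_i} e_j = 0$ for all $i,j$ (the structure constants vanish, so by Proposition~\ref{proposition:Gamma_from_a} all $\Gamma_{ij}^k = 0$). Consequently, for left-invariant tensors, the covariant derivative $\nabla^g K$ reduces to an algebraic expression: evaluating $(\nabla_X K)(Y,Z)$ in the CHC defining equation, I would express $\nabla_{e_i} K$ in terms of $K$ itself and the (vanishing) Christoffel symbols, reducing the CHC equation to a purely pointwise, algebraic identity among the $\lambda_i$. The CHC condition reads $(\nabla_X K)(Y,Z) = -\tfrac{c}{2}\{g(X,Y)Z + g(X,Z)Y\}$; testing this on basis vectors $e_i,e_j,e_k$ and using the diagonal form of $K$ together with the flatness of $\nabla^g$ should force strong constraints.

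The decisive step is to show that these constraints imply $c = 0$ and all $\lambda_i = 0$ (when $n \geq 2$), which gives $C = 0$, i.e.\ $\nabla = \nabla^g$. I expect the argument to run as follows: the right-hand side of the CHC equation, evaluated on suitable index combinations with $X = e_i$, $Y = e_i$, $Z = e_j$ for $i \neq j$, produces terms proportional to $c\, e_j$, while the left-hand side, being built from the diagonal $K$ and the vanishing connection, produces a different pattern; matching the two forces $c = 0$. Once $c = 0$, the CHC equation becomes $\nabla K \equiv 0$ (a kind of parallelism), and combining this with the quadratic terms appearing in $\nabla_X(K(Y,Z))$ — which involve products $\lambda_i \lambda_j$ through the connection $\nabla = \nabla^g + K$ — should force each $\lambda_i = 0$. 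The main obstacle will be bookkeeping the distinction between $\nabla^g K$ and $\nabla K = \nabla^g K + K\cdot K$: one must be careful which connection appears in the CHC formula and correctly account for the nonlinear self-interaction term of $K$, since it is precisely this term that distinguishes $n \geq 2$ from the degenerate $n=1$ case where any $\lambda_1 x_1^3$ is trivially CHC.

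For the converse direction, which is immediate, I would simply note that the trivial statistical structure $(g,\nabla^g)$ has $K = 0$, hence $C = 0$, and the CHC equation holds with $c = 0$; thus $\nabla = \nabla^g$ is always of CHC, completing the equivalence.
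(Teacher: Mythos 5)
Your proof is correct, but it establishes the hard direction by a genuinely different route than the paper. The paper's proof extracts $[K_X,K_Y]=0$ from dual flatness via Proposition~\ref{Prop_curvature-CS}, observes that $(\RR^n,g)$ has constant sectional curvature $0$, and then invokes the structure theory of CHC Hessian manifolds --- namely \cite[Corollary 2]{Shima_1995} and \cite[Proposition 3.1]{FK_2013} --- to obtain the identity $K(K(Z,X),Y)=0$; applied to the diagonal basis of Theorem~\ref{thm:Rn-moduli} this gives $\lambda_i^2=0$. You instead unwind the CHC equation by hand: on the abelian group all Christoffel symbols vanish (Proposition~\ref{proposition:Gamma_from_a}), so with $K(e_i,e_j)=\delta_{ij}\kappa_i e_i$ (where $\kappa_i=-\lambda_i/2$) one computes $(\nabla_{e_i}K)(e_j,e_k)=-\delta_{ij}\delta_{ik}\kappa_i^2 e_i$, the pure self-interaction term coming from $\nabla=\nabla^g+K$; the choice $i=j\neq k$ (this is exactly where $n\geq 2$ is used) makes the left-hand side vanish while the right-hand side is $-\tfrac{c}{2}e_k$, forcing $c=0$, and then $i=j=k$ gives $\kappa_i^2=c=0$, hence $C=0$ and $\nabla=\nabla^g$. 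Your argument is self-contained --- it needs nothing beyond Theorem~\ref{thm:Rn-moduli} and the paper's own computational lemmas --- whereas the paper's is shorter on the page but outsources the key identity to Shima and Furuhata--Kurose; both yield the same conclusion. One small correction to your commentary: what separates $n\geq 2$ from $n=1$ is not the nonlinear term $K\cdot K$ itself (that term, and the resulting relation $\kappa_i^2=c$, is present for every $n$, which is precisely why every $\lambda_1 x_1^3$ is CHC when $n=1$), but the availability of the mixed test $X=Y=e_i$, $Z=e_k$ with $i\neq k$ that forces $c=0$; your computation uses this correctly, so the slip is only in the prose.
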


\begin{proof}
The Hessian structure $(g, \nabla^g)$ is known to be of dually flat of CHC $0$ on $\RR^n$ (see~\cite{Shima_1995}).
Conversely, we suppose that $(g, \nabla)$ is a left-invariant dually flat structure on $\RR^n$ of CHC, and let us prove that $\nabla = \nabla^g$.
By Proposition~\ref{Prop_curvature-CS} and $R^\nabla = R^{\nabla^g} = 0$, we have $[K_X,K_Y] = 0$ for any $X,Y \in \mathfrak{g}_{\RR^n}$. 
Here, $K$ is the difference tensor of $(g, \nabla)$.
We also note that $(\RR^n,g)$ has the constant sectional curvature $0$.
Thus by \cite[Corollary 2]{Shima_1995} and \cite[Proposition 3.1]{FK_2013}, 
for each $X,Y,Z \in \mathfrak{g}_{\RR^n}$, 
the equality below holds:
\begin{equation}
    K(K(Z,X),Y) = 0 \text{ for } X,Y,Z \in \mathfrak{g}_{\RR^n} \label{eq:FK3-1:4}.
\end{equation}
Take the orthonormal basis $e_1,\dots, e_n$ of $\mathfrak{g}_{\RR^n}$ as in Theorem~\ref{thm:Rn-moduli} \ref{item:Rn-moduli_DF} and write $C = \sum_i \lambda_i x_i^3$.
Note that $K(e_i,e_i) = \lambda_i e_i$ for each $i$.
Therefore, we have
\[
0 = K(K(e_i,e_i),e_i) = \lambda_i^2 e_i.
\]
This proves that $C = 0$ and hence $\nabla = \nabla^g$.
\end{proof}

Let us denote by $g^{E_1}$ the Euclidean metric on the Euclidean line $\RR$.
We fix a unit vector $e \in \mathfrak{g}_{\RR}$ with respect to the inner-product on $\mathfrak{g}_{\RR}$ induced by $g^{E_1}$, 
and $t \in \mathfrak{g}_{\RR}^\ast$ is defined by $t(e) = 1$.
Note that for each $a \in \RR$, 
$at^3$ defines a left-invariant symmetric $(0,3)$-tensor field on $\RR$,  
and $(\RR,g^{E_1},at^3)$ defines a one-dimensional dually flat statistical manifold.

\begin{Thm}
For each $(g,C) \in \LstatDF(\RR^n)$, 
there exists $(a_1,\dots,a_n) \in \RR^n$ such that $(\RR^n,g,C)$ is statistically isomorphic to $\prod_{i=1}^n (\RR,g^{E_1},a_it^3)$, 
where $\prod_{i=1}^n (\RR,g^{E_1},a_it^3)$ denotes the direct product of the family $\{ (\RR,g^{E_1},a_it^3) \}_{i}$ of one-dimensional dually flat statistical manifolds in the sense of Section~\ref{Sec_SM}. 
\end{Thm}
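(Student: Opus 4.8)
The plan is to reduce the statement to the diagonalization already obtained in Theorem~\ref{thm:Rn-moduli}\eqref{item:Rn-moduli_DF} and then to exhibit an explicit \emph{linear} statistical isomorphism onto the product model. First I would apply Theorem~\ref{thm:Rn-moduli}\eqref{item:Rn-moduli_DF} to the given dually flat structure $(g,C)$: since $(g,C)$ is dually flat, there is an orthonormal basis $\{ e_1,\dots,e_n \}$ of $(\mathfrak{g}_{\RR^n},\LR)$ whose dual basis $\{ x_1,\dots,x_n \}$ diagonalizes $C$, that is, $C=\sum_{i=1}^n a_i x_i^3$ for suitable $a_i\in\RR$. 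These $a_i$ are exactly the numbers claimed in the statement.

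Next I would identify the statistical data of the target $\prod_{i=1}^n(\RR,g^{E_1},a_i t^3)$. Writing $\{ f_1,\dots,f_n \}$ for the standard basis of $\RR^n=\prod_i\RR$ (so $f_i$ spans the $i$-th factor) and $\{ y_1,\dots,y_n \}$ for its dual basis, the product metric $\bigoplus_i g^{E_1}$ is precisely the Euclidean metric making $\{ f_i \}$ orthonormal, and by the product formula for cubic forms recalled at the end of Section~\ref{Sec_SM} the product cubic form is $C'=\sum_{i=1}^n a_i y_i^3$. I would then define $f\colon\RR^n\to\RR^n$ to be the linear isomorphism determined by $f(e_i)=f_i$. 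Because $\RR^n$ is abelian, any linear isomorphism of $\mathfrak{g}_{\RR^n}$ is an automorphism of the Lie group, so $f$ is a diffeomorphism; since it carries one orthonormal basis to another, $f^\ast\!\left(\bigoplus_i g^{E_1}\right)=g$. Evaluating on the basis and using the symmetric tensor product convention, one gets $f^\ast C'(e_j,e_k,e_l)=C'(f_j,f_k,f_l)=a_j\,\delta_{jk}\delta_{kl}=C(e_j,e_k,e_l)$, so $f^\ast C'=C$.

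Finally I would upgrade ``metric and cubic form are preserved'' to ``statistical structure is preserved''. Naturality of the Levi-Civita connection gives $f^\ast\nabla^{g'}=\nabla^{f^\ast g'}=\nabla^g$; combining $f^\ast g'=g$, $f^\ast C'=C$, and the fact that the difference tensor is determined by the pair $(g,C)$ through $C=-2g(K(\cdot,\cdot),\cdot)$, one obtains $f^\ast K'=K$, and hence $f^\ast\nabla'=f^\ast(\nabla^{g'}+K')=\nabla^g+K=\nabla$. Thus $(f,1)$ is a statistical isomorphism in the sense of Definition~\ref{Def_SIUS}, which is the desired conclusion.

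The only step requiring genuine care is the last one: passing from the agreement of the pairs $(g,C)$ to the agreement of the connections $\nabla$. This is handled cleanly by the naturality of the Levi-Civita connection together with the bijective correspondence $\nabla\leftrightarrow\nabla g$ of Proposition~\ref{Prop_1to1-Lstat}; everything else amounts to unwinding the diagonalization of Theorem~\ref{thm:Rn-moduli} and the product formula for cubic forms, so I do not expect any serious obstacle.
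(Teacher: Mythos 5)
Your proof is correct and is exactly the argument the paper intends: the paper states this theorem without proof, as an immediate consequence of Theorem~\ref{thm:Rn-moduli}~\eqref{item:Rn-moduli_DF} together with the product formula for cubic forms recalled at the end of Section~\ref{Sec_SM}, and your write-up simply supplies the explicit linear statistical isomorphism and the (routine) passage from agreement of $(g,C)$ to agreement of the connections. One small imprecision to repair: Theorem~\ref{thm:Rn-moduli}~\eqref{item:Rn-moduli_DF} is stated for the standard inner product $\LR$, whereas your $(g,C)\in\LstatDF(\RR^n)$ carries an arbitrary left-invariant metric, and your later step $f^{\ast}\bigl(\bigoplus_i g^{E_1}\bigr)=g$ requires the basis $\{e_1,\dots,e_n\}$ to be orthonormal for $g$ itself, not for $\LR$; this is harmless, since either you first transport $g$ to $\LR$ by a linear automorphism of $\RR^n$ (a statistical isomorphism, and dually flatness is preserved by Theorem~\ref{Thm_DF-CS-preser}), or you observe that the paper's proof of (i)$\Rightarrow$(ii) uses only flatness of the metric and Proposition~\ref{proposition:Opozda}, both of which hold for every left-invariant metric on $\RR^n$.
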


For each $p,r \in \ZZ_{\geq 0}$ with $p+r =n$, 
and each $c = (c_1,\dots,c_n) \in \RR^n$ with $c_1 \geq c_2 \geq \dots c_p > 0$ and $c_{p+1}=\dots=c_{p+r}=0$, let us define the Hessian manifold (i.e.~a dually flat statistical manifold) $(\Omega_c,g_c,\nabla_c)$ as below.
The open domain $\Omega_c$ of the Euclidean space $\RR^n$ is defined by 
\[
\Omega_c := \left\{ (x_1,\dots,x_n) ~\middle|~ x_i > 0 \text{ for each } i \leq p \right\},  
\]
and $\nabla_c$ denotes the Euclidean connection on $\Omega_c$.
The Riemannian metric $g_c$ on $\Omega_c$ is defined as the pullback of the Euclidean metric $g^{E}$ on $\RR^n$ by the diffeomorphism
\[
\phi_c : \Omega_c \rightarrow \RR^n, ~ (x_1,\dots,x_n) \mapsto (\psi_{c_1}(x_1),\dots,\psi_{c_n}(x_n)),
\]
where we put
\[
\psi_{c_i}(x_i) :=     
\begin{cases}
    -\frac{2}{c_i} \log (\frac{c_i}{2} x_i) \text{ if } i \leq p \\
    x_i \text{ if } p < i.
\end{cases}
\]
 
\begin{Cor}\label{cor:1-dim_Hesse-str}
For each $(g,\nabla) \in \LstatDF(\RR^n)$, there exists $c \in \{ \lambda \in \RR^n \mid \lambda_1 \geq \dots \geq \lambda_n \geq 0 \}$ such that the Hessian manifold $(\RR^n,g,\nabla)$ is statistically isomorphic to the Hessian manifold  
$(\Omega_{c},g_{c},\nabla_{c})$ up to scaling.
\end{Cor}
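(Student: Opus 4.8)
The plan is to reduce everything to the one-dimensional factors and then reassemble. First I would invoke the preceding theorem, which asserts that every $(\RR^n,g,C) \in \LstatDF(\RR^n)$ is statistically isomorphic to a direct product $\prod_{i=1}^n (\RR,g^{E_1},a_it^3)$ for suitable $(a_1,\dots,a_n) \in \RR^n$. On the target side, I would observe that $(\Omega_c,g_c,\nabla_c)$ is itself a direct product of one-dimensional Hessian manifolds, since $\phi_c = (\psi_{c_1},\dots,\psi_{c_n})$ and $\nabla_c$ is the Euclidean connection: the $i$-th factor is $(\RR_{>0},\psi_{c_i}^\ast g^{E_1},\nabla^{\mathrm{Euc}})$ when $c_i > 0$ and the flat Euclidean line when $c_i = 0$. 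Thus it suffices to match the one-dimensional factors and to use that a direct product of scaling statistical isomorphisms sharing a common scaling factor $r$ is again a scaling statistical isomorphism (this follows from the additivity of cubic forms under direct products recalled in Section~\ref{Sec_SM}). Since the decomposition from the preceding theorem is a genuine statistical isomorphism, composing it with the factorwise maps below yields a scaling statistical isomorphism onto $(\Omega_c,g_c,\nabla_c)$.

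The core computation is the one-dimensional case. Writing $e = \partial_y$ for the unit vector, the structure $(\RR,g^{E_1},at^3)$ corresponds to the connection with $\nabla_{\partial_y}\partial_y = -\frac{a}{2}\partial_y$, since $K(e,e) = -\frac{a}{2}e$ and $\nabla^{g^{E_1}}$ is trivial. When $a = 0$ this is already the flat Euclidean line, matched to the $c_i = 0$ factor by the linear scaling $y \mapsto \sqrt{r}\,y$. When $a \neq 0$, I would take the explicit diffeomorphism $f(y) := e^{-ay/2} \colon \RR \to \RR_{>0}$. Since $x$ is a $\nabla^{\mathrm{Euc}}$-affine coordinate on $\RR_{>0}$ and $s := e^{-ay/2}$ is $\nabla$-affine on $\RR$ (it solves $(\log s')' = -a/2$, equivalently $\frac{s''}{s'} = -\frac{a}{2}$), the map $f$ is affine in these coordinates and therefore satisfies $f^\ast \nabla^{\mathrm{Euc}} = \nabla$. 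A direct computation with $g_c = \psi_c^\ast g^{E_1} = \frac{4}{c^2 x^2}\,dx^2$ and $f'(y) = -\frac{a}{2}f(y)$ gives $f^\ast g_c = \frac{a^2}{c^2}\,g^{E_1}$, so the choice $c := |a|/\sqrt{r}$ yields $f^\ast g_c = r\,g^{E_1}$. The orientation of $f$ (decreasing for $a>0$, increasing for $a<0$) automatically reconciles the sign of the cubic form, so no separate sign analysis is needed; indeed $f^\ast C_c = r\,C$ then follows formally from $f^\ast \nabla_c = \nabla$ and $f^\ast g_c = r\,g$ via the identity $C = -2g(K(\cdot,\cdot),\cdot)$.

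Finally I would assemble the factors: fix any $r > 0$, set $c_i := |a_i|/\sqrt{r}$ for each $i$, and take the product map $f = (f_1,\dots,f_n)$. By construction every factor scales by the same $r$, so $f$ is a scaling statistical isomorphism from $\prod_i(\RR,g^{E_1},a_it^3)$ onto $(\Omega_c,g_c,\nabla_c)$ with $c = (c_1,\dots,c_n)$. Composing with the coordinate permutation that sorts the entries into decreasing order places $c$ in the region $\{\lambda_1 \geq \dots \geq \lambda_n \geq 0\}$, as required; this permutation is itself a statistical isomorphism of the product. The one step demanding genuine care is the one-dimensional verification that $f^\ast \nabla^{\mathrm{Euc}} = \nabla$ holds together with the prescribed metric scaling, as it is precisely here that the logarithmic change $\psi_c$ and the exponential map $f$ must be seen to be mutually inverse up to affine reparametrization; the remainder is bookkeeping about direct products and reordering.
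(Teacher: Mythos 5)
Your proof is correct, and its core strategy is the same as the paper's: use the classification in Theorem~\ref{thm:Rn-moduli} to put the structure in diagonal form, and then realize the isomorphism with $(\Omega_c,g_c,\nabla_c)$ by an explicit exponential/logarithmic change of coordinates. The difference is in execution: the paper reduces in one stroke to $(\RR^n,g^E,C)$ with $C=\sum_i c_i x_i^3$ and then asserts that the fact that $\phi_c$ is the required isomorphism ``can be directly verified,'' whereas you route through the product-decomposition theorem stated just before the corollary, split $(\Omega_c,g_c,\nabla_c)$ into one-dimensional factors, and check the one-dimensional case explicitly. This buys two things. First, your argument actually contains the omitted verification, and your observation that $f^\ast C_c=f^\ast(\nabla_c g_c)=(f^\ast\nabla_c)(f^\ast g_c)=\nabla(rg^{E_1})=r\,C$ follows formally once $f^\ast\nabla_c=\nabla$ and $f^\ast g_c=rg^{E_1}$ are checked is exactly the right way to avoid a separate cubic-form computation. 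Second, your signs are the correct ones: with the paper's conventions ($C=\nabla g$, hence $K(e,e)=-\tfrac{a}{2}e$), the $\nabla$-affine functions on $(\RR,g^{E_1},at^3)$ are of the form $Ae^{-ay/2}+B$, so your map $f(y)=e^{-ay/2}$ is affine, while the paper's $\psi_c^{-1}(y)=\tfrac{2}{c}e^{+cy/2}$ is affine for the \emph{dual} connection. As literally defined, $\phi_c$ pulls the cubic form $C$ back to $-C_c$, so it is a statistical isomorphism onto $(\RR^n,g^E,-C)$ rather than onto $(\RR^n,g^E,C)$, and one must compose with the reflection $y\mapsto -y$ (equivalently, flip the signs of the basis vectors) to land on $+C$; since $C$ and $-C$ lie in the same $O(n)$-orbit this does not affect the validity of the corollary, but your factor-wise computation handles correctly the point that the paper's claim glosses over. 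The steps you leave routine (taking products of scaling statistical isomorphisms with a common factor $r$, and sorting the $c_i$ by a coordinate permutation) are indeed routine.
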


\begin{proof}
By Proposition~\ref{prop:metric-moduli-singleton}, and Theorem~\ref{thm:Rn-moduli}, there exists an orthogonal basis $\mathcal{B} = \{e_1, \dots, e_n\}$ on $(\gee_{\RR^n}, \LR)$ and $c = (c_1, \dots, c_n) \in \{ \lambda \in \RR^n \mid \lambda_1 \geq \dots \lambda_n \geq 0 \}$ such that by putting $C = \sum_{i} c_i x_i^3$, 
the dually flat statistical manifold $(\RR^n,g,\nabla)$ is statistically isomorphic to $(\RR^n,g^E,C)$ up to scaling.
Then, it can be directly verified that the diffeomorphism $\phi_{c}$ gives a scaling statistical isomorphism from the dually flat statistical manifold $(\Omega_c, g_c, \nabla_c)$ onto $(\RR^n,g^E,C)$.
\end{proof}

\begin{Rem}
It should be noted that, in the context of Hessian geometry, the claim of Corollary~\ref{cor:1-dim_Hesse-str} can essentially be found in \cite{Shima-1980, Shima}, where it appears as the classification of homogeneous Hessian domains associated with commutative normal Hessian algebras.
\end{Rem}

\begin{Rem}\label{Rem_top_moduli-Rn}
The moduli space $\MLStatDF(\RR^n)$ is not Hausdorff. In fact, the orbit of the Levi-Civita connections of left-invariant Riemannian metrics and other any orbit can not be separated in the moduli space $\MLStatDF(\RR^n)$ because of the scaling actions of $\RR_{>0}$.
Note that, if we remove the point represented by Levi-Civita connections in the moduli space $\MLStatDF(\RR^n)$, such subspace is Hausdorff. 
\end{Rem}

\subsection{Left-invariant conjugate symmetric statistical structures
and left-invariant dually flat structures on $\RR^n$}\label{subsec_LISS-Rn}
In this subsection, we give a proof of Theorem~\ref{thm:Rn-moduli}.

\begin{proof}[Proof of \ref{item:Rn-moduli_CS} in Theorem~\ref{thm:Rn-moduli}]
By Proposition~\ref{prop:metric-moduli-singleton}, 
our goal is to show that $S^3_{\mathrm{CS}}(\mathfrak{g}_{\RR^n}^\ast,\LR) = S^3(\mathfrak{g}_{\RR^n}^\ast)$.
Fix any $C \in S^3(\mathfrak{g}_{\RR^n}^\ast)$.
Since $\nabla^{g^E}_Y X =0$ for any $X,Y \in \mathfrak{g}_{\RR^n}$,
we have $\nabla^{g^E} C = 0$.
This proves that $C \in S^3_{\mathrm{CS}}(\mathfrak{g}_{\RR^n}^\ast,\LR)$
by Proposition~\ref{Prop_K-CS}.
\end{proof}

\begin{proof}[Proof of \ref{item:Rn-moduli_DF} in Theorem~\ref{thm:Rn-moduli}]
Let $C \in S^3(\mathfrak{g}_{\RR^n}^\ast)$.
Throughout this proof, the difference tensor of the left-invariant statistical structure $(\LR, C)$ is denoted by $K$, and the affine connection by $\nabla$ in the sense of Proposition~\ref{Prop_1to1-Lstat}. 
Moreover, we denote the $(1,3)$-curvature tensor of $\nabla$ by $R$, and the $(1,3)$-curvature tensor of the Levi-Civita connection of $g^E$ by $R_{g^E} \equiv 0$.
\begin{description}
    \item[Proof of \ref{item:Rn-moduli_DF:onb} $\Rightarrow$ \ref{item:Rn-moduli_DF_1}] Assume \ref{item:Rn-moduli_DF:onb}. Then by Proposition~\ref{Prop_1to1-Lstat}, the difference tensor $K$ satisfies 
    \begin{equation}\label{eq_K^C-i,j}
        K(e_i, e_j) = -\frac{1}{2} \delta_{ij} \lambda_i e_i \quad (i,j = 1,\dots ,n),
    \end{equation}
    where $\delta_{ij}$ denotes the Kronecker delta.
    We aim to show that $R$ coincides with $0$.
    By combining $R_{g^E} \equiv 0$, Theorem~\ref{thm:Rn-moduli} \ref{item:Rn-moduli_CS} with the equality Proposition~\ref{Prop_curvature-CS}, we see the following.
    \[
        R(e_i, e_j) = [K_{e_i}, K_{e_j}] \quad (i,j = 1, \dots ,n).
    \]
    By Equation \eqref{eq_K^C-i,j} and Proposition~\ref{proposition:Opozda}, we see that
    \[
        R(e_i, e_j) = 0 \quad (i,j = 1, \dots ,n).
    \]
    Thus, $R \equiv 0$.
        
    \item[Proof of \ref{item:Rn-moduli_DF_1} $\Rightarrow$ \ref{item:Rn-moduli_DF:onb}] Assume \ref{item:Rn-moduli_DF_1}.
    Recall that any dually flat structure is conjugate symmetric. Thus, by Proposition~\ref{Prop_curvature-CS} and $R_{g^E} \equiv 0$, we have the equality 
    \[
        R(X, Y) = [K_X, K_Y] 
    \]
    for all vector fields $X$ and $Y$.
    By assumption (i),
    \[
        [K_u, K_v] \equiv 0
    \]
    holds for all $u$, $v \in \mathfrak{g}_{\RR^n}$.
    By Proposition~\ref{proposition:Opozda}, there exists an orthonormal basis $e_1,\dots,e_n$ of $(\mathfrak{g}_{\RR^n}, \LR)$ and $c_1, \dots, c_n \in \RR$ such that 
    \[
    K(e_i,e_i) = c_i e_i, \quad K(e_i,e_j) = 0
    \]
    for distinct $i,j = 1,\dots,n$.
    Since $C(u,v,w) = -2 \langle K(u,v), w \rangle$ for any $u$, $v$ and $w \in \mathfrak{g}_{\RR^n}$, the following holds:
        \[
        C(e_i, e_j, e_k) = 
            \begin{cases}
                - 2 c_i & (i = j = k) \\
                0        & ( \text{otherwise} ).
            \end{cases}
        \]
    Therefore, by setting $\lambda_i = -2 c_i$, we see that $C$ is expressed as follows:
    \[
        C = \sum_{i=1}^n \lambda_i x_i^3.
    \]
    This completes the proof.
    \end{description}
\end{proof}

\begin{proof}[Proofs of \ref{item:Rn-moduli_moduli_CS} and \ref{item:Rn-moduli_moduli_DF} in Theorem~\ref{thm:Rn-moduli}]
The claim
\[
    \MLStat(\RR^n) = \MLStatCS(\RR^n) \cong (\RR_{>0} \times O(n)) \backslash S^3(\mathfrak{g}^\ast_{\RR^n})
\]
follows from Proposition~\ref{Prop_moduli}, Proposition~\ref{Prop_isotropy-LIM} \ref{Prop_isotropy-LIM_abelian} and Theorem~\ref{thm:Rn-moduli} \ref{item:Rn-moduli_CS}.
The claim
\[
    \MLStatDF(\RR^n)
    \cong (\RR_{>0} \times O(n)) \backslash O(n).V_{\mathcal{B}}
    \cong \RR_{>0} \backslash \{ \lambda \in \RR^n \mid \lambda_1 \geq \dots \geq \lambda_n \geq 0 \}
\]
is obtained by applying Proposition~\ref{Prop_moduli}, Proposition~\ref{Prop_isotropy-LIM} \ref{Prop_isotropy-LIM_abelian}, Theorem~\ref{thm:Rn-moduli} \ref{item:Rn-moduli_DF},
and Theorem~\ref{theorem:RnDFW} in Appendix~\ref{Appendix_Moduli-Rn-DF}.
\end{proof}

\section{The case of $\GRHn$}\label{Sec_GRHn}
In this section, we consider the connected and simply-connected Lie group $\GRHn$ with $n \geq 2$ (see Section~\ref{subsec_moduli-LIM} for the definition of $\GRHn$).
Then $\mathfrak{PM}(G_{\RRH^n}) \cong \{\ast\}$ (see Proposition~\ref{prop:metric-moduli-singleton}).

Throughout this section, we denote the Lie algebra of $\GRHn$ by $\geRHn$. 
Let us fix a \emph{canonical basis} $\{e_1, \dots ,e_n\}$ of $\geRHn$ (see~\cite{KOTT}), that is, $\{e_1, \dots ,e_n\}$ is a basis of $\geRHn$ 
with 
\begin{equation}\label{eq:brackt-relation_grhn}
    [e_1, e_i] = e_i \quad (i = 2, \dots ,n), 
\end{equation}
and the other bracket products are trivial. 
The dual basis of $\{ e_1,\dots,e_n \}$ is denoted by $\{ x_1,\dots,x_n \}$.
Let us consider the inner product $\LR \in \Modin(\geRHn)$ such that the canonical basis is orthonormal.
We denote by $g$ the left-invariant Riemannian metric on $\GRHn$ corresponding to $\LR$.
It is well-known (see~\cite{KOTT}) that the $(0,4)$-curvature tensor $R^0_g$ of $g$ can be written as 
\[
R^0_g = \sum_{1\leq i < j} \omega_{ij} \odot \omega_{ij}
\]
where we put $\omega_{ij} := x_i \otimes x_j - x_j \otimes x_i$. The Riemannian manifold $(G_{\mathbb{R}H^n}, g)$ has constant sectional curvature.

\subsection{Main theorem for $\GRHn$}
The goal of this section is to give a proof of the following theorem:
\begin{Thm}\label{thm:GRHn-cs-df-moduli}
Let us define 
\[
     C^\alpha := \alpha \bigg{(}4 x_1^3 + 6 \sum_{i=2}^{n} x_1 x_i^2 \bigg{)} \in S^3(\mathfrak{g}_{\RRH^n}^\ast) \cong \mathrm{Homog}_3(x_1,\dots,x_n)
\]
for each $\alpha \in \RR$.
Then
\begin{align*}
 \nabla^g C^\alpha &= -6 \alpha (\sum_{i \geq 2} x_i^2)^2 \in S^4(\geRHn^\ast), \\
 R^{C^\alpha}_g &= (1-\alpha^2) R^{0}_g = (1-\alpha^2) \sum_{1\leq i < j} \omega_{ij} \odot \omega_{ij}, \\
 S^3_{\mathrm{CS}}(\mathfrak{g}_{\RRH^n}^\ast,g) &= \{ C^\alpha \mid \alpha \in \RR \},~\text{ and }\\
 S^3_{\mathrm{DF}}(\mathfrak{g}_{\RRH^n}^\ast,g) &= \{ C^{1}, C^{-1} \}.
\end{align*}
Furthermore, 
\begin{align*}
    \MLStat(G_{\RRH^n}) &\cong (SO(1) \times O(n-1)) \backslash \mathrm{Homog}_3(x_1,\dots,x_n), \\
    \MLStatCS(G_{\RRH^n}) &\cong \{ C^\alpha \mid \alpha \in \RR \} \cong \RR,~\text{ and }\\ 
    \MLStatDF(G_{\RRH^n}) & \cong \{ C^\alpha \mid \alpha \in \pm 1 \} \cong \{ \pm 1 \}.
\end{align*}
\end{Thm}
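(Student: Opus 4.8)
The plan is to turn every assertion into linear algebra on $S^3(\geRHn^\ast)$ by first making the Levi--Civita connection of $g$ explicit. From the structure constants $a_{1i}^i=1$ ($i\geq 2$), Proposition~\ref{proposition:Gamma_from_a} gives that the only nonzero Christoffel symbols are $\Gamma_{i1}^i=-1$ and $\Gamma_{ii}^1=1$ for $i\geq 2$; equivalently $\nabla^g_{e_1}\equiv 0$, $\nabla^g_{e_i}e_1=-e_i$, and $\nabla^g_{e_i}e_j=\delta_{ij}e_1$ for $i,j\geq 2$. This connection data is the common input for all four displayed identities.

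To determine $S^3_{\mathrm{CS}}(\geRHn^\ast,g)$ I would use Proposition~\ref{Prop_K-CS}: $(g,C)$ is conjugate symmetric if and only if $\nabla^g C$ is a totally symmetric $(0,4)$-tensor. Writing $C_{abc}:=C(e_a,e_b,e_c)$ and using left-invariance (so $e_t\,C(\cdots)=0$), the tensor $\nabla^g C$ is computed from the connection alone, and since it is automatically symmetric in its last three slots, conjugate symmetry reduces to symmetry between the derivative slot and the others. Evaluating the antisymmetric part on the index types $(e_i;e_1,e_1,e_1)$, $(e_i;e_1,e_1,e_c)$, and $(e_i;e_1,e_b,e_c)$ with $b,c\geq 2$ yields a small linear system forcing $C_{11i}=0$, $C_{1ij}=0$ for $i\neq j$, $C_{ijk}=0$ for $i,j,k\geq 2$, and $C_{1ii}=\tfrac12 C_{111}$; the remaining symmetry among indices $\geq 2$ is then automatic. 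Reassembling in the monomial basis gives $C=C^\alpha$ with $\alpha=C_{111}/4$, so $S^3_{\mathrm{CS}}(\geRHn^\ast,g)=\{C^\alpha\mid\alpha\in\RR\}$, and the same connection data evaluated on $C^\alpha$ gives $\nabla^g C^\alpha=-6\alpha(\sum_{i\geq 2}x_i^2)^2$.

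For the curvature I would apply Proposition~\ref{Prop_curvature-CS}, $R^{C^\alpha}_g=R^0_g+[K,K]$, and compute $[K,K]$ through Theorem~\ref{theorem:Kcurvature}. From $(K_u)_{ij}=-\tfrac12 C^\alpha_{uij}$ one reads off $K_1=\mathrm{diag}(-2\alpha,-\alpha,\dots,-\alpha)$ and $K_u=-\alpha(E_{1u}+E_{u1})$ for $u\geq 2$, whence $[K_1,K_v]=\alpha^2(E_{1v}-E_{v1})$ and $[K_u,K_v]=\alpha^2(E_{uv}-E_{vu})$ for $u,v\geq 2$. The only surviving coefficients in Theorem~\ref{theorem:Kcurvature} are the diagonal ones $[K_j,K_i]_{ij}=-\alpha^2$, while every off-diagonal term $\omega_{ij}\odot\omega_{kl}$ with $(i,j)<(k,l)$ vanishes; hence $[K,K]=-\alpha^2\sum_{i<j}\omega_{ij}\odot\omega_{ij}=-\alpha^2 R^0_g$ and $R^{C^\alpha}_g=(1-\alpha^2)R^0_g$. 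Since $R^0_g\neq 0$ and dual flatness implies conjugate symmetry (so $S^3_{\mathrm{DF}}(\geRHn^\ast,g)\subseteq S^3_{\mathrm{CS}}(\geRHn^\ast,g)$), dual flatness holds exactly when $\alpha^2=1$, giving $S^3_{\mathrm{DF}}(\geRHn^\ast,g)=\{C^1,C^{-1}\}$.

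Finally, for the moduli spaces I would invoke Proposition~\ref{Prop_moduli} and Proposition~\ref{Prop_isotropy-LIM}(2): as $\geRHn$ is non-abelian, the relevant isotropy group is $\{1\}\times(\mathrm{Aut}(\geRHn)\cap O(\geRHn,\LR))$. Automorphisms preserve the derived subalgebra $\mathrm{span}\{e_2,\dots,e_n\}$, and an orthogonal automorphism cannot send $e_1\mapsto -e_1$ (that would violate $[e_1,e_i]=e_i$), so this group is exactly the copy of $O(n-1)$ acting on $x_2,\dots,x_n$ and fixing $x_1$, i.e.\ $SO(1)\times O(n-1)$. This immediately gives $\MLStat(\GRHn)\cong(SO(1)\times O(n-1))\backslash\mathrm{Homog}_3(x_1,\dots,x_n)$, and since $C^\alpha$ is built from the $O(n-1)$-invariants $x_1$ and $\sum_{i\geq 2}x_i^2$ the induced action on $S^3_{\mathrm{CS}}(\geRHn^\ast,g)\cong\RR$ and on $\{C^1,C^{-1}\}$ is trivial, yielding $\MLStatCS(\GRHn)\cong\RR$ and $\MLStatDF(\GRHn)\cong\{\pm 1\}$. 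I expect the main obstacle to be the second step: verifying that the antisymmetric part of $\nabla^g C$ vanishes \emph{only} for $C\in\RR C^1$, i.e.\ ruling out all other cubic forms, which requires organizing the linear constraints across every index type (and treating the degenerate case $n=2$, where $O(n-1)=O(1)$, separately).
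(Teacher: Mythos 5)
Your proposal is correct and follows essentially the same route as the paper: Christoffel symbols via Proposition~\ref{proposition:Gamma_from_a}, the conjugate-symmetry classification by imposing total symmetry of $\nabla^g C$ (Proposition~\ref{Prop_K-CS}), the curvature identity $R^{C^\alpha}_g=(1-\alpha^2)R^0_g$ via Proposition~\ref{Prop_curvature-CS} and Theorem~\ref{theorem:Kcurvature}, dual flatness from $S^3_{\mathrm{DF}}(\geRHn^\ast,g)\subseteq S^3_{\mathrm{CS}}(\geRHn^\ast,g)$ together with $R^0_g\neq 0$, and the moduli statements from Propositions~\ref{Prop_moduli} and~\ref{Prop_isotropy-LIM} combined with the $L$-invariance of $C^\alpha$ (you derive $L\cong SO(1)\times O(n-1)$ directly from preservation of the derived subalgebra, where the paper cites the known form of $\mathrm{Aut}(\geRHn)$, but this is only an organizational difference). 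One small remark: your matrices $K_1=-\alpha\,\mathrm{diag}(2,1,\dots,1)$ and $K_u=-\alpha(E_{1u}+E_{u1})$ carry the correct normalization, whereas the paper's proof of Proposition~\ref{prop:gehrn-K-curvature} writes an extraneous factor $4$; this is a typo there and both computations reach the same conclusion $[K,K]=-\alpha^2 R^0_g$.
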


\begin{Rem}
The dually flat structure $(g, C^{-1})$ can be found in \cite[Example 2.3]{Furuhata_2009}, as an example of Hessian structures of CHC $4$ on hyperbolic spaces. Notice that its dual structure $(g,C^{1})$ on $G_{\RRH^n}$ is dually flat but not CHC.
\end{Rem}

As supplementary remarks to Theorem~\ref{thm:GRHn-cs-df-moduli}, we will also discuss the following points in Section~\ref{Subsec_Takano} (see Theorem~\ref{Thm_Amari--Chentsov_Takano}):

\begin{Prop}
For any $\alpha \in \RR$, the left-invariant statistical structure $(g,C^\alpha)$ has constant curvature. 
Furthermore, any left-invariant conjugate symmetric statistical structure on $\GRHn$ is a statistical structure of constant sectional curvature in the sense of Section~\ref{subsec:some-class-of-SS}.
\end{Prop}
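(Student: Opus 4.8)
The plan is to read off the first assertion directly from the curvature data already recorded in Theorem~\ref{thm:GRHn-cs-df-moduli}, and then to promote it to an arbitrary left-invariant conjugate symmetric structure using invariance under the moduli action. First I would fix $\alpha \in \RR$ and recall from Theorem~\ref{thm:GRHn-cs-df-moduli} that $C^\alpha \in S^3_{\mathrm{CS}}(\geRHn^\ast, g)$, so that $(g,C^\alpha)$ is conjugate symmetric. By the equivalent characterization of constant curvature in Section~\ref{subsec:some-class-of-SS}, it then suffices to check that the sectional curvature $\mathrm{Sect}^{C^\alpha}_g$ is constant on every $2$-plane. Using the identity $R^{C^\alpha}_g = (1-\alpha^2)R^0_g$ from Theorem~\ref{thm:GRHn-cs-df-moduli} together with the fact, noted at the beginning of this section, that $(\GRHn,g)$ has constant Riemannian sectional curvature, I would compute, for any linearly independent pair $v,w \in \geRHn$,
\[
\mathrm{Sect}^{C^\alpha}_g(v,w) = (1-\alpha^2)\,\frac{R^0_g(v,w,w,v)}{g(v,v)\,g(w,w) - g(v,w)^2} = (1-\alpha^2)\,\mathrm{Sect}^0_g(v,w),
\]
where $\mathrm{Sect}^0_g$ denotes the Riemannian sectional curvature of $(\GRHn,g)$. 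Since $\mathrm{Sect}^0_g$ is the constant $-1$ (which one reads off from $R^0_g = \sum_{i<j}\omega_{ij}\odot\omega_{ij}$ by evaluating on the orthonormal canonical basis), the right-hand side equals $(1-\alpha^2)(-1)$ independently of the chosen $2$-plane, proving that $(g,C^\alpha)$ has constant curvature.

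For the second assertion, let $(g',\nabla')$ be an arbitrary left-invariant conjugate symmetric statistical structure on $\GRHn$. By Theorem~\ref{thm:GRHn-cs-df-moduli} the moduli space $\MLStatCS(\GRHn)$ is exhausted by the orbits of the structures $(g,C^\alpha)$, so $(g',\nabla') \approx (g,C^\alpha)$ for some $\alpha \in \RR$. By Proposition~\ref{proposition:compatible} this relation is realized by a scaling statistical isomorphism compatible with the group structure, and by Theorem~\ref{Thm_DF-CS-preser} the property of having constant curvature is preserved under scaling statistical isomorphisms. Combining this with the first assertion shows that $(g',\nabla')$ has constant curvature, which is the desired conclusion.

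I do not expect a genuine obstacle, since the essential curvature computation $R^{C^\alpha}_g = (1-\alpha^2)R^0_g$ has already been carried out in Theorem~\ref{thm:GRHn-cs-df-moduli}. The only points requiring care are invoking the sectional-curvature characterization of constant curvature correctly (which presupposes conjugate symmetry, already guaranteed by $C^\alpha \in S^3_{\mathrm{CS}}(\geRHn^\ast,g)$), and observing that rescaling the constant-curvature tensor $R^0_g$ by the scalar $1-\alpha^2$ again yields a constant-curvature tensor. The reduction in the second paragraph is purely formal, relying only on the classification of $\MLStatCS(\GRHn)$ and the invariance of the relevant class under the moduli action.
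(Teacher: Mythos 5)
Your proof is correct, but it takes a genuinely different route from the paper's. The paper gives no direct computation where the proposition is stated; it defers the claim to Section~\ref{Subsec_Takano}, where $\GRHn$ is identified with $\mathrm{Aff}^{d+}(n-1,\RR)$ acting simply transitively on the Takano Gaussian space $(\mathcal{N}_T^{n-1},g^F)$, the classification $S^3_{\mathrm{CS}}(\geRHn^\ast,g)=\{C^\alpha \mid \alpha\in\RR\}$ from Theorem~\ref{thm:GRHn-cs-df-moduli} is used to match the family $(g,C^\alpha)$ with the Amari--Chentsov $\alpha$-connections, and the constant-curvature property is then imported from Proposition~\ref{Prop:Takano_constant-curvature} (Takano's theorem, together with Amari and Lauritzen for the dually flat cases). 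Your argument instead stays entirely inside the Lie-theoretic framework: you read the identity $R^{C^\alpha}_g=(1-\alpha^2)R^0_g$ off Theorem~\ref{thm:GRHn-cs-df-moduli}, combine it with the constancy of the Riemannian sectional curvature of $(\GRHn,g)$ (equal to $-1$, as evaluation of $R^0_g$ on the canonical orthonormal basis shows), and conclude $\mathrm{Sect}^{C^\alpha}_g\equiv \alpha^2-1$ on every $2$-plane; the reduction of a general left-invariant conjugate symmetric structure to some $(g,C^\alpha)$ via Proposition~\ref{proposition:compatible} and the invariance of the constant-curvature class under scaling statistical isomorphisms (Theorem~\ref{Thm_DF-CS-preser}) is common to both treatments. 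What your route buys is self-containedness: no identification with a statistical model and no appeal to Takano's external result are needed, and you get the explicit value of the curvature constant for free; indeed, your computation could be run in the opposite direction to re-derive the constant-curvature assertion of Proposition~\ref{Prop:Takano_constant-curvature} for these invariant structures. What the paper's route buys is the bridge to information geometry, which is precisely what it needs anyway for the characterization of the $\alpha$-connections in Theorem~\ref{Thm_Amari--Chentsov_Takano}.
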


\subsection{Left-invariant conjugate symmetric statistical structures
and left-invariant dually flat structures on $\GRHn$}\label{subsec:LISS-GRHn}
In this subsection, we give a proof of Theorem~\ref{thm:GRHn-cs-df-moduli}.

We shall define the subgroup $L$ of $GL(\geRHn)$ by 
\[
L := \mathrm{Aut}(\geRHn) \cap O(\geRHn, \LR) \subset GL(\geRHn).
\]
Since $\mathfrak{PM}(G_{\RRH^n}) \cong \{\ast\}$, 
by applying Propositions~\ref{Prop_moduli} and \ref{Prop_isotropy-LIM} \ref{Prop_isotropy-LIM_nonabelian}, we have 
\begin{align*}
\MLStat(G_{\RRH^n}) &\cong L \backslash S^3(\mathfrak{g}_{\RRH^n}^\ast), \\
\MLStatCS(G_{\RRH^n}) &\cong L \backslash S^3_{\mathrm{CS}}(\mathfrak{g}_{\RRH^n}^\ast,\LR), \\
\MLStatDF(G_{\RRH^n}) &\cong L \backslash S^3_{\mathrm{DF}}(\mathfrak{g}_{\RRH^n}^\ast,\LR).
\end{align*}

Therefore, Theorem~\ref{thm:GRHn-cs-df-moduli} follows from the three propositions below.

\begin{Prop}\label{prop:GRHnL}
We shall identify $GL(\mathfrak{g}_{\RRH^n})$ with 
$GL(n,\RR)$ by considering matrices of linear transformations on $\mathfrak{g}_{\RRH^n}$ with respect to the canonical basis.
Then the subgroup $L := \mathrm{Aut}(\geRHn) \cap O(\geRHn, \LR)$ in $GL(\geRHn) \cong GL(n,\RR)$ can be written as 
\[
L = 
\left\{ 
        \begin{pmatrix}
        1 &    0   \\
        0&   h_0
    \end{pmatrix} 
     \ \middle|~ \ h_0 \in O(n-1)
        \right\} \cong SO(1) \times O(n-1).
\]
Furthermore, $C^\alpha \in S^3(\geRHn^\ast)$ is $L$-invariant for each $\alpha \in \RR$.
\end{Prop}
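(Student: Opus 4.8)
The plan is to prove the two assertions separately: first the explicit description of $L = \mathrm{Aut}(\geRHn) \cap O(\geRHn,\LR)$, and then the $L$-invariance of each $C^\alpha$. For the first part, I would start by computing $\mathrm{Aut}(\geRHn)$, the automorphisms of the Lie algebra with bracket relations \eqref{eq:brackt-relation_grhn}. The derived algebra $[\geRHn,\geRHn]$ is exactly $\mathrm{Span}\{e_2,\dots,e_n\}$, since $[e_1,e_i]=e_i$ produces all of $e_2,\dots,e_n$ and no brackets produce an $e_1$-component. Any Lie algebra automorphism $\varphi$ must preserve this characteristic ideal, so $\varphi(\mathrm{Span}\{e_2,\dots,e_n\}) = \mathrm{Span}\{e_2,\dots,e_n\}$; in the canonical basis this forces the matrix of $\varphi$ to be block lower-triangular with the $(1,i)$-entries for $i\geq 2$ vanishing, i.e.\ $e_1 \mapsto \lambda e_1 + v$ with $v \in \mathrm{Span}\{e_2,\dots,e_n\}$ and the lower block an invertible transformation of the ideal. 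Imposing that $\varphi$ respects the bracket $[e_1,e_i]=e_i$ then pins down $\lambda = 1$ and relates the remaining data.

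Next I would intersect with $O(\geRHn,\LR)$. An orthogonal map whose matrix is block lower-triangular (with the upper-right block forced to zero by the automorphism condition) must in fact be block-\emph{diagonal}, because orthogonality of the columns spanning $e_1$ against those spanning the ideal kills the lower-left block $v$. Combined with $\lambda = 1$ from the previous step, the $(1,1)$-entry is $1$ and the lower block $h_0$ lies in $O(n-1)$; conversely every such matrix is both an automorphism and an isometry. This yields
\[
L = \left\{ \begin{pmatrix} 1 & 0 \\ 0 & h_0 \end{pmatrix} \;\middle|\; h_0 \in O(n-1) \right\} \cong SO(1) \times O(n-1).
\]
The main obstacle here is being careful about the logical order: it is the \emph{combination} of the automorphism constraint (which forces the upper-right block to vanish and $\lambda=1$) with orthogonality (which forces the lower-left block to vanish) that produces block-diagonality; neither condition alone suffices, so I would keep the two computations cleanly separated.

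Finally, for the $L$-invariance of $C^\alpha = \alpha\left(4x_1^3 + 6\sum_{i=2}^n x_1 x_i^2\right)$, it suffices to treat $v_0 = 4x_1^3 + 6\sum_{i=2}^n x_1x_i^2$, since scaling by $\alpha$ commutes with the action. For $h = \mathrm{diag}(1,h_0) \in L$, the induced action on $\geRHn^\ast$ fixes $x_1$ and sends $(x_2,\dots,x_n)$ by the orthogonal matrix $h_0$ (up to transpose-inverse, which for orthogonal $h_0$ is again orthogonal). The term $x_1^3$ is manifestly fixed. For the second term, $\sum_{i=2}^n x_i^2$ is the squared norm form on the ideal $\mathrm{Span}\{x_2,\dots,x_n\}^\ast$, which is $O(n-1)$-invariant; hence $x_1 \cdot \sum_{i=2}^n x_i^2$ is fixed as well. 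Therefore $h \cdot v_0 = v_0$, completing the proof. This last step is essentially the observation that $v_0$ is built solely from the two $L$-invariant polynomials $x_1$ and $\sum_{i\geq 2} x_i^2$, so I expect it to be routine once the structure of $L$ is established.
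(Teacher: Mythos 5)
Your proposal is correct and follows essentially the same route as the paper: describe $\mathrm{Aut}(\geRHn)$ as the block lower-triangular matrices $\begin{pmatrix} 1 & 0 \\ \ast & h_0 \end{pmatrix}$, intersect with $O(n)$ (which is what $O(\geRHn,\LR)$ becomes since the canonical basis is orthonormal) to kill the lower-left block, and then deduce $L$-invariance of $C^\alpha$ from the invariance of $x_1$ and $\sum_{i \geq 2} x_i^2$. The only difference is that the paper simply cites the description of $\mathrm{Aut}(\geRHn)$ as well-known, whereas you supply its proof via preservation of the derived ideal $\mathrm{Span}\{e_2,\dots,e_n\}$ and the bracket relations forcing $\lambda = 1$ --- a correct and self-contained filling-in of that quoted fact.
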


\begin{Prop}\label{prop:gehrnCScomp}
Let $C \in S^3(\geRHn^\ast) \cong \mathrm{Homog}_3(x_1,\dots,x_n)$.
Then 
$\nabla^g C$ is totally symmetric if and only if $C = C^\alpha$ for some $\alpha \in \RR$.
Furthermore, $\nabla^g C^\alpha = -6 \alpha (\sum_{i \geq 2} x_i^2)^2 \in S^4(\geRHn^\ast)$.
\end{Prop}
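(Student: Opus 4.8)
The plan is to reduce both directions to an explicit, very sparse formula for $\nabla^g$ acting on cubic monomials, and then to exploit the residual $O(n-1)$-symmetry. First I would compute the generalized Christoffel symbols of $g$ from the bracket relations \eqref{eq:brackt-relation_grhn} via Proposition~\ref{proposition:Gamma_from_a}; the only nonvanishing ones are $\Gamma_{ii}^{1}=1$ and $\Gamma_{i1}^{i}=-1$ for $i\geq 2$. Feeding these into Proposition~\ref{proposition:nablax_computation} yields a clean differentiation rule for a monomial $x_ax_bx_c$: differentiating a factor $x_1$ produces $-\sum_{m\geq 2}(x_m\,\cdots)\otimes x_m$, while differentiating a factor $x_k$ with $k\geq 2$ produces $(x_1\,\cdots)\otimes x_k$. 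In particular every term of $\nabla^g C$ carries a derivative direction $\otimes x_t$ with $t\geq 2$, so $\nabla^g C$ \emph{never} has an $\otimes x_1$-component. This structural observation, special to $\geRHn$, is the key input for the whole argument.

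Next I would prove the displayed formula and the ``if'' direction by direct substitution. Writing $q:=\sum_{i\geq 2}x_i^2$, so that $C^\alpha=\alpha(4x_1^3+6x_1q)$, the rule above gives $\nabla^g(x_1^3)=-3\sum_{m\geq 2}(x_1^2x_m)\otimes x_m$ and $\nabla^g(x_1x_i^2)=-\sum_{m\geq 2}(x_i^2x_m)\otimes x_m+2(x_1^2x_i)\otimes x_i$. Summing these, the $x_1^2$-terms cancel and one is left with $\nabla^g C^\alpha=-6\alpha\sum_{i,m\geq 2}(x_i^2x_m)\otimes x_m$. It then remains to recognize that $\sum_{i,m\geq 2}(x_i^2x_m)\otimes x_m$ is precisely the image of $q^2$ under the symmetric embedding $S^4(\geRHn^\ast)\hookrightarrow S^3(\geRHn^\ast)\otimes\geRHn^\ast$, since $\tfrac14\sum_t(\partial_{x_t}q^2)\otimes x_t=\sum_t(qx_t)\otimes x_t$. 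This simultaneously shows that $\nabla^g C^\alpha$ is totally symmetric (the ``if'' direction) and that it equals $-6\alpha q^2\in S^4(\geRHn^\ast)$, as claimed.

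For the converse, suppose $\nabla^g C$ is symmetric. A symmetric $(0,4)$-tensor whose $\otimes x_1$-component vanishes must be $x_1$-free (its $\otimes x_1$-part is $\tfrac14\partial_{x_1}$ of it), so by the first paragraph the \emph{entire} $x_1$-containing part of $\nabla^g C$ must already vanish. This is an $O(n-1)$-equivariant linear condition on $C$: by Proposition~\ref{prop:GRHnL} the relevant symmetry group $L\cong SO(1)\times O(n-1)$ fixes $x_1$ and acts standardly on $W:=\mathrm{span}\{x_2,\dots,x_n\}$, and $\nabla^g$ is $O(n-1)$-equivariant since $L$ acts by isometric automorphisms. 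I would therefore analyze the condition on the $O(n-1)$-isotypic components of $S^3(\geRHn^\ast)=\RR x_1^3\oplus x_1^2W\oplus x_1S^2(W)\oplus S^3(W)$. On the trivial component, spanned by $x_1^3$ and $x_1q$, vanishing of the $x_1^2$-terms forces the ratio $4:6$, giving exactly $C\in\RR C^1=\{C^\alpha\mid\alpha\in\RR\}$; on each remaining isotypic type one exhibits a monomial representative (e.g.\ $x_1x_ix_j$ for the traceless symmetric part, or $x_ix_jx_k$ for the harmonic cubics) whose image under $\nabla^g$ has a nonzero $x_1$-containing component, so those components contribute nothing. The main obstacle is the standard representation $W$, which occurs with multiplicity two (as $x_1^2W$ and as $q\cdot W\subset S^3(W)$): one must rule out any cross-cancellation, i.e.\ show no nonzero combination $u\,x_1^2x_i+v\,qx_i$ has $x_1$-free image. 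I expect this to follow by reading off the coefficients of $x_1^3$, $x_1x_i^2$, and $x_1x_m^2$ ($m\neq i$) in the image and solving $u=0$, $3v-2u=0$, $v=0$. Pinning the exact $4:6$ ratio inside the trivial component and excluding this multiplicity-two cross-cancellation are the only delicate points; everything else is bookkeeping dictated by the sparse differentiation rule.
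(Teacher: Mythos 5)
Your proposal is correct, and its converse half follows a genuinely different route from the paper's. Both arguments share the same computational core --- the Christoffel symbols from Proposition~\ref{proposition:Gamma_from_a} and the differentiation rule of Proposition~\ref{proposition:nablax_computation}, which the paper packages as the explicit Lemma~\ref{lemma:gehrn_nablaC_comp} --- and the ``if'' direction is essentially the same calculation in both, though your identification of $\sum_{m\geq 2}(qx_m)\otimes x_m$ (with $q=\sum_{i\geq 2}x_i^2$) as the image of $q^2$ under the embedding $P\mapsto \tfrac14\sum_t(\partial_t P)\otimes x_t$ is a cleaner way to recognize total symmetry. For the ``only if'' direction the two proofs diverge: the paper lists specific component identities forced by symmetry, such as $(\nabla^g C)_{i11i}=(\nabla^g C)_{i1i1}$, and reads off the coefficients directly, whereas you first isolate the structural fact that $\nabla^g$ on $\geRHn$ never produces an $\otimes x_1$-component (since every nonzero $\Gamma^k_{tu}$ has first index $t\geq 2$), deduce that a symmetric image must be entirely $x_1$-free because its $\otimes x_1$-component is $\tfrac14\partial_{x_1}$ of it, and then analyze the vanishing of the $x_1$-containing part as an $L\cong O(n-1)$-equivariant linear condition on isotypic components. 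I checked your two ``delicate points'' and they are right: on the trivial component $\RR x_1^3\oplus \RR x_1q$ the $x_1^2$-terms are $(2c-3a)\sum_m (x_1^2x_m)\otimes x_m$ for $C=ax_1^3+cx_1q$, forcing $a:c=4:6$; and for $C=u\,x_1^2x_i+v\,qx_i$ the $x_1$-part has coefficients $u$ (of $x_1^3$), $3v-2u$ (of $x_1x_i^2$) and $v$ (of $x_1x_m^2$, $m\neq i$), forcing $u=v=0$. What the paper's approach buys is brevity and self-containedness; what yours buys is conceptual transparency and uniformity --- in particular it kills \emph{all} coefficients $C_{ijk}$ with $i,j,k\geq 2$ at once, including the repeated-index ones $C_{iij}$, $C_{iii}$, which the paper's displayed equations (stated only for $2\leq i<j<k$) do not explicitly cover. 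Two points would need attention in a complete write-up: your monomial representatives $x_1x_ix_j$ and $x_ix_jx_k$ require $n\geq 3$ and $n\geq 4$ respectively, so the cases $n=2,3$ need degenerate or substitute representatives (e.g.\ $x_2^3-3x_2x_3^2$ serves as a harmonic cubic when $n=3$, and for $n=2$ the traceless and harmonic pieces vanish); and the Schur-type step on the multiplicity-two standard component uses that the endomorphism algebra of the standard $O(n-1)$-representation over $\RR$ is $\RR$, which is true but should be stated.
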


\begin{Prop}\label{prop:gehrn-K-curvature}
$R^{C^{\alpha}}_g = (1-\alpha^2)R^{0}_g$ for each $\alpha \in \RR$.
\end{Prop}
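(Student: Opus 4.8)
The plan is to reduce the statement to the explicit commutator formula for the $(0,4)$-tensor $[K,K] = R^C_g - R^0_g$ provided by Theorem~\ref{theorem:Kcurvature}, combined with the conjugate-symmetric curvature identity $R^\nabla(X,Y) = R^{\nabla^g}(X,Y) + [K_X,K_Y]$ of Proposition~\ref{Prop_curvature-CS}. Since $C^\alpha$ is conjugate symmetric (this is part of Proposition~\ref{prop:gehrnCScomp}, as $\nabla^g C^\alpha \in S^4(\geRHn^\ast)$ is totally symmetric), these two results apply, and proving the proposition amounts to checking the single identity $[K^{(g,C^\alpha)},K^{(g,C^\alpha)}] = -\alpha^2 R^0_g$.

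First I would record the matrices $K_u$ determined by $(K_u)_{ij} = -\tfrac12 C^\alpha(e_u,e_i,e_j)$. Using the symmetric-tensor convention, the only nonzero components of $C^\alpha$ are $C^\alpha(e_1,e_1,e_1)=4\alpha$ and $C^\alpha(e_1,e_i,e_i)=2\alpha$ for $i\ge 2$ together with their permutations. This yields $K_1 = \mathrm{diag}(-2\alpha,-\alpha,\dots,-\alpha)$ and, for each $m\ge 2$, a matrix $K_m$ whose only nonzero entries are $-\alpha$ in the positions $(1,m)$ and $(m,1)$.

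Next I would compute the commutators $[K_u,K_v]$. The structural observation that makes the argument work is that, for every pair $u\ne v$, the matrix $[K_u,K_v]$ is supported precisely on the two slots $(u,v)$ and $(v,u)$, with entries $\pm\alpha^2$: concretely $[K_1,K_m]$ carries $\alpha^2$ at $(1,m)$ and $-\alpha^2$ at $(m,1)$, while $[K_m,K_{m'}]$ (with $2\le m\ne m'$) carries $\alpha^2$ at $(m,m')$ and $-\alpha^2$ at $(m',m)$. Substituting into Theorem~\ref{theorem:Kcurvature}, the off-diagonal ``cross'' terms involve coefficients $[K_l,K_k]_{ij}$ with $(i,j)\ne(k,l)$; because $[K_l,K_k]$ only charges the slots $(k,l),(l,k)$ and the conventions impose $i<j$, $k<l$, every such coefficient vanishes. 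The ``diagonal'' terms involve $[K_j,K_i]_{ij}$, which the same computation shows equals $-\alpha^2$ uniformly over all $i<j$.

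Assembling these gives $[K^{(g,C^\alpha)},K^{(g,C^\alpha)}] = -\alpha^2 \sum_{i<j}\omega_{ij}\odot\omega_{ij} = -\alpha^2 R^0_g$, and hence $R^{C^\alpha}_g = R^0_g + [K,K] = (1-\alpha^2)R^0_g$, as claimed. I do not expect a conceptual obstacle; the only delicate point is the index bookkeeping in Theorem~\ref{theorem:Kcurvature} — correctly matching each commutator $[K_l,K_k]$ and entry $(i,j)$ to the basis element $\omega_{ij}\odot\omega_{kl}$, and confirming the uniform value $-\alpha^2$ on the diagonal slots. Once the support of each commutator is pinned down, the vanishing of the cross terms is immediate and the identity follows.
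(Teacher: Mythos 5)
Your proposal is correct and takes essentially the same route as the paper's proof: compute the matrices $K_u$, observe that each commutator $[K_u,K_v]$ is supported on the slots $(u,v)$ and $(v,u)$ with entries $\pm\alpha^2$, and plug into Theorem~\ref{theorem:Kcurvature}, where the cross terms vanish and the diagonal terms give $-\alpha^2\sum_{i<j}\omega_{ij}\odot\omega_{ij}$. In fact your constants $K_1=-\alpha\,\mathrm{diag}(2,1,\dots,1)$ and $K_m=-\alpha(E_{1m}+E_{m1})$ are the correct ones; the paper's own proof misprints these with an extra factor of $4$ (and correspondingly $16\alpha^2$ in the commutators), though its final identity $[K,K]=-\alpha^2 R^0_g$ agrees with yours.
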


First, we shall give a proof of Proposition~\ref{prop:GRHnL}:

\begin{proof}[Proof of Proposition~\ref{prop:GRHnL}]
Under the identification $GL(\geRHn) \cong GL(n,\RR)$, it is well-known that  
the automorphism group $\mathrm{Aut}(\mathfrak{g}_{\RRH^n})$  can be written as 
\[
\mathrm{Aut}(\geRHn) = 
\left\{ 
        \begin{pmatrix}
        1 &    0   \\
        \ast&   h_0  
    \end{pmatrix} 
     \ \middle|~ \ h_0 \in GL(n-1, \RR)
        \right\}.
\]  
Since the canonical basis is orthonormal with respect to the inner product $\LR$, 
under the identification $GL(\mathfrak{g}_{\RRH^n}) \cong GL(n,\RR)$ fixed as above, 
we have 
$O(\mathfrak{g}_{\RRH^n},\LR) \cong O(n)$.
Hence  
\[
L = \mathrm{Aut}(\geRHn) \cap O(n) 
= 
\left\{ 
        \begin{pmatrix}
        1 &    0   \\
        0&   h_0
    \end{pmatrix} 
     \ \middle|~ \ h_0 \in O(n-1)
        \right\} \cong SO(1) \times O(n-1).
\]

It is readily seen that the polynomial function $\sum_{i=2}^{n} x_i^2$ is invariant under the action of the group $L = SO(1) \times O(n-1)$. Therefore, the polynomial function  
\[
C^\alpha = \alpha \left(4 x_1^3 + 6 x_1 \sum_{i=2}^{n} x_i^2 \right)
\]
is also $L$-invariant.
\end{proof}

In order to prove Proposition~\ref{prop:gehrnCScomp}, we prepare the following lemma:
\begin{Lem}\label{lemma:gehrn_nablaC_comp}
Let us put $x_{ijkl} := x_i \otimes x_{j} \otimes x_{k} \otimes x_{l} \in (\gee_{\RRH^n}^\ast)^{\otimes 4}$ for each $1 \leq i,j,k,l \leq n$.
Then 
\begin{align*}
    \nabla^g (x_{1}^3) 
    &= - \sum_{t \geq 2} (x_{11tt} + x_{1t1t} + x_{t11t}),  \\
    \nabla^g (x_{1}^2 x_i) 
    &= x_{111i}  - \frac{1}{3}\sum_{t \geq 2} (x_{t1it} + x_{ti1t} + x_{1tit} + x_{1itt} + x_{it1t} + x_{i1tt}) \quad (\text{for } i \geq 2), \\
     \nabla^g (x_{1} x_i x_j) 
    &= \frac{1}{3}(x_{11ji} + x_{1j1i} + x_{j11i} + x_{11ij} + x_{1i1j} + x_{i11j}) \\ & \quad \quad - \frac{1}{6}\sum_{t \geq 2} (x_{tijt} + x_{tjit} + x_{itjt} + x_{ijtt} + x_{jtit} + x_{jitt}) \quad (\text{for } i,j \geq 2), \text{ and } \\
    \nabla^g(x_i x_j x_k) 
    &= 4 (x_1 \odot x_i \odot x_j \odot x_k) - \frac{1}{6}(x_{ijk1}+x_{ikj1}+x_{jik1} + x_{jki1} +x_{kij1} + x_{kji1}) 
    \quad (\text{for } i,j,k \geq 2)
\end{align*}
hold.
\end{Lem}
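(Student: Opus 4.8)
The plan is to compute $\nabla^g$ on the cubic monomials directly from Proposition~\ref{proposition:nablax_computation}, which expresses $\nabla^g(x_{i_1}x_{i_2}x_{i_3})$ in terms of the generalized Christoffel symbols $\Gamma^k_{tu}$. The first step is therefore to determine these Christoffel symbols for the canonical basis, and the whole lemma then reduces to substitution plus bookkeeping.

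First I would record the structure constants. From the bracket relation \eqref{eq:brackt-relation_grhn}, the only nonzero $a^k_{ij}=\langle[e_i,e_j],e_k\rangle$ are $a^i_{1i}=1$ and $a^i_{i1}=-1$ for $i\geq 2$. Feeding these into Proposition~\ref{proposition:Gamma_from_a}, a short case check shows that the only nonzero Christoffel symbols are
\[
\Gamma^1_{ii}=1 \quad (i\geq 2), \qquad \Gamma^i_{i1}=-1 \quad (i\geq 2),
\]
with all others vanishing. This is what makes the rest tractable: in the double sum $\sum_{t,u}\Gamma^m_{tu}(\cdots)\otimes x_t$ of Proposition~\ref{proposition:nablax_computation}, only the terms with $(t,u;m)=(i,i;1)$ or $(i,1;i)$ survive.

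Next, for each of the four monomial types I would substitute into Proposition~\ref{proposition:nablax_computation}. For $x_1^3$ only the upper index $m=1$ can occur, so only $\Gamma^1_{tt}=1$ contributes and the three identical terms combine with a factor $3$; for $x_1^2x_i$ and $x_1x_ix_j$ both families of nonzero symbols appear; and for $x_ix_jx_k$ with $i,j,k\geq 2$ only $\Gamma^i_{i1},\Gamma^j_{j1},\Gamma^k_{k1}$ contribute, yielding the fully symmetric leading term $x_1\odot x_i\odot x_j\odot x_k$. In every case the result is a short sum of terms $(x_ux_ax_b)\otimes x_t$ with $x_ux_ax_b\in S^3(\geRHn^\ast)$.

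The remaining---and only genuinely tedious---step is to rewrite each symmetric product $x_ux_ax_b$ as a full tensor using the symmetrization convention fixed in Section~\ref{Subsec_LISS}, and then to collect the $(\geRHn^\ast)^{\otimes 4}$-components $x_{pqrs}$ displayed in the statement. For instance $x_tx_1x_1=\tfrac13(x_{t11}+x_{1t1}+x_{11t})$, whence $(x_tx_1x_1)\otimes x_t=\tfrac13(x_{t11t}+x_{1t1t}+x_{11tt})$, and multiplying by the coefficient $-3$ recovers the first formula. The main obstacle is purely this bookkeeping: in the cases $x_1x_ix_j$ and $x_ix_jx_k$ the indices are generically distinct, so each symmetric product expands into six tensor monomials, and one must track the fractional coefficients ($\tfrac13$ and $\tfrac16$) and combine the three symmetric contributions correctly. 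No conceptual difficulty arises; all four identities then follow by comparing coefficients monomial by monomial.
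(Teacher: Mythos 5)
Your proposal is correct and follows essentially the same route as the paper's proof: both read off the structure constants from \eqref{eq:brackt-relation_grhn}, obtain the nonzero Christoffel symbols $\Gamma_{tt}^{1}=1$ and $\Gamma_{t1}^{t}=-1$ ($t \geq 2$) via Proposition~\ref{proposition:Gamma_from_a}, and then substitute into Proposition~\ref{proposition:nablax_computation}. The only difference is that you spell out the symmetrization bookkeeping (and verify the $x_1^3$ case), which the paper leaves implicit.
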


\begin{proof}[Proof of Lemma~\ref{lemma:gehrn_nablaC_comp}]
By definition, the structure constants of $\geRHn$ with respect to the canonical basis are given by $a_{1t}^t = 1$, $a_{t1}^t = -1$ ($t \geq 2$), with all other $a_{ij}^k$ equal to zero.
Then, by applying Proposition~\ref{proposition:Gamma_from_a}, the generalized Christoffel symbols can be computed as
\begin{align*}
    \Gamma_{t1}^t &= -1 \quad (t \geq 2), \\
    \Gamma_{tt}^1 &= 1 \quad (t \geq 2), 
\end{align*}
with all other $\Gamma_{ij}^k$ equal to zero.
The claims of Lemma~\ref{lemma:gehrn_nablaC_comp} can be obtained by applying Proposition~\ref{proposition:nablax_computation}.
\end{proof}

Let us give a proof of Proposition~\ref{prop:gehrnCScomp}.
\begin{proof}[Proof of Proposition~\ref{prop:gehrnCScomp}]
First, let us assume that $\nabla^g C$ is totally symmetric.
Put
\[
    \nabla^g C = \sum_{u,v,s,t} ((\nabla^g C)_{uvst}) (x_{uvst})
\]
and $C = \sum_{u,v,s} C_{uvs} (x_{uvs})$,
where we put $x_{uvs} := x_u \otimes x_v \otimes x_s \in (\mathfrak{g}_{\RRH^n}^\ast)^{\otimes 3}$ and $x_{uvst} := x_u \otimes x_v \otimes x_s \otimes x_t \in (\mathfrak{g}_{\RRH^n}^\ast)^{\otimes 4} 
$.
Then by Lemma~\ref{lemma:gehrn_nablaC_comp},
\begin{align*}
0 &= (\nabla^g C)_{i11i} - (\nabla^g C)_{i1i1} = -C_{111} + 2 C_{1ii} \quad (\text{for } i \geq 2), \\
0 &= (\nabla^g C)_{111i} - (\nabla^g C)_{11i1} = 3 C_{11i} \quad (\text{for } i \geq 2), \text{ and }\\
0 &= (\nabla^g C)_{i11j} - (\nabla^g C)_{i1j1} = \frac{1}{3}C_{1ij} \quad (\text{for } 2 \leq i < j).
\end{align*}
Furthermore, by applying $C_{11i} = 0$, we also obtain 
\begin{align*}
    0 &= (\nabla^g C)_{ijk1} - (\nabla^g C)_{ij1k} = -C_{ijk} \quad (\text{for } 2 \leq i \leq j \leq k).
\end{align*}
Thus $C = C_{111} (x_{111} + \frac{3}{2}\sum_{i \geq 2} x_1 x_i^2) = C^{\alpha}$ for $\alpha = C_{111}/4$.
Conversely, by Lemma~\ref{lemma:gehrn_nablaC_comp}, 
one can see that 
\begin{align*}
\nabla^g C^\alpha 
    &= -2 \alpha \sum_{i \geq 2} \sum_{t \geq 2} (x_{tiit} + x_{itit} + x_{iitt}) \\
    &= -2 \alpha \bigg{(}\sum_{i \geq 2} 3x_{iiii} + \sum_{2 \leq i < t}  (x_{iitt}+x_{itit}+x_{itti} + x_{tiit} + x_{titi} + x_{ttii}) \bigg{)} \\
    &= -6 \alpha \bigg{(}\sum_{i \geq 2} x_i^4 + \sum_{2 \leq i < t} 2 x_i^2 x_t^2 \bigg{)} \\
    &= -6 \alpha \bigg{(}\sum_{i \geq 2} x_i^2 \bigg{)}^2
\end{align*}
and in particular, $\nabla^g C^\alpha$ is totally symmetric.
\end{proof}

Finally, we shall prove Proposition~\ref{prop:gehrn-K-curvature}:
\begin{proof}[Proof of Proposition~\ref{prop:gehrn-K-curvature}]
Fix $\alpha \in \RR$, and put $C := C^\alpha$.
Recall that $R^0_g = \sum_{1\leq i < j} \omega_{ij} \odot \omega_{ij}$.
Thus by applying Proposition~\ref{Prop_curvature-CS}, our goal is to show that 
\[
[K,K] = [K^{(g,C)},K^{(g,C)}] = (-\alpha^2) \sum_{1\leq i < j} \omega_{ij} \odot \omega_{ij}
\]
(see Section~\ref{subsec:remark_computations} for the definition of $[K,K]$ and $[K^{(g, C)}, K^{(g, C)}]$).
Put
\[
\sum_{i,j,l} C_{ijl} x_{ijl} = C = \alpha \bigg{(}4 x_{111} + 2 \sum_{i \geq 2} (x_{1ii} + x_{i1i} + x_{ii1}) \bigg{)}.
\]
Then for each $l = 1,\dots,n$, 
the matrix $K_{l} = -\frac{1}{2}(C_{lij})_{ij}$ can be written as 
\[
K_1 = -\alpha \cdot \mathrm{diag}(2,1,\dots,1), \text{ and }
K_u = -\alpha (E_{1u} + E_{u1}) \quad \text{ for } u \geq 2,
\]
where 
we write $E_{ij}$ for the $n \times n$ matrix unit whose $(i,j)$-entry is $1$ and all other entries are zero. 
In particular, for $u \geq 2$, $v \geq 1$,    
\begin{align*}
    [K_u,K_v] &= \begin{cases}
        \alpha^2 (E_{u1} - E_{1u}) \quad \text{ if } v = 1, \\
        \alpha^2 (E_{uv} - E_{vu}) \quad \text{ if } v \geq 2,
    \end{cases} \\
    &= \alpha^2 (E_{uv}-E_{vu}).
\end{align*}
Therefore by Theorem~\ref{theorem:Kcurvature}, 
\[
    [K,K] 
    = (-\alpha^2) \sum_{1 \leq i < j} \omega_{ij} \odot \omega_{ij} = (-\alpha^2) R^0_g.
\]
This completes the proof.
\end{proof}

\subsection{Amari--Chentsov $\alpha$-connection on the Takano Gaussian space}\label{Subsec_Takano}
Throughout this subsection, let $m \in \ZZ_{\geq 1}$.
In this subsection, we present a characterization of the Amari--Chentsov $\alpha$-connection $\nabla^{A(\alpha)}$ on $(\mathcal{N}_T^m, \RR^m, \mathbf{p}_T)$ 
(see~Sections \ref{Subsec_Ex-SM} and \ref{subsec:homog-SM} for notation used in this subsection).

\begin{Thm}\label{Thm_Amari--Chentsov_Takano}
Let $\nabla$ be a statistical connection on the Takano Gaussian space $(\mathcal{N}_T^m, g^F)$.
Then, the following three conditions on $(g^F, \nabla)$ are equivalent:
\begin{enumerate}[label=(\roman*)]
    \item $\nabla$ is $\mathrm{Aff}^{d+}(m, \RR)$-invariant and $(g^F, \nabla)$ is conjugate symmetric.
    \item $\nabla$ is $\mathrm{Aff}^{d+}(m, \RR)$-invariant and $(g^F, \nabla)$ has constant curvature.
    \item $\nabla = \nabla^{A(\alpha)}$ holds for some $\alpha \in \RR$.
\end{enumerate}
\end{Thm}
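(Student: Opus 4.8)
The plan is to prove the equivalences by establishing the cycle $(3) \Rightarrow (2) \Rightarrow (1) \Rightarrow (3)$, leveraging the identification of $(\mathcal{N}_T^m, g^F)$ with the Lie group $\GRHn$ (for $n = m+1$) and the explicit description of the moduli space from Theorem~\ref{thm:GRHn-cs-df-moduli}. The first step is to make this identification precise: by Example~\ref{Ex_Takano-simply-trans}, the group $\mathrm{Aff}^{d+}(m,\RR) = \RR_{>0} \ltimes \RR^m$ acts simply-transitively on $\mathcal{N}_T^m = \RR_{>0} \times \RR^m$, so we may regard $\mathcal{N}_T^m$ as the Lie group $\mathrm{Aff}^{d+}(m,\RR)$ itself. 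I would check that the Lie algebra of $\mathrm{Aff}^{d+}(m,\RR)$ is isomorphic to $\mathfrak{g}_{\RR\mathrm{H}^{m+1}}$ (with the $\RR_{>0}$-direction playing the role of $e_1$ and the $\RR^m$-directions the roles of $e_2,\dots,e_{m+1}$), and that under this isomorphism the Fisher metric $g^F$ corresponds, up to scaling, to the canonical left-invariant metric $g$ on $\GRHn$. The $\mathrm{Aff}^{d+}(m,\RR)$-invariant affine connections are then exactly the left-invariant ones, so a statistical connection $\nabla$ satisfying the invariance hypothesis corresponds to an element of $S^3(\mathfrak{g}_{\RR\mathrm{H}^{m+1}}^\ast)$.

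With this dictionary in place, the three implications become statements about left-invariant structures on $\GRHn$. For $(3) \Rightarrow (2)$: I would verify that the Amari--Chentsov $\alpha$-connection $\nabla^{A(\alpha)}$ is $\mathrm{Aff}^{d+}(m,\RR)$-invariant (this is exactly Example~\ref{Ex_Takano-simply-trans}) and that $(g^F, \nabla^{A(\alpha)})$ has constant curvature (this is Proposition~\ref{Prop:Takano_constant-curvature}, due to Takano). The implication $(2) \Rightarrow (1)$ is immediate, since constant curvature statistical structures are conjugate symmetric by the nesting of classes recalled in Section~\ref{subsec:some-class-of-SS}. The substantive implication is $(1) \Rightarrow (3)$: assuming $\nabla$ is $\mathrm{Aff}^{d+}(m,\RR)$-invariant and $(g^F,\nabla)$ is conjugate symmetric, the corresponding cubic form $C$ lies in $S^3_{\mathrm{CS}}(\mathfrak{g}_{\RR\mathrm{H}^{m+1}}^\ast, g)$. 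By Theorem~\ref{thm:GRHn-cs-df-moduli}, this space is precisely $\{ C^\alpha \mid \alpha \in \RR \}$, so $C = C^\alpha$ for some $\alpha$, and hence $\nabla = \nabla^{A(\alpha)}$.

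The main obstacle I anticipate is the last matching step: I must confirm that the left-invariant conjugate symmetric connection with cubic form $C^\alpha$ genuinely coincides with the Amari--Chentsov $\alpha$-connection $\nabla^{A(\alpha)}$, and not merely with \emph{some} $\alpha'$-connection for a reindexed parameter. To pin this down I would compute the Amari--Chentsov $\alpha$-tensor field $C^{A(\alpha)}$ of~\eqref{eq_Amari--Chentsov} explicitly in the canonical basis via the defining Gaussian integrals, and compare the resulting homogeneous cubic polynomial with $C^\alpha = \alpha(4x_1^3 + 6\sum_{i \geq 2} x_1 x_i^2)$. The normalization constants in the Fisher metric~\eqref{eq_Fisher-metric} and in $C^{A(\alpha)}$ must be tracked carefully so that the scaling relating $g^F$ to $g$ is absorbed consistently; this is where the bookkeeping is genuinely delicate, though it is a finite computation rather than a conceptual difficulty.

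Finally, I would note that the equivalence as stated does not require $\nabla$ to be left-invariant a priori beyond the stated invariance, since $\mathrm{Aff}^{d+}(m,\RR)$-invariance is precisely left-invariance under the simply-transitive action; thus no separate argument is needed to reduce an arbitrary $\mathrm{Aff}^{d+}(m,\RR)$-invariant connection to the Lie-algebraic setting. This keeps the proof entirely within the framework developed for $\GRHn$ in Section~\ref{subsec:LISS-GRHn}, and the characterization follows by directly quoting Theorem~\ref{thm:GRHn-cs-df-moduli}.
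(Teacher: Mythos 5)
Your proposal is correct, and its skeleton is exactly the paper's (very terse) proof: identify $\mathcal{N}_T^m$ with $G_{\RR\mathrm{H}^{m+1}} \cong \mathrm{Aff}^{d+}(m,\RR)$ acting simply-transitively, quote Example~\ref{Ex_Takano-simply-trans} for the invariance of $g^F$ and $\nabla^{A(\alpha)}$, Proposition~\ref{Prop:Takano_constant-curvature} for constant curvature of the Amari--Chentsov connections, the nesting of classes for (2) $\Rightarrow$ (1), and Theorem~\ref{thm:GRHn-cs-df-moduli} for $S^3_{\mathrm{CS}}(\geRHn^\ast,g) = \{C^\alpha \mid \alpha \in \RR\}$ in the direction (1) $\Rightarrow$ (3).

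The one place you deviate is the step you flag as the main obstacle, and there your plan is heavier than necessary. Since condition (3) asserts only that $\nabla = \nabla^{A(\alpha)}$ for \emph{some} $\alpha$, no exact matching of parameters is needed; what must be shown is that the Amari--Chentsov family exhausts the one-parameter family of invariant conjugate symmetric connections. This follows without evaluating any Gaussian integrals: by \eqref{eq_Amari--Chentsov} the map $\alpha \mapsto C^{A(\alpha)} = \alpha\, C^{A(1)}$ is linear; each $C^{A(\alpha)}$ lies in the space of invariant conjugate symmetric cubic forms (invariance from Example~\ref{Ex_Takano-simply-trans}, conjugate symmetry from constant curvature), which is one-dimensional by Theorem~\ref{thm:GRHn-cs-df-moduli}; and $C^{A(1)} \neq 0$ because $\nabla^{A(1)}$ is dually flat while $\nabla^{A(0)} = \nabla^{g^F}$ is not, by Proposition~\ref{Prop:Takano_constant-curvature}. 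A line spanned by a nonzero vector inside a one-dimensional vector space is the whole space, so every invariant conjugate symmetric statistical connection equals some $\nabla^{A(\alpha)}$; this dimension-and-linearity argument is what the paper's proof implicitly relies on, and your explicit computation of $C^{A(\alpha)}$ would also settle it, only at the cost of the normalization bookkeeping you describe. One small caution in your setup: with the obvious Lie algebra isomorphism the pullback of $g^F$ is proportional to $\mathrm{diag}(2m,1,\dots,1)$ in the canonical basis, \emph{not} a scalar multiple of $\LR$; you must first rescale the abelian directions (an automorphism) before ``up to scaling'' is literally true. This is always possible since $\mathfrak{PM}(G_{\RR\mathrm{H}^{m+1}}) \cong \{\ast\}$, and in any case the one-dimensionality of the conjugate symmetric fiber is equivariant under $\RR_{>0} \times \mathrm{Aut}$, so the argument is unaffected.
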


\begin{proof}
One sees that the Lie group $G_{\RR \mathrm{H}^{m+1}}$ is isomorphic to $\mathrm{Aff}^{d+}(m, \RR)$.
Moreover, \(\mathrm{Aff}^{d+}(m, \mathbb{R})\) has a simply-transitive action on \(\mathcal{N}_T^m\) under which the Fisher metric \(g^F\) and the Amari--Chentsov $\alpha$-connection \(\nabla^{A(\alpha)}\) on $\mathcal{N}_T^m$ are invariant, see Example~\ref{Ex_Takano-simply-trans}.
Thus, by Theorem~\ref{thm:GRHn-cs-df-moduli} and Proposition~\ref{Prop:Takano_constant-curvature}, we obtain the claim.
\end{proof}

In \cite{KO}, the Amari--Chentsov $\alpha$-connection on the $m$-variate normal distribution family $(\mathcal{N}^m, \RR^m, \mathbf{p}_N)$ is characterized in the following sense:
\begin{Prop}[\cite{KO}]\label{Prop_MND-KO}
Let
\[
    R := \left\{T \in GL(m, \mathbb{R}) ~\middle|~ T \text{ is an upper-triangular matrix with all positive diagonal entries} \right\},
\]
\(\mathrm{Aff}^s(m, \mathbb{R}) := R \ltimes \mathbb{R}^m\), and $\nabla$ be a statistical connection on \((\mathcal{N}^m, g^F)\).
Then, the following two conditions on $(g^F, \nabla)$ are equivalent:
\begin{enumerate}[label=(\roman*)]
    \item $\nabla$ is $\mathrm{Aff}^{s}(m, \RR)$-invariant and $(g^F, \nabla)$ is conjugate symmetric.
    \item The affine connection \(\nabla\) is the Amari--Chentsov \(\alpha\)-connection for some $\alpha \in \RR$.
\end{enumerate}
\end{Prop}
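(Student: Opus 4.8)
The plan is to reduce everything to a left-invariant computation on a Lie group, exactly as was done for $\GRHn$ in Theorem~\ref{thm:GRHn-cs-df-moduli}. The starting point is that $\mathrm{Aff}^s(m,\RR) = R \ltimes \RR^m$ acts simply-transitively on $\mathcal{N}^m = \mathrm{Sym}^+(m,\RR) \times \RR^m$: the upper-triangular group $R$ acts simply-transitively on $\mathrm{Sym}^+(m,\RR)$ by $T.\Sigma = T\Sigma T^{\mathsf{T}}$, the orbit map $T \mapsto T T^{\mathsf{T}}$ at $\Sigma = I_m$ being the reverse Cholesky bijection $R \to \mathrm{Sym}^+(m,\RR)$, while $\RR^m$ translates the mean. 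Hence $\mathcal{N}^m$ can be identified with the Lie group $G := \mathrm{Aff}^s(m,\RR)$, under which $g^F$ becomes a left-invariant Riemannian metric and any $\mathrm{Aff}^s(m,\RR)$-invariant statistical connection $\nabla$ becomes left-invariant. The proposition thereby becomes a classification of the left-invariant conjugate symmetric statistical structures $(g^F,\nabla)$ on $G$.

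The implication (2) $\Rightarrow$ (1) is the easy direction. Conjugate symmetry of $(g^F,\nabla^{A(\alpha)})$ is Fact~\ref{Fact_N-CS-DF}. The $\mathrm{Aff}^s(m,\RR)$-invariance of $\nabla^{A(\alpha)}$ follows because both $g^F$ and the Amari--Chentsov $\alpha$-tensor field $C^{A(\alpha)}$ are invariant under the full affine action on $\mathcal{N}^m$ (the generalization of Chentsov's theorem, cf.~\cite{Ay_2017}), and $\mathrm{Aff}^s(m,\RR)$ is a subgroup of $\mathrm{Aff}(m,\RR)$; since $\nabla^{A(\alpha)}$ is determined by the invariant pair $(g^F, C^{A(\alpha)})$ via Proposition~\ref{Prop_1to1-Lstat}, it too is invariant.

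For the converse (1) $\Rightarrow$ (2), I would first compute the Lie algebra $\mathfrak{g} = \mathfrak{r} \ltimes \RR^m$ of $G$, where $\mathfrak{r}$ is the algebra of upper-triangular matrices, and express $g^F$ as the inner product $\LR$ it induces on $\mathfrak{g}$ at the identity $(\Sigma,\mu)=(I_m,0)$; the classical formula $g^F = \tfrac{1}{2}\mathrm{tr}(\Sigma^{-1}\,d\Sigma\,\Sigma^{-1}\,d\Sigma) + d\mu^{\mathsf{T}}\Sigma^{-1}\,d\mu$ makes this explicit. By Proposition~\ref{Prop_1to1-Lstat}, the invariant connection $\nabla$ corresponds to a cubic form $C \in S^3(\mathfrak{g}^\ast)$, and by Proposition~\ref{Prop_K-CS} conjugate symmetry of $(g^F,C)$ is equivalent to the total symmetry of the $(0,4)$-tensor $\nabla^{g^F}C$. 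Feeding the structure constants of $\mathfrak{g}$ into Propositions~\ref{proposition:Gamma_from_a} and~\ref{proposition:nablax_computation} turns the requirement that $\nabla^{g^F}C$ be totally symmetric into a homogeneous linear system in the coefficients of $C$; solving it and showing that the solution space is exactly the line $\{ C^{A(\alpha)} \mid \alpha \in \RR \}$ finishes the proof.

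The main obstacle is this final linear-algebra step. Unlike the $\GRHn$ case, where the canonical basis almost diagonalizes the Christoffel symbols, the algebra $\mathfrak{g}$ has dimension $m(m+3)/2$ and a genuinely nonabelian semidirect-product bracket, so $S^3(\mathfrak{g}^\ast)$ is large and the symmetry equations are correspondingly unwieldy. The efficient way to control them is to use symmetry rather than brute force: the stabilizer $L = \mathrm{Aut}(\mathfrak{g}) \cap O(\mathfrak{g},\LR)$ acts linearly on the subspace $S^3_{\mathrm{CS}}(\mathfrak{g}^\ast, g^F)$ of conjugate symmetric cubic forms, and decomposing $S^3(\mathfrak{g}^\ast)$ as an $L$-module isolates the few components in which a conjugate symmetric form can live. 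Because the easy direction already produces the nonzero element $C^{A(1)} \in S^3_{\mathrm{CS}}(\mathfrak{g}^\ast, g^F)$, it then suffices to prove $\dim S^3_{\mathrm{CS}}(\mathfrak{g}^\ast, g^F) = 1$, which yields $S^3_{\mathrm{CS}}(\mathfrak{g}^\ast, g^F) = \{ C^{A(\alpha)} \mid \alpha \in \RR \}$ and hence the equivalence.
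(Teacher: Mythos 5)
There is a genuine gap, and it is worth noting first that the paper does not prove this proposition at all: it is quoted verbatim from the external reference \cite{KO}, so you were in effect being asked to reprove Kobayashi--Ohno's theorem from scratch. Your setup is sound as far as it goes. The simply-transitive action of $\mathrm{Aff}^s(m,\RR)$ on $\mathcal{N}^m$ via Cholesky factorization is correct, the identification of invariant structures with left-invariant ones on the group is legitimate, and the direction (2) $\Rightarrow$ (1) is handled properly (Fact~\ref{Fact_N-CS-DF} for conjugate symmetry, Chentsov-type invariance of $g^F$ and $C^{A(\alpha)}$ under the full affine group, restricted to the subgroup $\mathrm{Aff}^s(m,\RR)$, plus Proposition~\ref{Prop_1to1-Lstat}).

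The problem is that the implication (1) $\Rightarrow$ (2) --- which is the entire mathematical content of the cited result --- is never actually proved. You correctly reduce it to showing $S^3_{\mathrm{CS}}(\mathfrak{g}^\ast, g^F) = \RR\, C^{A(1)}$, equivalently $\dim S^3_{\mathrm{CS}}(\mathfrak{g}^\ast, g^F) = 1$, but then you only describe two strategies for this (brute-force linear algebra via Propositions~\ref{proposition:Gamma_from_a} and~\ref{proposition:nablax_computation}, or an $L$-module decomposition) without executing either. This is not a routine verification that can be waved through: $\dim \mathfrak{g} = m(m+3)/2$, so $S^3(\mathfrak{g}^\ast)$ has dimension $\binom{m(m+3)/2+2}{3}$, and the symmetry equations for $\nabla^{g^F} C$ form a large coupled system whose solution is precisely what occupies \cite{KO}. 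Moreover, the symmetry trick that rescues the $\GRHn$ computation is unlikely to help as much here: in that case $L = \mathrm{Aut}(\geRHn) \cap O(\geRHn,\LR) \cong SO(1) \times O(n-1)$ is a large group whose invariants in $S^3$ form a very small space, whereas for $\mathfrak{g} = \mathfrak{r} \ltimes \RR^m$ (with $\mathfrak{r}$ strictly constrained to the triangular algebra, which conjugation by $O(m)$ does not preserve) the group $\mathrm{Aut}(\mathfrak{g}) \cap O(\mathfrak{g},\LR)$ is far smaller, so the isotypic decomposition does not ``isolate a few components'' in any obvious way. Until the dimension count is actually carried out --- by whatever method --- the proposal is a plan for a proof, not a proof.
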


Theorem~\ref{Thm_Amari--Chentsov_Takano} provides a characterization similar to the above Proposition~\ref{Prop_MND-KO} for the Amari--Chentsov $\alpha$-connection on $(\mathcal{N}_T^m, \RR^m, \mathbf{p}_T)$.

Let us consider the case where $m = 2$.
In \cite{Inoguchi_2024}, the Lie group $\mathrm{Aff}^{d+}(1, \RR) = \RR_{>0} \ltimes \RR$ is called the \emph{Lie group of (univariate) normal distributions}.
Note that $\mathrm{Aff}^{d+}(1, \RR) \cong G_{\RRH^2}$.
By Theorem~\ref{thm:GRHn-cs-df-moduli}, for the moduli space $\MLStat (\mathrm{Aff}^{d+}(1, \RR))$, the following theorem holds. 

\begin{Thm}
The following is a complete set of representatives for $\MLStat(\mathrm{Aff}^{d+}(1, \RR))$.
\[
S := \left\{(a,b,c,d) ~\middle|~ b>0 \right\}\ \cup\ \left\{(a,0,c,d) ~\middle|~ d\ge0 \right\} \subset S^{3}(\gee_{\RRH^2}^\ast).
\]
Here, $(a,b,c,d) := a x_1^3 + b x_1^2x_2 + cx_1x_2^2 + d x_2^3 \in S^3(\gee_{\RRH^2}^\ast)$, and $\{x_1, x_2\}$ is the dual basis of the canonical basis of $\gee_{\RRH^2}$ (see~Equation~\eqref{eq:brackt-relation_grhn}).
We note that $(a,b,c,d) = (4\alpha, 0, 6\alpha, 0)$ corresponds to $(g^F,\nabla^{A(\alpha)})$.
\end{Thm}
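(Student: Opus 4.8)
The plan is to reduce the description of $\MLStat(\mathrm{Aff}^{d+}(1,\RR))$ to an explicit finite group action on a $4$-dimensional space and then choose a transversal. First I would use the isomorphism $\mathrm{Aff}^{d+}(1,\RR) \cong G_{\RRH^2}$ recorded in Section~\ref{Subsec_Takano}, and apply Theorem~\ref{thm:GRHn-cs-df-moduli} with $n = 2$. This identifies the moduli space with $(SO(1) \times O(1)) \backslash \mathrm{Homog}_3(x_1, x_2)$. Since $SO(1)$ is trivial and $O(1) = \{\pm 1\}$, the effective symmetry is a single copy of $\ZZ/2\ZZ$ acting on $\mathrm{Homog}_3(x_1,x_2) \cong S^3(\gee_{\RRH^2}^\ast)$, so the whole problem becomes the computation of this involution together with the choice of one point per orbit.

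Next I would make the $\ZZ/2\ZZ$-action explicit. By Proposition~\ref{prop:GRHnL} the nontrivial element of $L$ is $h = \mathrm{diag}(1,-1)$, and by Proposition~\ref{Prop_isotropy-LIM} the isotropy acts with scaling factor $r = 1$; hence by~\eqref{eq_action-isotropy-S3} it acts on cubic forms by substitution along $h^{-1} = h$. Tracking the dual basis gives $x_1 \mapsto x_1$ and $x_2 \mapsto -x_2$, so on coefficients the involution reads
\[
(a,b,c,d) \longmapsto (a,-b,c,-d).
\]
Thus each orbit is either a pair $\{(a,b,c,d),(a,-b,c,-d)\}$ with $(b,d)\neq(0,0)$, or a single $h$-fixed point $(a,0,c,0)$.

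It then remains to check that $S$ meets every orbit in exactly one point, which I would do by case analysis on the pair $(b,d)$. If $b \neq 0$, exactly one of the two orbit points has positive second coordinate, and that one lies in the first piece of $S$ while its partner lies in neither piece. If $b = 0$ but $d \neq 0$, the orbit is $\{(a,0,c,d),(a,0,c,-d)\}$ and exactly the point with $d > 0$ lies in the second piece. If $(b,d) = (0,0)$ the point is $h$-fixed and lies in $S$ via $d = 0 \ge 0$. This shows that $S$ is a complete set of representatives. Finally I would verify that the Amari--Chentsov datum, which for $n=2$ is $C^\alpha = 4\alpha x_1^3 + 6\alpha x_1 x_2^2 = (4\alpha,0,6\alpha,0)$, indeed lies in $S$, confirming the stated correspondence with $(g^F,\nabla^{A(\alpha)})$.

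The argument is short precisely because Theorem~\ref{thm:GRHn-cs-df-moduli} has already reduced the moduli space to the orbit space under $L$; the only points needing care are the correct sign of the involution on each monomial and the exhaustiveness of the three cases. I do not anticipate a genuine obstacle, only the routine bookkeeping of splitting the quotient of $\RR^4$ by the sign involution $(b,d) \mapsto (-b,-d)$ (with $a,c$ fixed) into a clean transversal.
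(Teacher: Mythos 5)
Your proposal is correct and takes essentially the same route as the paper: the paper deduces this theorem directly from Theorem~\ref{thm:GRHn-cs-df-moduli} with $n=2$ (via $\mathrm{Aff}^{d+}(1,\RR)\cong G_{\RRH^2}$ and Propositions~\ref{Prop_moduli}, \ref{Prop_isotropy-LIM}, \ref{prop:GRHnL}), leaving exactly the bookkeeping you carried out. Your computation of the $O(1)$-involution $(a,b,c,d)\mapsto(a,-b,c,-d)$, the case analysis showing $S$ meets every orbit exactly once, and the check that $(4\alpha,0,6\alpha,0)\in S$ supply the details the paper leaves implicit.
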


\section{The case of $H^3 \times \RR^{n-3}$}\label{Sec_H3Rn-3}
In this section, we consider the product Lie group of the three-dimensional Heisenberg group $H^3$ and the abelian Lie group $\RR^{n-3}$, where $n \geq 3$. 
Then $\mathfrak{PM}(H^3 \times \RR^{n-3}) \cong \{\ast\}$ (see Proposition~\ref{prop:metric-moduli-singleton}).

Throughout this section, we denote the Lie algebra of $H^3$ by $\haa_3$. 
Let us fix a \emph{canonical basis} $\{e_1, e_2 ,e_3\}$ of $\haa_3$ (see~\cite{KOTT}),
that is, $\{e_1, e_2 ,e_3\}$ is a basis of $\haa_3$ with 
\[
    [e_1, e_2] = e_3, 
\]
and the other bracket products are trivial. 
Furthermore, we fix a basis $\{e_4, \dots, e_n\}$ on the Lie algebra $\gee_{\RR^{n-3}}$ of $\RR^{n-3}$, and we also call the basis $\{e_1, e_2, e_3, e_4, \dots, e_n\}$ of $\haa_3 \oplus \gee_{\RR^{n-3}}$ the \emph{canonical basis}.
The dual basis of $\{ e_1, e_2, e_3, \dots,e_n \}$ is denoted by $\{ x_1, x_2, x_3, \dots, x_n\}$.
Let us consider the inner product $\LR \in \Modin(\haa_3 \oplus \gee_{\RR^{n-3}})$ for which the canonical basis is orthonormal.
We denote by $g$ the left-invariant Riemannian metric on $H^3 \times \RR^{n -3}$ corresponding to $\LR$.
It is well-known (see~\cite{Milnor_1976}) that the $(0,4)$-curvature tensor $R^0_g$ of $g$ on $H^3 \times \RR^{n-3}$ can be written as 
\[
R^0_g = \frac{3}{4}\omega_{12} \odot \omega_{12} - \frac{1}{4} \omega_{13} \odot \omega_{13} - \frac{1}{4} \omega_{23} \odot \omega_{23}
\]
where we put $\omega_{ij} := x_i \otimes x_j - x_j \otimes x_i$.

\subsection{Main theorem for $H^3 \times \RR^{n-3}$}
The goal of this section is to give a proof of the following theorem:

\begin{Thm}\label{thm:H3Rn-3-cs-df-moduli}
Let us define 
\[
    w_0 := x_1 x_1 + x_2 x_2 + x_3 x_3 \in S^2(\mathfrak{h}_3^\ast) \cong  \mathrm{Homog}_2(x_1, x_2, x_3).
\]
Then 
\begin{align*}
 S^3_{\mathrm{CS}}((\haa_3 \oplus \gee_{\RR^{n-3}})^\ast, g) &= \{ w_0 p_1 + p_3 \mid p_1 \in S^1(\gee_{\RR^{n-3}}^\ast), ~  p_3 \in S^3(\gee_{\RR^{n-3}}^\ast) \}, \\
 S^3_{\mathrm{DF}}((\haa_3 \oplus \gee_{\RR^{n-3}})^\ast, g) &= \emptyset.
\end{align*}
Moreover, for each $C \in  S^3_{\mathrm{CS}}((\haa_3 \oplus \gee_{\RR^{n-3}})^\ast, g)$, we have $\nabla^g C = 0$ and 
$\mathrm{Sect}^{\nabla}_{g} (e_1,e_3) \geq 1/4 > 0$, where $\nabla$ is the statistical connection corresponding to $(g, C)$. 
Furthermore, 
\begin{align*}
    \MLStat(H^3 \times \RR^{n-3}) &\cong 
    (S(O(2) \times O(1)) \times O(n-3)) \backslash \mathrm{Homog}_3(x_1,\dots,x_n), \\
    \MLStatCS(H^3 \times \RR^{n-3}) &\cong O(n-3) \backslash ( \mathrm{Homog}_1(x_4, \dots, x_n) \oplus \mathrm{Homog}_3(x_4, \dots, x_n)), \\ 
    \MLStatDF(H^3 \times \RR^{n-3}) &= \emptyset.        
\end{align*}
\end{Thm}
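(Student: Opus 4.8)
The plan is to follow the template of the proof of Theorem~\ref{thm:GRHn-cs-df-moduli}. Since $\mathfrak{PM}(H^3\times\RR^{n-3})\cong\{\ast\}$ and the Lie algebra $\haa_3\oplus\gee_{\RR^{n-3}}$ is non-abelian, Propositions~\ref{Prop_moduli} and~\ref{Prop_isotropy-LIM} (\ref{Prop_isotropy-LIM_nonabelian}) reduce all four moduli computations to the action of the compact group
\[
L := \mathrm{Aut}(\haa_3\oplus\gee_{\RR^{n-3}})\cap O(\haa_3\oplus\gee_{\RR^{n-3}},\LR)
\]
on $S^3$ and on the subsets $S^3_{\mathrm{CS}}$ and $S^3_{\mathrm{DF}}$. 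First I would determine $L$. Any orthogonal automorphism preserves the derived algebra $\RR e_3$ and the center $Z=\mathrm{Span}\{e_3,\dots,e_n\}$, hence also $Z^\perp=\mathrm{Span}\{e_1,e_2\}$, the complement $(\RR e_3)^\perp$, and therefore $\gee_{\RR^{n-3}}=Z\cap(\RR e_3)^\perp$. Writing the restriction to $\mathrm{Span}\{e_1,e_2\}$ as $A\in O(2)$, the relation $[\varphi e_1,\varphi e_2]=\varphi e_3$ forces $\varphi e_3=(\det A)e_3$, while the restriction to $\gee_{\RR^{n-3}}$ is an arbitrary $B\in O(n-3)$ (all brackets there being trivial). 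This yields $L=\{\mathrm{diag}(A,\det A,B)\}\cong S(O(2)\times O(1))\times O(n-3)$, the group appearing in the statement.

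Next I would compute the Levi-Civita data. From the structure constants $a_{12}^3=-a_{21}^3=1$ and Proposition~\ref{proposition:Gamma_from_a}, the only nonzero Christoffel symbols are the six among the indices $1,2,3$ (e.g.\ $\Gamma_{12}^3=\tfrac12$), while every symbol involving an index $\geq4$ vanishes. Consequently $\nabla^g x_j=0$ for $j\geq4$, so $\nabla^g$ preserves the grading of $S^3$ by the number of tensor factors drawn from $\{x_4,\dots,x_n\}$, and I would treat each graded piece separately via Propositions~\ref{proposition:nablax_computation} and~\ref{Prop_K-CS}. The purely flat part $S^3(\gee_{\RR^{n-3}}^\ast)$ is parallel, hence always conjugate symmetric; a direct check gives $\nabla^g w_0=0$, so each element $w_0 x_j$ is parallel as well. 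The substance of the argument, and the step I expect to be the main obstacle, is the converse: imposing total symmetry of $\nabla^g C$ on the remaining graded pieces and solving the resulting linear systems to show that no purely Heisenberg cubic (grading $0$) and no ``two Heisenberg, one flat'' term (grading $2$) is conjugate symmetric, while in grading $1$ the only survivors are the multiples of $w_0$. This gives $S^3_{\mathrm{CS}}=\{w_0 p_1+p_3\}$ and simultaneously $\nabla^g C=0$ for every such $C$.

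For the dually flat part I would prove $S^3_{\mathrm{DF}}=\emptyset$ by a curvature obstruction. Every dually flat structure is conjugate symmetric, so it suffices to rule out $R^{\nabla}\equiv0$ for $C=w_0 p_1+p_3$. Writing $K$ for the difference tensor, the explicit shape of $C$ forces $K(e_1,e_3)=0$ and, crucially, $K(e_1,e_1)=K(e_3,e_3)$, since $w_0$ treats $x_1,x_2,x_3$ symmetrically. Using Proposition~\ref{Prop_curvature-CS}, the self-adjointness of $K_{e_1},K_{e_3}$, and the value $R^0_g(e_1,e_3,e_3,e_1)=1/4$ read off from the stated expression for $R^0_g$, I obtain
\[
\mathrm{Sect}^{\nabla}_{g}(e_1,e_3)=R^0_g(e_1,e_3,e_3,e_1)+\langle[K_{e_1},K_{e_3}]e_3,e_1\rangle=\tfrac14+\|K(e_1,e_1)\|^2\geq\tfrac14>0,
\]
so $R^{\nabla}\neq0$ and no left-invariant dually flat structure exists.

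Finally I would assemble the moduli spaces. The $S(O(2)\times O(1))$ factor of $L$ fixes $w_0$ (being $O(3)$-invariant) and acts trivially on $x_4,\dots,x_n$, hence acts trivially on $S^3_{\mathrm{CS}}$, whereas $O(n-3)$ acts in the standard way on $\mathrm{Homog}_1(x_4,\dots,x_n)\oplus\mathrm{Homog}_3(x_4,\dots,x_n)$, which is identified with $S^3_{\mathrm{CS}}$ through $(p_1,p_3)\mapsto w_0 p_1+p_3$. This gives the stated description of $\MLStatCS(H^3\times\RR^{n-3})$, a singleton when $n=3$; the identity $\MLStat(H^3\times\RR^{n-3})\cong L\backslash\mathrm{Homog}_3(x_1,\dots,x_n)$ is then immediate, and $\MLStatDF(H^3\times\RR^{n-3})=\emptyset$ follows from $S^3_{\mathrm{DF}}=\emptyset$.
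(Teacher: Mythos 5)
Your overall architecture coincides with the paper's: reduce everything to the action of $L=\mathrm{Aut}(\haa_3\oplus\gee_{\RR^{n-3}})\cap O(\haa_3\oplus\gee_{\RR^{n-3}},\LR)$ via Propositions~\ref{Prop_moduli} and~\ref{Prop_isotropy-LIM}, identify $L\cong S(O(2)\times O(1))\times O(n-3)$, determine $S^3_{\mathrm{CS}}$ by imposing total symmetry of $\nabla^g C$, and exclude dually flat structures by a curvature computation carried out inside $S^3_{\mathrm{CS}}$. Several of your local arguments are in fact cleaner than the paper's: the direct derivation of $L$ from preservation of the derived algebra and the center (the paper quotes the known form of $\mathrm{Aut}(\haa_3\oplus\gee_{\RR^{n-3}})$); the observation that $\nabla^g$ preserves the grading by the number of factors from $\{x_4,\dots,x_n\}$; the identity $\nabla^g w_0=0$, which immediately gives the inclusion $\{w_0p_1+p_3\}\subseteq S^3_{\mathrm{CS}}$; and the conceptual bound $\mathrm{Sect}^\nabla_g(e_1,e_3)=\tfrac14+\|K(e_1,e_1)\|^2$ obtained from $K(e_1,e_3)=0$, $K(e_1,e_1)=K(e_3,e_3)$ and self-adjointness, which reproduces exactly the value $\tfrac14+\tfrac{1}{36}\sum_{k\geq4}c_k^2$ of Proposition~\ref{prop:H3Rn-3-curvature} without any matrix manipulation.

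There is, however, a genuine gap, and you flag it yourself: the reverse inclusion $S^3_{\mathrm{CS}}\subseteq\{w_0p_1+p_3\}$ is never proved. You assert that one must ``solve the resulting linear systems'' to show that gradings $0$ and $2$ contain no nonzero conjugate symmetric cubic and that in grading $1$ only the span of $\{w_0x_j\}_{j\geq 4}$ survives, but you do not solve them (incidentally, your label for grading $2$ should read ``one Heisenberg, two flat'' factors). This is precisely the computational heart of the theorem: in the paper it occupies Lemma~\ref{lemma:H3Rn-3_nablaC_comp}, the table of $\nabla^g$ applied to every cubic monomial, together with the list of coefficient identities (1)--(19) in the proof of Proposition~\ref{prop:H3Rn-3CScomp}. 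Nothing in the grading reduction makes this automatic; ruling out, say, $x_1^3$, $x_1x_2x_3$ or $x_1^2x_2$ in grading $0$, or $x_1x_ix_j$ in grading $2$, still requires extracting and comparing specific components of $\nabla^g C$, and that is where all the Christoffel-symbol work lives. The omission propagates: without the exact identification of $S^3_{\mathrm{CS}}$, the description of $\MLStatCS(H^3\times\RR^{n-3})$ is unestablished, and so is $S^3_{\mathrm{DF}}=\emptyset$, because your curvature obstruction quantifies only over the elements $w_0p_1+p_3$; any further conjugate symmetric cubic, had one existed, would have to be tested for flatness separately. So the proposal is a correct and in places elegant plan, but it is not yet a proof.
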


\begin{Rem}
For the case of $H^3$, by Inoguchi and the second author \cite{IO-2024}, there is no non-trivial left-invariant conjugate symmetric statistical structure on $H^3$, that is $S^3_{\mathrm{CS}}(\haa_3^\ast, g) = \{0\}$ holds for any left-invariant Riemannian metric $g \in \Modin(H^3)$.
Moreover, as is well-known, any left-invariant Riemannian metric on \(H^3\) is not flat (see COROLLARY 4.6 in \cite{Milnor_1976}).
Thus, 
we can see that there is no left-invariant dually flat structure on \(H^3\). That is, \(\LstatDF(H^3) = \emptyset\).
\end{Rem}

\begin{Rem}
One can also see that there is no left-invariant constant curvature statistical structure on the three-dimensional Heisenberg group \(H^3\).
Note that a constant curvature statistical structure is conjugate symmetric.
According to \cite{Milnor_1976}, any left-invariant Riemannian metric on a non-abelian nilpotent Lie group is not Einstein, and hence not of constant curvature.
\end{Rem}

As supplementary remarks to Theorem~\ref{thm:H3Rn-3-cs-df-moduli}, we will also discuss the following points:

\begin{Prop}\label{proposition:H3RnotCC}
Any left-invariant conjugate symmetric statistical structure on $H^3 \times \RR^{n-3}$ does not have constant curvature.
\end{Prop}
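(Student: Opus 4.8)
The plan is to argue by contradiction, exhibiting two $2$-planes inside the Heisenberg factor on which the statistical sectional curvatures must disagree. Since $\mathfrak{PM}(H^3\times\RR^{n-3})$ is a singleton and constant curvature is preserved under the action defining $\MLStat$, it suffices to treat the fixed canonical metric $g$ together with $C\in S^3_{\mathrm{CS}}((\haa_3\oplus\gee_{\RR^{n-3}})^\ast,g)$. By Theorem~\ref{thm:H3Rn-3-cs-df-moduli} such a $C$ has the form $C=w_0 p_1+p_3$ with $p_1\in S^1(\gee_{\RR^{n-3}}^\ast)$ and $p_3\in S^3(\gee_{\RR^{n-3}}^\ast)$; write $p_1=\sum_{a\geq 4}c_a x_a$ and set $s:=\sum_{a\geq 4}c_a^2$. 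Recall from Section~\ref{subsec:some-class-of-SS} that constant curvature forces $\mathrm{Sect}^\nabla_g$ to take a single value on every $2$-plane, where $\nabla$ denotes the statistical connection corresponding to $(g,C)$; I will contradict this on $\mathrm{Span}\{e_1,e_2\}$ and $\mathrm{Span}\{e_1,e_3\}$.

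First I would pin down the difference tensor $K=K^{(g,C)}$ on the Heisenberg subalgebra through $C(X,Y,Z)=-2\langle K(X,Y),Z\rangle$. Because $x_a(e_i)=0$ for $a\geq 4$ and $i\in\{1,2,3\}$ and $p_3$ involves only $x_4,\dots,x_n$, evaluating $C$ on Heisenberg vectors detects only the $w_0 p_1$ term. A short computation then gives, for distinct $i,j\in\{1,2,3\}$ and $a\geq 4$,
\[
K(e_i,e_i)=v,\qquad K(e_i,e_j)=0,\qquad K(e_i,e_a)=-\tfrac{c_a}{6}\,e_i,
\]
where $v:=-\tfrac16\sum_{a\geq4}c_a e_a$ lies in the abelian factor and, crucially, is \emph{the same} vector for all $i\in\{1,2,3\}$. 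This uniformity is forced by the isotropic factor $w_0=x_1^2+x_2^2+x_3^2$, which treats $e_1,e_2,e_3$ symmetrically even though the bracket $[e_1,e_2]=e_3$ does not.

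Next I would use Proposition~\ref{Prop_curvature-CS} in the form $R^\nabla_g=R^0_g+[K,K]$, together with the curvature $R^0_g=\tfrac34\,\omega_{12}\odot\omega_{12}-\tfrac14\,\omega_{13}\odot\omega_{13}-\tfrac14\,\omega_{23}\odot\omega_{23}$ recorded at the start of this section. From the formulas above $K_{e_i}v=\tfrac{s}{36}e_i$ and $K_{e_i}e_j=0$ for distinct $i,j\in\{1,2,3\}$, so the bracket contributions on the two planes coincide:
\[
[K,K](e_1,e_2,e_2,e_1)=[K,K](e_1,e_3,e_3,e_1)=\tfrac{s}{36}.
\]
Consequently the difference of the statistical sectional curvatures equals that of the Riemannian ones,
\[
\mathrm{Sect}^\nabla_g(e_1,e_3)-\mathrm{Sect}^\nabla_g(e_1,e_2)=R^0_g(e_1,e_3,e_3,e_1)-R^0_g(e_1,e_2,e_2,e_1)=\tfrac14-\bigl(-\tfrac34\bigr)=1\neq 0,
\]
so $\mathrm{Sect}^\nabla_g$ is not constant and $(g,\nabla)$ cannot have constant curvature.

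The main obstacle is the explicit determination of $K$ on the Heisenberg subalgebra and the verification that the two $[K,K]$-contributions coincide; everything hinges on the fact that the part of $C$ reaching the Heisenberg directions is controlled solely by the isotropic factor $w_0$, so the anisotropy of $R^0_g$ produced by $[e_1,e_2]=e_3$ persists in $R^\nabla_g$ and cannot be cancelled by the correction $[K,K]$.
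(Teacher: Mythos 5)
Your proposal is correct and follows essentially the same route as the paper: after reducing to the canonical metric and invoking the classification $C = w_0 p_1 + p_3$, the paper's Proposition~\ref{prop:H3Rn-3-curvature} computes exactly your two quantities, $R^C_g(e_1,e_2,e_2,e_1) = -\tfrac34 + \tfrac1{36}\sum_{k\geq 4}c_k^2$ and $R^C_g(e_1,e_3,e_3,e_1) = \tfrac14 + \tfrac1{36}\sum_{k\geq 4}c_k^2$, via Proposition~\ref{Prop_curvature-CS} and the commutators $[K_1,K_2]$, $[K_1,K_3]$ of the matrices $K_l = -\tfrac16\sum_{k\geq 4}c_k(E_{lk}+E_{kl})$, and concludes non-constancy from the fixed gap of $1$ between the two sectional curvatures. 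Your presentation of $K$ in endomorphism form (with the uniform vector $v$) is just a repackaging of the same computation, and all your intermediate values agree with the paper's.
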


\subsection{Left-invariant conjugate symmetric statistical structures
and left-invariant dually flat structures on $H^3 \times \RR^{n-3}$}\label{subsec:LISS-Rn-Heisenberg}
In this subsection, we give proofs of Theorem~\ref{thm:H3Rn-3-cs-df-moduli} and Proposition~\ref{proposition:H3RnotCC}.

We shall define the subgroup $L'$ of $GL(\haa_3 \oplus \gee_{\RR^{n-3}})$ by
\[
L' := \mathrm{Aut}(\haa_3 \oplus \gee_{\RR^{n-3}}) \cap O(\haa_3 \oplus \gee_{\RR^{n-3}}, \LR) \subset GL(\haa_3 \oplus \gee_{\RR^{n-3}}).
\]
Since $\mathfrak{PM}(H^3 \times \RR^{n-3}) \cong \{\ast\}$, by applying Propositions~\ref{Prop_moduli} and \ref{Prop_isotropy-LIM} \ref{Prop_isotropy-LIM_nonabelian}, we have 
\begin{align*}
\MLStat(H^3 \times \RR^{n-3}) &\cong L' \backslash S^3((\haa_3 \oplus \gee_{\RR^{n-3}})^\ast), \\
\MLStatCS(H^3 \times \RR^{n-3}) &\cong L' \backslash S^3_{\mathrm{CS}}((\haa_3 \oplus \gee_{\RR^{n-3}})^\ast,\LR), \\
\MLStatDF(H^3 \times \RR^{n-3}) &\cong L' \backslash S^3_{\mathrm{DF}}((\haa_3 \oplus \gee_{\RR^{n-3}})^\ast,\LR).
\end{align*}

Therefore, Theorem~\ref{thm:H3Rn-3-cs-df-moduli} and Proposition~\ref{proposition:H3RnotCC} follow from the three propositions below.

\begin{Prop}\label{prop:H3Rn-3L'}
We shall identify $GL(\haa_3 \oplus \gee_{\RR^{n-3}})$ with $GL(n,\RR)$ by considering matrices of linear transformations on $\haa_3 \oplus \gee_{\RR^{n-3}}$ with respect to the canonical basis.
Then the subgroup $L'$ of $GL(\haa_3 \oplus \gee_{\RR^{n-3}}) \cong GL(n,\RR)$ can be written as 
\begin{equation}\label{eq:H3Rn-3L'}
    L' = \left\{ 
        \begin{pmatrix}
             a&      b&      0&  0&  \cdots&  0 \\
             c&      d&      0&  0&  \cdots&  0 \\
             0&      0&  ad-bc&  0&  \cdots&  0  \\
        \vdots& \vdots&      0&   &        &  \\
        \vdots& \vdots& \vdots&   &      \text{\huge{$\alpha$}}  &  \\
             0&      0&      0&   &        &
    \end{pmatrix} ~\middle|~ 
    \begin{pmatrix}
             a&      b \\
             c&      d      
    \end{pmatrix}
    \in O(2), \ \alpha \in O(n-3)
        \right\} \cong S(O(2) \times O(1)) \times O(n-3).
\end{equation}
Furthermore, $w_0 p_1 \in S^3((\haa_3 \oplus \gee_{\RR^{n-3}})^\ast)$ is $S(O(2) \times O(1))$-invariant for any $p_1 \in S^1(\gee_{\RR^{n-3}}^\ast)$.
\end{Prop}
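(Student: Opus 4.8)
The plan is to determine $L'$ using only the canonical algebraic invariants of $\gee := \haa_3 \oplus \gee_{\RR^{n-3}}$ together with the orthogonality constraint, and then to read off the invariance of $w_0 p_1$ directly from the resulting block decomposition. The two invariants I would exploit are the derived subalgebra $[\gee,\gee] = \RR e_3$ and the center $\mathfrak{z}(\gee) = \mathrm{span}\{e_3, e_4, \ldots, e_n\}$; both are preserved by every Lie algebra automorphism, and in particular by every $h \in L'$.

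First I would fix $h \in L' = \mathrm{Aut}(\gee) \cap O(\gee, \LR)$ and use orthogonality to upgrade ``preserves'' to ``preserves together with orthogonal complement.'' Since $h$ preserves $\mathfrak{z}(\gee)$ and is orthogonal, it preserves $\mathfrak{z}(\gee)^\perp = \mathrm{span}\{e_1, e_2\}$. Since $h$ preserves $\RR e_3$ with $\|e_3\|=1$, we get $h(e_3) = \pm e_3$, and orthogonality then forces $h$ to preserve $(\RR e_3)^\perp$; intersecting with $\mathfrak{z}(\gee)$ shows $h$ preserves $\mathrm{span}\{e_4, \ldots, e_n\}$ as well. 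Thus $h$ is block diagonal for $\mathrm{span}\{e_1,e_2\} \oplus \RR e_3 \oplus \mathrm{span}\{e_4,\ldots,e_n\}$, with an $O(2)$ block $A = \begin{pmatrix} a & b \\ c & d \end{pmatrix}$ on the first summand, a sign on $e_3$, and an $O(n-3)$ block $\alpha$ on the last. The remaining constraint is the single nontrivial bracket: applying $h$ to $[e_1,e_2]=e_3$ gives $[h e_1, h e_2] = (ad-bc)e_3$, so the $e_3$-entry of $h$ must equal $\det A = ad - bc$. This is exactly the form \eqref{eq:H3Rn-3L'}. Conversely, any matrix of that shape is orthogonal (block diagonal with orthogonal blocks) and an automorphism (the only relation to check, $[e_1,e_2]=e_3$, holds by the same determinant computation), so $L'$ is precisely \eqref{eq:H3Rn-3L'}. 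Recording the correspondence $h \mapsto (\mathrm{diag}(A,\det A), \alpha)$ and noting that the $3 \times 3$ block has determinant $(\det A)^2 = 1$ identifies $L'$ with $S(O(2) \times O(1)) \times O(n-3)$.

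For the invariance claim, I would observe that the $S(O(2) \times O(1))$ factor corresponds exactly to $\alpha = I_{n-3}$, so it acts orthogonally on $\mathrm{span}\{e_1,e_2,e_3\} = \haa_3$ (via $\mathrm{diag}(A, \det A) \in O(3)$) and trivially on $\mathrm{span}\{e_4,\ldots,e_n\}$. Under the induced action \eqref{eq_action-isotropy-S3} (with $r=1$, since $L'$ lies in the isotropy in the non-abelian case, by Proposition~\ref{Prop_isotropy-LIM}), the tensor $w_0 = x_1^2 + x_2^2 + x_3^2$ represents the restriction of the inner product to $\haa_3$ and is therefore fixed by any orthogonal transformation of $\haa_3$, while each $x_j$ with $j \geq 4$, and hence $p_1 \in S^1(\gee_{\RR^{n-3}}^\ast)$, is fixed because the action on $\mathrm{span}\{e_4,\ldots,e_n\}$ is trivial. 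Since the action on the symmetric algebra is multiplicative, $w_0 p_1$ is $S(O(2)\times O(1))$-invariant, as claimed.

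The main obstacle I anticipate is the rigorous derivation of the block-diagonal form: one must check carefully that orthogonality, combined with invariance of the derived algebra and the center, really does force the three subspaces $\mathrm{span}\{e_1,e_2\}$, $\RR e_3$, and $\mathrm{span}\{e_4,\ldots,e_n\}$ to be simultaneously $h$-invariant (in particular that $e_3$ cannot be mixed with the abelian directions $e_4,\ldots,e_n$). Everything after this reduction is a short determinant computation together with routine bookkeeping.
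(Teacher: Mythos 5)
Your proof is correct, and it takes a genuinely different route from the paper. The paper simply cites the known description of $\mathrm{Aut}(\haa_3 \oplus \gee_{\RR^{n-3}})$ from earlier work (\cite{KTT, KT_2023, KOTT}) and states that Equation~\eqref{eq:H3Rn-3L'} follows directly by intersecting that expression with $O(n)$, with the invariance of $w_0 p_1$ declared ``readily seen.'' You instead derive the answer from scratch: the derived subalgebra $[\gee,\gee]=\RR e_3$ and the center $\mathrm{span}\{e_3,\dots,e_n\}$ are preserved by any automorphism, orthogonality upgrades this to preservation of $\mathrm{span}\{e_1,e_2\}$, $\RR e_3$, and $\mathrm{span}\{e_4,\dots,e_n\}$ simultaneously (your worry about $e_3$ mixing with the abelian directions is in fact already resolved by your own argument, since the derived subalgebra is the \emph{line} $\RR e_3$, forcing $h(e_3)=\pm e_3$, after which intersecting $(\RR e_3)^\perp$ with the center isolates $\mathrm{span}\{e_4,\dots,e_n\}$), and the single bracket relation $[e_1,e_2]=e_3$ pins the $e_3$-entry to $\det A$. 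What your approach buys is self-containedness: you never need the full automorphism group, which is strictly larger and contains non-block-diagonal elements (e.g.\ shears $e_1 \mapsto e_1 + t e_3$), and whose published descriptions use a different basis numbering, as the paper itself has to caution. What the paper's approach buys is brevity and consistency with the toolkit it reuses elsewhere. Your treatment of the invariance claim (orthogonal action on $\haa_3$ fixes $w_0$ as the restricted inner product, trivial action fixes each $x_j$ with $j \geq 4$, multiplicativity of the action on the symmetric algebra) matches the paper's in substance but is spelled out more explicitly, including the correct observation via Proposition~\ref{Prop_isotropy-LIM} that no scaling factor intervenes.
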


\begin{Prop}\label{prop:H3Rn-3CScomp}
Let $C \in S^3((\haa_3 \oplus \gee_{\RR^{n-3}})^\ast) \cong \mathrm{Homog}_3(x_1,\dots,x_n)$.
Then $\nabla^g C$ is totally symmetric if and only if $C = w_0 p_1 + p_3$ for some $p_1 \in S^1(\gee_{\RR^{n-3}}^\ast) = \gee_{\RR^{n-3}}^\ast$ and $p_3 \in S^3(\gee_{\RR^{n-3}}^\ast)$. 
Furthermore, $\nabla^g (w_0 p_1 + p_3) = 0$.
\end{Prop}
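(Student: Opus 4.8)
The plan is to reduce everything to the observation that $w_0$ and $x_4,\dots,x_n$ are $\nabla^g$--parallel, and then to an essentially three--dimensional computation on the Heisenberg part. First I would compute the Christoffel symbols from Proposition~\ref{proposition:Gamma_from_a}. Since the only nonzero structure constants are $a_{12}^3 = 1 = -a_{21}^3$, the only nonzero $\Gamma_{ij}^k$ are those whose index set $\{i,j,k\}$ equals $\{1,2,3\}$, with values $\pm\tfrac12$. Two consequences drive the whole argument: (i) $\nabla^g_{e_j} = 0$ for every $j \geq 4$, so each $x_j$ with $j \geq 4$ is parallel; and (ii) applying Proposition~\ref{proposition:nablax_computation} to $x_1^2, x_2^2, x_3^2$ gives $\nabla^g w_0 = 0$, i.e.\ $w_0$ is parallel as well (as it must be, being the restriction of $g$ to $\mathfrak{h}_3$).

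For the ``if'' direction and the ``Furthermore'' clause, I would note that $w_0 p_1 + p_3$ is a symmetric product of the parallel tensors $w_0, x_4,\dots,x_n$; hence $\nabla^g(w_0 p_1 + p_3) = 0$ by the Leibniz rule, and $0$ is trivially totally symmetric, so $(g, w_0 p_1 + p_3)$ is conjugate symmetric by Proposition~\ref{Prop_K-CS}.

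For the converse, assume $\nabla^g C$ is totally symmetric and decompose $C = C_3 + C_2 + C_1 + C_0$ by its degree $3,2,1,0$ in the Heisenberg variables $x_1,x_2,x_3$ (equivalently by degree $0,1,2,3$ in $x_4,\dots,x_n$). Since $\nabla^g x_j = 0$ for $j \geq 4$ while each $\nabla^g x_i$ with $i \leq 3$ is a $2$--tensor supported on $\mathfrak{h}_3$, the operator $\nabla^g$ preserves this grading and $\nabla^g C_0 = 0$; moreover permuting tensor slots preserves the number of indices in $\{4,\dots,n\}$, so each graded piece of $\nabla^g C$ is separately symmetric. The key point is that, by (i), the last (derivative) slot of $\nabla^g C$ always lies in $\mathfrak{h}_3$; combined with total symmetry, any component carrying an index in $\{4,\dots,n\}$ must vanish. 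Thus $\nabla^g C$ is purely Heisenberg, forcing $\nabla^g C_1 = 0$ and $\nabla^g C_2 = 0$. A short independence argument (the $\nabla^g x_1, \nabla^g x_2, \nabla^g x_3$ are linearly independent and the abelian monomial factors are independent) then yields $C_1 = 0$, and a six--dimensional kernel computation shows $\ker(\nabla^g|_{S^2(\mathfrak{h}_3^\ast)}) = \RR w_0$, whence $C_2 = w_0 p_1$ for some $p_1 \in \gee_{\RR^{n-3}}^\ast$.

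The remaining and hardest step is to show $C_3 = 0$: that no nonzero purely Heisenberg cubic has symmetric covariant derivative, equivalently $S^3_{\mathrm{CS}}(\mathfrak{h}_3^\ast, g) = \{0\}$ (the $n=3$ statement of \cite{IO-2024}, which we reprove). Here the slot argument gives nothing, so I would carry out the direct computation: evaluate $\nabla^g$ on all ten monomials of $\mathrm{Homog}_3(x_1,x_2,x_3)$ via Proposition~\ref{proposition:nablax_computation}, record the antisymmetrization obstruction component by component, and check that the resulting linear system on the ten coefficients admits only the trivial solution. Assembling the pieces gives $C = C_2 + C_0 = w_0 p_1 + p_3$. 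The main obstacle is precisely this last verification, namely the injectivity of the ``obstruction--to--symmetry'' map on the ten--dimensional space of Heisenberg cubics; the earlier reductions are exactly what isolate it as the only computation that must genuinely be done by hand.
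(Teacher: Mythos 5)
Your proposal is sound and reaches the proposition by a genuinely different route from the paper. The paper works by brute force: Lemma~\ref{lemma:H3Rn-3_nablaC_comp} tabulates $\nabla^g$ on every cubic monomial, the ``only if'' direction then extracts nineteen explicit symmetry conditions that kill all coefficients except those of $w_0p_1 + p_3$, and the ``if'' direction together with the ``Furthermore'' clause is read off from the same table. Your reductions replace most of this with structure: the parallelism of $x_4,\dots,x_n$ and of $w_0$ (cleanest seen from $w_0 = g - \sum_{j \geq 4} x_j x_j$ together with $\nabla^g g = 0$, since Proposition~\ref{proposition:nablax_computation} as stated covers only cubics) gives the ``if'' direction and $\nabla^g(w_0p_1+p_3)=0$ at once; the grading by Heisenberg degree plus your slot argument (the derivative slot of $\nabla^g C$ always lies in $\{1,2,3\}$, so under total symmetry every component carrying an abelian index vanishes) correctly forces $\nabla^g C_1 = \nabla^g C_2 = 0$; and your two kernel computations are valid --- injectivity on the $C_1$-piece follows from the independence of $\nabla^g x_1,\nabla^g x_2,\nabla^g x_3$, and $\ker(\nabla^g|_{S^2(\mathfrak{h}_3^\ast)}) = \RR w_0$ follows because the kernels of the two rotation generators $\nabla^g_{e_1}$, $\nabla^g_{e_2}$ on $S^2(\mathfrak{h}_3^\ast)$ intersect exactly in $\RR w_0$. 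These steps absorb items (1)--(9) of the paper's proof conceptually rather than by coefficient chasing. What your approach buys is clarity and a reduction of the hand computation to the ten-dimensional pure-Heisenberg sector; what it leaves undone is precisely that computation: the claim $S^3_{\mathrm{CS}}(\mathfrak{h}_3^\ast,g)=\{0\}$ is announced but not executed, and it is exactly where the paper also has to grind (items (10)--(19) of its proof). The system does close with only the trivial solution --- this is the $n=3$ statement of \cite{IO-2024}, reproved in the paper --- so your plan is correct, but a complete write-up must include that ten-coefficient verification rather than defer it; modulo that one routine check, your argument is a complete and arguably cleaner proof.
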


\begin{Prop}\label{prop:H3Rn-3-curvature}
For each 
\[
C = (x_1^2 + x_2^2 + x_3^2) (c_4 x_4 + \dots + c_n x_n) + \sum_{4 \leq i \leq j \leq k \leq n} C_{ijk} x_i x_j x_k \in S^3((\haa_3 \oplus \gee_{\RR^{n-3}})^\ast)
\]
where $c_4, \dots , c_n, C_{ijk} \in \RR$, 
\begin{align*}
    R^C_g (e_1, e_2, e_2, e_1) &= -\frac{3}{4} + \frac{1}{36} \sum_{k \geq 4} c_k^2, \text{ and } \\
    R^C_g (e_1, e_3, e_3, e_1) &= \frac{1}{4} + \frac{1}{36} \sum_{k \geq 4} c_k^2. 
\end{align*}
\end{Prop}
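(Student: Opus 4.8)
The plan is to invoke the conjugate-symmetry curvature formula of Proposition~\ref{Prop_curvature-CS}. The cubic form $C = w_0 p_1 + p_3$ in the statement is exactly of the shape characterized in Proposition~\ref{prop:H3Rn-3CScomp}, so it lies in $S^3_{\mathrm{CS}}((\haa_3 \oplus \gee_{\RR^{n-3}})^\ast, g)$ and $R^C_g$ is well defined, with
\[
R^C_g(X,Y,Z,W) = R^0_g(X,Y,Z,W) + \langle [K_X, K_Y]Z, W\rangle ,
\]
where $K = K^{(g,C)}$. The two Riemannian contributions come directly from the stated formula for $R^0_g$: since its coefficients are $r_{1212} = R^0_g(e_1,e_2,e_1,e_2) = 3/4$ and $r_{1313} = R^0_g(e_1,e_3,e_1,e_3) = -1/4$, the antisymmetry $R^0_g(X,Y,Z,W) = -R^0_g(X,Y,W,Z)$ from Proposition~\ref{Prop_curvature-CS} gives $R^0_g(e_1,e_2,e_2,e_1) = -3/4$ and $R^0_g(e_1,e_3,e_3,e_1) = 1/4$. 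Thus everything reduces to evaluating the two difference-tensor commutator terms.

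Next I would read off the matrices $K_u$ via $(K_u)_{ij} = -\tfrac12 C(e_u,e_i,e_j)$ for $u = 1,2,3$. The key structural observation is that $p_3$ involves only $x_4,\dots,x_n$ and $w_0 p_1 = \sum_{a=1}^3 \sum_{k\geq 4} c_k\, x_a^2 x_k$, so for $u \in \{1,2,3\}$ the only monomial that can feed $e_u$ into $C(e_u,\cdot,\cdot)$ is $x_u^2 x_k$. Polarizing $x_u^2 x_k$ in the paper's normalization yields $C(e_u,e_u,e_k) = c_k/3$, with all other relevant entries vanishing, and hence
\[
K_u = -\tfrac16 \sum_{k\geq 4} c_k\, (E_{uk} + E_{ku}) \qquad (u = 1,2,3),
\]
where $E_{ab}$ denotes the matrix units; each $K_u$ is symmetric, as it must be.

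Then I would compute $[K_1,K_2]$ and $[K_1,K_3]$ using $E_{ab}E_{cd} = \delta_{bc}E_{ad}$. Because the ``Euclidean'' index $k$ in every $E_{uk}$ satisfies $k \geq 4$, almost all products cancel, and one finds $K_1 K_2 = \tfrac{1}{36}\big(\sum_{k\geq 4} c_k^2\big) E_{12}$ and $K_2 K_1 = \tfrac{1}{36}\big(\sum_{k\geq 4} c_k^2\big) E_{21}$, so that $[K_1,K_2] = \tfrac{1}{36}\big(\sum_{k\geq 4} c_k^2\big)(E_{12} - E_{21})$, and symmetrically $[K_1,K_3] = \tfrac{1}{36}\big(\sum_{k\geq 4} c_k^2\big)(E_{13} - E_{31})$. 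Since $K_u$ is symmetric, the operator $K_{e_u}$ is represented by $K_u$ in the standard sense, so reading off the $(1,2)$- and $(1,3)$-entries gives $\langle [K_{e_1},K_{e_2}]e_2,e_1\rangle = \tfrac{1}{36}\sum_{k\geq 4} c_k^2$ and $\langle [K_{e_1},K_{e_3}]e_3,e_1\rangle = \tfrac{1}{36}\sum_{k\geq 4} c_k^2$; adding the Riemannian terms produces the two claimed identities.

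The computation is routine linear algebra once the matrices $K_u$ are in hand, so the only genuine risk is bookkeeping. I expect the main place to slip is the polarization normalization — correctly obtaining the factor $1/3$ in $C(e_u,e_u,e_k)$, hence $1/6$ in $K_u$ and $1/36$ in the commutators — together with tracking the antisymmetry sign that turns the stated $r_{ijij}$ coefficients of $R^0_g$ into the $(i,j,j,i)$-components appearing in the statement.
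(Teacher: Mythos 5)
Your proposal is correct and follows essentially the same route as the paper: both reduce the claim via Proposition~\ref{Prop_curvature-CS} to computing the commutator terms $\langle [K_{e_1},K_{e_u}]e_u,e_1\rangle$, read off $K_u = -\tfrac{1}{6}\sum_{k\geq 4} c_k (E_{uk}+E_{ku})$ for $u=1,2,3$ from the polarization of $w_0p_1$, and obtain $[K_1,K_2]=\tfrac{1}{36}\bigl(\sum_{k\geq 4}c_k^2\bigr)(E_{12}-E_{21})$ and its analogue for $[K_1,K_3]$, exactly as in the paper's proof (which phrases the bookkeeping through Theorem~\ref{theorem:Kcurvature} rather than evaluating $\langle [K_X,K_Y]Z,W\rangle$ directly, an inessential difference). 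Your normalization checks ($1/3$ from polarization, hence $1/6$ in $K_u$ and $1/36$ in the commutators) and the antisymmetry sign converting $r_{ijij}$ into the $(i,j,j,i)$-components are all consistent with the paper.
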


First, we shall give a proof of Proposition~\ref{prop:H3Rn-3L'}:

\begin{proof}[Proof of Proposition~\ref{prop:H3Rn-3L'}]
Recall that the automorphism group of $\mathfrak{h}_3 \oplus \mathfrak{g}_{\RR^{n-3}}$ has been known (see e.g.~\cite{KTT, KT_2023, KOTT}).
Equation~\eqref{eq:H3Rn-3L'} follows directly from the expressions.
Note that we use the bracket relation $[e_1, e_2] = e_3$, which may have a different numbering of the basis from the previous papers.
It is readily seen that the polynomial function $w_0 =x_1 x_1 + x_2x_2 + x_3 x_3 \in S^2(\haa_3^\ast)$ is invariant under the action of the group $S(O(2) \times O(1))$.
Therefore, the polynomial function $w_0 p_1$ is also $S(O(2) \times O(1))$-invariant for any $p_1 \in S^1(\gee_{\RR^{n-3}}^\ast)$.
\end{proof}

In order to prove Proposition~\ref{prop:H3Rn-3CScomp}, we prepare the following lemma:

\begin{Lem}\label{lemma:H3Rn-3_nablaC_comp}
The following equalities hold.
\begin{align*}
    \nabla^g(x_1^3) &= -\frac{3}{2} (x_1^2 x_2) \otimes x_3 -\frac{3}{2} (x_1^2 x_3) \otimes x_2, \\
\nabla^g(x_2^3) &= \frac{3}{2} (x_2^2 x_1) \otimes x_3 + \frac{3}{2} (x_2^2 x_3) \otimes x_1, \\
\nabla^g(x_3^3) &= -\frac{3}{2} (x_3^2 x_2) \otimes x_1 +\frac{3}{2} (x_3^2 x_1) \otimes x_2, \\
\nabla^g(x_1^2 x_2) &= -(x_1x_2x_3) \otimes x_2 - (x_1 x_2^2) \otimes x_3 + \frac{1}{2} (x_1^2 x_3) \otimes x_1 + \frac{1}{2} (x_1^3) \otimes x_3, \\
\nabla^g(x_1^2 x_3) &= -(x_1 x_3^2) \otimes x_2 - (x_1 x_2 x_3) \otimes x_3 - \frac{1}{2} (x_1^2 x_2) \otimes x_1 + \frac{1}{2} (x_1^3) \otimes x_2, \\
\nabla^g(x_2^2 x_1) &= (x_1x_2x_3) \otimes x_1 + (x_1^2 x_2) \otimes x_3 - \frac{1}{2} (x_2^2 x_3) \otimes x_2 - \frac{1}{2} (x_2^3) \otimes x_3, \\
\nabla^g(x_2^2 x_3) &= (x_2x_3^2) \otimes x_1 + (x_1x_2x_3) \otimes x_3 -  \frac{1}{2} (x_2^3) \otimes x_1+\frac{1}{2} (x_2^2 x_1) \otimes x_2, \\
\nabla^g(x_3^2 x_1) &= -(x_1x_2x_3) \otimes x_1  + (x_1^2 x_3) \otimes x_2 - \frac{1}{2} (x_3^3) \otimes x_2 - \frac{1}{2} (x_2 x_3^2) \otimes x_3, \\
\nabla^g(x_3^2 x_2) &= -(x_2^2 x_3) \otimes x_1 + (x_1x_2x_3) \otimes x_2 +\frac{1}{2} (x_3^3) \otimes x_1 +\frac{1}{2} (x_1 x_3^2) \otimes x_3, \text{ and } \\
\nabla^g(x_1 x_2 x_3) &= -\frac{1}{2}(x_2^2 x_3) \otimes x_3 - \frac{1}{2}(x_2 x_3^2) \otimes x_2 + \frac{1}{2}(x_1^2 x_3) \otimes x_3 + \frac{1}{2}(x_1 x_3^2) \otimes x_1 \\ & \ \ \ +\frac{1}{2} (x_1^2 x_2) \otimes x_2 - \frac{1}{2}(x_1 x_2^2) \otimes x_1.
\end{align*}
Furthermore, for $4 \leq i, j, k \leq n$, the following equalities also hold. 
\begin{align*}
    \nabla^g(x_1^2 x_i) &= -(x_1 x_2 x_i) \otimes x_3 - (x_1 x_3 x_i) \otimes x_2, \\
    \nabla^g(x_2^2 x_i) &= (x_1 x_2 x_i) \otimes x_3 + (x_2 x_3 x_i) \otimes x_1, \\
    \nabla^g(x_3^2 x_i) &= (x_1 x_3 x_i) \otimes x_2 - (x_2 x_3 x_i) \otimes x_1, \\
    \nabla^g(x_1 x_2 x_i) &= -\frac{1}{2}(x_2^2 x_i) \otimes  x_3 - \frac{1}{2} (x_2 x_3 x_i) \otimes x_2 + \frac{1}{2} (x_1^2 x_i) \otimes x_3 + \frac{1}{2} (x_1 x_3 x_i) \otimes x_1, \\
    \nabla^g(x_1 x_3 x_i) &= -\frac{1}{2}(x_2 x_3 x_i) \otimes x_3 - \frac{1}{2} (x_3^2 x_i) \otimes x_2 + \frac{1}{2} (x_1^2 x_i) \otimes x_2 - \frac{1}{2} (x_1 x_2 x_i) \otimes x_1,\\
    \nabla^g(x_2 x_3 x_i) &= \frac{1}{2}(x_1 x_3 x_i) \otimes x_3 + \frac{1}{2}(x_3^2 x_i) \otimes x_1 + \frac{1}{2} (x_1 x_2 x_i) \otimes x_2 - \frac{1}{2} (x_2^2 x_i) \otimes x_1,\\
    \nabla^g(x_1 x_i x_j) &= -\frac{1}{2}(x_2 x_i x_j) \otimes x_3 -\frac{1}{2} (x_3 x_i x_j) \otimes x_2, \\
    \nabla^g(x_2 x_i x_j) &= \frac{1}{2}(x_1 x_i x_j) \otimes x_3 + \frac{1}{2}(x_3 x_i x_j) \otimes x_1, \\
    \nabla^g(x_3 x_i x_j) &= -\frac{1}{2} (x_2 x_i x_j) \otimes x_1 + \frac{1}{2} (x_1 x_i x_j) \otimes x_2, \text{ and } \\
    \nabla^g(x_i x_j x_k) &= 0.
\end{align*}
\end{Lem}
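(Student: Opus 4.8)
The plan is to reduce the entire computation to two earlier results: the Christoffel-symbol formula of Proposition~\ref{proposition:Gamma_from_a} and the differentiation formula of Proposition~\ref{proposition:nablax_computation}. First I would record the structure constants of $\haa_3 \oplus \gee_{\RR^{n-3}}$ with respect to the canonical basis. Since the only nontrivial bracket is $[e_1,e_2]=e_3$, the only nonzero structure constants are $a_{12}^3 = 1$ and $a_{21}^3 = -1$, all others vanishing. Feeding these into Proposition~\ref{proposition:Gamma_from_a} yields the generalized Christoffel symbols; a direct check produces exactly six nonzero ones, namely $\Gamma_{12}^3 = \tfrac12$, $\Gamma_{21}^3 = -\tfrac12$, $\Gamma_{13}^2 = \Gamma_{31}^2 = -\tfrac12$, and $\Gamma_{23}^1 = \Gamma_{32}^1 = \tfrac12$, with all remaining $\Gamma_{ij}^k$ equal to zero.

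The key structural observation is that every nonzero $\Gamma_{ij}^k$ has all three indices in $\{1,2,3\}$, because $\gee_{\RR^{n-3}}$ is a central abelian factor contributing nothing to the brackets. In the formula of Proposition~\ref{proposition:nablax_computation},
\[
\nabla^g (x_{i_1} x_{i_2} x_{i_3}) = - \sum_{t,u} \bigl(\Gamma_{tu}^{i_{1}} (x_u x_{i_{2}} x_{i_{3}}) + \Gamma_{tu}^{i_{2}} (x_u x_{i_{1}} x_{i_{3}}) + \Gamma_{tu}^{i_{3}} (x_u x_{i_{1}} x_{i_{2}})\bigr)\otimes x_t,
\]
a factor $x_{i_s}$ therefore contributes only when its index $i_s$ lies in $\{1,2,3\}$; any factor indexed by $i_s \geq 4$ is simply carried along unchanged. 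This immediately gives $\nabla^g(x_i x_j x_k) = 0$ for $i,j,k \geq 4$, since then all three superscripts exceed $3$.

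With this in hand I would organize the verification according to how many of the three indices lie in the Heisenberg block $\{1,2,3\}$. For monomials with one or two factors from $\gee_{\RR^{n-3}}$ (such as $x_1 x_i x_j$ or $x_1^2 x_i$), only the Heisenberg factors are differentiated, so each case collapses to a short sum over the pairs $(t,u) \in \{(2,3),(3,2),(1,3),(3,1),(1,2),(2,1)\}$ singled out by the six nonzero symbols; substituting and reading $x_u x_{i_2} x_{i_3}$ as symmetric products recovers the stated expressions. The fully Heisenberg monomials, where all three superscripts lie in $\{1,2,3\}$ and every term of the sum survives, are the most laborious; these are precisely the first ten formulas in the statement and must be treated individually.

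The hard part is purely the bookkeeping for these fully Heisenberg cases: one must track which symmetric monomial $x_u x_{i_2} x_{i_3}$ each nonzero $\Gamma_{tu}^{i_s}$ produces and then collect like terms, mirroring the analogous computation in Lemma~\ref{lemma:gehrn_nablaC_comp} for $\geRHn$. There is no conceptual obstacle; the argument is a finite, if tedious, substitution, and it is the symmetry of the symmetric tensor product that makes the various contributions coalesce into the compact right-hand sides listed above.
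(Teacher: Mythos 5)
Your proposal is correct and follows essentially the same route as the paper's proof: record the structure constants $a_{12}^3 = 1$, $a_{21}^3 = -1$ of $\haa_3 \oplus \gee_{\RR^{n-3}}$, compute the six nonzero generalized Christoffel symbols via Proposition~\ref{proposition:Gamma_from_a} (your values $\Gamma_{12}^3 = \Gamma_{23}^1 = \Gamma_{32}^1 = \tfrac12$, $\Gamma_{21}^3 = \Gamma_{13}^2 = \Gamma_{31}^2 = -\tfrac12$ agree with the paper's), and then substitute into Proposition~\ref{proposition:nablax_computation}. Your additional observation that every nonzero $\Gamma_{ij}^k$ has all indices in the Heisenberg block, so that factors $x_i$ with $i \geq 4$ pass through undifferentiated, is a useful organizational refinement of the same computation rather than a different argument.
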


\begin{proof}[Proof of Lemma~\ref{lemma:H3Rn-3_nablaC_comp}]
By definition, the structure constants of $\mathfrak{h}_3 \oplus \mathfrak{g}_{\mathbb{R}^{n-3}}$ with respect to the canonical basis are given by $a_{12}^3 = 1$, $a_{21}^3 = -1$, with all other $a_{ij}^k$ equal to zero.
Then, by applying Proposition~\ref{proposition:Gamma_from_a}, the generalized Christoffel symbols can be computed as
\begin{align*}
\Gamma_{12}^3 &= \Gamma_{23}^1 = \Gamma_{32}^1 = \tfrac{1}{2}, \\ 
\Gamma_{13}^2 &= \Gamma_{21}^3 = \Gamma_{31}^2 = -\tfrac{1}{2},
\end{align*}
with all other $\Gamma_{ij}^k$ equal to zero.
The claims of Lemma~\ref{lemma:H3Rn-3_nablaC_comp} can be obtained by applying Proposition~\ref{proposition:nablax_computation}.
\end{proof}

Let us give a proof of Proposition~\ref{prop:H3Rn-3CScomp}.

\begin{proof}[Proof of Proposition~\ref{prop:H3Rn-3CScomp}]
First, let us assume that $\nabla^g C$ is totally symmetric.
Put 
\[
    \nabla^g C = \sum_{u,v,s,t} ((\nabla^g C)_{uvst}) (x_u \otimes x_v \otimes x_s \otimes x_t)
\]
and $C = \sum_{u,v,s} C_{uvs} (x_u \otimes x_v \otimes x_s)$.
Then by Lemma~\ref{lemma:H3Rn-3_nablaC_comp}, the following claims hold for each $i, j \geq 4$:
\begin{enumerate}[label=(\arabic*)]
    \item $C_{1ij} = 0$ since $(\nabla^g C)_{2ij3} = (\nabla^g C)_{2i3j}$, $(\nabla^g C)_{2ij3} = \alpha C_{1ij}$ (for some $\alpha \neq 0$) and $(\nabla^g C)_{2i3j} = 0$.
    \item $C_{2ij} = 0$ since $(\nabla^g C)_{1ij3} = (\nabla^g C)_{1i3j}$, $(\nabla^g C)_{1ij3} = \alpha C_{2ij}$ (for some $\alpha \neq 0$) and $(\nabla^g C)_{1i3j} = 0$.
    \item $C_{3ij} = 0$ since $(\nabla^g C)_{1ij2} = (\nabla^g C)_{1i2j}$, $(\nabla^g C)_{1ij2} = \alpha C_{3ij}$ (for some $\alpha \neq 0$) and $(\nabla^g C)_{1i2j} = 0$.
    \item $C_{12i} = 0$ since $(\nabla^g C)_{22i3} = (\nabla^g C)_{223i}$, $(\nabla^g C)_{22i3} = \alpha C_{12i}$ (for some $\alpha \neq 0$) and $(\nabla^g C)_{223i} = 0$.
    \item $C_{13i} = 0$ since $(\nabla^g C)_{23i3} = (\nabla^g C)_{233i}$, $(\nabla^g C)_{23i3} =  \alpha C_{13i}$ (for some $\alpha \neq 0$) and $(\nabla^g C)_{233i} = 0$.
    \item $C_{23i} = 0$ since $(\nabla^g C)_{13i3} = (\nabla^g C)_{133i}$, $(\nabla^g C)_{13i3} =  \alpha C_{23i}$ (for some $\alpha \neq 0$) and $(\nabla^g C)_{133i} = 0$.
    \item $C_{11i} = C_{22i}$ since $(\nabla^g C)_{12i3} = (\nabla^g C)_{123i}$, $(\nabla^g C)_{12i3} = \alpha (C_{11i} - C_{22i})$ (for some $\alpha \neq 0$) and $(\nabla^g C)_{123i} = 0$.
    \item $C_{11i} = C_{33i}$ since $(\nabla^g C)_{13i2} = (\nabla^g C)_{132i}$, $(\nabla^g C)_{13i2} = \alpha (C_{11i} - C_{33i})$ (for some $\alpha \neq 0$) and $(\nabla^g C)_{132i} = 0$.
    \item $C_{22i} = C_{33i}$ since $(\nabla^g C)_{23i1} = (\nabla^g C)_{231i}$, $(\nabla^g C)_{23i1} = \alpha (C_{22i} - C_{33i})$ (for some $\alpha \neq 0$) and $(\nabla^g C)_{231i} = 0$.
    \item $C_{123} = 0$ since $(\nabla^g C)_{1122} = (\nabla^g C)_{1221}$, $(\nabla^g C)_{1122} = \alpha C_{123}$ (for some $\alpha > 0$) and $(\nabla^g C)_{1221} = \beta C_{123}$ (for some $\beta < 0$).
    \item $C_{112} = 0$ since $(\nabla^g C)_{1131} = (\nabla^g C)_{1113}$, $(\nabla^g C)_{1131} = \frac{1}{2} C_{112}$ and $(\nabla^g C)_{1113} = \frac{3}{2} C_{112}$.
    \item $C_{113} = 0$ since $(\nabla^g C)_{1121} = (\nabla^g C)_{1112}$, $(\nabla^g C)_{1121} = -\frac{1}{2} C_{113}$ and $(\nabla^g C)_{1112} = \frac{3}{2} C_{113}$.
    \item $C_{122} = 0$ since $(\nabla^g C)_{2232} = (\nabla^g C)_{2223}$, $(\nabla^g C)_{2232} = -\frac{1}{2} C_{122}$ and $(\nabla^g C)_{2223} = -\frac{3}{2} C_{122}$.
    \item $C_{223} = 0$ since $(\nabla^g C)_{2212} = (\nabla^g C)_{2221}$, $(\nabla^g C)_{2212} = \frac{1}{2} C_{223}$ and $(\nabla^g C)_{2221} = -\frac{3}{2} C_{223}$.
    \item $C_{133} = 0$ since $(\nabla^g C)_{3323} = (\nabla^g C)_{3332}$, $(\nabla^g C)_{3323} = -\frac{1}{2}C_{133}$ and $(\nabla^g C)_{3332} = -\frac{3}{2}C_{133}$.
    \item $C_{233} = 0$ since $(\nabla^g C)_{3313} = (\nabla^g C)_{3331}$, $(\nabla^g C)_{3313} = \frac{1}{2}C_{233}$ and $(\nabla^g C)_{3331} = \frac{3}{2}C_{233}$.
    \item $C_{111} = 0$ since $(\nabla^g C)_{1123} = (\nabla^g C)_{1231}$, $(\nabla^g C)_{1123} = -\frac{1}{2}C_{111} + C_{122}$, $(\nabla^g C)_{1231} = \frac{1}{2}C_{122} - \frac{1}{2} C_{133}$, $C_{122} = 0$ and $C_{133} = 0$.    
    \item $C_{222} = 0$ since $(\nabla^g C)_{2213} = (\nabla^g C)_{2132}$, $(\nabla^g C)_{2213} = \frac{1}{2} C_{222} - C_{112}$, $(\nabla^g C)_{2132}= -\frac{1}{2} C_{112} + \frac{1}{2} C_{233}$, $C_{112} = 0$ and $C_{233} = 0$.    
    \item $C_{333} = 0$ since $(\nabla^g C)_{3321} = (\nabla^g C)_{3213}$, $(\nabla^g C)_{3321} = -\frac{1}{2}C_{333} + C_{223}$, $(\nabla^g C)_{3213} = -\frac{1}{2}C_{113} + \frac{1}{2} C_{223}$, $C_{113} = 0$ and $C_{223} = 0$.    
\end{enumerate}
Thus $C$ can be written as $C = w_0 p_1 + p_3$ for some $p_1 \in S^1(\gee_{\RR^{n-3}}^\ast)$ and $p_3 \in S^3(\gee_{\RR^{n-3}}^\ast)$. 
Conversely, let 
\[
    C = (x_1^2 + x_2^2 + x_3^2) (c_4 x_4 + \dots + c_n x_n) + \sum_{4 \leq i \leq j \leq k \leq n} C_{ijk} x_i x_j x_k.
\]
Then, it immediately follows from Lemma~\ref{lemma:H3Rn-3_nablaC_comp} that $\nabla^g C = 0$.
\end{proof}

Finally, we shall prove Proposition~\ref{prop:H3Rn-3-curvature}:

\begin{proof}[Proof of Proposition~\ref{prop:H3Rn-3-curvature}]
Let 
\[
    C = (x_1^2 + x_2^2 + x_3^2) (c_4 x_4 + \dots + c_n x_n) + \sum_{4 \leq i \leq j \leq k \leq n} C_{ijk} x_i x_j x_k.
\]
Recall that 
$R^0_g = \frac{3}{4}\omega_{12} \odot \omega_{12} - \frac{1}{4} \omega_{13} \odot \omega_{13} - \frac{1}{4} \omega_{23} \odot \omega_{23}$ (see~\cite{Milnor_1976}).
Thus by applying Proposition~\ref{Prop_curvature-CS}, our goal is to show that 
\begin{align}
[K,K](e_1, e_2, e_2, e_1) &= [K^{(g, C)}, K^{(g, C)}](e_1, e_2, e_2, e_1) = \frac{1}{36} \sum_{k \geq 4} c_k^2, \label{item:Kcurvature1221}\\
[K,K](e_1, e_3, e_3, e_1) &= [K^{(g, C)}, K^{(g, C)}](e_1, e_3, e_3, e_1) = \frac{1}{36} \sum_{k \geq 4} c_k^2
\label{item:Kcurvature1331}
\end{align}
(see Section~\ref{subsec:remark_computations} for the definitions of $[K,K]$ and $[K^{(g, C)}, K^{(g, C)}]$).
Recall that 
\[
    [K, K] = 2 \sum_{i<j,k<l,(i,j) < (k,l)} [K_l,K_k]_{ij} (\omega_{ij} \odot \omega_{kl}) + \sum_{i<j} [K_j,K_i]_{ij} (\omega_{ij} \odot \omega_{ij})
\]
(cf.~Theorem~\ref{theorem:Kcurvature}).
Thus 
\begin{align*}
    [K, K] (e_1, e_2, e_2, e_1)   =  -[K,K] (e_1, e_2, e_1, e_2) &= - [K_2,K_1]_{12} = [K_1,K_2]_{12},  \\
    [K,K] (e_1, e_3, e_3, e_1) =  -[K,K] (e_1, e_3, e_1, e_3) &= - [K_3,K_1]_{13} = [K_1,K_3]_{13}.
\end{align*}
By directly computing, one can see that 
\[
    K_l = -\frac{1}{6} \sum_{k \geq 4} c_k (E_{lk} + E_{kl})
\]
for each $l = 1,2,3$, 
where we write $E_{ij}$ for the $n \times n$ matrix unit whose $(i,j)$-entry is $1$ and all other entries are zero. 
Therefore, we obtain 
\begin{align*}
    [K_1,K_2] &=  \frac{1}{36} \bigg{(}\sum_{k \geq 4} c_k^2 \bigg{)} (-E_{21}+E_{12}), \\
    [K_1,K_3] &=  \frac{1}{36} \bigg{(}\sum_{k \geq 4} c_k^2 \bigg{)} (-E_{31}+E_{13}), 
\end{align*}
and hence Equations \eqref{item:Kcurvature1221} and \eqref{item:Kcurvature1331} hold.
\end{proof}

\section*{Acknowledgements.}
We would like to acknowledge Hajime Fujita, Hitoshi Furuhata, Shintaro Hashimoto, Hideyuki Ishi, Daisuke Kazukawa, Fumiyasu Komaki, Akira Kubo, Yuichiro Sato, Koichi Tojo, and Masaki Yoshioka for their valuable comments and suggestions.

\appendix
\section{Remarks on topologies of equivariant fiber bundles}\label{Appendix_Top-Moduli}

In this section, we recall a theorem (Theorem~\ref{theorem:fiber_total_homeo})  concerning topologies of equivariant fiber bundles, which is applied in Section~\ref{subsection:modli_unique_metric}.

Let $E,X,F$ be all locally-compact Hausdorff spaces, 
and $\pi : E \rightarrow X$ is a topological $F$-bundle.
We also fix a locally-compact Hausdorff group $G$, and a continuous $G$-action on $E$ and that on $X$ such that $\pi$ is $G$-equivariant. 
Suppose that the $G$-action on $X$ is transitive and 
the map $\varpi_{x} : G \rightarrow X, ~ g \mapsto g.x$ is open for each $x \in X$.
Let us fix $x_0 \in X$.
We write $H$ for the isotropy subgroup of $G$ at $x_0$, 
and $E_0 := \pi^{-1}(x_0)$ the fiber of $\pi$ at $x_0$.
Then $H$ acts on $E_0$ naturally.

The quotient spaces of $E$ and $E_0$ by the $G$-action and by the $H$-action are denoted by $G \backslash E$ and $H \backslash E_0$, respectively.
Let us consider the maps 
\begin{align*}
\psi : E_0 &\rightarrow G \backslash E, ~ c \mapsto [c]_G, \\
\phi : H \backslash E_0 &\rightarrow G \backslash E,~[c]_H \mapsto [c]_G, 
\end{align*}
where $[c]_H$ and $[c]_G$ denote the $H$-orbit and the $G$-orbit through $c \in E_0 \subset E$, respectively.

\begin{Thm}\label{theorem:fiber_total_homeo}
In the setting above, the map $\psi : E_0 \rightarrow G \backslash E$ is a continuous and open surjection.
Furthermore, the map $\phi : H \backslash E_0 \rightarrow G \backslash E$ is a homeomorphism.
\end{Thm}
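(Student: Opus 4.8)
The plan is to deduce everything from properties of the single map $\psi$, since $\phi$ is the map it induces on the orbit space $H\backslash E_0$. First I would record the two easy properties of $\psi$. Continuity is immediate: $\psi$ is the composite of the inclusion $E_0\hookrightarrow E$ with the orbit map $q\colon E\to G\backslash E$, both continuous. Surjectivity uses transitivity and equivariance: given $[e]_G$ with $e\in E$, transitivity gives $g\in G$ with $g.\pi(e)=x_0$, and then $\pi(g.e)=g.\pi(e)=x_0$ shows that $g.e\in E_0$ represents the same $G$-orbit. The formal part concerning $\phi$ then follows mechanically: writing $q_H\colon E_0\to H\backslash E_0$ for the quotient map, one has $\psi=\phi\circ q_H$, so continuity of $\psi$ and the universal property of the quotient give continuity of $\phi$; surjectivity of $\phi$ follows from that of $\psi$; and injectivity is the orbit computation that if $c_2=g.c_1$ with $c_1,c_2\in E_0$, then $x_0=\pi(c_2)=g.x_0$ forces $g\in H$, so that $c_1,c_2$ lie in a single $H$-orbit. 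Finally, openness of $\phi$ follows from openness of $\psi$, because for open $V\subseteq H\backslash E_0$ one has $\phi(V)=\psi(q_H^{-1}(V))$.

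Thus the whole theorem reduces to the single assertion that $\psi$ is open, which I would first reformulate. The orbit map $q\colon E\to G\backslash E$ is open (as always for quotients by group actions, since $q^{-1}(q(A))=\bigcup_{g}g.A$ is open whenever $A$ is), and $q^{-1}(\psi(U))$ is exactly the $G$-saturation $G.U$. Hence $\psi(U)$ is open if and only if $G.U$ is open in $E$. Because $G.U$ is $G$-invariant and each $g$ acts as a homeomorphism of $E$, it suffices to produce, for each $c\in U$, an open neighbourhood of $c$ in $E$ contained in $G.U$.

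To do this I would pass to a bundle chart. Choose a trivialization $\Phi\colon\pi^{-1}(W)\xrightarrow{\sim}W\times F$ over an open $W\ni x_0$, with $\Phi(c)=(x_0,f_0)$, so that $\Phi(U)=\{x_0\}\times\tilde U$ for an open $\tilde U\ni f_0$ in $F$. The target is then to find neighbourhoods $W'\ni x_0$ and $O\ni f_0$ with $\Phi^{-1}(W'\times O)\subseteq G.U$. A point $\Phi^{-1}(x,f)$ lies in $G.U$ precisely when some $g$ with $g.x_0=x$ carries it into $U$; openness of $\varpi_{x_0}$ guarantees that $\{g.x_0:g\in V\}$ is a neighbourhood of $x_0$ for any $V\ni e$, which lets one reach all nearby base points, and continuity of the action together with local compactness of the fibre lets one thicken $U$ into the neighbouring fibres.

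The main obstacle is precisely this last thickening: the chart $\Phi$ need not be $G$-equivariant, so transporting a point of $\pi^{-1}(x)$ back into $E_0$ by the action scrambles its $F$-coordinate, and one must control that scrambling. The cleanest route is a continuous local section $s$ of $\varpi_{x_0}$ with $s(x_0)=e$: then $(x,f)\mapsto s(x)^{-1}.\Phi^{-1}(x,f)$ is a continuous map into $E_0$ taking the value $c$ at $(x_0,f_0)$, hence carrying a whole neighbourhood of $(x_0,f_0)$ into $U$, which yields the desired inclusion $\Phi^{-1}(W'\times O)\subseteq G.U$. Such sections are available in the situation where this theorem is applied, with $G$ a Lie group and $X$ a homogeneous manifold; in the stated locally compact generality the same conclusion is extracted from openness of $\varpi_{x_0}$ and local compactness by a tube-lemma argument. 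Equivalently, one may phrase the entire step as showing that the canonical continuous $G$-equivariant bijection $G\times_H E_0\to E$, $[g,c]\mapsto g.c$, is a homeomorphism, after which the openness of $\psi$ is a formal consequence.
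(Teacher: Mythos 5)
Your formal skeleton coincides with the paper's proof: continuity and surjectivity of $\psi$, bijectivity and continuity of $\phi$, openness of the orbit map $q : E \to G\backslash E$, and the reduction of everything to the single claim that the saturation $G.U$ of an open set $U \subseteq E_0$ is open in $E$ are all the same steps (the paper reduces openness of $\psi$ to openness of $\phi$ using openness of $\theta_0$, you go the opposite way via $\phi(V)=\psi(q_H^{-1}(V))$; these are equivalent). Your further reduction --- by $G$-invariance of $G.U$ it suffices to exhibit, for each $c \in U$, a neighborhood of $c$ in $E$ inside $G.U$ --- and your diagnosis of the real difficulty, namely that a local trivialization is not $G$-equivariant, so that pulling a nearby fibre back onto $E_0$ ``scrambles'' the $F$-coordinate, are also exactly what the paper does.

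The gap is in the one step you execute in full. Your main argument requires a continuous local section $s$ of $\varpi_{x_0} : G \to X$ with $s(x_0)=e$, and the theorem's hypotheses (locally compact Hausdorff group, transitive action, $\varpi_x$ open) do not provide one: for example, the coordinatewise squaring map $(S^1)^{\mathbb{N}} \to (S^1)^{\mathbb{N}}$, which is the orbit map for $G=(S^1)^{\mathbb{N}}$ acting on $G/\{\pm 1\}^{\mathbb{N}}$, is an open surjective homomorphism of compact groups admitting no continuous local section. So the section route (whose genuine advantage is that it needs no local compactness of $F$) proves the theorem only under an extra hypothesis --- sufficient for the paper's application, where $G$ is a Lie group and $X$ a homogeneous manifold, but not for the theorem as stated. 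For the general case you assert that ``the same conclusion is extracted from openness of $\varpi_{x_0}$ and local compactness by a tube-lemma argument,'' but that deferred step is precisely the substantive content of the paper's proof: one must introduce the continuous map $O \times F \to F$, $(g,v) \mapsto \eta_F\bigl(g^{-1}.\eta^{-1}(g.x,v)\bigr)$, observe that it is the identity at $g=e_G$, choose a compact neighborhood $K$ of the fibre coordinate of $c$ inside the open set corresponding to $U$, and apply Lemma~\ref{lemma:cptopen} to shrink the group neighborhood to some $O'$ so that the tube $\varpi_x(O') \times K$ lies in the chart image of the saturation. You name the correct ingredients, so this is a failure of execution rather than a wrong idea; but as written the proposal proves the theorem only in the case where the scrambling can be trivialized by a section, not in the stated locally compact generality. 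The same remark applies to your closing alternative ($G \times_H E_0 \to E$ being a homeomorphism), which is again asserted rather than proved and hides the identical difficulty.
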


\begin{proof}
One can easily see that $\psi$ is continuous and surjective, and 
$\phi$ is bijective and continuous.
Our goal is to show that $\psi$ and $\phi$ are both open.
The quotient maps from $E$ and $E_0$ to $G \backslash E$ and $H \backslash E_0$ are denoted by $\theta$ and $\theta_0$.
Since $\theta_0$ is open and $\psi = \phi \circ  \theta_0$, 
we only need to show that $\phi$ is an open map.
Fix any open subset $U$ of $H \backslash E_0$.
We write $V := \theta_0^{-1}(U)$.
Then $V$ is an $H$-stable open subset of $E_0$ and $\theta(V) = \phi(U)$ in $G \backslash E$.
Thus our goal is to show that $G. V = \theta^{-1}(\theta(V))$ is open in $E$.
Fix any $c \in G. V$, and we shall prove that $c$ is an interior point of $G. V$ in $E$.
Put $x := \pi(c) \in X$, $E_x := \pi^{-1}(x)$, $V_x := (G. V) \cap E_x$, and denote by $H_x$ the isotropy subgroup of $G$ at $x$.
Fix $g \in G$ with $g.x_0 = x$.
Then one sees that $H_x = gHg^{-1}$, 
$g.V = V_x$, and hence $V_x$ is an $H_x$-stable open subset of $E_x$ with $c \in V_x$ and $G. V = G. V_x$. 
Let us fix an open neighborhood $W$ of $x$ in $X$ with a trivialization $\eta : \pi^{-1}(W) \rightarrow W \times F$.
We denote by $O := \varpi_{x}^{-1}(W) \subset G$.
Then $O$ is an open neighborhood of the unit $e_G$ of $G$, 
and $O. E_x \subset \pi^{-1}(W)$.

Let us write $\eta_F$ for the composition of $\eta$ and the projection from $W \times F$ onto $F$.
Put $c_F := \eta_F(c) \in F$, $c' := \eta(c) = (x,c_F) \in W \times F$, 
$V_x' := \eta_F(V_x) \subset F$, and $\Omega := \eta(O. V_x) \subset W \times F$.
We only need to show that $c'$ is an interior point of $\Omega$ in $W \times F$.
Since $F$ is locally-compact Hausdorff and $V_x'$ is an open neighborhood of $c_F$ in $F$, 
one can take a compact neighborhood $K$ of $c_F$ in $F$ with $K \subset V_x'$. 
Let us consider the continuous map 
\[
\Phi : O \times F \rightarrow F, ~ (g,v) \mapsto \eta_F(g^{-1}. \eta^{-1}(g.x,v)).
\]
Note that for each $v \in F$ and each $g \in O$, 
the equalities $\Phi(e_G,v) = v$  
and $(g.x,v) = \eta(g. \alpha)$ holds
where we define $\alpha$ to be the unique element in $E_x$ with $\eta_F(\alpha)  = \Phi(g,v) \in F$. 
By Lemma~\ref{lemma:cptopen} below, one can find an open neighborhood $O'$ of $e_G$ in $O$ such that $\Phi(O' \times K) \subset V_x'$.
For such $O'$, one can see that 
\[
c' \in \varpi_x(O') \times K \subset \Omega.
\]
Since $\varpi_x$ is an open map, $\varpi_x(O') \times K$ is a neighborhood of $c'$ in $W \times F$.
This completes the proof.
\end{proof}

\begin{Lem}\label{lemma:cptopen}
Let $O$, $F$ be both topological spaces, $e \in O$, 
and 
$\Phi : O \times F \rightarrow F$ a continuous map with $\Phi(e,v) = v$ for any $v \in F$. 
Fix $v_0 \in F$, a compact neighborhood $K$ of $v_0$ in $F$, 
and an open neighborhood $V$ of $v_0$ in $F$ with $K \subset V$.
Then there exists an open neighborhood $O'$ of $e$ in $O$ such that $\Phi(O' \times K) \subset V$.    
\end{Lem}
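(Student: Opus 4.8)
The plan is to prove this by the standard compactness argument (a ``tube lemma'' for one factor), using the hypothesis $\Phi(e,v) = v$ precisely to guarantee that each point $(e,k)$ with $k \in K$ already lies in the open set $\Phi^{-1}(V)$, and then exploiting the compactness of $K$ to replace the resulting pointwise neighborhoods of $e$ by a single uniform one.

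First I would fix an arbitrary $k \in K$. Since $K \subset V$, the hypothesis gives $\Phi(e,k) = k \in V$, so $(e,k) \in \Phi^{-1}(V)$. As $\Phi$ is continuous and $V$ is open, $\Phi^{-1}(V)$ is open in $O \times F$; by the definition of the product topology there exist open neighborhoods $U_k \ni e$ in $O$ and $W_k \ni k$ in $F$ with $U_k \times W_k \subset \Phi^{-1}(V)$, that is, $\Phi(U_k \times W_k) \subset V$. The family $\{ W_k \}_{k \in K}$ is then an open cover of $K$, and compactness of $K$ yields finitely many points $k_1,\dots,k_m \in K$ with $K \subset \bigcup_{i=1}^m W_{k_i}$. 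I would then set
\[
O' := \bigcap_{i=1}^m U_{k_i},
\]
which is a finite intersection of open neighborhoods of $e$ and hence itself an open neighborhood of $e$ in $O$.

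Finally I would verify that $O'$ has the required property: given any $(o,k) \in O' \times K$, choose $i$ with $k \in W_{k_i}$; since $o \in O' \subset U_{k_i}$, we have $(o,k) \in U_{k_i} \times W_{k_i}$, whence $\Phi(o,k) \in V$. Thus $\Phi(O' \times K) \subset V$, as desired. I do not expect any genuine obstacle here: the content is entirely the compactness of $K$, which is what permits passing from the local neighborhoods $U_k$ of $e$ to a single $O'$, while the condition $\Phi(e,\cdot) = \mathrm{id}_F$ is exactly what ensures each $(e,k)$ starts inside $\Phi^{-1}(V)$. I note in passing that only the compactness of $K$ is used, not that $K$ is a neighborhood of $v_0$.
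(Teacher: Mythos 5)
Your proof is correct, and it is exactly the standard ``tube lemma'' compactness argument: each point $(e,k)$ with $k \in K$ lies in the open set $\Phi^{-1}(V)$ because $\Phi(e,k) = k \in K \subset V$, a basic product-open box $U_k \times W_k$ fits inside $\Phi^{-1}(V)$, and compactness of $K$ lets you intersect finitely many of the $U_k$ to get the uniform neighborhood $O'$. The paper states this lemma without proof (treating it as standard), so there is no alternative argument to compare against; your write-up, including the observation that only compactness of $K$ and the inclusion $K \subset V$ are used (not that $K$ is a neighborhood of $v_0$), supplies precisely the argument the paper implicitly relies on in the proof of Theorem~\ref{theorem:fiber_total_homeo}.
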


\section{Remarks on moduli of geometric structures on homogeneous spaces}\label{Appendix_Moduli-Act}
Let us introduce the following notion for maps between manifolds equipped with Lie group actions:
\begin{Def}
Let $G_i$ be a Lie group acting smoothly on a smooth manifold $M_i$ for each $i = 1,2$.
We say that a pair  
$f : M_1 \rightarrow M_2$ and $\phi : G_1 \rightarrow G_2$ of smooth maps is \emph{$(G_1,G_2)$-equivariant} if $\phi$ is a Lie group homomorphism, 
and 
for any $\eta \in G_1$ and any $x \in M_1$, the equality 
$f(\eta. x) = \phi(\eta). (f(x))$ holds.
\end{Def}

Let us fix a Lie group $G$ and a closed subgroup $H$ of $G$.
The homogeneous space corresponding to $(G,H)$ is denoted by $M$.
In this section, we put $\mathrm{GRiem}(M)$ and $\mathrm{GStat}(M)$ to the set of all $G$-invariant Riemannian metrics on $M$ and that of all $G$-invariant statistical structures on $M$.
Let us define the Lie subgroup 
\[
\mathrm{Aut}_H(G) := \left\{ \varphi \in \mathrm{Aut}(G) ~\middle|~ \varphi(H) = H \right\}
\]
of $\mathrm{Aut}(G)$.
For each $\varphi \in \mathrm{Aut}_H(G)$, 
we define a smooth diffeomorphism $\theta_\varphi : M \rightarrow M$ by putting 
\[
\theta_{\varphi}(\eta H) := \varphi(\eta) H \quad (\text{for } \eta \in G).
\]
Then the Lie group $\mathbb{R}_{>0} \times \mathrm{Aut}_H(G)$ acts smoothly on 
$\mathrm{GRiem}(M)$ and $\mathrm{GStat}(M)$ as below:
\begin{align*}
(r,\varphi). g &:= r \cdot ((\theta_\varphi^{-1})^* g), \\
(r,\varphi). (g,\nabla) &:= ((r,\varphi). g, (\theta_\varphi^{-1})^* \nabla).
\end{align*}

In the setting above, the following two theorems hold:

\begin{Thm}\label{theorem:Riemorbit}
Let $g$ and $g'$ be both $G$-invariant Riemannian metrics on $M$.
Then the following two conditions on $g$ and $g'$ are equivalent:
\begin{enumerate}[label=(\roman*)]
    \item \label{item:Riemorbit:orbit} The metrics $g$ and $g'$ lie in the same $(\mathbb{R}_{>0} \times \mathrm{Aut}_H(G))$-orbit in $\mathrm{GRiem}(M)$. 
    \item \label{item:Riemorbit:isom} There exists a $(G,G)$-equivariant pair $(f : M \rightarrow M, \ \phi : G \rightarrow G)$ of maps such that 
    $\phi \in \mathrm{Aut}(G)$ and 
    $f$ gives an isometry up to scaling between $(M,g)$ and $(M,g')$.
\end{enumerate} 
\end{Thm}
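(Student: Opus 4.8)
The plan is to establish the two implications separately; the forward direction is essentially formal, while the reverse one requires a normalization by a left translation.

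For (i) $\Rightarrow$ (ii), suppose $g' = (r,\varphi).g = r\cdot(\theta_\varphi^{-1})^* g$ for some $(r,\varphi)\in\mathbb{R}_{>0}\times\mathrm{Aut}_H(G)$. I would take $f := \theta_\varphi$ and $\phi := \varphi$. The $(G,G)$-equivariance is immediate from $\theta_\varphi(\eta H)=\varphi(\eta)H$ and the fact that $G$ acts on $M=G/H$ by left translation, since $f(\eta.\sigma H)=\varphi(\eta)\varphi(\sigma)H=\phi(\eta).f(\sigma H)$. Pulling back $g'=r(\theta_\varphi^{-1})^* g$ by $\theta_\varphi$ gives $\theta_\varphi^* g'=r\,g$, hence $\tfrac{1}{r} f^* g'=g$ and $f$ is an isometry up to scaling.

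The substantive direction is (ii) $\Rightarrow$ (i). Here $\phi\in\mathrm{Aut}(G)$ need not preserve $H$ and $f$ need not fix the base point $o:=eH$, so $(f,\phi)$ is not yet of the form $(\theta_\varphi,\varphi)$. The idea is to correct $f$ by a translation: writing $f(o)=\sigma_0 H$, I would set $\tilde f:=\sigma_0^{-1}.f(\cdot)$ (the composition of $f$ with the $G$-action of $\sigma_0^{-1}$), which fixes $o$, and $\tilde\phi:=\mathrm{Inn}_{\sigma_0^{-1}}\circ\phi$ with $\mathrm{Inn}_{\sigma_0^{-1}}(\tau)=\sigma_0^{-1}\tau\sigma_0$. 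A short computation then shows $\tilde f$ is $(G,G)$-equivariant with respect to $\tilde\phi\in\mathrm{Aut}(G)$.

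The key step is to check that $\tilde\phi\in\mathrm{Aut}_H(G)$ and that $\tilde f=\theta_{\tilde\phi}$. Since $\tilde f$ fixes $o$, for each $\eta\in H$ one has $o=\tilde f(\eta.o)=\tilde\phi(\eta).o$, so $\tilde\phi(H)\subseteq H$; applying this to $\tilde f^{-1}$ gives $\tilde\phi(H)=H$, whence $\tilde\phi\in\mathrm{Aut}_H(G)$. By transitivity every point is $\tau.o$, and equivariance gives $\tilde f(\tau H)=\tilde\phi(\tau).o=\theta_{\tilde\phi}(\tau H)$, i.e.\ $\tilde f=\theta_{\tilde\phi}$. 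Finally, since $g'$ is $G$-invariant, left translation by $\sigma_0^{-1}$ is a $g'$-isometry, so $\tilde f^* g'=f^* g'$; together with $r f^* g'=g$ this yields $r\,\theta_{\tilde\phi}^* g'=g$, and pulling back by $\theta_{\tilde\phi}^{-1}$ gives $g'=\tfrac{1}{r}(\theta_{\tilde\phi}^{-1})^* g=(\tfrac{1}{r},\tilde\phi).g$, placing $g$ and $g'$ in the same orbit. I expect the main obstacle to be the bookkeeping in this reverse direction—verifying that the inner-automorphism correction exactly repairs the failure of $\phi$ to preserve $H$, and tracking the pullback identities (notably $(\theta_{\tilde\phi}^{-1})^*=(\theta_{\tilde\phi}^*)^{-1}$ and the $G$-invariance of $g'$) so that the scaling constant lands in $\mathbb{R}_{>0}$ with the correct orbit action. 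No input beyond transitivity of the $G$-action and invariance of the metrics is needed.
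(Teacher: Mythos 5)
Your proof is correct and follows essentially the same route as the paper: the forward direction is the same formal pullback computation, and in the reverse direction your corrected pair $(\tilde f,\tilde\phi)=(\rho_{\sigma_0^{-1}}\circ f,\ \mathrm{Inn}_{\sigma_0^{-1}}\circ\phi)$ is exactly the inverse of the automorphism $\varphi(\eta)=\phi^{-1}(\eta_0\eta\eta_0^{-1})$ that the paper constructs, with the same identification $\theta_{\tilde\phi}=\rho_{\eta_0^{-1}}\circ f$ and the same use of $G$-invariance of $g'$. If anything, your verification that $\tilde\phi(H)=H$ (checking both inclusions via $\tilde f^{-1}$) is slightly more careful than the paper's, which only writes out one inclusion.
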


\begin{proof}[Proof of Theorem~\ref{theorem:Riemorbit}]
First, let us assume that there exists $(r,\varphi) \in \mathbb{R}_{>0} \times \mathrm{Aut}_H(G)$ such that $g = (r,\varphi). g'$ in $\mathrm{GRiem}(M)$.
Then one can see that $(\theta_{\varphi}^{-1},\varphi^{-1})$ gives a $(G,G)$-equivariant pair of maps and the pair $(f,r)$ yields an isometry up to scaling between $(M,g)$ and $(M,g')$.
This proves \ref{item:Riemorbit:orbit} $\Rightarrow$ \ref{item:Riemorbit:isom}.

Conversely, suppose \ref{item:Riemorbit:isom}.
Fix $(f,\phi)$ as a $(G,G)$-equivariant pair of maps such that 
$\phi \in \mathrm{Aut}(G)$, $f \in \mathrm{Diff}(M)$, and $r \cdot (f^\ast g') = g$ ($r > 0$).
Let us take $\eta_0 \in G$ with $\eta_0H = f(e_G H)$ in $M$, where $e_G$ denotes the unit of the group $G$.
Then one sees that
\[
f(\eta H) = \phi(\eta) f(e_G H) = \phi(\eta) \eta_0 H  ~ \text{ for any } \eta \in G.
\]
We define a Lie group automorphism $\varphi$ on $G$ by 
\[
\varphi : G \rightarrow G, ~ \eta \mapsto \phi^{-1}(\eta_0 \eta \eta_0^{-1}).
\]
Then $\varphi(H) = H$ since for each $h \in H$, the following equations hold in $M = G/H$:
\[
(\varphi^{-1}(h)) H = (\eta_0^{-1} \phi(h) \eta_0) H = \eta_0^{-1} (f(hH)) = \eta_0^{-1} (f(e_G H)) = \eta_0^{-1} (\eta_0 H) = e_G H.
\]
Furthermore, one sees that $\theta_{\varphi}^{-1} = \rho^M_{\eta_0^{-1}} \circ f : M \rightarrow M$, where
\[
\rho^M : G \rightarrow \mathrm{Diff}(M), ~ \eta \mapsto \rho^M_{\eta}
\]
denotes the $G$-action on $M$.
Since the metric $g'$ is $G$-invariant, we have 
\[
(r,\varphi). g' = r \cdot ((\theta_{\varphi}^{-1})^\ast g') = r \cdot (f^\ast ((\rho^M_{\eta_0^{-1}})^\ast g')) = r \cdot (f^\ast g') = g.
\]
This proves \ref{item:Riemorbit:orbit} $\Leftarrow$ \ref{item:Riemorbit:isom}.
\end{proof}

\begin{Thm}\label{theorem:Statorbit}
Let $(g,\nabla)$ and $(g',\nabla')$ be both $G$-invariant statistical structures on $M$.
Then the following two conditions on $(g,\nabla)$ and $(g',\nabla')$ are equivalent:
\begin{enumerate}[label=(\roman*)]
    \item The statistical structures $(g,\nabla)$ and $(g',\nabla')$ lie in the same $(\mathbb{R}_{>0} \times \mathrm{Aut}_H(G))$-orbit in $\mathrm{GStat}(M)$. 
    \item There exists a $(G,G)$-equivariant pair $(f : M \rightarrow M, \ \phi : G \rightarrow G)$ of maps such that 
    $\phi \in \mathrm{Aut}(G)$ and $f$ gives a scaling statistical isomorphism between $(M,g,\nabla)$ and $(M,g',\nabla')$.
\end{enumerate}    
\end{Thm}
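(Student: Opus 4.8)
The plan is to run the two-step argument in the proof of Theorem~\ref{theorem:Riemorbit} essentially verbatim, carrying the affine connection along as a second piece of data and checking at each step that it transforms correctly. Since the metric half of the argument is already established there, the only genuinely new content is the behavior of the connection under the relevant pullbacks, and this is controlled entirely by the $G$-invariance of the connections in play.

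For the implication (i) $\Rightarrow$ (ii), I would assume $(g,\nabla) = (r,\varphi).(g',\nabla')$ for some $(r,\varphi) \in \mathbb{R}_{>0} \times \mathrm{Aut}_H(G)$. Unwinding the definition of the action gives $g = r\cdot(\theta_\varphi^{-1})^*g'$ and $\nabla = (\theta_\varphi^{-1})^*\nabla'$. Setting $f := \theta_\varphi^{-1}$ and $\phi := \varphi^{-1}$, the pair $(f,\phi)$ is $(G,G)$-equivariant with $\phi \in \mathrm{Aut}(G)$ by exactly the computation used for the Riemannian case. Then $f^*\nabla' = \nabla$ is immediate, while $f^*g' = r^{-1}g$, so $(f,r^{-1})$ is a scaling statistical isomorphism in the sense of Definition~\ref{Def_SIUS}. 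Conversely, for (ii) $\Rightarrow$ (i), I would reuse the construction of the Riemannian proof: given $(f,\phi)$ with $f^*\nabla' = \nabla$ and $f^*g' = r\cdot g$, I choose $\eta_0 \in G$ with $\eta_0 H = f(e_G H)$, so that equivariance yields $f(\eta H) = \phi(\eta)\eta_0 H$, and I define $\varphi : \eta \mapsto \phi^{-1}(\eta_0 \eta \eta_0^{-1})$. The verification that $\varphi(H) = H$, hence $\varphi \in \mathrm{Aut}_H(G)$, and the identity $\theta_\varphi^{-1} = \rho^M_{\eta_0^{-1}} \circ f$ are inherited word for word. The one new computation is that of $(\theta_\varphi^{-1})^*\nabla'$: using $\theta_\varphi^{-1} = \rho^M_{\eta_0^{-1}} \circ f$ together with the $G$-invariance of $\nabla'$ (so that $(\rho^M_{\eta_0^{-1}})^*\nabla' = \nabla'$), I obtain $(\theta_\varphi^{-1})^*\nabla' = f^*((\rho^M_{\eta_0^{-1}})^*\nabla') = f^*\nabla' = \nabla$. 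The metric computation is exactly as before, giving $(\theta_\varphi^{-1})^*g' = r\cdot g$, so that $(r^{-1},\varphi).(g',\nabla') = (g,\nabla)$ and the two structures lie in the same orbit.

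There is no real obstacle here, precisely because the delicate geometric input—promoting an abstract equivariant isomorphism to an honest element of $\mathrm{Aut}_H(G)$ and checking $\varphi(H)=H$—has already been carried out in the proof of Theorem~\ref{theorem:Riemorbit}. The only points demanding attention are, first, the bookkeeping of the scalar, where the convention $f^*g' = r\cdot g$ of Definition~\ref{Def_SIUS} forces the group element $r^{-1}$ rather than $r$ (harmless, since the parameter ranges over all of $\mathbb{R}_{>0}$); and second, confirming that the $G$-invariance of $\nabla'$, and not merely of $g'$, is available—which it is, by hypothesis, since $(g',\nabla') \in \mathrm{GStat}(M)$. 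Everything else is a transcription of the Riemannian argument with the connection appended.
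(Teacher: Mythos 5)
Your proposal is correct and takes essentially the same route as the paper, which itself only states that the proof of Theorem~\ref{theorem:Statorbit} ``can be given similarly to the proof of Theorem~\ref{theorem:Riemorbit}'': you transcribe that Riemannian argument, append the connection, and correctly identify the only two points needing care, namely the inversion of the scalar forced by the convention $f^{\ast}g' = r\cdot g$ in Definition~\ref{Def_SIUS} and the use of $G$-invariance of $\nabla'$ in the step $(\theta_\varphi^{-1})^{\ast}\nabla' = f^{\ast}((\rho^M_{\eta_0^{-1}})^{\ast}\nabla') = f^{\ast}\nabla' = \nabla$.
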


The proof of Theorem~\ref{theorem:Statorbit} can be given similarly to the proof of Theorem~\ref{theorem:Riemorbit}.

Note that Proposition~\ref{proposition:compatible} is a special case of Theorem~\ref{theorem:Statorbit} when $H$ is trivial and 
$M = G$.

\section{Proof of $\MLStatDF(\RR^n) \cong (\RR_{>0}) \backslash \{ \lambda_1 \geq \dots \geq \lambda_n \geq 0 \}$}\label{Appendix_Moduli-Rn-DF}
Let us consider $\RR^n$ the vector space equipped with the standard inner product $\langle \cdot,\cdot \rangle_{\RR^n}$.
The standard basis of $\RR^n$ and its dual basis of $(\RR^n)^\ast$ will be denoted by $\{ e_1,\dots,e_n \}$ and $\{ x_1,\dots,x_n \}$, respectively.
We denote by $S^3((\RR^n)^\ast)$ the vector space of all symmetric $(0,3)$-tensors on $\RR^n$, 
equipped with the $O(n)$-action defined in the usual sense.
The $n$-dimensional subspace $V$ of $S^3((\RR^n)^\ast)$ is defined by 
\[
V := \left\{ \sum_i a_i x_i^3 ~\middle|~ a_i \in \RR \right\}, 
\]
and we write $\Xi := O(n).V \subset S^3((\RR^n)^\ast)$.
Our concern in this section is to determine the quotient space $O(n) \backslash \Xi$.

Let us define the subspace $V_+$ of $V$ by 
\[
V_+ := \left\{ \sum_i \lambda_i x_i^3 ~\middle|~ \lambda_1 \geq \dots \geq \lambda_n \geq 0 \right\}.
\]
The goal of this section is to give a proof of the following theorem:
\begin{Thm}\label{theorem:RnDFW}
The map 
\[
V_+ \rightarrow O(n) \backslash \Xi, ~ \sum_i \lambda_i x_i^3 \mapsto O(n).\left( \sum_i \lambda_i x_i^3 \right)
\]
is a homeomorphism.
\end{Thm}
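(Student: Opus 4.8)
The plan is to recover the coefficients $\lambda_i$ from an explicit orthogonal invariant of $C$, and to use this invariant simultaneously for injectivity and for continuity of the inverse. Write $\iota$ for the map of the theorem, so $\iota(\sum_i \lambda_i x_i^3) = O(n).(\sum_i \lambda_i x_i^3)$. Since $V_+ \subset V \subset \Xi$, the map $\iota$ is the restriction to $V_+$ of the quotient map $q : \Xi \to O(n)\backslash\Xi$, hence continuous. For surjectivity I would first record how $O(n)$ acts on coordinates: if $\varphi \in O(n)$ is a permutation of the basis $\{e_i\}$ or a sign change $e_i \mapsto -e_i$, then on $C = \sum_i a_i x_i^3 \in V$ it acts by the corresponding signed permutation of the coefficient vector $(a_i)$. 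Using these elements one can move any $\sum_i a_i x_i^3 \in V$ into $V_+$ (flip signs to make all coefficients nonnegative, then permute to arrange them in decreasing order). As $\Xi = O(n).V$, every $O(n)$-orbit in $\Xi$ therefore meets $V_+$, so $\iota$ is surjective.

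The key device is the symmetric bilinear form $M(C)$ on $\RR^n$ obtained by contracting two indices of $C \otimes C$:
\[
M(C)(u,w) := \sum_{k,l=1}^{n} C(u,e_k,e_l)\,C(w,e_k,e_l).
\]
This is independent of the chosen orthonormal basis, and a reindexing of the sum (using that $\{\varphi^{-1}e_k\}$ is again orthonormal) shows $M(\varphi.C) = M(C)(\varphi^{-1}\,\cdot\,,\varphi^{-1}\,\cdot\,)$ for $\varphi \in O(n)$; thus the corresponding self-adjoint operators for $C$ and $\varphi.C$ are conjugate, and the spectrum of $M(C)$ is an $O(n)$-invariant. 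Moreover $M(C)(u,u) = \sum_{k,l} C(u,e_k,e_l)^2 \geq 0$, so $M(C)$ is positive semidefinite. A direct computation, using $(x_i^3)(e_j,e_k,e_l) = \delta_{ij}\delta_{ik}\delta_{il}$, gives $M\!\left(\sum_i \lambda_i x_i^3\right) = \mathrm{diag}(\lambda_1^2,\dots,\lambda_n^2)$.

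Injectivity then follows at once: if $\sum_i \lambda_i x_i^3$ and $\sum_i \mu_i x_i^3$ with $\lambda,\mu \in V_+$ lie in the same orbit, their invariant forms are orthogonally conjugate, so $\{\lambda_i^2\} = \{\mu_i^2\}$ as multisets; nonnegativity together with the ordering $\lambda_1 \geq \cdots \geq \lambda_n \geq 0$ forces $\lambda_i = \mu_i$ for all $i$. For the homeomorphism I would build a continuous inverse directly: define $r : \Xi \to V_+$ by taking the eigenvalues of $M(C)$, sorting them as $s_1 \geq \cdots \geq s_n \geq 0$, and setting $r(C) = \sum_i \sqrt{s_i}\,x_i^3$. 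Since $C \mapsto M(C)$ is continuous and the sorted eigenvalues of a symmetric matrix depend continuously on the matrix, $r$ is continuous; as these eigenvalues are $O(n)$-invariant, $r$ factors through the quotient, giving a continuous $\bar r : O(n)\backslash\Xi \to V_+$ by the universal property of the quotient topology. The computation above yields $\bar r \circ \iota = \mathrm{id}_{V_+}$, and surjectivity of $\iota$ gives $\iota \circ \bar r = \mathrm{id}$, so $\bar r = \iota^{-1}$ is continuous and $\iota$ is a homeomorphism.

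The step I expect to be the main obstacle is injectivity in the presence of repeated or vanishing coefficients: a naive approach analyzing exactly which $\varphi \in O(n)$ relate two diagonal forms becomes delicate precisely when the $\lambda_i$ are not distinct (the stabilizer jumps), and one would have to argue that such a $\varphi$ is nonetheless a signed permutation. The invariant $M$ is designed to bypass this entirely, since it packages the data $\{\lambda_i^2\}$ as an orthogonally invariant spectrum for which multiplicities cause no trouble; the only routine verifications left are the basis-independence and equivariance of $M$, the positive semidefiniteness, and the diagonal computation, all of which are straightforward.
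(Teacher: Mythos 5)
Your proof is correct, and it takes a genuinely different route from the paper. The paper proves injectivity through a structural orbit statement (its Proposition~\ref{proposition:OnW}): using the contraction operators $\widetilde{K}^{C}_u$ for a \emph{generic} vector $u$ (generic in the dense open set $\Omega_\lambda$), it matches eigenspace decompositions to show that any $k \in O(n)$ carrying one diagonal tensor to another can be replaced by a signed permutation; it then obtains the homeomorphism not by exhibiting an inverse but by a topological argument: the continuous bijection $\psi$ is closed, because $V_+$ is closed in $S^3((\RR^n)^\ast)$ and the quotient map by the compact group $O(n)$ on a locally compact Hausdorff space is a closed map. You instead build the complete invariant $M(C)(u,w) = \sum_{k,l} C(u,e_k,e_l)\,C(w,e_k,e_l)$ (equivalently $\mathrm{tr}(C_u C_w)$ in Hilbert--Schmidt form, which is the cleanest way to see the basis-independence you flag as routine), read off the multiset $\{\lambda_i^2\}$ from its spectrum, and use sorted eigenvalues plus the square root to write down a continuous inverse explicitly. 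This buys you three things: injectivity with no genericity or case analysis at repeated/vanishing coefficients, continuity of the inverse without invoking closedness of quotient maps by compact groups, and in fact your invariant re-proves the paper's Proposition~\ref{proposition:OnW} for free (if $C = \sum_i a_i x_i^3$ and $C' = \sum_i b_i x_i^3$ lie in one $O(n)$-orbit, then $\{a_i^2\} = \{b_i^2\}$ as multisets, so $b_i = \pm a_{\sigma(i)}$ for some permutation $\sigma$, i.e.\ $C' \in W.C$). What the paper's approach buys in exchange is finer structural information about \emph{which} orthogonal transformations can relate two elements of $V$ --- the eigenspace-matching argument is the kind of statement one would need to pursue the polar-action picture mentioned in the paper's remark --- whereas your argument deliberately discards that information by passing to an invariant.
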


\begin{Rem}
We believe that $\Xi$ is a smooth submanifold of the vector space $S^3((\RR^n)^\ast)$ and that the natural $O(n)$-action on $\Xi$ is polar with a section $V$.
If this is indeed the case, then Theorem~\ref{theorem:RnDFW} would follow directly from the general theory of polar actions 
(cf. Gorodski (2022), arXiv:2208.03577).
Although we have attempted to verify this claim, a proof has not yet been obtained.
\end{Rem}

Let us consider the standard injective group homomorphism $\mathfrak{S}_n \ltimes \{ \pm 1 \}^n$ into $O(n)$, 
and the image will be denoted by $W \subset O(n)$.
Then $V_+$ is a complete representative of the $W$-action on $V$.

A key part of Theorem~\ref{theorem:RnDFW} is the following:

\begin{Prop}\label{proposition:OnW}
Let us fix $C,C' \in V$ such that $O(n).C = O(n).C'$.
Then $W.C = W.C'$.
\end{Prop}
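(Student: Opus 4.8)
The plan is to reduce the statement to the elementary fact that two diagonal quadratic forms related by an orthogonal change of variables have the same eigenvalues; the bridge between the cubic forms and this linear-algebra fact will be a canonical quadratic form built out of $C$ and the inner product. Throughout I use the convention $g.C := C(g^{-1}\,\cdot\,,g^{-1}\,\cdot\,,g^{-1}\,\cdot\,)$ for $g \in O(n)$, and I write $C = \sum_i a_i x_i^3$, $C' = \sum_i b_i x_i^3$.

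First I would attach to each $C \in S^3((\RR^n)^\ast)$ and each $v \in \RR^n$ the self-adjoint operator $L^C_v$ on $(\RR^n,\langle\cdot,\cdot\rangle_{\RR^n})$ determined by $\langle L^C_v w, u\rangle_{\RR^n} = C(v,w,u)$ (self-adjointness is immediate from the symmetry of $C$). For $C = \sum_i a_i x_i^3 \in V$, the normalization of the symmetric product gives $C(e_i,e_j,e_k) = a_i\,\delta_{ij}\delta_{jk}$, so that $L^C_v = \mathrm{diag}(a_1 v_1,\dots,a_n v_n)$ in the standard basis. I then define the quadratic form $B_C(v) := \mathrm{tr}\bigl((L^C_v)^2\bigr)$; for $C \in V$ this is $B_C = \sum_i a_i^2\,x_i^2$, whose Gram operator relative to $\langle\cdot,\cdot\rangle_{\RR^n}$ is $\mathrm{diag}(a_1^2,\dots,a_n^2)$.

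The key step is the $O(n)$-equivariance of the assignment $C \mapsto B_C$. For $g \in O(n)$ with $C' = g.C$, orthogonality of $g$ gives $L^{C'}_v = g\,L^C_{g^{-1}v}\,g^{-1}$, and applying $\mathrm{tr}((\,\cdot\,)^2)$ yields $B_{C'}(v) = B_C(g^{-1}v)$. Hence, if $C,C' \in V$ satisfy $O(n).C = O(n).C'$, picking $g$ with $g.C = C'$ shows that the Gram operators $\mathrm{diag}(b_1^2,\dots,b_n^2)$ of $B_{C'}$ and $\mathrm{diag}(a_1^2,\dots,a_n^2)$ of $B_C$ are orthogonally conjugate, hence share the same spectrum. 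Therefore the multisets $\{a_i^2\}$ and $\{b_i^2\}$ coincide, i.e.\ $\{|a_i|\} = \{|b_i|\}$ as multisets.

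Finally I would translate this back into the $W$-action. A direct computation of the action of a signed permutation $w \in W = \mathfrak{S}_n \ltimes \{\pm 1\}^n$ on $V$ shows that $w$ sends the coefficient vector $(a_i)$ to $(\pm a_{\pi^{-1}(i)})$ for the underlying permutation $\pi$; consequently $W.C = W.C'$ holds exactly when $\{|a_i|\}$ and $\{|b_i|\}$ agree as multisets, which is precisely what the previous step establishes. The only genuinely nontrivial ingredient is the construction of the invariant quadratic form $B_C$ from the cubic $C$; once that covariant is identified, every remaining step is routine, so I do not expect any further obstacle.
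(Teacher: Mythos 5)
Your proposal is correct, and it takes a genuinely different route from the paper. The paper argues directly on the group element: given $k \in O(n)$ with $k.C_\lambda = C_\mu$, it chooses a \emph{generic} vector $u$ in a dense open set $\Omega_\lambda \cap k^{-1}\Omega_\mu$ (so that the operator you call $L^{C_\lambda}_u$ has pairwise distinct nonzero eigenvalues on the support coordinates), and then uses the conjugation identity $\widetilde{K}^{C_\lambda}_u = k^{-1}\circ \widetilde{K}^{C_\mu}_{ku}\circ k$ --- the same equivariance you prove --- to match eigenspace decompositions, forcing $k$ to carry coordinate lines $\RR e_i$ (for $i$ in the support of $\lambda$) to coordinate lines, from which it \emph{constructs} a signed permutation $k' \in W$ with $k'.C_\lambda = C_\mu$. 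You instead package the same operator into an $O(n)$-equivariant covariant $C \mapsto B_C = \mathrm{tr}\bigl((L^C_{\cdot})^2\bigr)$, note that for diagonal cubics its Gram matrix is $\mathrm{diag}(a_1^2,\dots,a_n^2)$, and invoke the elementary fact that orthogonally conjugate diagonal matrices have equal spectra; since the multiset $\{|a_i|\}$ is precisely the complete invariant of the $W$-action on $V$, the proposition follows. All the individual steps check out (the equivariance $L^{g.C}_v = g\,L^C_{g^{-1}v}\,g^{-1}$, the computation $B_C = \sum_i a_i^2 x_i^2$ under the paper's normalization of the symmetric product, and the description of the $W$-action on coefficient vectors). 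What your approach buys is brevity and the elimination of the genericity/density lemma: no choice of generic vector is needed, and the whole argument reduces to the spectral theorem. What the paper's approach buys is constructiveness: it exhibits the element $k'\in W$ explicitly and shows along the way that the original $k$ factors as $k'k_0$ with $k_0$ stabilizing $C_\lambda$, structural information about the orthogonal map itself that an invariant-theoretic argument does not provide.
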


To prove  
Proposition~\ref{proposition:OnW}, 
we give two lemmas below:

\begin{Lem}\label{lemma:Omega_lambda_od}
For each $\lambda \in \RR^n$, we put 
\[
\support (\lambda) := \left\{ i \in \{ 1,\dots,n \} ~\middle|~ \lambda_i \neq 0 \right\}
\]
and 
\[
\Omega_\lambda :=  \left\{ \sum_{i} a_i e_i \in \RR^n ~\middle|~ a_i \neq 0 \text{ for  } i \in \support (\lambda), \text{and }a_i \lambda_i \neq a_j \lambda_j \text{ for } i,j \in \support (\lambda) \text{ with } i \neq j \right\}.
\]    
Then $\Omega_\lambda$ is open dense in $\RR^n$.
\end{Lem}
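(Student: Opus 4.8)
The plan is to exhibit $\Omega_\lambda$ as the complement in $\RR^n$ of a finite union of linear hyperplanes, and then invoke the standard fact that such a complement is open and dense. First I would rewrite the two defining conditions as exclusions of zero sets of linear forms. Writing a generic point as $\sum_i a_i e_i$, the condition ``$a_i \neq 0$ for $i \in \support(\lambda)$'' excludes the coordinate hyperplanes $\{ a_i = 0 \}$ with $i \in \support(\lambda)$, while the condition ``$a_i \lambda_i \neq a_j \lambda_j$'' excludes the zero sets of the linear forms $a_i \lambda_i - a_j \lambda_j$ for distinct $i,j \in \support(\lambda)$. Thus
\[
\Omega_\lambda = \RR^n \setminus \Big( \bigcup_{i \in \support(\lambda)} \{ a_i = 0 \} \ \cup \bigcup_{\substack{i,j \in \support(\lambda) \\ i \neq j}} \{ a_i \lambda_i - a_j \lambda_j = 0 \} \Big).
\]

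The one point that genuinely requires the support hypothesis is the claim that each excluded set is a \emph{proper} hyperplane, i.e. the zero set of a nonzero linear form. For the coordinate hyperplanes this is immediate. For the forms $a_i \lambda_i - a_j \lambda_j$ with $i,j \in \support(\lambda)$, the coefficients $\lambda_i$ and $\lambda_j$ are by definition nonzero, so the form does not vanish identically; this is exactly why the indices in the second condition are restricted to $\support(\lambda)$. Hence the set subtracted on the right-hand side is a finite union of codimension-one linear subspaces of $\RR^n$.

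With that in hand, openness and density follow by routine topology: each hyperplane is closed, so their finite union is closed and its complement $\Omega_\lambda$ is open; each hyperplane has empty interior (equivalently, Lebesgue measure zero), so the finite union has empty interior and therefore its complement $\Omega_\lambda$ is dense. I do not expect any real obstacle here—the argument is essentially formal once the forms are identified—so the only substantive step is verifying the nonvanishing just discussed. Finally I would note that the degenerate cases are harmless: if $\support(\lambda) = \emptyset$ then $\Omega_\lambda = \RR^n$, and if $\lvert \support(\lambda) \rvert \leq 1$ then only coordinate hyperplanes are excluded, so the conclusion holds a fortiori.
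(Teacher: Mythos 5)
Your proof is correct and is essentially the same argument as the paper's: the paper phrases it as a finite intersection of open dense subsets (one per defining condition), while you phrase it dually as the complement of a finite union of proper hyperplanes, which is the same computation via De Morgan. Your explicit remark that $\lambda_i, \lambda_j \neq 0$ guarantees the forms $a_i\lambda_i - a_j\lambda_j$ are nonzero (so their zero sets are genuine hyperplanes) is a point the paper leaves implicit, but there is no substantive difference in approach.
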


\begin{proof}[Proof of Lemma~\ref{lemma:Omega_lambda_od}]
Recall that a finite intersection of open dense subsets of $\RR^n$ is again open and dense.
Since the conditions $a_i \neq 0$ and $a_i \lambda_i \neq a_j \lambda_j$ (for $i \neq j$) each define open dense subsets of $\RR^n$, it follows that $\Omega_\lambda$ is open and dense in $\RR^n$.
\end{proof}

\begin{Lem}\label{lemma:03end}
For each $P \in S^3((\RR^n)^\ast)$ and $u \in \RR^n$, 
let us write 
$\widetilde{K}^P_u : \RR^n \rightarrow \RR^n$ for the unique linear endomorphism satisfying that 
\[
\langle \widetilde{K}^P_u(v),w \rangle_{\RR^n} = \langle P, u \otimes v \otimes w \rangle \quad (\text{for any } v,w \in \RR^n),
\]
where $\langle P, u \otimes v \otimes w \rangle$ denotes the pairing of the $(0,3)$-tensor $P$ and the $(3,0)$-tensor $u \otimes v \otimes w$ on $\RR^n$.
Then for each $u \in \RR^n$ and $k \in O(n)$, the equality  $\widetilde{K}^{P}_{u} = k^{-1} \circ \widetilde{K}^{k.P}_{ku} \circ k$ holds as linear endomorphisms on $\RR^n$.
\end{Lem}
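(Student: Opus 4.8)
The plan is to verify the identity directly from the characterizing property of $\widetilde{K}^P_u$, using only the non-degeneracy of $\langle\cdot,\cdot\rangle_{\RR^n}$ together with the orthogonality of $k$. Since $\widetilde{K}^P_u$ is, by definition, the \emph{unique} linear endomorphism satisfying $\langle \widetilde{K}^P_u(v),w\rangle_{\RR^n} = \langle P, u\otimes v\otimes w\rangle$ for all $v,w\in\RR^n$, it suffices to show that the candidate endomorphism $k^{-1}\circ\widetilde{K}^{k.P}_{ku}\circ k$ satisfies the very same defining relation; the claimed equality then follows from uniqueness.

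Concretely, I would fix arbitrary $v,w\in\RR^n$ and evaluate
\[
\langle (k^{-1}\circ\widetilde{K}^{k.P}_{ku}\circ k)(v),\,w\rangle_{\RR^n}
=\langle k^{-1}\bigl(\widetilde{K}^{k.P}_{ku}(kv)\bigr),\,w\rangle_{\RR^n}.
\]
Because $k\in O(n)$ we have $k^{-1}=k^{\top}$, hence $\langle k^{-1}x,w\rangle_{\RR^n}=\langle x,kw\rangle_{\RR^n}$, and the expression becomes $\langle \widetilde{K}^{k.P}_{ku}(kv),\,kw\rangle_{\RR^n}$. Applying the defining relation of $\widetilde{K}^{k.P}_{ku}$ with test vectors $kv,kw$ rewrites this as $\langle k.P,\,(ku)\otimes(kv)\otimes(kw)\rangle=(k.P)(ku,kv,kw)$. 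Finally, unwinding the $O(n)$-action $(k.P)(\,\cdot\,,\cdot\,,\cdot\,)=P(k^{-1}\,\cdot\,,k^{-1}\,\cdot\,,k^{-1}\,\cdot\,)$ gives $(k.P)(ku,kv,kw)=P(u,v,w)=\langle\widetilde{K}^P_u(v),w\rangle_{\RR^n}$. Since $v,w$ were arbitrary and $\langle\cdot,\cdot\rangle_{\RR^n}$ is non-degenerate, the two endomorphisms coincide.

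I expect no genuine obstacle in this argument: the only points demanding care are the identity $k^{-1}=k^{\top}$, which holds precisely because $k$ is orthogonal (and is the reason the statement is formulated for $O(n)$ rather than for a general element of $GL(n,\RR)$), and the correct placement of inverses in the definition of the $O(n)$-action. In essence the lemma records the \emph{naturality} (equivariance) of the index-lowering operation $P\mapsto\widetilde{K}^P_{(\cdot)}$ under the orthogonal group, and the symmetry of $P$ plays no role.
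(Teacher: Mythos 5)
Your proof is correct and is essentially the paper's own argument: the paper runs the identical chain of equalities $\langle \widetilde{K}^P_u(v),w\rangle = \langle P, u\otimes v\otimes w\rangle = \langle k.P, ku\otimes kv\otimes kw\rangle = \langle k^{-1}\widetilde{K}^{k.P}_{ku}(kv),w\rangle$, merely read in the opposite direction, and likewise concludes by non-degeneracy of the inner product. Your added remarks (that orthogonality of $k$ is exactly what converts $k^{\top}$ into $k^{-1}$, and that symmetry of $P$ is irrelevant) are accurate but not needed beyond what the paper does.
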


\begin{proof}
Let us put $Q = k.P$ and fix any $u,v,w \in \RR^n$.
Then 
\begin{align*}
\langle \widetilde{K}^P_{u}(v),w \rangle_{\RR^n} 
&= \langle P, u \otimes v \otimes w \rangle \\
&= \langle k^{-1}.Q, u \otimes v \otimes w \rangle \\
&= \langle Q, ku \otimes kv \otimes kw \rangle \\
&= \langle \widetilde{K}^Q_{ku}(kv),kw \rangle_{\RR^n} \\
&= 
\langle k^{-1}\widetilde{K}^Q_{ku}(kv),w \rangle_{\RR^n}
\end{align*}
This proves the claim. 
\end{proof}

\begin{proof}[Proof of Proposition~\ref{proposition:OnW}]
For each $\lambda = (\lambda_1,\dots,\lambda_n) \in \RR^n$, 
we write $C_\lambda = \sum_{i} \lambda_i x_i^3 \in V$.
Fix $\lambda,\mu \in \RR^n$ and $k \in O(n)$ with $k.C_\lambda = C_\mu$.
Then our goal is to find $k' \in W$ such that $k'.C_\lambda = C_\mu$. 
By Lemma~\ref{lemma:Omega_lambda_od}, $\Omega_\lambda \cap k^{-1} \Omega_\mu$ is open dense in $\RR^n$, and in particular non-empty.
Fix $u = \sum_{i} a_i e_i \in \Omega_\lambda \cap k^{-1} \Omega_\mu$.
We write $ku = \sum_{i} b_i e_i \in \Omega_\mu$.
Let us define the linear endomorphisms $\widetilde{K}^{C_\lambda}_u$ and $\widetilde{K}^{C_\mu}_{ku}$ on $\RR^n$ as in Lemma~\ref{lemma:03end}.
Then by the definition, we have 
$\widetilde{K}^{C_\lambda}_u (e_i) = a_i \lambda_i e_i$
and $\widetilde{K}^{C_\mu}_{ku}(e_i) = b_i \mu_i e_i$ for each $i$.
Since $u \in \Omega_\lambda$, 
we see that the family 
\[
\{ \RR e_i \}_{i \in \support (\lambda)} \sqcup \{ \mathrm{Span} \{ e_i \mid \lambda_i = 0 \} \}
\]
gives the eigenspace decomposition of $\RR^n$ by the operator $\widetilde{K}^{C_\lambda}_u$
such that 
the eigenvalue of $\widetilde{K}^{C_\lambda}_u$ on 
$\mathrm{Span} \{ e_i \mid \lambda_i = 0 \}$ is zero.
Furthermore, since $ku \in \Omega_\mu$, 
\[
\{ \RR e_j \}_{j \in \support (\mu)} \sqcup \{ \mathrm{Span} \{ e_j \mid \mu_j = 0 \} \}
\]
gives the eigenspace decomposition of $\RR^n$ by the operator $\widetilde{K}^{C_\mu}_{ku}$
such that 
the eigenvalue of $\widetilde{K}^{C_\mu}_{ku}$ on 
$\mathrm{Span} \{ e_i \mid \mu_i = 0 \}$ is zero.
By Lemma~\ref{lemma:03end},
\[
\widetilde{K}^{C_\lambda}_{u} = k^{-1} \circ \widetilde{K}^{C_\mu}_{ku} \circ k \text{ as } \RR^n \rightarrow \RR^n.
\]
This implies the equations
\[
\mathrm{Span} \{ e_i \mid \lambda_i = 0 \} = k^{-1} \mathrm{Span} \{ e_j \mid \mu_j = 0 \}
\]
and 
\[
\{ \RR e_i \}_{i \in \support (\lambda)} 
= 
\{  \RR (k^{-1} e_j) \}_{j \in \support (\mu)}.
\]
In particular, one can find and fix $(\sigma,(\varepsilon_1,\dots,\varepsilon_n)) \in \mathfrak{S}_n \ltimes \{ \pm 1 \}^n$ satisfying that the equality $ke_i = \varepsilon_i e_{\sigma(i)}$ holds for each $i \in \support (\lambda)$.
Let us define $k' \in W \subset O(n)$ to be the element corresponding to $(\sigma,(\varepsilon_1,\dots,\varepsilon_n)) \in \mathfrak{S}_n \ltimes \{ \pm 1 \}^n$ fixed as above.
Then for $k_0 := k'^{-1} k \in O(n)$, 
we have $k_0 e_i = e_i$ for each $i \in \support (\lambda)$ and $\mathrm{Span} \{ e_i \mid \lambda_i = 0 \}$ is stable by $k_0$.
Hence $k_0.C_\lambda = C_\lambda$, and $k'. C_\lambda = k.C_\lambda = C_\mu$.
This completes the proof. 
\end{proof}

\begin{proof}[Proof of Theorem~\ref{theorem:RnDFW}]
The continuous map $\psi : V_+ \rightarrow O(n) \backslash \Xi$ is surjective because any $W$-orbit in $V$ intersects $V_+$ and $W \subset O(n)$.
The injectivity of $\psi$ follows from 
Proposition~\ref{proposition:OnW} as below:
Let us take any two elements $P,Q \in V_+$ such that $O(n).P = \psi(P) = \psi(Q) = O(n).Q$.
Then by Proposition~\ref{proposition:OnW}, 
$W.P = W.Q$.
Since $V_+$ gives a complete representative of $W$-action on $V$, we have $P = Q$.
This proves that the map $\psi$ is injective.

We only need to show that the continuous bijective map $\psi$ is closed.
One can see that $V_+$ is closed in $S^3((\RR^n)^\ast)$, and hence the inclusion map $V_+ \rightarrow \Xi$ is closed.
Furthermore, the quotient map $\Xi \rightarrow O(n) \backslash \Xi$ is also closed.
In fact, in general, for a compact Hausdorff group $K$ and a continuous $K$-action on a locally-compact Hausdorff space $Y$, the quotient map $Y \rightarrow K \backslash Y$ is known to be closed.
Thus the composition $\psi : V_+ \rightarrow O(n) \backslash \Xi$ is also closed.
\end{proof}

\end{document}